\numberwithin{equation}{section}
\DeclareMathAlphabet{\pazocal}{OMS}{zplm}{m}{n}
\newcommand*{\rom}[1]{\expandafter\@slowromancap\romannumeral #1@}
\theoremstyle{theorem}
\newtheorem{theorem}{Theorem}[section]
\newtheorem{example}{Example}[section]
\newtheorem{thm*}{Theorem*}
\newtheorem{lemma}[theorem]{Lemma}
\theoremstyle{definition}
\newtheorem{definition}[theorem]{Definition}
\theoremstyle{assumption}
\newtheorem{assumption}[theorem]{Assumption}
\theoremstyle{proposition}
\newtheorem{prop}[theorem]{Proposition}
\theoremstyle{corollary}
\newtheorem{cor}[theorem]{Corollary}
\theoremstyle{remark}
\newtheorem{remark}[theorem]{Remark}
\DeclareMathOperator{\R}{\mathbb{R}}
\DeclareMathOperator{\T}{\mathbb{T}}
\DeclareMathOperator{\N}{\mathbb{N}}
\DeclareMathOperator{\Z}{\mathbb{Z}}
\DeclareMathOperator{\TT}{\mathbb{T}}
\DeclareMathOperator{\E}{\mathbb{E}}
\DeclareMathAlphabet{\mathup}{OT1}{\familydefault}{m}{n}
\newcommand{\dx}[1]{\mathop{}\!\mathup{d} #1}
\DeclarePairedDelimiter{\abs}{\lvert}{\rvert}
\DeclarePairedDelimiter{\norm}{\lVert}{\rVert}
\DeclarePairedDelimiter{\bra}{(}{)}
\DeclarePairedDelimiter{\pra}{[}{]}
\DeclarePairedDelimiter{\set}{\{}{\}}
\newcommand{\cF}{\ensuremath{\mathcal F}}
\newcommand{\cP}{\ensuremath{\mathcal P}}
\newcommand{\cC}{\ensuremath{\mathcal C}}
\newcommand{\cG}{\ensuremath{\mathcal G}}
\newcommand{\cK}{\ensuremath{\mathcal K}}
\newcommand{\ve}{\varepsilon}
\newcommand{\eps}{\varepsilon}
\DeclareMathOperator{\C}{C}
\providecommand{\ud}[1]{\, \mathrm{d} #1}
\providecommand{\dx}{\ud{x}}
\providecommand{\dy}{\ud{y}}
\providecommand{\deta}{\ud{\eta}}
\providecommand{\ds}{\ud{s}}
\providecommand{\dt}{\ud{t}}
\providecommand{\dd}{\ud}
\newcommand{\Leb}{\ensuremath{{L}}}
\DeclareMathOperator{\Ent}{Ent}
\DeclareMathOperator{\Lip}{Lip}
\DeclareMathOperator{\sign}{sgn}
\DeclareMathOperator*{\esup}{ess\,sup}
\DeclareMathOperator{\diam}{diam}
\newenvironment{tenumerate}[1][]
  {\enumerate[label=\textup(\alph*\textup),ref=(\alph*),#1]}
  {\endenumerate}
\apptocmd{\thebibliography}{\setlength{\itemsep}{1pt}}{}{}
  \title{Ergodicity and random dynamical systems for conservative SPDEs}
 \author[1]{Benjamin Fehrman \thanks{benjamin.fehrman@maths.ox.ac.uk}}
 \author[2]{Benjamin Gess  \thanks{benjamin.gess@math.uni-bielefeld.de}}
 \author[3]{Rishabh S. Gvalani  \thanks{gvalani@mis.mpg.de}}
 \affil[1]{\textit{University of Oxford}}
 \affil[2]{\textit{Universit{\"a}t Bielefeld and Max Planck Institute for Mathematics in the Sciences, Leipzig}}
 \affil[3]{\textit{Max Planck Institute for Mathematics in the Sciences, Leipzig}}
\date{}
\begin{document}
\maketitle
\begin{abstract}
The dynamics of the solutions to a class of conservative SPDEs are analysed from two perspectives: Firstly, a probabilistic construction of a corresponding random dynamical system is given for the first time. Secondly, the existence and uniqueness of invariant measures, as well as mixing for the associated Markov process is shown. \end{abstract}
\tableofcontents

\allowdisplaybreaks
\section{Introduction}

In this work, we consider the dynamical behaviour of solutions to conservative SPDEs of the general type
\begin{equation}\label{eq:spde-intro}
\partial_{t}\rho=\Delta\Phi(\rho)-\nabla\cdot\nu(\rho)-\nabla\cdot(B(\rho))-f(\rho)-\sqrt{\eps}\nabla\cdot(\sigma(\rho)\xi),
\end{equation}
where $\nu$ is local function of $\rho$, $B$ a nonlocal function, $\xi$ is spatially correlated noise and under assumptions on $\Phi,\nu,B,f,\sigma$ specified below. We introduce a new probabilistic construction of an associated random dynamical system, and we prove the ergodicity and mixing properties of the associated Markovian semigroup. 

Conservative SPDEs of this type appear, for example, as fluctuating continuum models in non-equilibrium statistical mechanics, see, for example \cite{Ot2005,Gi.Le.Pr1999,FG21,FehGesDir2020,Fe.Ge2022}. 

The theory of random dynamical systems (RDS) aim at merging techniques from stochastic analysis with those from dynamical systems theory, as an approach to the systematic study of dynamical properties of solutions to stochastic differential equations. Once it has been set up correctly, a random dynamical system provides us with several useful tools, for example, the multiplicative ergodic theorem, the concept of Lyapunov exponents, and invariant manifolds. This motivates the use of RDS techniques in the dynamical analysis of solutions to conservative SPDEs such as~\eqref{eq:spde-intro}. Before methods from RDS may be used in order to analyse the dynamics of SPDEs, it has to be shown that appropriate modifications can be selected among their solutions satisfying, for example, the stochastic flow property. 

In contrast to the deterministic case, the stochastic flow property is not an immediate consequence of the uniqueness of solutions. While in finite dimensions satisfactory answers to this problem have been found, based on the Kolmogorov continuity theorem (see \cite{K90,Arn98,AS95}), the infinite-dimensional case is intrinsically more difficult and counterexamples are known to exist \cite{M84}. The vast majority of established results rely either on a transformation to a (random) PDE that can be solved path-by-path \cite{CF94,IL01,G13-2,G13}, a method restricted to affine-linear noise, or on analytic methods like rough path theory and its generalizations \cite{Ku.Ne.So2020,GT10,He.Ne2019}, see in particular \cite{FehGes2019} for conservative SPDEs, which, however, require restrictive assumptions on the regularity of the coefficients, since roughly speaking they rely on higher order Taylor expansions. 

Only few results on the existence of stochastic flows based on probabilistic methods exist. The most general result so far was developed in \cite{F96}. One key obstacle that one encounters on the way to proving the existence of stochastic flows is the selection of a modification of the set of solutions that is continuous with respect to the initial condition. This obstacle was successfully overcome in \cite{F96} for semilinear SPDEs of the form\footnote{The method introduced in the present work  strictly contains the SPDEs considered in \cite{F96}, since the additional $0$-order multiplicative noise in \eqref{eq:intro_flanoli} compared to \eqref{eq:spde-intro} can be handled by a standard transformation method, see e.g.\ \cite{G13-2}.} 
\begin{equation}\label{eq:intro_flanoli}
  \partial_t \rho= D^2 : (A(x)\rho) - \nabla\cdot(a(x)\rho) - f(\rho) - \nabla\cdot(\rho \xi) - \rho (\tilde\xi+b(x)).
\end{equation}
The present work extends \cite{F96} in two regards: First, the method developed in \cite{F96} is restricted to linear noise and linear higher order operators as in \eqref{eq:intro_flanoli}. Second,  in \cite{F96} it is shown that for fixed times $s,t$ a modification of the solutions to \eqref{eq:intro_flanoli} that is pathwise continuous with respect to the initial condition can be chosen. This only implies an imperfect form of the stochastic flow property, leaving open the proof of the (perfect) semi-flow property, as well as of the (perfect) cocycle property, which are key properties of an RDS. Both of these open problems are solved in the present work. We present the first purely probabilistic construction of an RDS for a nonlinear SPDE of the type \eqref{eq:spde-intro}.

The key to this construction is a pathwise contraction estimate for \eqref{eq:spde-intro}, which was first established in \cite{FG21} and is generalized in this work to possibly unstable dynamics with nontrivial and possible nonlocal drift $B$, and nonlinearity $f$. More precisely, we prove that there is a finite $C=C(T)>0$, so that each two solutions $\rho^1,\rho^2$ to \eqref{eq:spde-intro} satisfy a.s. 
\begin{equation}\label{eq:intro-pathwise-contraction}
\sup_{t \in [0,T]}\|\rho(t,\rho_{0,1},\omega)-\rho(t,\rho_{0,2},\omega)\|_{L^1(\T^d)}\le C \|\rho_{0,1}-\rho_{0,2}\|_{L^1(\T^d)} \, .
\end{equation}
Roughly speaking, it is this pathwise continuity in the initial condition that allows us to select good modifications of the solutions $\rho$ to \eqref{eq:spde-intro}, which are then shown to form a stochastic flow. The so-called perfection of this stochastic flow into an RDS strongly relies on the continuity in the initial time of solutions \cite{AS95,Ka.Sc1997}. In the case of SPDEs, it is challenging to establish this continuity. In the present work, we establish new regularity estimates for SPDEs of the type \eqref{eq:spde-intro} which, in particular, are shown to imply initial time continuity. This leads to the first main result of this work of which we state an informal version below.
\begin{theorem}[see Theorem~\ref{thm:rds}]
There exists an $\eps_0 \in (0,1)$, such that for all $\eps < \eps_0$ and under appropriate assumptions on $\Phi,\sigma,\nu,B,f$ (see Assumptions~\ref{assumption_1}, \ref{assumption_2},  \ref{assumption_3}, \ref{assdrift}, and ~\ref{assume_grad}), the solutions $\rho(t,\rho_0,\omega)$ of~\eqref{eq:spde-intro} generate a random dynamical system.
\end{theorem}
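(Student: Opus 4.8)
The plan is to exhibit the solution operator of \eqref{eq:spde-intro} as a perfect cocycle over the Wiener shift, thereby realising it as a random dynamical system in the sense of \cite{Arn98}. Recall that this amounts to a metric dynamical system $(\Omega,\cF,\P,(\theta_t)_{t\in\R})$ together with a jointly measurable map $\varphi\colon[0,\infty)\times\Omega\times X\to X$ on a suitable state space $X\subseteq L^1(\T^d)$ (dictated by the $L^1$-contractive structure of \eqref{eq:spde-intro}), satisfying $\varphi(0,\omega)=\id_X$ and the \emph{perfect} cocycle identity
\begin{equation}
\varphi(t+s,\omega)=\varphi(t,\theta_s\omega)\circ\varphi(s,\omega)\qquad\text{for all }s,t\ge0\text{ and all }\omega\in\Omega.
\end{equation}
The base flow $(\theta_t)$ is the standard noise shift, which preserves the driving law and whose construction is routine; the entire difficulty lies in building $\varphi$ out of $\rho(t,\rho_0,\omega)$ and in upgrading its evolution property from \emph{almost sure} to \emph{perfect}.

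First I would establish well-posedness of \eqref{eq:spde-intro} in $X$ together with the pathwise contraction estimate \eqref{eq:intro-pathwise-contraction}, which for almost every $\omega$ equips $\rho_0\mapsto\rho(\cdot,\rho_0,\omega)$ with a deterministic Lipschitz modulus $C=C(T)$ in $\sup_{[0,T]}\norm{\cdot}_{L^1(\T^d)}$. Fixing a countable dense set $D\subseteq X$, I would discard the (countable union, hence null) set of $\omega$ for which some solution indexed by $D$ is ill-behaved, and extend $\rho_0\mapsto\rho(t,\rho_0,\omega)$ from $D$ to all of $X$ by uniform continuity. This produces, off one fixed null set, a map $\varphi(t,\omega)\rho_0:=\rho(t,\rho_0,\omega)$ defined for \emph{every} initial datum and continuous in $\rho_0$; joint measurability in $(t,\omega,\rho_0)$ then follows from measurability on $D$ and this continuity.

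Next I would verify the \emph{crude} (fixed-time, almost-sure) flow property. For fixed $s$, the shifted trajectory $r\mapsto\rho(s+r,\rho_0,\omega)$ solves \eqref{eq:spde-intro} driven by $\theta_s\omega$ with initial datum $\varphi(s,\omega)\rho_0$, so uniqueness gives $\varphi(t+s,\omega)=\varphi(t,\theta_s\omega)\circ\varphi(s,\omega)$ for $\P$-a.e.\ $\omega$, the contraction estimate again letting the null set be chosen independent of $\rho_0$. This is precisely the imperfect flow property obtained in \cite{F96}, where the exceptional null set still depends on the pair $(s,t)$.

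The main obstacle is the \emph{perfection} of this crude cocycle into one for which a single null set serves all $(s,t)$ and all initial data at once. Here I would appeal to the abstract perfection theorems of \cite{AS95,Ka.Sc1997}, whose decisive hypothesis is joint continuity of the flow in its time arguments, and in particular continuity in the initial time of the solutions. For the nonlinear conservative equation \eqref{eq:spde-intro} this is far from automatic, since solutions carry only low ($L^1$-type) regularity; establishing it is the heart of the argument and rests on the new regularity estimates, which show that $t\mapsto\rho(t,\rho_0,\omega)$ is continuous up to $t=0$ with moduli uniform enough to remain stable under the shift $\theta_s$. The smallness condition $\eps<\eps_0$ enters precisely here, closing these estimates by ensuring that the noise is controlled by the dissipation coming from $\Delta\Phi$. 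Feeding the measurable crude identity and this continuity into the perfection machinery then yields an indistinguishable modification of $\varphi$ satisfying the perfect cocycle identity for all $s,t\ge0$ and all $\omega$, which together with the noise-shift metric dynamical system constitutes the desired random dynamical system.
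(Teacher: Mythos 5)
Your proposal is correct and follows essentially the same route as the paper: pathwise $L^1$-contraction plus a countable dense set of initial data to build a continuous modification, the crude semiflow/cocycle from uniqueness, initial-time (Hölder) continuity via the new gradient estimates (where the smallness $\eps<\eps_0$ enters, exactly as you identify), and finally the Kager--Scheutzow perfection theorem. The paper additionally handles some technical points you elide (random initial data for the semiflow composition, measurability with respect to the uncompleted $\sigma$-algebra $\mathcal{F}^0$), but these are refinements of the same strategy.
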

This result opens the way for an analysis of the qualitative behaviour of solutions to \eqref{eq:spde-intro} by means of RDS techniques. Notably, this includes dynamically challenging cases for which the deterministic dynamics
\begin{equation}
\partial_{t}\rho=\Delta\Phi(\rho)-\nabla\cdot\nu(\rho)-\nabla\cdot(B(\rho))-f(\rho)\label{eq:spde-intro-1}
\end{equation}
are unstable. We provide a particular example of a system where the results of our theorem apply.
\begin{example}[Parabolic sine-Gordon equation with conservative noise]
Consider the following SPDE
\[
\partial_t \rho = \Delta \rho - \kappa f(\rho)- \sqrt{\eps} \nabla\cdot (\sigma(\rho)\dd\xi) \, ,
\]
where $f(x)=\sin{x}$, $\ve$ is small, and $\sigma(\cdot)$ is chosen to be sufficiently regular.  With the particular choice of $f$ we have made and with $\kappa$ sufficiently large, the deterministic part of the equation is bistable, i.e. it has multiple steady states.
\end{example}

The second main result of this work addresses the existence and the uniqueness of invariant measures, and mixing for conservative SPDEs. While the ergodicity for SPDEs with semilinear noise, that is, noise that depends only on the values of the solution, not on its derivatives, has attracted much interest in the literature, only a few results are known in the case of conservative noise, see, for example, \cite{EKMS00,ESR12,Ge.So2017,GS16-2,Ga.Ge2019} and the references therein. Notably, these known results are restricted to the case of spatially homogeneous noise. In this case, the spatially constant state is invariant, and, therefore, the unique invariant measure is simply given by the corresponding Dirac measure. Here, we study the dynamically richer case of spatially inhomogeneous noise. 

We introduce a general framework, based on the pathwise stability property \eqref{eq:intro-pathwise-contraction}, which implies the uniqueness of invariant measures for the associated  Markovian semigroups. We prove our ergodicity result in a more abstract setting (see Theorem~\ref{thm:abstractergodicity}) for a Markov process taking values in a metric space and satisfying certain properties, i.e. the strong Markov property, an abstract dissipation estimate, and the existence of a distinguished object in the support of its path space measure which is asymptotically contractive (see Assumption~\ref{ass:support}). The last property is symbolic of the contractivity of the deterministic dynamics of~\eqref{eq:spde-intro} in the model case we wish to treat. Indeed, the theory is developed with an eye on the case of the following conservative SPDE
\begin{equation}
\partial_{t}\rho=\Delta\Phi(\rho) - \nabla \cdot \nu(\rho)-\sqrt{\eps}\nabla\cdot(\sigma(\rho)\dd\xi),\label{eq:spde-intro-2}
\end{equation}
where $\Phi$ and $\nu$ are such that the non-perturbed case ($\eps=0$) is asymptotically globally stable.

\begin{theorem}[see Theorems~\ref{thm:abstractergodicity} and~\ref{thm:semiabstract}]
Assume that $f, B \equiv 0$. Then, under appropriate assumptions  on $\Phi,\sigma,\nu$ (see Assumptions~\ref{assumption_1}, \ref{assumption_2},  \ref{assumption_3}, and \ref{assdrift}), the associated Markovian semigroup has a unique invariant measure and is strongly mixing. 
\end{theorem}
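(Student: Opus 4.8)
The plan is to derive the statement from the abstract ergodicity result (Theorem~\ref{thm:abstractergodicity}) by verifying its hypotheses for the Markov process generated by the model equation~\eqref{eq:spde-intro-2} with $f,B\equiv 0$; this verification is precisely the content of the semi-abstract result (Theorem~\ref{thm:semiabstract}). I would work in the state space $L^1(\T^d)$ restricted to a fixed total mass, which the conservative dynamics preserves, and establish in turn the strong Markov property, the abstract dissipation estimate, and Assumption~\ref{ass:support}. Existence of an invariant measure then follows from the dissipation estimate by a Krylov--Bogoliubov argument, while uniqueness and mixing follow from the remaining two structural properties via the abstract theorem.

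\textbf{Markov property and dissipation.} The strong Markov property is a consequence of the well-posedness theory underlying~\eqref{eq:intro-pathwise-contraction}: since $f,B\equiv 0$, equation~\eqref{eq:spde-intro-2} admits a pathwise-unique solution with a measurable solution map, and the time-homogeneity of $\xi$ yields the Markov property in the usual way. For the dissipation estimate I would apply It\^o's formula to a strictly convex functional $\Psi(\rho)$—for instance a relative entropy measured against the globally attracting steady state $\bar\rho$ of the $\eps=0$ dynamics. The parabolic term $\Delta\Phi(\rho)$ and the drift $-\nabla\cdot\nu(\rho)$ generate a strictly negative dissipation, whereas the conservative noise $\sqrt{\eps}\,\nabla\cdot(\sigma(\rho)\,\dd\xi)$ contributes, after using its divergence structure and the spatial correlation of $\xi$, only an It\^o correction of order $\eps$. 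For $\eps<\eps_0$ small the dissipation dominates, giving a bound of the form $\E\,\Psi(\rho(t))\le e^{-\lambda t}\Psi(\rho_0)+C\eps$, which is both the required dissipation estimate and, through tightness of the time-averaged laws, the input for Krylov--Bogoliubov.

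\textbf{Assumption~\ref{ass:support}.} The distinguished object is the globally attracting steady state $\bar\rho$ of the deterministic dynamics~\eqref{eq:spde-intro-2} at $\eps=0$. Its asymptotic contractivity is the global asymptotic stability of that dynamics, made robust in $L^1(\T^d)$ by the pathwise estimate~\eqref{eq:intro-pathwise-contraction}. The substantive requirement is that $\bar\rho$ lie in the topological support of the law of the solution process, i.e.\ that on any finite time window the noise drives the solution arbitrarily close in $L^1(\T^d)$ to $\bar\rho$ with strictly positive probability. I expect this to be the main obstacle: it is a Stroock--Varadhan-type support theorem for a nonlinear, conservative SPDE with degenerate diffusion $\Phi$ and noise entering through $\nabla\cdot(\sigma(\rho)\,\dd\xi)$, which does not follow from standard results. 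I would attack it by a controllability argument—steering the solution toward $\bar\rho$ by exploiting the smoothing of $\Phi$ where it is nondegenerate and the spatial structure of $\xi$—and then convert controllability into positivity of the support through a Wong--Zakai or Girsanov approximation, using the continuity of the solution map guaranteed by~\eqref{eq:intro-pathwise-contraction}.

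\textbf{Conclusion.} With these hypotheses in hand, Theorem~\ref{thm:abstractergodicity} applies. The mechanism is as follows: the pathwise $L^1$-stability~\eqref{eq:intro-pathwise-contraction} yields equicontinuity of the semigroup (the e-property), so that the laws of solutions started from nearby data stay close; on its own~\eqref{eq:intro-pathwise-contraction} is only a stability, not a contraction, but the asymptotically contractive distinguished object $\bar\rho$, which by Assumption~\ref{ass:support} the process visits with positive probability on each long time window, supplies the actual shrinking of the $L^1$-distance between two copies driven by the same noise. Iterating over disjoint windows with the strong Markov property—and controlling excursions by the dissipation estimate—drives this distance to zero, which is the lower-bound mechanism behind the abstract theorem. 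This forces any two invariant measures to coincide and the transition probabilities to converge to the unique invariant measure, i.e.\ the semigroup is strongly mixing.
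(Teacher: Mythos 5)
Your proposal follows essentially the same route as the paper: an abstract ergodicity theorem (Theorem~\ref{thm:abstractergodicity}) is proved for a Markov process satisfying pathwise non-expansivity, the strong Markov property, a dissipation estimate with a coercive functional having compact sublevel sets, and a support/contractivity assumption for a distinguished deterministic trajectory; Theorem~\ref{thm:semiabstract} then verifies these hypotheses for~\eqref{eq:DK} with $f,B\equiv 0$. Existence is indeed by Krylov--Bogoliubov (Feller from non-expansivity, tightness of the time-averaged laws from the dissipation estimate), and uniqueness and mixing follow from exactly the coupling mechanism you describe (Lemmas~\ref{lem:close}--\ref{lem:t2}, iterated via the strong Markov property with excursions controlled by the dissipation estimate).

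The one genuine divergence is the support property. You propose to \emph{prove} it by controllability plus a Wong--Zakai or Girsanov argument; the paper does not prove it at this level of generality at all --- it is taken as a hypothesis (Assumption~\ref{ass:contractivity}), and for the concrete regularized Dean--Kawasaki example it is imported from the rough-path support theorem of~\cite{FehGes2019}, which is precisely why that example requires $\sigma\in C_b^{5+\delta}$. Your route, if carried out, would be stronger (the paper itself remarks that a purely probabilistic proof of the support property is an open improvement), but as written it is a sketch of the hardest step rather than an argument. A second, smaller point: the paper's Assumption~\ref{ass:support} asks that the full deterministic trajectory $u(\cdot,\rho_0)$ lie in the support of the path-space measure of $\rho(\cdot,\rho_0,\cdot)$ and that deterministic trajectories contract \emph{pairwise}, rather than that the process approach a fixed steady state $\bar\rho$ on every finite window as you state; on short windows the solution need not be near $\bar\rho$ with positive probability, and it is the trajectory formulation that makes the hypothesis checkable and feeds cleanly into Lemma~\ref{lem:close}. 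Your exponential entropy decay is also stronger than what the paper uses; the linear-in-$T$ bound of Assumption~\ref{ass:entene} suffices.
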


We apply this theorem to a particular SPDE of the type \eqref{eq:spde-intro-2}. 
\begin{example}[Regularized Dean--Kawasaki equation]
Consider the following SPDE
\[
\partial_t \rho = \Delta \rho - \sqrt{\eps} \nabla\cdot (\sigma(\rho)\dd\xi) \, ,
\]
where $\ve$ is small and $\sigma(\cdot)$ is chosen to be a sufficiently nice approximation of $(\cdot)^{\frac{1}{2}}$. The above equation is a regularized version of the so-called Dean--Kawasaki equation which describes the macroscopic fluctuations of $N$ i.i.d Brownian motions on $\T^d$, see \cite{Dea96}. 
\end{example}

\subsection{Overview of the literature}

The generation of random dynamical systems for (finite-dimensional) SDE has been thoroughly understood in a series of works. The existence of stochastic flows is answered in detail in \cite{K90}, fundamentally relying on the Kolmogorov continuity theorem. This left open the problem of the existence of perfect cocycles. This aspect was discussed in detail in the works \cite{AS95,Ka.Sc1997,Arn98}. In particular, in \cite{Ka.Sc1997} a general perfection result for crude cocycles has been shown, relying on the initial time continuity of solutions. Again, in the case of (finite-dimensional) SDE, this continuity could be obtained via Kolmogorov's continuity theorem. The infinite-dimensional case is intrinsically more difficult and counter-examples are known \cite{M84}. Despite much effort \cite{MZZ08,F95,BF92,FS90,F96}, no probabilistic construction of RDS for nonlinear SPDEs is known, previous to the present work. In fact, the existence of random dynamical systems is one of the major obstacles in developing a dynamical systems approach to SPDEs. The vast majority of existence results for SPDEs relies on transformations into random PDE, which can then be solved path by path, for example, \cite{CF94,MZZ08,IL01,G13-2,G13} but there are many more. 

Under restrictive assumptions on the coefficients, the existence of stochastic flows for linear SPDEs has been shown in \cite{F95,BF92,FS90}. Only few probabilistic methods to construct stochastic flows for nonlinear SPDEs are known. The most general result in this direction was shown in \cite{F96} considering SPDEs of the type \eqref{eq:intro_flanoli}. The proof in \cite{F96} was based on the derivation of a weighted $L^2$ norm, for which the noise operator in \eqref{eq:intro_flanoli} becomes contractive. As a result, the stochastic integral in the corresponding It\^o formula vanishes, leading to a pathwise stability estimate. This method is strictly limited to linear higher order operators, since nonlinear operators like all of the terms in \eqref{eq:spde-intro}, are not contractive in weighted $L^2$ norms. Therefore, the method developed in the present work can be applied to a far reaching extension SPDEs.

A rough-path based well-posedness approach to conservative SPDEs was developed in recent years in the works \cite{LS98,LPS13,LPS14,GS14,GS16-2,Ho.Ka.Ri.St2018,FG21,Ho.Ka.Ri.St2020}, leading to the work \cite{FehGes2019}, which proved the existence of RDS for conservative parabolic-hyperbolic SPDEs with spatially inhomogeneous noise. Since this approach relies on rough paths techniques, it requires restrictive assumptions on the coefficients. 

More precisely, in \cite{FehGes2019} it was required that $\sigma \in C^5(\R)$ and that $\xi$ takes values in $C^5(\TT^d)$. 

An alternative approach to the existence of random dynamical systems based on a rough path, and, thus, path-by-path well-posedness theory for SPDEs has been initiated in \cite{GT10} and partially extended in \cite{He.Ne2019}. As usual in rough path based approaches, these results rely on high order regularity of the coefficients. 

The ergodicity for SPDEs with semilinear noise has attracted much interest in the literature and a complete account would go much beyond the scope of this work. We refer to the monographs \cite{DPZ14,Ku.Sh2012} and the references therein. Much less is known for the case of conservative SPDEs. The analysis of the long-time behaviour of conservative stochastic conservation laws has been initiated in \cite{Ge.So2017}, where a quantitative rate of convergence to the spatially constant state has been proved. In the subsequent work \cite{GS16-2}, this result was generalized to the case of parabolic-hyperbolic SPDEs with spatially homogeneous conservative noise. These works rely on Fourier analytic methods, and, thus, are restricted to the case of spatially homogeneous noise. 

The long-time behaviour for particular cases of stochastic Hamilton-Jacobi equation, has been analysed in \cite{Ga.Ge2019}. Notably, in 1+1 dimension, the derivative of these equations satisfies a conservative SPDE. These results have been significantly extended in the recent work \cite{GGLS22}, where also cases of spatially inhomogeneous noise have been treated. In the case of spatially homogeneous noise, in \cite{GGLS22} it has been shown that noise increases the speed of convergence to the infinite time limit point in the stochastic mean curvature flow.

The pathwise stability property \eqref{eq:intro-pathwise-contraction} can be interpreted as a strong, pathwise form of the $e$-property introduced in \cite{KPS10}, also known under the names of $e$-chain, $d$-equicontinuity \cite{Ku.Sc2018}. The abstract framework developed in \cite{KPS10} has been subsequently applied in several instances of SPDEs, e.g.\ stochastic mean curvature flows \cite{ESR12}, singular-degenerate SPDEs \cite{GT14,GT15}, and models from self-organized criticality \cite{Ne2019a}.  An alternative approach via generalized couplings has been developed in \cite{Ku.Sc2018}, providing conditions for strong mixing. A quantification of the rate of convergence has subsequently been given in \cite{Bu.Ku.Sc2020}. For a discussion of the relation of the $e$-property to the concept of asymptotic strong Feller property introduced in \cite{Ha.Ma2006a} we refer to \cite{Ja2013a}. Assuming the stronger pathwise stability property \eqref{eq:intro-pathwise-contraction}, the results in the present work show  strong mixing for the Markovian semigroup. This is stronger than the weak$^*$-mean ergodicity which has been shown in \cite{KPS10}.

The ergodicity for the conservative, stochastic Burgers equation has been proven in \cite{EKMS00}, with several subsequent extensions in \cite{Ba2016a,Bo2016b,Bo.Ku2021}. Notably, while these works focus on the case of additive noise, they consider SPDEs posed on the full space, for which the methods developed in the present work do not apply. Extensions of \cite{EKMS00}, to multi-dimensional (non-conservative) stochastic scalar conservation laws with general flux and semilinear, multiplicative noise have been given in \cite{DV15}, with further extension to parabolic-hyperbolic SPDEs in \cite{Ge.Ho2018}.

\section{Pathwise stability for SPDEs with conservative noise} ~\label{sec:stability}

In this section, we prove the uniqueness and pathwise contraction of stochastic kinetic solutions to the It\^o equation
\begin{equation}\label{it_2}\partial_t\rho = \Delta\Phi(\rho)-\nabla\cdot\nu(\rho)-\nabla\cdot (\lambda(\rho) B(\rho))-f(\rho)-\nabla\cdot(\sigma(\rho)\dd\xi).\end{equation}
The definition of a solution in Definition~\ref{def_sol_new} below is based on the equation's kinetic form and  \cite[Definitions~3.4, 6.2]{FG21}. Further motivation for the argument leading to the uniqueness of solutions can be found in the introduction to \cite[Section~4]{FG21}.  We introduce the assumptions on the noise and coefficients in Assumptions \ref{assumption_1}, \ref{ass1}, \ref{ass2}, and \ref{assdrift} below.  We prove the pathwise $L^1$-contraction of solutions in Theorem~\ref{thm_unique}.

 \begin{assumption}\label{assumption_1} \label{assnoise} Let $(\Omega,\mathcal{F},\mathbb{P})$ be a probability space, let $(\mathcal{F}_t)_{t\in[0,\infty)}$ be a filtration on $\Omega$, let $\{e_k\}_{k\in\N}$ be a smooth orthonormal $L^2(\TT^d)$-basis, and let $(\lambda_k)_{k\in\N}$ be real numbers.  Assume that the noise $\xi$ is defined by
 \[\xi = \sum_{k=1}^\infty \lambda_k e_k B^k_t,\]
 for independent $\mathcal{F}_t$-adapted Brownian motions $(B^k)_{k\in\N}$, and assume that $\xi$ is probabilistically stationary in space.  Precisely, the quadratic variation of the noise and that of its spatial gradient
 \[F_1 = \sum_{k=1}^\infty \lambda_k^2 e_k^2\;\;\textrm{and}\;\; F_3 = \sum_{k=1}^\infty \lambda_k^2\abs{\nabla e_k}^2,\]
 are finite and constant on $\TT^d$. \end{assumption}
 \begin{remark}  Assumption~\ref{assumption_1} is satisfied by the standard approximations of space-time white noise on $\TT^d$:
 \begin{equation}\label{trig_noise}\xi=\sum_{k\in\Z^{d}}\lambda_{k}(\sqrt{2}\sin(k\cdot x)B^k_t+\sqrt{2}\cos(k\cdot x)W^k_t),\end{equation}
for independent, $\mathcal{F}_t$-adapted Brownian motions $(B^k,W^k)_{k\in\Z^d}$, and for every smooth spatial convolution of space-time white noise.  In the specific case of \eqref{trig_noise}, we have that $F_{1}=\sum_{k\in\Z^{d}}\lambda_{k}^{2}$ and $F_{3} = \sum_{k\in\Z^{d}}\abs{k}^{2}\lambda_{k}^{2}$, and we have that Assumption~\ref{assumption_1} is satisfied whenever the latter sum is finite.  \end{remark}

 \begin{assumption}[{\cite[Assumption 4.1]{FG21}}] Assume that $\Phi,\sigma\in C([0,\infty)$ and $\nu\in C([0,\infty);\R^{d})$ satisfy the following assumptions: 
\begin{enumerate}
\item We have that $\Phi,\sigma\in C_{{\rm loc}}^{1,1}((0,\infty))$ and $\nu\in C_{{\rm loc}}^{1}((0,\infty);\R^{d})$. 
\item We have that $\Phi(0)=0$ with $\Phi'>0$ on $(0,\infty)$. 
\item There exists $c\in(0,\infty)$ such that 
\[
\limsup_{\xi\to0^{+}}\frac{\sigma^{2}(\xi)}{\xi}\leq c.
\]
\item There exists $c\in[1,\infty)$ such that 
\[
\sup_{\xi'\in[0,\xi]}\sigma^{2}(\xi')\leq c(1+\xi+\sigma^{2}(\xi))\textrm{ for }\xi\in[0,\infty)\,.
\]
\item There exists $c\in[1,\infty)$ such that 
\[
\sup_{\xi'\in[0,\xi]}|\nu|(\xi')\leq c(1+\xi+|\nu|(\xi))\textrm{ for }\xi\in[0,\infty)\,.
\]
\end{enumerate}
\label{ass1}
\label{assumption_2}
 \end{assumption}
 
 \begin{remark}  The following assumption is used only to guarantee the existence of stochastic kinetic solutions in Section~\ref{apptospde} below.  The assumption plays no role in the proof of the $L^1$-contraction in Theorem~\ref{thm_unique}.  \end{remark}
 
 \begin{assumption}[{\cite[Assumption 5.2]{FG21}}]\label{assumption_3} Let $\Phi,\sigma\in C([0,\infty)\cap C_{{\rm loc}}^{1}((0,\infty))$, $\nu\in C([0,\infty);\R^{d})\cap C_{{\rm loc}}^{1}((0,\infty);\R^{d})$, and $p\in[2,\infty)$ satisfy the following assumptions: 
\begin{enumerate}
\item There exists a $m\in[1,\infty)$ and $c\in(0,\infty)$ such that 
\[
\Phi(\xi)\leq c(1+\xi^{m})\textrm{ for }\xi\in[0,\infty)\,.
\]
\item Let $\Theta_{\Phi,p}\in C([0,\infty))\cap C_{{\rm loc}}^{1}((0,\infty))$ be the unique function satisfying $\Theta_{\Phi,p}(0)=0$ and $\Theta_{\Phi,p}'(\xi)=\xi^{\frac{p-2}{2}}[\Phi'(\xi)]^{\frac{1}{2}}$. Then, there exists $c\in(0,\infty)$ such that
\[
|\nu|(\xi)+\Phi'(\xi)\leq c(1+\xi+\Theta_{\Phi,p}^{2}(\xi))\textrm{ for }\xi\in(0,\infty)\,.
\]
\item Either there exists $c\in(0,\infty)$ and $\theta\in[0,1/2]$ such that 
\[
\xi^{-\frac{p-2}{2}}\Phi'(\xi)^{-\frac{1}{2}}\leq c\xi^{\theta}\textrm{ for }\xi\in(0,\infty)\,,
\]
or there exists $c\in(0,\infty)$ and $q\in[1,\infty)$ such that 
\[
|\xi-\xi'|^{q}\leq c|\Theta_{\Phi,p}(\xi)-\Theta_{\Phi,p}(\xi')|^{2}\textrm{ for }\xi,\xi'\in[0,\infty)\,.
\]
\item For $\Theta_{\Phi,2}$ and $\Theta_{\Phi,p}$ as above, there exists a $c\in(0,\infty)$ such that 
\[
\sigma^{2}(\xi)\leq c(1+\xi+\Theta_{\Phi,2}^{2}(\xi))\textrm{ and }\xi^{p-2}\sigma^{2}(\xi)\leq c(1+\xi+\Theta_{\Phi,p}^{2}(\xi))\textrm{ for }\xi\in[0,\infty)\,.
\]
\item For every $\delta\in(0,1)$, there exists $c_{\delta}\in(0,\infty)$ such that 
\[
\frac{\sigma'(\xi)^{4}}{\Phi'(\xi)}+[\sigma\sigma'(\xi)]^{2}+\Phi'(\xi)\leq c_{\delta}(1+\xi+\Theta_{\Phi,p}^{2}(\xi))\;\;\textrm{for every \ensuremath{\xi\in(\delta,\infty)}.}
\]
\end{enumerate}
\label{ass2} \end{assumption}

 \begin{assumption}\label{assume_B} Let $\lambda\in \C([0,\infty))\cap\C^1((0,\infty))$ have a bounded first derivative and assume that $B\colon L^{1}(\TT^{d})\rightarrow W^{1,\infty}(\TT^{d};\R^d)$ is Lipschitz continuous: there exists $\norm{B}_{{\Lip}}\in(0,\infty)$ such that 
\[
\norm{B(\rho^{1})-B(\rho^{2})}_{W^{1,\infty}(\TT^{d};\R^d)}\leq \norm{B}_{{\Lip}}\norm{\rho^{1}-\rho^{2}}_{L^{1}(\TT^{d})}\;\;\textrm{for every}\;\;\rho^{1},\rho^{2}\in L^{1}(\TT^{d}).
\]
\label{assdrift}
We further assume that $f\in \C([0,\infty))$, that $f(0)=0$, and that $f$ satisfies a one-sided Lipschitz estimate and a linear growth estimate:  there is a constant $\|f\|_{\Lip}<\infty$ such that, for every $\xi,\xi'\in[0,\infty)$
\[-(f(\xi)-f(\xi'))(\xi-\xi')\leq\|f\|_{\Lip}|\xi-\xi'|^2\;\;\textrm{and}\;\;|f(\xi)|\leq \|f\|_{\Lip}(1+|\xi|).\]
 \end{assumption}

\begin{definition}\label{def_sol_new}  Let $\rho_0\in L^1(\TT^d)$ be nonnegative and $\cF_0$-measurable.  A \emph{stochastic kinetic solution} of \eqref{it_2} is a continuous $L^1(\TT^d)$-valued, $\mathcal{F}_t$-predictable process $\rho\in L^1([0,T]\times\Omega;L^1(\TT^d))$ that satisfies the following properties.
\begin{enumerate}
\item \emph{Preservation of mass}:  almost surely for every $t\in[0,T]$,
\begin{equation}\label{it_3}\norm{\rho(\cdot,t)}_{L^1(\TT^d)}=\norm{\rho_0}_{L^1(\TT^d)}-\int_0^t \int_{\TT^d}f(\rho(\cdot,s))ds.\end{equation}
\item \emph{Integrability of the flux}:  we have that
\[\sigma(\rho)\in L^2(\Omega;L^2(\TT^d\times[0,T]))\;\;\textrm{and}\;\;\nu(\rho)\in L^1(\Omega;L^1(\TT^d\times[0,T];\R^d)).\]
\item \emph{Regularity of the reaction term}:  we have that a.s. 
\[\lambda(\rho)\in L^1([0,T];W^{1,1}(\TT^d)).\]
\item \emph{Local regularity}:  for every $K\in\N$,
\begin{equation}\label{it_4} [(\rho\wedge K)\vee\nicefrac{1}{K}]\in L^2(\Omega;L^2([0,T];H^1(\TT^d))).\end{equation}
\end{enumerate}
Furthermore, there exists a kinetic measure $q$ that satisfies the following three properties.
\begin{enumerate}
\setcounter{enumi}{3}
\item \emph{Regularity}: almost surely as nonnegative measures,
\begin{equation}\label{it_5}\delta_0(\xi-\rho)\Phi'(\xi)\abs{\nabla\rho}^2\leq q\;\;\textrm{on}\;\;\TT^d\times(0,\infty)\times[0,T].\end{equation}
\item \emph{Vanishing at infinity}:  we have that
\begin{equation}\label{it_6}\lim_{M\rightarrow\infty}\E\left[q(\TT^d\times[M,M+1]\times[0,T])\right]=0.\end{equation}
\item \emph{The equation}: for every $\psi\in C^\infty_c(\TT^d\times(0,\infty))$, almost surely for every $t\in[0,T]$,
\begin{align}\label{it_7}
& \int_{\R}\int_{\TT^d}\chi(x,\xi,t)\psi(x,\xi) = \int_{\R}\int_{\TT^d}\overline{\chi}(\rho_0)\psi(x,\xi)-\int_0^t\int_{\TT^d}\Phi'(\rho)\nabla\rho\cdot(\nabla\psi)(x,\rho)
\\ \nonumber & \quad -\int_0^t\int_{\R}\int_{\TT^d}\partial_\xi\psi(x,\xi)\dd q+\frac{F_1}{2}\int_0^t\int_{\TT^d}(\sigma'(\rho))^2\abs{\nabla\rho}^2(\partial_\xi\psi(x,\rho))
\\ \nonumber & \quad +\frac{F_3}{2}\int_0^t\int_{\TT^d}\sigma^2(\rho)(\partial_\xi\psi)(x,\rho)-\int_0^t\int_{\TT^d}\psi(x,\rho)\nabla\cdot\nu(\rho)\dt
\\ \nonumber & \quad -\int_0^t\int_{\TT^d}\psi(x,\rho)(\nabla\cdot(\lambda(\rho) B(\rho))+f(\rho))-\int_0^t\int_{\TT^d}\psi(x,\rho)\nabla\cdot\left(\sigma(\rho)\dd\xi\right).
\end{align}
\end{enumerate}
 \end{definition}

\begin{remark}
We remark here that the notion of solution introduced above coincides with the notion of rough paths solution introduced in~\cite{FehGes2019} as long as the assumptions stated above and in~\cite[Section 2.1]{FehGes2019} are both satisfied. We refer the reader to ~\cite[Remark 2.6]{DarGes2020} where the equivalence between different notions of solutions to such conservative SPDEs is briefly discussed.
  \label{rem:twosolutions}
\end{remark} \begin{definition}\label{def_s}  For every $\ve,\delta\in(0,1)$ let $\kappa^\ve_d\colon\TT^d\rightarrow[0,\infty)$ and $\kappa^\delta_1\colon\R\rightarrow[0,\infty)$ be standard convolution kernels of scales $\ve$ and $\delta$ on $\TT^d$ and $\R$ respectively, and let $\kappa^{\ve,\delta}$ be defined by
\[\kappa^{\ve,\delta}(x,y,\xi,\eta)=\kappa^\ve_d(x-y)\kappa^\delta_1(\xi-\eta)\;\;\textrm{for every}\;\;(x,y,\xi,\eta)\in(\TT^d)^2\times\R^2.\]
For every $\beta\in(0,1)$ let $\varphi_\beta\colon\R\rightarrow[0,1]$ be the unique nondecreasing piecewise linear function that satisfies
\begin{equation}\label{3_7}\varphi_\beta(\xi)=1\;\;\textrm{if}\;\;\xi\geq \beta,\;\;\varphi_\beta(\xi)=0\;\; \textrm{if}\;\;\xi\leq \nicefrac{\beta}{2},\;\;\textrm{and}\;\;\varphi'_\beta=\frac{2}{\beta}\mathbf{1}_{\{\nicefrac{\beta}{2}<\xi<\beta\}},\end{equation}
and for every $M\in\N$ let $\zeta_M\colon\R\rightarrow [0,1]$ be the unique nonincreasing piecewise linear function that satisfies
\[\zeta_M(\xi)=0\;\; \textrm{if}\;\;\xi\geq M+1,\;\;\zeta_M(\xi)= 1\;\;\textrm{if}\;\;\xi\leq M,\;\;\textrm{and}\;\;\zeta'_M=-\mathbf{1}_{\{M<\xi<M+1\}}.\]
\end{definition}

\begin{theorem}\label{thm_unique}  Let $\xi$, $\Phi$, $\sigma$, $\nu$, $\lambda$, $f$, and $B$ satisfy Assumptions~\ref{assumption_1}, ~\ref{assumption_2}, and \ref{assume_B}, let $\rho^1_0,\rho^2_1\in L^1(\Omega;L^1(\TT^d))$ be $\cF_0$-measurable and let $\rho^1,\rho^2$ be stochastic kinetic solutions of \eqref{it_2} in the sense of Definition~\ref{def_sol_new} with initial data $\rho_0^1,\rho_0^2$.  Then, almost surely,
\begin{equation}\begin{split}\label{eq:pathwise_stability}
& \norm{\rho^1(\cdot,t)-\rho^2(\cdot,t)}_{L^1(\TT^d)}
\\ & \leq \norm{\rho_{0,1}-\rho_{0,2}}_{L^1(\TT^d)}\exp\left(t(\norm{B}_{{\Lip}}\min_{i\in\{1,2\}}\norm{\lambda(\rho^i)}_{L^1([0,t];W^{1,1}(\TT^d))}+\|f\|_{\Lip})\right),
\end{split}\end{equation}
for $\norm{B}_{{\Lip}}$ the Lipschitz constant of the map $B\colon L^1(\TT^d)\rightarrow W^{1,\infty}(\TT^d;\R^d)$.
\end{theorem}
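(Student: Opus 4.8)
\section*{Proof proposal}

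The plan is to establish \eqref{eq:pathwise_stability} by a doubling-of-variables argument carried out at the level of the kinetic formulation \eqref{it_7}, in the spirit of Kruzhkov's method for scalar conservation laws adapted to the stochastic kinetic setting of \cite{FG21}. Writing $\chi^i=\mathbf{1}_{0<\xi<\rho^i}$, the point of departure is the elementary identity $\int_{\R}|\chi^1-\chi^2|\,\dd\xi=|\rho^1-\rho^2|$. I would therefore study the symmetrized bilinear functional obtained by testing the kinetic equation for $\rho^1$ in the variables $(x,\xi)$ against the kinetic function of $\rho^2$ in the variables $(y,\eta)$, regularized by the kernel $\kappa^{\ve,\delta}$ of Definition~\ref{def_s}, cut off near the origin by $\varphi_\beta$ and at infinity by $\zeta_M$. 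As the parameters are sent to their limits in the correct order this functional converges to $\norm{\rho^1(\cdot,t)-\rho^2(\cdot,t)}_{L^1(\TT^d)}$, while the initial term $\overline{\chi}(\rho_0^i)$ in \eqref{it_7} produces the factor $\norm{\rho_{0,1}-\rho_{0,2}}_{L^1(\TT^d)}$.

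First I would apply the It\^o product rule to the product of the two regularized kinetic functions. Because both solutions are driven by the \emph{same} family of Brownian motions $(B^k)_k$, the stochastic integrals originating from $\rho^1$ and $\rho^2$ combine, and the spatial homogeneity built into Assumption~\ref{assumption_1}, namely the constancy of $F_1$ and $F_3$, makes the integrand of the combined martingale degenerate on the diagonal $\{x=y,\ \xi=\eta\}$. Consequently its quadratic variation converges to zero as $\delta,\ve\to0$, so the combined stochastic integral vanishes in the limit and the resulting estimate is \emph{pathwise} rather than merely valid in expectation; this is exactly the mechanism flagged in \eqref{eq:intro-pathwise-contraction}. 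What survives from the second-order It\^o terms is a \emph{noise defect}, schematically of the form $(\sigma(\xi)-\sigma(\eta))^2\,\partial^2_{\xi\eta}\kappa^\delta_1$, together with the explicit corrections carrying the factors $F_1(\sigma'(\rho))^2|\nabla\rho|^2$ and $F_3\sigma^2(\rho)$ appearing in \eqref{it_7}.

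The heart of the argument — and the step I expect to be the main obstacle — is to show that this noise defect is dominated, in the limit $\delta\to0$, by the \emph{parabolic defect} encoded in the kinetic measures. Since $(\sigma(\xi)-\sigma(\eta))^2\lesssim(\xi-\eta)^2$ while $\partial^2_{\xi\eta}\kappa^\delta_1\sim\delta^{-2}$, the noise defect does not vanish on its own; it must be absorbed using the lower bound \eqref{it_5}, $\delta_0(\xi-\rho)\Phi'(\xi)|\nabla\rho|^2\le q$, which supplies precisely the parabolic dissipation needed to control the gradient-squared contributions. The degeneracy of $\Phi'$ near $\xi=0$ is handled by the cutoff $\varphi_\beta$, whose removal as $\beta\to0$ relies on condition~(3) of Assumption~\ref{assumption_2}, that is $\limsup_{\xi\to0^+}\sigma^2(\xi)/\xi\le c$; the truncation $\zeta_M$ is removed as $M\to\infty$ using the vanishing-at-infinity property \eqref{it_6}. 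Once this balance is established, the surviving parabolic terms and the nonnegative kinetic measures $q^1,q^2$ carry a favourable sign and may simply be discarded.

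It then remains to track the lower-order contributions. The conservative flux $\nu(\rho)$ is translation-covariant, and its contribution vanishes as $\ve\to0$ upon collapsing the spatial kernel. The reaction term is controlled by the one-sided Lipschitz bound on $f$ in Assumption~\ref{assdrift}, which yields a contribution bounded by $\norm{f}_{\Lip}\norm{\rho^1(\cdot,t)-\rho^2(\cdot,t)}_{L^1(\TT^d)}$; it is the one-sidedness, rather than full Lipschitz continuity, that is needed here. The nonlocal drift $\nabla\cdot(\lambda(\rho)B(\rho))$ is estimated using the Lipschitz continuity of $B\colon L^1(\TT^d)\to W^{1,\infty}(\TT^d;\R^d)$ together with the regularity $\lambda(\rho^i)\in L^1([0,T];W^{1,1}(\TT^d))$ required in Definition~\ref{def_sol_new}; distributing the divergence and estimating against either copy of $\lambda(\rho^i)$ produces the factor $\norm{B}_{\Lip}\min_{i\in\{1,2\}}\norm{\lambda(\rho^i)}_{L^1([0,t];W^{1,1}(\TT^d))}$. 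Collecting everything and sending $\delta\to0$, then $\beta\to0$, then $M\to\infty$, and finally $\ve\to0$, I obtain the pathwise differential inequality whose Gr\"onwall integration gives the stated exponential bound \eqref{eq:pathwise_stability}.
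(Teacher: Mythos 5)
Your proposal follows essentially the same route as the paper's proof: a kinetic doubling of variables with the kernels and cutoffs of Definition~\ref{def_s}, the It\^o product rule for the regularized kinetic functions, cancellation of the martingale contributions through the spatial stationarity of Assumption~\ref{assumption_1} (which is what makes the estimate pathwise), the sign of the measure term via \eqref{it_5}, removal of the cutoffs using \eqref{it_6} and the behaviour of $\sigma$ near zero, the one-sided Lipschitz bound for $f$, the Lipschitz continuity of $B$ combined with the $W^{1,1}$-regularity of $\lambda(\rho^i)$ to produce the $\min_i$ factor, and Gr\"onwall. The only inessential discrepancies are bookkeeping ones: the paper takes the limits in the order $\ve\to0$, then $\delta\to0$, $\beta\to0$, $M\to\infty$ (rather than deferring $\ve\to0$ to the end as you state), and the error/noise-defect terms are shown to vanish on their own using the gradient integrability, while the kinetic-measure lower bound \eqref{it_5} is used separately to sign the parabolic cross term.
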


\begin{proof}   Let $\rho^1$ and $\rho^2$ be solutions of \eqref{it_2} with initial data $\rho^1_0$ and $\rho^2_0$ respectively, and let $\chi^1$ and $\chi^2$ be the kinetic functions of $\rho^1$ and $\rho^2$.   For every $\ve,\delta\in(0,1)$ and $i\in\{1,2\}$ let $\chi^{\ve,\delta}_{t,i}(y,\eta)=(\chi^i(\cdot,\cdot,t)*\kappa^{\ve,\delta})(y,\eta)$ for the convolution kernel $\kappa^{\ve,\delta}$ defined in Definition~\ref{def_s}.  It follows from Definition~\ref{def_sol_new} and the Kolmogorov continuity criterion (see, for example, Revuz and Yor \cite[Chapter~1, Theorem~2.1]{RevYor1999}) that for every $\ve,\delta\in(0,1)$ there exists a subset of full probability such that, for every $i\in\{1,2\}$, $(y,\eta)\in\TT^d\times(\nicefrac{\delta}{2},\infty)$, and $t\in[0,T]$,
\begin{align}\label{it_20}
& \left.\chi^{\ve,\delta}_{s,i}(y,\eta)\right|_{s=0}^t=\nabla_y\cdot \left(\int_0^t\int_{\TT^d}\Phi'(\rho^i)\nabla\rho^i \kappa^{\ve,\delta}(x,y,\rho^i,\eta)\right)
\\ \nonumber & \quad + \partial_\eta\left(\int_0^t\int_{\R}\int_{\TT^d}\kappa^{\ve,\delta}(x,y,\xi,\eta)\dd q^i\right)
\\ \nonumber & \quad -\int_0^t\int_{\TT^d}\kappa^{\ve,\delta}(x,y,\rho^i,\eta)(\nabla\cdot(\lambda(\rho^i)B(\rho^i,x))+f(\rho^i))
\\ \nonumber & \quad -\partial_\eta\left(\frac{1}{2}\int_0^t\int_{\TT^d}\left(F_1(\sigma'(\rho^i)^2\abs{\nabla\rho^i}^2+F_3(x)\sigma^2(\rho^i) \right)\kappa^{\ve,\delta}(x,y,\rho^i,\eta) \right)
\\ \nonumber & \quad -\int_0^t\int_{\TT^d}\kappa^{\ve,\delta}(x,y,\rho^i,\eta)\nabla\cdot\nu(\rho) - \int_0^t\int_{\TT^d}\kappa^{\ve,\delta}(x,y,\rho^i,\eta)\nabla\cdot\left(\sigma(\rho^i)\cdot\dd\xi\right).
\end{align}
For the cutoff functions defined in Definition~\ref{def_s}, it follows almost surely from \eqref{3_7} and \eqref{it_20} that, for every $\ve,\beta\in(0,1)$, $M\in\N$, and $\delta\in(0,\nicefrac{\beta}{4})$, for every $t\in[0,T]$ and $i\in\{1,2\}$,
\begin{equation}\label{it_38}
\left.\int_{\R}\int_{\TT^d}\chi^{\ve,\delta}_{s,i}(y,\eta)\varphi_\beta(\eta)\zeta_M(\eta)\dy\deta\right|^t_{s=0} = I^{i,\textrm{cut}}_t+I^{i,\textrm{mart}}_t+I^{i,\textrm{cons}}_t+I^{i,\textrm{int}}_t,
\end{equation}
for the cutoff term defined by
\begin{align*}
& I^{i,\textrm{cut}}_t =-\int_0^t\int_{\R^2}\int_{(\TT^d)^2}\kappa^{\ve,\delta}(x,y,\xi,\eta)\partial_\eta(\varphi_\beta(\eta)\zeta_M(\eta))\dd q^i(x,\xi,s)
 \\  & +\frac{1}{2}\int_0^t\int_{\R}\int_{(\TT^d)^2}F_1(\sigma'(\rho^i(x,s))^2\kappa^{\ve,\delta}(x,y,\rho^i(x,s),\eta)\partial_\eta(\varphi_\beta(\eta)\zeta_M(\eta))
 \\  & +\frac{1}{2}\int_0^t\int_{\R}\int_{(\TT^d)^2}F_3(x)\sigma^2(\rho^i(x,s))\kappa^{\ve,\delta}(x,y,\rho^i(x,s),\eta)\partial_\eta(\varphi_\beta(\eta)\zeta_M(\eta)),
 \end{align*}
for the martingale term defined by
\[I^{i,\textrm{mart}}_t = -\int_0^t\int_{\R}\int_{(\TT^d)^2}\kappa^{\ve,\delta}(x,y,\rho^i(x,s),\eta)\nabla\cdot(\sigma(\rho^i(x,s))\dd\xi)\varphi_\beta(\eta)\zeta_M(\eta),\]
for the conservative term defined by
\[I^{i,\textrm{cons}}_t=-\int_0^t\int_{\R}\int_{(\TT^d)^2}\kappa^{\ve,\delta}(x,y,\rho^i,\eta)\nabla\cdot\nu(\rho)\varphi_\beta(\eta)\zeta_M(\eta),\]
and for the interaction term defined by
\begin{equation}\label{itt_0}I^{i,\textrm{int}}_t = -\int_0^t\int_{\R}\int_{(\TT^d)^2}\kappa^{\ve,\delta}(x,y,\rho^i,\eta)(\nabla\cdot(\lambda(\rho^i)B(\rho^i,x))+f(\rho^i))\varphi_\beta(\eta)\zeta_M(\eta),\end{equation}
where we emphasize that the terms $I^{i,\textrm{cut}}_t$, $I^{i,\textrm{mart}}_t$, $I^{i,\textrm{cons}}_t$, and $I^{i,\textrm{int}}_t$ depend on $\ve,\delta,\beta\in(0,1)$ and $M\in\N$.

We will now handle the mixed term.  In the following, we will write $(x,\xi)\in\TT^d\times\R$ for the arguments of $\chi^1$ and all related quantities, and  $(x',\xi')\in\TT^d\times\R$ for the arguments of $\chi^2$ and all related quantities.  We define the convolution kernels
\[\overline{k}^{\ve,\delta}_{s,1}(x,y,\eta)=\kappa^{\ve,\delta}(x,y,\rho^1(x,s),\eta)\]
and
\[\overline{k}^{\ve,\delta}_{s,2}(x',y,\eta)=\kappa^{\ve,\delta}(x',y,\rho^2(x',s),\eta).\]
It follows from \eqref{3_7}, \eqref{it_20}, and the stochastic product rule that we almost surely have, for every $\ve,\beta\in(0,1)$, $M\in\N$, and $\delta\in(0,\nicefrac{\beta}{4})$, for every $t\in[0,T]$,
\begin{align}\label{it_09}
& \left.\int_{\R}\int_{\TT^d}\chi^{\ve,\delta}_{s,1}(y,\eta)\chi^{\ve,\delta}_{s,2}(y,\eta)\varphi_\beta(\eta)\zeta_M(\eta)\dy\deta\right|_{s=0}^t
\\ \nonumber & = \int_0^t\int_{\R}\int_{\TT^d}\left(\chi^{\ve,\delta}_{s,2}(y,\eta)\dd \chi^{\ve,\delta}_{s,1}(y,\eta)+\chi^{\ve,\delta}_{s,1}(y,\eta)\dd \chi^{\ve,\delta}_{s,2}(y,\eta)\right)\varphi_\beta(\eta)\zeta_M(\eta)\dy\deta
\\ \nonumber & \quad +\int_0^t\int_{\R}\int_{\TT^d} \dd\langle \chi^{\ve,\delta}_2,\chi^{\ve,\delta}_1\rangle_s(y,\eta)\varphi_\beta(\eta)\zeta_M(\eta)\dy\deta.
\end{align}
It follows from the integration by parts formula of \cite[Lemma~2.2]{FG21}, \eqref{it_20}, the definition of $\varphi_\beta$, $\delta\in(0,\nicefrac{\beta}{4})$, and the distributional equalities involving the kinetic function that
\begin{align}\label{it_0009}
& \int_0^t\int_{\R}\int_{\TT^d}\chi^{\ve,\delta}_{s,2}(y,\eta)\dd \chi^{\ve,\delta}_{s,1}(y,\eta)\varphi_\beta(\eta)\zeta_M(\eta)\dy\deta\ds
\\ & = I^{2,1,\textrm{err}}_t+I^{2,1,\textrm{meas}}_t+I^{2,1,\textrm{cut}}_t+I^{2,1,\textrm{mart}}_t+I^{2,1,\textrm{cons}}_t+I^{2,1,\textrm{int}}_t
\end{align}
where, after adding the second term of \eqref{it_000009} below and subtracting it in \eqref{it_0000009} below, the error term is
\begin{align}\label{it_000009}
I^{2,1,\textrm{err}}_t & = -\int_0^t\int_{\R}\int_{(\TT^d)^3}\Phi'(\rho^1)\nabla\rho^1\cdot\nabla\rho^2\overline{\kappa}^{\ve,\delta}_{s,1}\overline{\kappa}^{\ve,\delta}_{s,2}\varphi_\beta(\eta)\zeta_M(\eta)
\\ \nonumber & \quad +\int_0^t\int_{\R}\int_{(\TT^d)^3}[\Phi'(\rho^1)]^\frac{1}{2}[\Phi'(\rho^2)]^\frac{1}{2}\nabla\rho^1\cdot\nabla\rho^2\overline{\kappa}^{\ve,\delta}_{s,1}\overline{\kappa}^{\ve,\delta}_{s,2}\varphi_\beta(\eta)\zeta_M(\eta)
\\ \nonumber & \quad - \frac{1}{2}\int_0^t\int_{\R}\int_{(\TT^d)^3}\left(F_1(\sigma'(\rho^1))^2\right)\overline{\kappa}^{\ve,\delta}_{s,1}\overline{\kappa}^{\ve,\delta}_{s,2}\varphi_\beta(\eta)\zeta_M(\eta)
\\ \nonumber & \quad - \frac{1}{2}\int_0^t\int_{\R}\int_{(\TT^d)^3}\left(F_3(x)\sigma^2(\rho^1))\right)\overline{\kappa}^{\ve,\delta}_{s,1}\overline{\kappa}^{\ve,\delta}_{s,2}\varphi_\beta(\eta)\zeta_M(\eta),
\end{align}
the measure term is
\begin{align}\label{it_0000009}
& I^{2,1,\textrm{meas}}_t  = \int_0^t\int_{\R^2}\int_{(\TT^d)^3}\kappa^{\ve,\delta}(x,y,\xi,\eta)\overline{\kappa}^{\ve,\delta}_{s,2}\varphi_\beta(\eta)\zeta_M(\eta)\dd q^1(x,\xi,s)
\\ \nonumber & \quad - \int_0^t\int_{\R}\int_{(\TT^d)^3}[\Phi'(\rho^1)]^\frac{1}{2}[\Phi'(\rho^2)]^\frac{1}{2}\nabla\rho^1\cdot\nabla\rho^2\overline{\kappa}^{\ve,\delta}_{s,1}\overline{\kappa}^{\ve,\delta}_{s,2}\varphi_\beta(\eta)\zeta_M(\eta),
\end{align}
the cutoff term is
\begin{align*}
& I^{2,1,\textrm{cut}}_t  = -\int_0^t\int_{\R^2}\int_{(\TT^d)^2}\kappa^{\ve,\delta}(x,y,\xi,\eta)\chi^{\ve,\delta}_{s,2}(y,\eta)\partial_\eta(\varphi_\beta(\eta)\zeta_M(\eta))\dd q^1(x,\xi,s)
\\  & \quad +\frac{1}{2}\int_0^t\int_{\R}\int_{(\TT^d)^2}\left(F_1(\sigma'(\rho^1))^2+F_3(x)\sigma^2(\rho^1)\right)\overline{\kappa}^{\ve,\delta}_{s,1}\chi^{\ve,\delta}_{s,2}\partial_\eta(\varphi_\beta(\eta)\zeta_M(\eta)),
\end{align*}
the martingale term is
\[I^{2,1,\textrm{mart}}_t=-\int_0^t\int_{\R}\int_{(\TT^d)^2}\overline{\kappa}^{\ve,\delta}_{s,1}\chi^{\ve,\delta}_{s,2}\varphi_\beta(\eta)\zeta_M(\eta)\nabla\cdot(\sigma(\rho^1)\dd\xi(x)),\]
the conservative term is
\[I^{2,1,\textrm{cons}}_t = -\int_0^t\int_{\R}\int_{(\TT^d)^2}\overline{\kappa}^{\ve,\delta}_{s,1}\chi^{\ve,\delta}_{s,2}\varphi_\beta(\eta)\zeta_M(\eta)\nabla\cdot\nu(\rho^1),\]
and the interaction term is
\begin{equation}\label{itt_1}I^{2,1,\textrm{int}}_t = -\int_0^t\int_{\R}\int_{(\TT^d)^2}\overline{\kappa}^{\ve,\delta}_{s,1}\chi^{\ve,\delta}_{s,2}\varphi_\beta(\eta)\zeta_M(\eta)(\nabla\cdot(\lambda(\rho^1)B(\rho^1))+f(\rho^1)).\end{equation}
The analogous formula holds for the second term on the right hand side of \eqref{it_09}, with the decomposition $I^{1,2,\textrm{err}}_t$ and similarly for the remaining five parts.  For the final term of \eqref{it_09}, it follows from \eqref{it_20} and the definition of $\xi$ that
\begin{align}\label{3_009}
& \int_0^t\int_{\R}\int_{\TT^d}\dd\langle\chi^{\ve,\delta}_1,\chi^{\ve,\delta}_{s,2}\rangle_s(y,\eta)\varphi_\beta(\eta)\zeta_M(\eta)
\\ \nonumber & = \sum_{k=1}^\infty\int_0^t\int_{\R}\int_{(\TT^d)^3}\lambda_k^2e_k(x)e_k(x')\sigma'(\rho^1)\sigma'(\rho^2)\nabla\rho^1\cdot\nabla\rho^2\overline{\kappa}^{\ve,\delta}_{s,1}\overline{\kappa}^{\ve,\delta}_{s,2}\varphi_\beta(\eta)\zeta_M(\eta)
\\ \nonumber & \quad + \sum_{k=1}^\infty\int_0^t\int_{\R}\int_{(\TT^d)^3}\lambda_k^2\nabla e_k(x)\cdot\nabla e_k(x')\sigma(\rho^1)\sigma(\rho^2)\overline{\kappa}^{\ve,\delta}_{s,1}\overline{\kappa}^{\ve,\delta}_{s,2}\varphi_\beta(\eta)\zeta_M(\eta)
\\ \nonumber &  \quad + \sum_{k=1}^\infty \int_0^t\int_{\R}\int_{(\TT^d)^3}\lambda_k^2\sigma'(\rho^1)\sigma(\rho^2)e_k(x)\nabla e_k(x')\cdot\nabla\rho^1\overline{\kappa}^{\ve,\delta}_{s,1}\overline{\kappa}^{\ve,\delta}_{s,2}\varphi_\beta(\eta)\zeta_M(\eta)
\\ \nonumber &  \quad + \sum_{k=1}^\infty \int_0^t\int_{\R}\int_{(\TT^d)^3}\lambda_k^2\sigma(\rho^1)\sigma'(\rho^2)e_k(x')\nabla e_k(x)\cdot\nabla\rho^2\overline{\kappa}^{\ve,\delta}_{s,1}\overline{\kappa}^{\ve,\delta}_{s,2}\varphi_\beta(\eta)\zeta_M(\eta).
\end{align}
It follows from \eqref{it_09}, \eqref{it_0009}, and \eqref{3_009} that
\begin{align*}
& \left.\int_{\R}\int_{\TT^d}\chi^{\ve,\delta}_{s,1}(y,\eta)\chi^{\ve,\delta}_{s,2}(y,\eta)\varphi_\beta(\eta)\zeta_M(\eta)\dy\deta\right|^t_{s=0}
\\ & = I^{\textrm{err}}_t+I^{\textrm{meas}}_t+I^{\textrm{mix},\textrm{cut}}_t+I^{\textrm{mix},\textrm{mart}}_t+I^{\textrm{mix},\textrm{cons}}_t+I^{\textrm{mix},\textrm{int}}_t,
\end{align*}
where the error terms \eqref{it_000009} and \eqref{3_009} combine to form, using Einstein's summation convention over repeated indices,\small
\begin{align*}
& I^{\textrm{err}}_t = -\int_0^t\int_{\R}\int_{(\TT^d)^3}\left([\Phi'(\rho^1)]^\frac{1}{2}-[\Phi'(\rho^2)]^\frac{1}{2}\right)^2\nabla\rho^1\cdot\nabla\rho^2\overline{\kappa}^{\ve,\delta}_{s,1}\overline{\kappa}^{\ve,\delta}_{s,2}\varphi_\beta\zeta_M
\\ \nonumber & -\frac{1}{2} \int_0^t\int_{\R}\int_{(\TT^d)^3}(F_1[\sigma'(\rho^1)]^2+F_1[\sigma'(\rho^2)]^2)
\\ \nonumber & +\int_0^t\int_{\R}\int_{(\TT^d)^3}\lambda_k^2e_k(x)e_k(x')\sigma'(\rho^1)\sigma'(\rho^2))\overline{\kappa}^{\ve,\delta}_{s,1}\overline{\kappa}^{\ve,\delta}_{s,2}\varphi_\beta\zeta_M
\\ \nonumber & -\frac{1}{2} \int_0^t\int_{\R}\int_{(\TT^d)^3}(F_3(x)\sigma^2(\rho^1)+F_3(x')\sigma^2(\rho^2)
\\ \nonumber & + \int_0^t\int_{\R}\int_{(\TT^d)^3}\lambda_k^2\nabla e_k(x)\cdot\nabla e_k(x')\sigma(\rho^1)\sigma(\rho^2))\overline{\kappa}^{\ve,\delta}_{s,1}\overline{\kappa}^{\ve,\delta}_{s,2}\varphi_\beta\zeta_M
\\ \nonumber & + \int_0^t\int_{\R}\int_{(\TT^d)^3}\sigma'(\rho^1)\sigma(\rho^2)(\lambda_ke_k)(x)\nabla (\lambda_ke_k)(x'))\cdot\nabla\rho^1\overline{\kappa}^{\ve,\delta}_{s,1}\overline{\kappa}^{\ve,\delta}_{s,2}\varphi_\beta\zeta_M
\\ \nonumber & +\int_0^t\int_{\R}\int_{(\TT^d)^3}\sigma(\rho^1)\sigma'(\rho^2)(\lambda_ke_k)(x')\nabla (\lambda_ke_k)(x)\cdot\nabla\rho^2\overline{\kappa}^{\ve,\delta}_{s,1}\overline{\kappa}^{\ve,\delta}_{s,2}\varphi_\beta\zeta_M,
\end{align*}
\normalsize where the measure terms \eqref{it_0000009} combine to form
\begin{align*}
I^{\textrm{meas}}_t & =  \int_0^t\int_{\R^2}\int_{(\TT^d)^3}\kappa^{\ve,\delta}(x,y,\xi,\eta)\overline{\kappa}^{\ve,\delta}_{s,2}\varphi_\beta(\eta)\zeta_M(\eta)\dd q^1(x,\xi,s)
\\ \nonumber & \quad +\int_0^t\int_{\R^2}\int_{(\TT^d)^3}\kappa^{\ve,\delta}(x',y,\xi',\eta)\overline{\kappa}^{\ve,\delta}_{s,1}\varphi_\beta(\eta)\zeta_M(\eta)\dd q^2(x',\xi',s)
\\ \nonumber & \quad -2\int_0^t\int_{\R}\int_{(\TT^d)^3}[\Phi'(\rho^1)]^\frac{1}{2}[\Phi'(\rho^2)]^\frac{1}{2}\nabla\rho^1\cdot\nabla\rho^2\overline{\kappa}^{\ve,\delta}_{s,1}\overline{\kappa}^{\ve,\delta}_{s,2}\varphi_\beta(\eta)\zeta_M(\eta),
\end{align*}
and where the interaction terms \eqref{itt_0} and \eqref{itt_1} combine to form
\begin{align*}
I^{\textrm{int}}_t &  = \int_0^t\int_{\R}\int_{(\TT^d)^2}(2\chi^{\ve,\delta}_{s,2}-1)(\nabla\cdot(\lambda(\rho^1)B(\rho^1))+f(\rho^1))\overline{\kappa}^{\ve,\delta}_{s,1}\varphi_\beta(\eta)\zeta_M(\eta)
\\ & \quad + \int_0^t\int_{\R}\int_{(\TT^d)^2}(2\chi^{\ve,\delta}_{s,1}-1)(\nabla\cdot(\lambda(\rho^2)B(\rho^2))+f(\rho^2))\overline{\kappa}^{\ve,\delta}_{s,2}\varphi_\beta(\eta)\zeta_M(\eta).
\end{align*}
Then, for the cut-off, martingale, and conservative terms defined respectively by
\[I^{\textrm{cut,...}}_t= I^{1,\textrm{cut,...}}_t+I^{2,\textrm{cut,...}}_t-2(I^{2,1,\textrm{cut,...}}_t+I^{1,2,\textrm{cut,...}}_t),\]
we have from \eqref{it_38} and \eqref{it_0009} that, almost surely for every $t\in[0,T]$,
\begin{align}\label{it_21}
& \left.\int_{\R}\int_{\TT^d}\left(\chi^{\ve,\delta}_{s,1}+\chi^{\ve,\delta}_{s,2}-2\chi^{\ve,\delta}_{s,1}\chi^{\ve,\delta}_{s,2}\right)\varphi_\beta\zeta_M\right|_{s=0}^t
\\ \nonumber & =-2I^{\textrm{err}}_t-2I^{\textrm{meas}}_t+I^{\textrm{mart}}_t+I^{\textrm{cut}}_t+I^{\textrm{cons}}_t+I^{\textrm{int}}_t.
\end{align}
We will handle the five terms on the right hand side of \eqref{it_21} separately.

\textit{The measure term}.  It follows from property \eqref{it_5} of the kinetic measure and H\"older's inequality that the measure term almost surely satisfies, for every $t\in[0,T]$,
\begin{equation}\label{3_22} I^{\textrm{meas}}_t\geq 0.\end{equation}

\textit{The error term}.  The argument leading from \cite[Equation~(4.24)]{FG21} to \cite[Equation~(4.26]{FG21} proves almost surely for every $t\in[0,T]$ that
\begin{equation}\label{3_12}\limsup_{\delta\rightarrow 0}\left(\limsup_{\ve\rightarrow 0}\abs{I^{\textrm{err}}_t}\right)=0.\end{equation}

\textit{The martingale term}.  The argument leading from \cite[Equation~(4.27)]{FG21} to \cite[Equation~(4.36)]{FG21} proves almost surely for every $t\in[0,T]$ that
\begin{equation}\label{3_29}  \lim_{M\rightarrow\infty}\left(\lim_{\beta\rightarrow 0}\left(\lim_{\delta\rightarrow 0}\left(\lim_{\ve\rightarrow 0} I^\textrm{mart}_t\right)\right)\right) =0.\end{equation}

\textit{The conservative term.}  The argument leading to \cite[Equation~(4.37)]{FG21} proves proves almost surely along subsequences that, for every $t\in[0,T]$,
\begin{equation}\label{3_29292929}  \lim_{M\rightarrow\infty}\left(\lim_{\beta\rightarrow 0}\left(\lim_{\delta\rightarrow 0}\left(\lim_{\ve\rightarrow 0} I^\textrm{cons}_t\right)\right)\right) =0.\end{equation}

\textit{The cut-off term.}  The argument leading to \cite[Equation~(4.38)]{FG21} proves almost surely for every $t\in[0,T]$ that
\begin{equation}\label{3_18} \lim_{M\rightarrow\infty}\left(\lim_{\beta\rightarrow 0}\left(\lim_{\delta\rightarrow 0}\left(\lim_{\ve\rightarrow 0}I^{\textrm{cut}}_t\right)\right)\right)=0.\end{equation}

\textit{The interaction term.}  The new elements of the proof involve the treatment of the interaction term defined for every $t\in[0,T]$ by
\begin{align*}
I^{\textrm{int}}_t & = \int_0^t\int_{\R}\int_{(\TT^d)^2}(2\chi^{\ve,\delta}_{s,2}-1)(\nabla\cdot(\lambda(\rho^1)B(\rho^1))+f(\rho^1))\overline{\kappa}^{\ve,\delta}_{s,1}\varphi_\beta(\eta)\zeta_M(\eta)
\\ & \quad + \int_0^t\int_{\R}\int_{(\TT^d)^2}(2\chi^{\ve,\delta}_{s,1}-1)(\nabla\cdot(\lambda(\rho^2)B(\rho^2))+f(\rho^2))\overline{\kappa}^{\ve,\delta}_{s,2}\varphi_\beta(\eta)\zeta_M(\eta).
\end{align*}
It follows from the boundedness of the kinetic function that, almost surely for every $t\in[0,T]$,
\begin{align*}
\lim_{\ve\rightarrow 0} I^{\textrm{int}}_t & = \int_0^t\int_{\R}\int_{\TT^d}(2\chi^{\delta}_{s,2}-1)(\nabla\cdot(\lambda(\rho^1)B(\rho^1))+f(\rho^1))\overline{\kappa}^{\delta}_{s,1}\varphi_\beta(\eta)\zeta_M(\eta)
\\ & \quad + \int_0^t\int_{\R}\int_{\TT^d}(2\chi^{\delta}_{s,1}-1)(\nabla\cdot(\lambda(\rho^2)B(\rho^2))+f(\rho^2))\overline{\kappa}^{\delta}_{s,2}\varphi_\beta(\eta)\zeta_M(\eta),
\end{align*}
for $\chi^\delta_{s,i} = (\chi*\kappa^\delta_1)$ and $\overline{\kappa}^\delta_{s,i} = \kappa^\delta_1(\xi-\rho^i)$.  A repetition of the arguments leading from \cite[Equation~(4.28)]{FG21} to \cite[Equation~(4.31)]{FG21} proves that, almost surely for every $t\in[0,T]$,
\begin{align}\label{itt_3}
\lim_{\delta\rightarrow 0}(\lim_{\ve\rightarrow 0} I^\textrm{int}_t) & = \int_0^t\int_{\TT^d} \sign(\rho_2-\rho_1)(\nabla\cdot(\lambda(\rho^1)B(\rho^1))+f(\rho^1))\varphi_\beta(\rho^1)\zeta_M(\rho^1)
\\ \nonumber & \quad - \int_0^t\int_{\TT^d} \sign(\rho_2-\rho_1)(\nabla\cdot(\lambda(\rho^2)B(\rho^2))+f(\rho^2))\varphi_\beta(\rho^2)\zeta_M(\rho^2)
\\& \quad =: I^\textrm{int,B}_t +  I^\textrm{int,f}_t.
\end{align}
For the second term on the righthand side of \eqref{itt_3}, we first note using the boundedness of $f$, the $L^1$-integrability of the solutions, and the dominated convergence theorem that
\[\lim_{M\rightarrow\infty}\left(\lim_{\beta\rightarrow 0} I^\textrm{int,f}_t\right) = \int_0^t\int_{\TT^d} \sign(\rho_2-\rho_1)(f(\rho^1)-f(\rho^2)),\]
and it then follows from the one-sided Lipschitz continuity of $f$ that
\begin{equation}\label{itt_4}\lim_{M\rightarrow\infty}\left(\lim_{\beta\rightarrow 0} I^\textrm{int,f}_t\right) \leq \norm{f}_{{\Lip}}\int_0^t\int_{\TT^d}\abs{\rho^1-\rho^2}.\end{equation}
For the first term on the righthand side of \eqref{itt_3}, for every $\beta\in(0,1)$ and $M\in\N$ let $\Theta_{\beta,M}\colon[0,\infty)\rightarrow\R$ be the unique function defined by $\Theta_{\beta,M}(0)=0$ and $\Theta'_{\beta,M}(\xi)=\lambda'(\xi)\varphi_\beta(\xi)\zeta_M(\xi)$.  We then have that
\begin{align}\label{it_40}
& \lim_{\delta\rightarrow 0}(\lim_{\ve\rightarrow 0} I^\textrm{int,B}_t)
\\ \nonumber & = \int_0^t\int_{\TT^d} \sign(\rho_2-\rho_1)\nabla\cdot(\Theta_{\beta,M}(\rho^1)B(\rho^1)-\Theta_{\beta,M}(\rho^2)B(\rho^2))
\\ \nonumber & \quad + \int_0^t\int_{\TT^d} \sign(\rho_2-\rho_1)(\nabla\cdot B(\rho^1))(\lambda(\rho^1)\varphi_\beta(\rho^1)\zeta_M(\rho^1)-\Theta_{\beta,M}(\rho^1))
\\ \nonumber & \quad - \int_0^t\int_{\TT^d} \sign(\rho_2-\rho_1)(\nabla\cdot B(\rho^2))(\lambda(\rho^2)\varphi_\beta(\rho^1)\zeta_M(\rho^2)-\Theta_{\beta,M}(\rho^2)).
\end{align}
We will first treat the first term on the right hand side of \eqref{it_40}.  We form the decomposition
\begin{align}\label{it_41}
& \int_0^t\int_{\TT^d} \sign(\rho_2-\rho_1)\nabla\cdot(\Theta_{\beta,M}(\rho^1)B(\rho^1)-\Theta_{\beta,M}(\rho^2)B(\rho^2))
\\ \nonumber & = \int_0^t\int_{\TT^d} \sign(\rho_2-\rho_1)\nabla\cdot(\Theta_{\beta,M}(\rho^1)B(\rho^1)-\Theta_{\beta,M}(\rho^2)B(\rho^1))
\\ \nonumber & \quad + \int_0^t\int_{\TT^d} \sign(\rho_2-\rho_1)\nabla\cdot(\Theta_{\beta,M}(\rho^2)(B(\rho^1)-B(\rho^2))).
\end{align}
The first term on the right hand side of \eqref{it_41} is handled analogously to \cite[Equation~(4.33)]{FG21}, which uses the the Lipschitz continuity of $\Theta_{\beta,M}$ on $[0,\infty)$, the boundedness of $B(\rho^1)$ given by the assumption that
\[\norm{B(\rho^1)}_{L^\infty(\TT^d\times[0,T];\R^d)}\leq c\norm{\rho^1}_{L^\infty([0,T];L^1(\TT^d)}\leq c\norm{\rho_{0,1}}_{L^1(\TT^d)},\]
the local regularity of $\rho^1$ and $\rho^2$, and an approximation of the $\sign$ function to make rigorous the formal equality that, almost surely for every $t\in[0,T]$,
\begin{align*}
& \int_0^t\int_{\TT^d} \sign(\rho_2-\rho_1)\nabla\cdot(\Theta_{\beta,M}(\rho^1)B(\rho^1)-\Theta_{\beta,M}(\rho^2)B(\rho^1))
\\ & -\int_0^t\int_{\TT^d}\delta_0(\rho^2-\rho^1)(\Theta_{\beta,M}(\rho^1)-\Theta_{\beta,M}(\rho^2))(\nabla\rho^1-\nabla\rho^2)\cdot B(\rho^1)=0.
\end{align*}
For the second term on the right hand side of \eqref{it_41}, it follows from the $W^{1,1}$-regularity of $\lambda(\rho^2)$ and the definition of the cut-off functions $\varphi_\beta$ and $\zeta_M$ that, almost surely as $\beta\rightarrow 0$ and $M\rightarrow\infty$,
\[\Theta_{\beta,M}(\rho^2)\rightarrow \lambda(\rho^2)\;\;\textrm{strongly in}\;\;L^1([0,T];W^{1,1}(\TT^d)).\]
It then follows from the Lipschitz continuity of $B$ from $L^1(\TT^d)$ to $W^{1,\infty}(\TT^d;\R^d)$ that, almost surely for every $t\in[0,T]$,
\begin{align*}
& \lim_{M\rightarrow\infty}\left(\lim_{\beta\rightarrow 0}\left|\int_0^t\int_{\TT^d} \sign(\rho_2-\rho_1)\nabla\cdot(\Theta_{\beta,M}(\rho^2)(B(\rho^1)-B(\rho^2))\right|\right)
\\ & = \left|\int_0^t\int_{\TT^d} \sign(\rho_2-\rho_1)\nabla\cdot(\lambda(\rho^2)(B(\rho^1)-B(\rho^2)))\right|
\\ & \leq \norm{B}_{\Lip}\int_0^t\norm{\rho^1(\cdot,s)-\rho^2(\cdot,s)}_{L^1(\TT^d)}\int_{\TT^d}\abs{\lambda(\rho^2(\cdot,s))}+\abs{\nabla\lambda(\rho^2(\cdot,s))},
\end{align*}
for $ \norm{B}_{\Lip}$ the Lipschitz constant of the map $B\colon L^1(\TT^d)\rightarrow W^{1,\infty}(\TT^d)$.  This completes the analysis of \eqref{it_41}.  Returning to the final two terms of \eqref{it_40}, it follows from the $L^1$-integrability of $\lambda(\rho^i)$, the assumption that $\lambda(0)=0$, and the definitions of the cutoff functions that almost surely, as $\beta\rightarrow 0$ and $M\rightarrow\infty$,
\[\lambda(\rho^i)\varphi_\beta(\rho^i)\zeta_M(\rho^i)\rightarrow\lambda(\rho^i)\;\;\textrm{strongly in}\;\;L^1(\TT^d\times[0,T]),\]
and
\[\Theta_{M,\beta}(\rho^i)\rightarrow\lambda(\rho^i)\;\;\textrm{strongly in}\;\;L^1(\TT^d\times[0,T]).\]
It then follows from the boundedness of $\nabla \cdot B(\rho^i)$ that, almost surely for every $t\in[0,T]$, as $\beta\rightarrow 0$ and $M\rightarrow\infty$,
\[\left|\int_0^t\int_{\TT^d} \sign(\rho_2-\rho_1)(\nabla\cdot B(\rho^i))(\lambda(\rho^1)\varphi_\beta(\rho^i)\zeta_M(\rho^i)-\Theta_{\beta,M}(\rho^i))\right|\rightarrow 0.\]
Returning to \eqref{itt_3}, it follows from \eqref{itt_4} and \eqref{it_40} that, almost surely that for every $t\in[0,T]$,
\begin{align*}
& \lim_{M\rightarrow\infty}(\lim_{\beta\rightarrow 0}(\lim_{\delta\rightarrow 0}(\lim_{\ve\rightarrow 0} I^\textrm{int}_t)))
\\ & \leq \left( \norm{B}_{\Lip}\int_{\TT^d}\abs{\lambda(\rho^2(\cdot,s))}+\abs{\nabla\lambda(\rho^2(\cdot,s))} + \|f\|_{\Lip}\right)\int_0^t\norm{\rho^1(\cdot,s)-\rho^2(\cdot,s)}_{L^1(\TT^d)},
\end{align*}
which completes the analysis of the interaction term.

\textit{Conclusion.}  Returning to \eqref{it_21}, it follows from properties of the kinetic function that that there almost surely exist random subsequences $\ve,\delta,\beta\rightarrow 0$ and $M\rightarrow\infty$ such that, for every $t\in[0,T]$,
\begin{align*}
& \left.\int_{\R}\int_{\TT^d}\abs{\chi^1_t-\chi^2_t}^2\right|_{s=0}^{s=t}
\\& = \lim_{M\rightarrow\infty}\left(\lim_{\beta\rightarrow 0}\left(\lim_{\delta\rightarrow 0}\left(\lim_{\ve\rightarrow 0}\left.\int_{\R}\int_{\TT^d}\abs{\chi^{\ve,\delta}_{t,1}-\chi^{\ve,\delta}_{t,2}}^2\varphi_\beta\zeta_M\right|_{s=0}^{s=t}\right)\right)\right)
\\ & \leq \lim_{M\rightarrow\infty}\left(\lim_{\beta\rightarrow 0}\left(\lim_{\delta\rightarrow 0}\left(\lim_{\ve\rightarrow 0}\left(-2I^{\textrm{err}}_t-2I^{\textrm{meas}}_t+I^{\textrm{mart}}_t+I^{\textrm{cut}}_t+I^{\textrm{cons}}_t+I^{\textrm{int}}_t\right)\right)\right)\right)
\\ & \leq \left(\norm{B}_{\Lip}\int_{\TT^d}\abs{\lambda(\rho^2(\cdot,s))}+\abs{\nabla\lambda(\rho^2(\cdot,s))} + \|f\|_{\Lip}\right)\int_0^t\norm{\rho^1(\cdot,s)-\rho^2(\cdot,s)}_{L^1(\TT^d)}.
\end{align*}
It then follows from Gr\"onwall's inequality and properties of the kinetic function that, almost surely for every $t\in[0,T]$,
\begin{align*}
& \norm{\rho^1(\cdot,t)-\rho^2(\cdot,t)}_{L^1(\TT^d)}
\\ & \leq \norm{\rho_{0,1}-\rho_{0,2}}_{L^1(\TT^d)}\exp\left(t(\norm{B}_{{\Lip}}\norm{\lambda(\rho^2)}_{L^1([0,t];W^{1,1}(\TT^d))}+\|f\|_{\Lip})\right),
\end{align*}
where, due to the symmetry of the argument with respect to $\rho^1$ and $\rho^2$, we have that
\begin{align*}
& \norm{\rho^1(\cdot,t)-\rho^2(\cdot,t)}_{L^1(\TT^d)}
\\ & \leq \norm{\rho_{0,1}-\rho_{0,2}}_{L^1(\TT^d)}\exp\left(t(\norm{B}_{{\Lip}}\min_{i\in\{1,2\}}\norm{\lambda(\rho^i)}_{L^1([0,t];W^{1,1}(\TT^d))}+\|f\|_{\Lip})\right),
\end{align*}
which completes the proof.  \end{proof}

\section{Gradient estimates for SPDEs with conservative noise}

\label{sec:gradient} In this section, we establish a stable a priori estimate for certain nonlinear gradients of the solution to the SPDE 
\begin{equation}
\partial_{t}\rho=\Delta\Phi(\rho)-\nabla\cdot(\nu(\rho)+\lambda(\rho)B(\rho))-f(\rho)-\nabla\cdot\left(\sigma(\rho)\dd\xi\right),\label{ge_1}
\end{equation}
with spatially stationary It\^o-noise $\xi=\sum\lambda_{k}e_{k} B_{t}^{k}$ satisfying Assumption~\ref{assnoise}.  

Let $\{e_{k}\}_{k\in\N}$ be a smooth orthonormal $L^{2}(\TT^{d})$-basis and for every $N\in\N$ let $L_{T}^{2}(\TT^{d})=\Pi_{N}(L^{2}(\TT^{d}))$ where $\Pi_{N}$ denotes the Fourier projection 
\[
\Pi_{N}f=\sum_{k=1}^{N}\hat{f}_{k}e_{k}\;\;\textrm{for}\;\;\hat{f}_{k}=\int_{\TT^{d}}f(x)e_{k}(x).
\]

Due to the nonlinear structure of the diffusion in \eqref{ge_1}, the estimates derived in this section are not relying on standard $H^1$-based apriori estimates, but instead on estimating $\|\nabla \Phi(\rho)\|_{L^2}^2$. We observe, however, that such nonlinear a priori estimates are incompatible with standard Galerkin approximations.  In fact, it is unclear if $\|\nabla \Phi(\Pi_{N}\rho_0)\|_{L^2}^2$ is uniformly bounded in $N$, see, for example \cite{Gess2012}. We here develop a new argument bypassing these issues: Instead of considering a Galerkin approximation of the SPDE \eqref{ge_1}, we first, informally, apply It\^o's formula to derive an SPDE for $v:=\Phi(\rho)$, and project to the first $N$ Fourier modes only afterwards. Precisely, we consider
\begin{equation}\begin{split}
dv_{N}&=d\Phi(\rho_{N})=\Pi_{N}\Phi'(\rho_{N})\Big(\Delta\Phi(\rho_{N})-\nabla\cdot(\nu(\rho_{N})+\lambda(\rho_{N})B(\rho_{N}))\\
&-f(\rho_{N})-\nabla\cdot\left(\sigma(\rho_{N})\dd\xi\right)\Big)+\frac{1}{2}\Pi_{N}\Phi''(\rho_{N})\sum_{k=1}^\infty|\nabla\cdot\left(\sigma(\rho_{N})\lambda_{k}e_{k}\right)|^{2},\label{ge_2}
\end{split}\end{equation}
where $\rho_{N}:=\Phi^{-1}(v_{N})$, and we extend $\Phi$ to all of $\R$ by setting $\Phi(r)=-\Phi(-r)$. 
In this section we first derive uniform apriori estimates on $v_N$, and prove that after passing to a limit $v_N\to v$, these estimates can be transferred back to the original solution $\rho$ to \eqref{ge_1} as desired. Notably, the conservative structure of the noise in  \eqref{ge_1} leads to quartic error terms, which have to be treated carefully in the derivation of uniform estimates on $v_N$.

\begin{prop}
Let $\xi$, $\Phi$, $\sigma$, $\nu$, $\lambda$, $f$, and $B$ satisfy Assumptions~\ref{assumption_1}, \ref{assumption_2}, \ref{assumption_3}, and \ref{assume_B} and let $\rho_0\in L^1(\TT^d)$.  Then, for every $N\in\N$, there exists a unique strong solution $v_{N}\in L^{2}([0,T];L_{N}^{2}(\TT^{d}))$ of \eqref{ge_2}. 
\end{prop}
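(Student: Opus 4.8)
The plan is to read \eqref{ge_2} as a finite-dimensional It\^o SDE and to invoke the classical existence and uniqueness theory for such equations, the only genuine difficulty being the degeneracy of $\Phi$ at the origin, which forces a regularization and a uniform a priori estimate. First I would make the reduction to $\R^N$ explicit. Writing $v_N=\sum_{j=1}^N a_j e_j$ and identifying $v_N$ with the coefficient vector $a=(a_1,\dots,a_N)\in\R^N$, I test \eqref{ge_2} against $e_j$ for $j\le N$; since $\langle\Pi_N g,e_j\rangle_{L^2(\TT^d)}=\langle g,e_j\rangle_{L^2(\TT^d)}$, the projection drops out and one obtains a system $\dd a_j=b_j(a)\,\dt+\sum_{k}\Sigma_{jk}(a)\,\dd B^k_t$ for $j=1,\dots,N$, where $\rho_N=\Phi^{-1}(\sum_i a_i e_i)$, the diffusion coefficient is $\Sigma_{jk}(a)=-\int_{\TT^d}\Phi'(\rho_N)\,\nabla\cdot(\sigma(\rho_N)\lambda_k e_k)\,e_j$, and $b_j$ is the corresponding integral of the drift of \eqref{ge_2}. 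The finiteness of $F_1$ and $F_3$ in Assumption~\ref{assumption_1}, together with the smoothness of the $e_k$, gives $\sum_k\abs{\Sigma_{jk}(a)}^2<\infty$ on sublevel sets, so $\Sigma$ is a bona fide Hilbert--Schmidt coefficient and the stochastic integral is well defined.

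The obstruction to applying standard SDE theory directly is that, through the chain rule $\nabla\rho_N=\nabla v_N/\Phi'(\rho_N)$, the It\^o correction $\tfrac12\Phi''(\rho_N)\sum_k\abs{\nabla\cdot(\sigma(\rho_N)\lambda_k e_k)}^2$ develops the singular factor $\Phi''(\rho_N)(\sigma'(\rho_N))^2\Phi'(\rho_N)^{-2}$ as $\rho_N\to 0$, so the coefficients are locally Lipschitz only away from $\{\rho_N=0\}$. I would therefore regularize: replace $\Phi$ by $\Phi_\delta$ which agrees with $\Phi$ on $\{\abs{\cdot}\ge\delta\}$ and is affine on $(-\delta,\delta)$. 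Then $\Phi_\delta\in C^{1,1}_{\mathrm{loc}}(\R)$ with $\Phi_\delta'$ bounded below on compacts (and $\Phi_\delta''\equiv 0$ near the origin, so the singular correction disappears), whence $\Phi_\delta^{-1}$ and all compositions are locally Lipschitz on all of $\R^N$. Using the local regularity of $\Phi,\sigma,\nu$ in Assumption~\ref{assumption_2} and the Lipschitz bounds on $\lambda,f,B$ in Assumption~\ref{assume_B}, the coefficients $b,\Sigma$ of the regularized system are locally Lipschitz with at most polynomial growth, so there is a unique maximal strong solution $v_N^\delta$ up to an explosion time $\tau_\delta$.

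The core task is then a $\delta$-uniform a priori estimate that both rules out explosion (forcing $\tau_\delta=\infty$, i.e. a global solution) and survives the limit $\delta\to 0$. Applying It\^o to a suitable energy --- an $L^2$-type functional of $v_N^\delta$ together with the nonlinear gradient quantity $\norm{\nabla\Phi(\rho_N^\delta)}_{L^2}^2$ that this section is built to control --- and invoking the structural bounds of Assumption~\ref{assumption_3} (the $\Theta_{\Phi,p}$ conditions) to absorb the quartic terms produced by the conservative noise, one bounds $v_N^\delta$ uniformly in $\delta$ in $L^2([0,T];L^2_N(\TT^d))$. Passing $\delta\to 0$ along these estimates recovers a solution of the original equation \eqref{ge_2}, the singular corrections near $\{\rho_N=0\}$ being tamed precisely because the estimate controls the integrated quantities $\int\Phi'(\rho_N)\abs{\nabla\rho_N}^2$ and $\int(\sigma'(\rho_N))^2\abs{\nabla\rho_N}^2$ in which they appear. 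Uniqueness follows from the local Lipschitz theory on the non-degenerate region combined with the one-sided (monotone) structure of the principal part $\Phi'(\rho_N)\Delta\Phi(\rho_N)$, which furnishes the comparison estimate for two solutions. The main obstacle is exactly this uniform-in-$\delta$ control of the quartic noise corrections near $\{\rho_N=0\}$, and it is here that the careful choice of energy and Assumption~\ref{assumption_3} are indispensable.
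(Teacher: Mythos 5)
Your opening reduction is exactly the paper's proof, and essentially all of it: the authors test \eqref{ge_2} against $e_1,\dots,e_N$, observe that the result is a finite-dimensional system of It\^o SDEs for the Fourier coefficients with $\rho_N=\Phi^{-1}(v_N)$, and conclude existence and uniqueness of a strong solution directly from the stated assumptions on $\Phi,\sigma,\nu,\lambda,B,f$ --- there is no regularization, no a priori estimate, and no passage to a limit in their argument. Everything you add after your first paragraph is therefore a different and much heavier route, and it is in that extra machinery that the gaps lie.

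Two concrete problems. First, the $\delta$-uniform energy estimate on which your whole scheme rests is asserted rather than derived. Controlling the quartic It\^o corrections by means of $\lVert\nabla\Phi(\rho_N)\rVert_{L^2}^2$ is precisely the content of Proposition~\ref{prop_grad_est}, but that proposition requires Assumption~\ref{assume_grad} (stochastic coercivity $\Phi'-\tfrac{F_1}{2}(\sigma')^2\ge c_1$, bounded and decaying second derivatives, $F_4<\infty$), which is \emph{not} among the hypotheses of the present proposition; under Assumptions~\ref{assumption_1}, \ref{assumption_2}, \ref{assumption_3}, and \ref{assume_B} alone there is no reason the estimate closes, so your global-existence and limit-passage steps are unsupported. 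Second, your uniqueness argument for the limiting equation appeals to ``the one-sided (monotone) structure of the principal part $\Phi'(\rho_N)\Delta\Phi(\rho_N)$'', but after composing with the Fourier projection $\Pi_N$ this operator is not monotone in general --- the incompatibility of the nonlinear structure with Galerkin-type projections is exactly the difficulty the preamble of this section is written around, so this step would fail as stated. If you want to match the paper, the work to be done is not a regularization scheme but a verification that the coefficient functionals $a\mapsto b(a)$ and $a\mapsto\Sigma(a)$ on $\R^N$, which are integrals over $\TT^d$ of compositions with $\Phi^{-1}$, are locally Lipschitz with suitable growth; that is the one claim the paper's proof leaves implicit, and it is where your (legitimate) worry about the degeneracy of $\Phi$ near the origin should be addressed.
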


\begin{proof}
For an element $v_N=\sum_{k=1}^{N}\lambda_{k}(t)e_{k}(x)$ of $L^{2}([0,T];L_{N}^{2}(\TT^{d}))$ equation \eqref{ge_2} is equivalent to the system of SDEs, for every $k\in\{1,\ldots,N\}$, 
\begin{align*}
  \dd\lambda_{k}
  &=-\int_{\TT^{d}}\nabla e_{k}\cdot(\Phi'(\rho_N)\nabla\rho_N-\nu(\rho_N)-\lambda(\rho_N)B(\rho_N))\dt\\
  &-\int_{\TT^{d}} e_{k} f(\rho_N)\dt+\int_{\TT^{d}}\sigma(\rho_N)\nabla e_{k}\cdot\dd\xi,
\end{align*}
with $\rho_N := \Phi^{-1}(v_N)$, which, due to the assumptions on $\Phi$, $\nu$, $\lambda$, $B$, $f$, and $\sigma$, has a unique strong solution, which completes the proof. 
\end{proof}
\begin{assumption}\label{assume_grad}   Let $\sigma,\Phi\in \C([0,\infty))\cap\C^{2}((0,\infty))$ have bounded first and second derivatives, let $\xi=\sum_{k=1}^{\infty}\lambda_{k}e_{k} B_{t}^{k}$ satisfy Assumption~\ref{assnoise}, and assume that $\sigma$, $\Phi$, and $\xi$ satisfy the following properties. 
\begin{enumerate}
\item Stochastic coercivity: there exists $c_{1}\in(0,\infty)$ such that 
\[
\Phi'(\xi)-\frac{F_{1}}{2}(\sigma'(\xi))^{2}\geq c_{1}\quad\forall \xi\in(0,\infty).
\]
\item Decay of the second derivatives: there exists $c\in(0,\infty)$ such that 
\[
\abs{\sigma''(\xi)}+\abs{\Phi''(\xi)}\leq c(1+\Phi(\xi))^{-2}\quad\forall \xi\in(0,\infty).
\]
\item Regularity of the noise: we have that 
\[
F_4:=\sum_{k=1}^{\infty}\lambda_{k}^{2}\left\|\nabla^2e_{k}\right\|_\infty^{2}<\infty.
\]
\end{enumerate}
\end{assumption}
\begin{prop}
\label{prop_grad_est} Let $\xi$, $\Phi$, $\sigma$, $\nu$, $\lambda$, $f$, and $B$ satisfy Assumptions~\ref{assumption_1}, \ref{assumption_2}, \ref{assumption_3}, \ref{assume_B}, and \ref{assume_grad}. Let $N\in\N$, let $T\in(0,\infty)$, let $v_{N}\in L^{2}([0,T];L_{N}^{2}(\TT^{d}))$ be the unique strong solution of \eqref{ge_2}, and let $\rho_{N}:=\Phi^{-1}(v_{N}).$ Then, for all $p\in[2,\infty)$, for all $F_1$ small enough, there exists $c\in(0,\infty)$ depending on the $L^m(\Omega;L^1(\TT^d))$-norm of the initial data, $p$, and $T$ but independent of $N\in \N$ such that 
\begin{align}
 & \sup_{t\in[0,T]}\E\left(\int_{\TT^{d}}\abs{\nabla\Phi(\rho_{N}(t))}^{2}+1\right)^{\frac{p}{2}}+\E\int_{0}^{T}\left(\int_{\TT^{d}}\abs{\nabla\Phi(\rho_{N})}^{2}+1\right)^{\frac{p-2}{2}}\int_{\TT^{d}}(\Delta\Phi(\rho_{N}))^{2}\label{grad_est}\\
 & \leq c\left(\E\left(\int_{\TT^{d}}\abs{\nabla\Phi(\rho_{N}(0))}^{2}+1\right)^{\frac{p}{2}}+1\right).\nonumber
\end{align}
\end{prop}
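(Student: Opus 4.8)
The plan is to apply It\^o's formula to the finite-dimensional process $t\mapsto\int_{\TT^d}\abs{\nabla v_N(t)}^2$, read off a coercive dissipation matching $\int_{\TT^d}(\Delta\Phi(\rho_N))^2$, absorb all remaining contributions, and only then apply It\^o a second time to the power $(\cdot)^{p/2}$. Since $v_N$ takes values in the finite-dimensional space $L_N^2(\TT^d)$ and $-\Delta$ commutes with $\Pi_N$, the It\^o formula is rigorously justified and the projection $\Pi_N$ may be discarded whenever it is paired against $\Delta v_N=\Delta\Phi(\rho_N)$, because $\Delta v_N\in L_N^2(\TT^d)$. Computing $d\int_{\TT^d}\abs{\nabla v_N}^2=2\int_{\TT^d}\nabla v_N\cdot\nabla\,dv_N+\sum_k\int_{\TT^d}\abs{\nabla\Pi_N(\Phi'(\rho_N)\nabla\cdot(\sigma(\rho_N)\lambda_ke_k))}^2\dt$, the leading diffusion term of \eqref{ge_2} yields, after integration by parts, the favourable term $-2\int_{\TT^d}\Phi'(\rho_N)(\Delta\Phi(\rho_N))^2$. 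The essential cancellation is that this term, the It\^o correction from the martingale part, and the second-order Stratonovich-type drift correction $\tfrac12\Pi_N\Phi''(\rho_N)\sum_k\abs{\nabla\cdot(\sigma(\rho_N)\lambda_ke_k)}^2$ tested against $-2\Delta v_N$ conspire, using $\norm{\Delta u}_{L^2(\TT^d)}=\norm{D^2u}_{L^2(\TT^d)}$ on the torus and the stationarity identity $\sum_k\lambda_k^2e_k^2=F_1$, to replace the coefficient $\Phi'(\rho_N)$ of the dissipation by $\Phi'(\rho_N)-\tfrac{F_1}{2}(\sigma'(\rho_N))^2$. Hence the stochastic coercivity Assumption~\ref{assume_grad}(1) supplies a net dissipation bounded below by $2c_1\int_{\TT^d}(\Delta\Phi(\rho_N))^2$.

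The lower-order drift terms generated by $\nu$, $\lambda B$, and $f$ are all tested against $-\Delta\Phi(\rho_N)$. Here I would use Young's inequality to absorb an arbitrarily small multiple of $\int_{\TT^d}(\Delta\Phi(\rho_N))^2$ into the coercive reserve, controlling the remainder by $\int_{\TT^d}\abs{\nabla\Phi(\rho_N)}^2$ together with powers of $\norm{\rho_N}_{L^1(\TT^d)}$. The growth bounds of Assumptions~\ref{assumption_2} and~\ref{assumption_3}, the Lipschitz bound on $B\colon L^1(\TT^d)\to W^{1,\infty}(\TT^d;\R^d)$ from Assumption~\ref{assume_B}, and the $L^1$-control of $\rho_N$ coming from the mass/contraction structure (which produces the stated dependence on the $L^m(\Omega;L^1(\TT^d))$-norm of the initial datum) make all these contributions of the form $c(1+\norm{\rho_N}_{L^1})^{\cdots}(1+\int_{\TT^d}\abs{\nabla\Phi(\rho_N)}^2)$.

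I expect the main obstacle to be the \emph{quartic error terms} produced when both spatial derivatives in the quadratic-variation term fall on the coefficients. Expanding $\nabla\Pi_N(\Phi'(\rho_N)\nabla\cdot(\sigma(\rho_N)\lambda_ke_k))$ by the chain rule produces, beyond the principal second-order-in-$v_N$ contribution treated above, terms carrying $\Phi''(\rho_N)\abs{\nabla\rho_N}^2$ and $(\sigma'(\rho_N))^2\abs{\nabla\rho_N}^2$; squaring and summing over $k$ yields quantities quartic in $\nabla\rho_N$. These are precisely the terms controlled by the decay-of-second-derivatives Assumption~\ref{assume_grad}(2): writing $\Phi''(\rho_N)\abs{\nabla\rho_N}^2=\Phi''(\rho_N)\Phi'(\rho_N)^{-2}\abs{\nabla\Phi(\rho_N)}^2$ and using $\abs{\Phi''}+\abs{\sigma''}\leq c(1+\Phi)^{-2}$ converts the quartic-in-$\nabla\rho_N$ quantities into quadratic-in-$\nabla v_N$ quantities with bounded coefficients, the genuinely second-order remnant carrying a factor of $F_1$ (together with $F_3$ and $F_4$ from the terms where derivatives fall on $e_k$). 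Taking $F_1$ sufficiently small — the role of the smallness hypothesis, beyond the coercivity already assumed — then lets me absorb the second-order part of these errors into the coercive $\int_{\TT^d}(\Delta\Phi(\rho_N))^2$ term, while the remainder is bounded by $\int_{\TT^d}\abs{\nabla v_N}^2$.

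Assembling these bounds gives a differential inequality of the form $d\,X_t+2c_1'\bigl(\int_{\TT^d}(\Delta\Phi(\rho_N))^2\bigr)\dt\leq C(1+\norm{\rho_N}_{L^1})^{\cdots}X_t\,\dt+d\,\mathcal M_t$ for $X_t:=\int_{\TT^d}\abs{\nabla v_N(t)}^2+1$ and a local martingale $\mathcal M_t$. Applying It\^o's formula a second time to $X_t^{p/2}$ introduces the weight $\tfrac{p}{2}X_t^{(p-2)/2}$ in front of $d\,X_t$, reproducing exactly the dissipation term of \eqref{grad_est}, plus a further correction of order $X_t^{(p-4)/2}$ times the quadratic variation of $X_t$, which is absorbed as above. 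After localization by a stopping time, taking expectations removes the martingale increments; the supremum in $t$ of the martingale part is controlled by the Burkholder--Davis--Gundy inequality, estimated by a small multiple of $\E\sup_{t\in[0,T]}X_t^{p/2}$ plus the dissipation and absorbed to the left-hand side. A Gr\"onwall argument with constants depending only on $p$, $T$, and the $L^m(\Omega;L^1(\TT^d))$-norm of the initial data, but uniform in $N$, then yields \eqref{grad_est}.
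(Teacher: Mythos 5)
Your overall architecture coincides with the paper's: two successive applications of It\^o's formula (first to $\int_{\TT^d}\abs{\nabla v_N}^2$, then to its $p/2$-th power), the stochastic coercivity cancellation producing the net dissipation with coefficient $\Phi'-\tfrac{F_1}{2}(\sigma')^2$, Young/H\"older absorption of the drift terms using the $L^1$-control of $\rho_N$ and the interpolation $\int\Phi(\rho_N)^2\lesssim\int\abs{\nabla\Phi(\rho_N)}^2+1$, and smallness of $F_1$ to close the estimate. You also correctly identify the quartic error terms as the crux.

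However, there is a genuine gap precisely at that crux. You claim that writing $\Phi''(\rho_N)\abs{\nabla\rho_N}^2=\Phi''(\rho_N)\Phi'(\rho_N)^{-2}\abs{\nabla\Phi(\rho_N)}^2$ and invoking $\abs{\Phi''}+\abs{\sigma''}\leq c(1+\Phi)^{-2}$ ``converts the quartic-in-$\nabla\rho_N$ quantities into quadratic-in-$\nabla v_N$ quantities with bounded coefficients.'' A pointwise decay of the coefficient cannot lower the degree in the gradient: after the substitution the offending terms are still of the form
\begin{equation}
\int_{\TT^{d}}(1+\Phi(\rho_{N}))^{-2}\abs{\nabla\Phi(\rho_{N})}^{4},
\end{equation}
which is quartic in $\nabla v_N$ and is \emph{not} dominated by $\int_{\TT^d}\abs{\nabla v_N}^2$ (any inverse estimate on $L^2_N$ that would give such a bound is $N$-dependent and would destroy the uniformity you need). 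The missing step, which the paper supplies and on which the whole proposition hinges, is an integration by parts exploiting the exact power $-2$ of the decay: one writes $(1+\Phi)^{-2}\abs{\nabla\Phi}^4$ as a divergence plus terms of the form $(1+\Phi)^{-1}\abs{\nabla\Phi}^2\,\partial^2_{x_ix_j}\Phi$, applies Cauchy--Schwarz, and absorbs to obtain
\begin{equation}
\int_{\TT^{d}}(1+\Phi(\rho_{N}))^{-2}\abs{\nabla\Phi(\rho_{N})}^{4}\leq c\int_{\TT^{d}}(\Delta\Phi(\rho_{N}))^{2}.
\end{equation}
Only after this reduction does the factor $F_1$ in front of the quartic terms allow absorption into the coercive dissipation for $F_1$ small. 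Your later sentence about ``the second-order part of these errors'' suggests you anticipate such an absorption, but you never explain how the quartic integral becomes a second-order one; as written, the step you yourself flag as the main obstacle does not go through.
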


\begin{proof}

It follows from the $\C^2$-regularity of $\sigma$ and $\Phi$ and the smoothness of the $e_{k}$ that, almost surely for every $x\in\TT^{d}$, the gradient of $v_N=\Phi(\rho_{N})$ solves the SDE obtained by formally differentiating \eqref{ge_2}. We then obtain from It\^o's formula that 
\begin{align}
\dd\left(\frac{1}{2}\int_{\TT^{d}}\abs{\nabla v_{N}}^{2}\right) & =\dd\left(\frac{1}{2}\int_{\TT^{d}}\abs{\nabla\Phi(\rho_{N})}^{2}\right)=-\int_{\TT^{d}}\Phi'(\rho_{N})(\Delta\Phi(\rho_{N}))^{2}\label{ge_4}\\
 & -\int_{\TT^{d}}\Delta\Phi(\rho_{N})\Phi'(\rho_{N})\nabla\cdot(\sigma(\rho_{N})\dd\xi)\nonumber \\
 & -\int_{\TT^{d}}\Delta\Phi(\rho_{N})\Phi'(\rho_{N})(\nabla\cdot(\nu(\rho_{N})+\lambda(\rho_{N})B(\rho_{N}))-f(\rho_N))\\
 & -\frac{1}{2}\int_{\TT^{d}}\Delta\Phi(\rho_{N})\Phi''(\rho_{N})(F_{1}\abs{\sigma'(\rho_{N})\nabla\rho_{N}}^{2}+F_{3}\sigma(\rho_{N})^{2})\nonumber \\
 & +\frac{1}{2}\int_{\TT^{d}}\sum_{k=1}^{\infty}\sum_{i=1}^{d}\abs{\nabla\left(\Phi'(\rho_{N})\partial_{x_i}\left(\sigma(\rho_{N})(\lambda_{k}e_{k}\right)\right)}^{2}.\nonumber 
\end{align}
For $p\ge2$, we may again apply It\^o's formula to obtain that
\begin{align}
 & \dd\E\left(\frac{1}{2}\int_{\TT^{d}}\abs{\nabla\Phi(\rho_{N})}^{2}+1\right)^{\frac{p}{2}}\label{ge_4-1}\\
 & =\frac{p}{2}\E\left(\frac{1}{2}\int_{\TT^{d}}\abs{\nabla\Phi(\rho_{N})}^{2}+1\right)^{\frac{p-2}{2}}\Bigg(-\int_{\TT^{d}}\Phi'(\rho_{N})(\Delta\Phi(\rho_{N}))^{2}\nonumber \\
 & -\int_{\TT^{d}}\Delta\Phi(\rho_{N})\Phi'(\rho_{N})(\nabla\cdot(\nu(\rho_{N})+\lambda(\rho_{N})B(\rho_{N}))-f(\rho_N))\nonumber \\
 & -\frac{1}{2}\int_{\TT^{d}}\Delta\Phi(\rho_{N})\Phi''(\rho_{N})(F_{1}\abs{\sigma'(\rho_{N})\nabla\rho_{N}}^{2}+F_{3}\sigma(\rho_{N})^{2})\nonumber \\
 & +\frac{1}{2}\int_{\TT^{d}}\sum_{k=1}^{\infty}\sum_{i=1}^{d}\abs{\nabla\left(\Phi'(\rho_{N})\partial_{x_i}\left(\sigma(\rho_{N})(\lambda_{k}e_{k}\right)\right)}^{2}\Bigg)\nonumber \\
 & +\frac{p(p-2)}{8}\E\left(\frac{1}{2}\int_{\TT^{d}}\abs{\nabla\Phi(\rho_{N})}^{2}+1\right)^{\frac{p-4}{2}}\sum_{k=1}^\infty\Big|\int_{\TT^{d}}\Delta\Phi(\rho_{N})\Phi'(\rho_{N})\nabla(\sigma(\rho_{N})\lambda_{k}e_{k})\Big|^{2}.\nonumber 
\end{align}
We first treat the last term on the right hand side of \eqref{ge_4-1}, for which we have
\begin{align*}
&\sum_{k=1}^\infty\Big|\int_{\TT^{d}}\Delta\Phi(\rho_{N})\Phi'(\rho_{N})\nabla\cdot(\sigma(\rho_{N})\lambda_{k}e_{k})\Big|^{2} \\
&\le\sum_{k=1}^\infty\Big(\int_{\TT^{d}}\Phi'(\rho_{N})(\Delta\Phi(\rho_{N}))^{2}\Big)\Big(\int_{\TT^{d}}\Phi'(\rho_{N})|\nabla(\sigma(\rho_{N})\lambda_{k}e_{k}|^{2}\Big).
\end{align*}
It then follows from Assumption~\ref{assume_grad} that, for some $c\in(0,\infty)$,
\begin{align*}
&\sum_{k=1}^\infty\int_{\TT^{d}}\Phi'(\rho_{N})|\nabla(\sigma(\rho_{N})\lambda_{k}e_{k}|^{2} \\
& \le 2\sum_{k=1}^\infty \int_{\TT^{d}}\Phi'(\rho_{N})(|\nabla\rho_{N}|^{2}(\sigma'(\rho_{N}))^{2}|\lambda_{k}e_{k}|^{2}+\sigma^{2}(\rho_{N})|\lambda_{k}\nabla e_{k}|^{2})\\
 & \le c\left(F_1\int_{\TT^{d}}F_1|\nabla\Phi(\rho_{N})|^{2}(\sigma'(\rho_{N}))^{2}+F_3\int_{\TT^{d}}\Phi'(\rho_{N})\sigma^{2}(\rho_{N})|\lambda_{k}\nabla   e_{k}|^{2}\right)\\
 & \le c\left(F_1\int_{\TT^{d}}|\nabla\Phi(\rho_{N})|^{2}+F_3\Big(\int_{\TT^{d}}\Phi^{2}(\rho_{N})+1\Big)\right)\\
 & \leq c(F_{1}+F_{3})\Big(\int_{\TT^{d}}|\nabla\Phi(\rho_{N})|^{2}+\left(\int_{\TT^d}\rho_N(0)\right)^m+1\Big),
\end{align*}
where in the final step we have used the $L^1$-integrability of $\rho_N$, Assumption~\ref{assumption_3}, and the interpolation estimate \cite[Lemma~5.4]{FG21} to deduce that, for some $c\in(0,\infty)$,
\begin{equation}\label{interpolate}\int_{\TT^d}\Phi(\rho_N)^2\leq c\left(\int_{\TT^d}\abs{\nabla\Phi(\rho_N)}^2+\left(\int_{\TT^d}\rho_n(0)\right)^m+1\right).\end{equation}
Hence, for some $c\in(0,\infty)$,
\begin{align*}
&\Big|\int_{\TT^{d}}\Delta\Phi(\rho_{N})\Phi'(\rho_{N})\nabla(\sigma(\rho_{N})\lambda_{k}e_{k})\Big|^{2} \\
& \le c(F_{1}+F_{3})\Big(\int_{\TT^{d}}|\nabla\Phi(\rho_{N})|^{2}+\left(\int_{\TT^d}\rho_N(0)\right)^m+1\Big),
\end{align*}
which implies that, for some $c\in(0,\infty)$ depending on $p$ and the norm of the initial data in $L^m(\Omega;L^1(\TT^d))$,
\begin{align*}
&\frac{p(p-2)}{8}\E\left(\frac{1}{2}\int_{\TT^{d}}\abs{\nabla\Phi(\rho_{N})}^{2}+1\right)^{\frac{p-4}{2}}\sum_{k=1}^\infty\Big|\int_{\TT^{d}}\Delta\Phi(\rho_{N})\Phi'(\rho_{N})\nabla\cdot(\sigma(\rho_{N})\lambda_{k}e_{k})\Big|^{2} \\
 & \leq c(F_{1}+F_{3})\E\left(\frac{1}{2}\int_{\TT^{d}}\abs{\nabla\Phi(\rho_{N})}^{2}+1\right)^{\frac{p-2}{2}}\Big(\int_{\TT^{d}}\Phi'(\rho_{N})(\Delta\Phi(\rho_{N}))^{2}\Big).
\end{align*} 

For the second term on the right hand side of \eqref{ge_4-1}, we have using Assumption~\ref{assume_grad} and the $L^{1}$-estimate for $\rho$ that there exists $c\in(0,\infty)$ depending on the Lipschitz norms of $\nu$, $B,f$, and $\lambda$, and on the lower bound of $\Phi'$, that 
\begin{align}
 & \abs{\int_{\TT^{d}}\Phi'(\rho_{N})(\nabla\cdot(\nu(\rho_{N})+\lambda(\rho_{N})B(\rho_{N}))+f(\rho_N))\Delta\Phi(\rho_{N})}\label{ge_50}\\
 & \leq c\left(\int_{\TT^{d}}\abs{\nabla\Phi(\rho_{N})}^{2}+|\Phi(\rho_{N})|^2 \right)^{\frac{1}{2}}\left(\int_{\TT^{d}}\Phi'(\rho_{N})(\Delta\Phi(\rho_{N}))^{2}\right)^{\frac{1}{2}}.\nonumber 
\end{align}
For the last term of \eqref{ge_4} we have that, for every $k\in\N$ and $i\in\{1,\ldots,d\}$, 
\begin{align*}
\sum_{i=1}^{d}|\nabla\left(\Phi'(\rho_{N})\partial_{x_i}(\sigma(\rho_{N})(\lambda_{k}e_{k}))\right)|^2
&=\sum_{i,j=1}^{d}|\partial_{x_j}\left(\Phi'(\rho_{N})\partial_{x_i}(\sigma(\rho_{N})(\lambda_{k}e_{k}))\right)|^2
\\&\le 4 (V_{1}+V_{2}+V_{3}+V_{4}),
\end{align*}
for $V_{i}$ depending on $k$ defined by 
\begin{align*}
V_{1}
&:=\sum_{i,j=1}^{d} |\partial_{x_j}(\Phi'(\rho_{N})\partial_{x_i}(\sigma(\rho_{N})))\cdot(\lambda_{k}e_{k})|^2\\
&=\sum_{i,j=1}^{d}|\partial_{x_j}(\partial_{x_i}\Phi(\rho_{N})(\sigma'(\rho_{N})))\cdot(\lambda_{k}e_{k})|^2,
\end{align*}
and for
\begin{align*}
& V_{2}:=\sum_{i,j=1}^{d}|\partial_{x_j}(\Phi'(\rho_{N})\sigma(\rho_{N}))\partial_{x_i}(\lambda_{k}e_{k})|^2,\;\;V_{3}:=\sum_{i,j=1}^{d}|\Phi'(\rho_{N})\partial_{x_j}(\sigma(\rho_{N}))\partial_{x_i}(\lambda_{k}e_{k})|^2\\
& \;\;\textrm{and}\;\;V_{4}:=\sum_{i,j=1}^{d}|\Phi'(\rho_{N})\sigma(\rho_{N})\partial_{x_j}\partial_{x_i}(\lambda_{k}e_{k})|^2.
\end{align*}
We observe that, 
\begin{align*}
\sum_{k=1}^{\infty}\int_{\TT^{d}}V_{1}& =F_{1}\sum_{i,j=1}^{d}\int_{\TT^{d}}\abs{\partial_{x_{j}}\partial_{x_{i}}\Phi(\rho_{N})}^{2}(\sigma'(\rho_{N}))^{2}\\
 & \quad+2F_{1}\int_{\TT^{d}}\sigma'(\rho_{N})\sigma''(\rho_{N})\partial_{x_{j}}\partial_{x_{i}}\Phi(\rho_{N})\partial_{x_{j}}\rho_{N}\\
 & \quad+F_{1}\int_{\TT^{d}}(\Phi'(\rho_{N}))^{2}(\sigma''(\rho_{N}))^{2}(\partial_{x_{i}}\rho_{N})^{2}(\partial_{x_{j}}\rho_{N})^{2}.
\end{align*}
We then have that 
\begin{align}
\sum_{k=1}^{\infty}\int_{\TT^{d}}V_{1} & =F_{1}\sum_{i,j=1}^{d}\int_{\TT^{d}}(\sigma'(\rho_{N}))^{2}(\partial_{x_{i}}\partial_{x_{j}}\Phi(\rho_{N}))^{2}\label{ge_6}\\
 & \quad+\sum_{i,j=1}^{d}2F_{1}\int_{\TT^{d}}\sigma'(\rho_{N})\sigma''(\rho_{N})\partial_{x_{j}}\rho_{N}\partial_{x_{i}}\Phi(\rho_{N})\partial_{x_{i}x_{j}}^{2}\Phi(\rho_{N})\nonumber \\
 & \quad+F_{1}\int_{\TT^{d}}(\Phi'(\rho_{N}))^{2}(\sigma''(\rho_{N}))^{2}\abs{\nabla\rho_{N}}^{4}\nonumber .
\end{align}
For the first term on the right hand side of \eqref{ge_6}, it follows from Assumption~\ref{assume_grad} that, after integrating by parts, 
\begin{align}
 & \sum_{i,j=1}^{d}\int_{\TT^{d}}(\sigma'(\rho_{N}))^{2}(\partial_{x_{i}}\partial_{x_{j}}\Phi(\rho_{N}))^{2}=\int_{\TT^{d}}(\sigma'(\rho_{N}))^{2}(\Delta(\Phi(\rho_{N})))^{2}\label{ge_8}\\
 & \quad-2\sum_{i,j=1}^{d}\int_{\TT^{d}}\sigma'(\rho_{N})\sigma''(\rho_{N})\partial_{x_{j}}\rho_{N}\partial_{x_{i}}\Phi(\rho_{N})\partial_{x_{i}x_{j}}^{2}\Phi(\rho_{N})\\
 & \quad+2\int_{\TT^{d}}\Phi'(\rho_{N})\sigma'(\rho_{N})\sigma''(\rho_{N})\abs{\nabla\rho_{N}}^{2}\Delta\Phi(\rho_{N}).
\end{align}
Returning to \eqref{ge_6}, we have from \eqref{ge_8} that 
\begin{align}
 \sum_{k=1}^{\infty}\int_{\TT^{d}} V_{1} & =2F_{1}\int_{\TT^{d}}\Phi'(\rho_{N})\sigma'(\rho_{N})\sigma''(\rho_{N})\abs{\nabla\rho_{N}}^{2}\Delta\Phi(\rho_{N})\label{ge_9_0}\\
 & \quad+F_{1}\int_{\TT^{d}}(\sigma'(\rho_{N}))^{2}(\Delta(\Phi(\rho_{N})))^{2}\nonumber \\
 & \quad+F_{1}\int_{\TT^{d}}(\Phi'(\rho_{N}))^{2}(\sigma''(\rho_{N}))^{2}\abs{\nabla\rho_{N}}^{4}.\nonumber 
\end{align}
It remains to estimate the terms involving $V_{i}$ for $i\in\{2,3,4\}$. For the case $i=2$ we have using Assumption~\ref{assume_grad} that, for some $c\in(0,\infty)$, 
\begin{align}
{ \sum_{k=1}^{\infty}\int_{\TT^{d}}V_{2}} & \leq cF_3\int_{\TT^{d}}\left(\frac{\Phi''(\rho_{N})\sigma(\rho_{N})}{\Phi'(\rho_{N})}+\sigma'(\rho_{N})\right)^{2}\abs{\nabla\Phi(\rho_{N})}^{2}\label{ge_9}\\
 & \leq cF_3\int_{\TT^{d}}\abs{\nabla\Phi(\rho_{n})}^{2}.\nonumber 
\end{align}
For the term involving $V_{3}$, it follows from Assumption~\ref{assume_grad} that, for some $c\in(0,\infty)$,
\begin{equation}
{ \sum_{k=1}^{\infty}\int_{\TT^{d}}  V_{3}}\leq cF_3\int_{\TT^{d}}(\sigma'(\rho_{N}))^{2}\abs{\nabla\Phi(\rho_{N})}^{2}\leq cF_3\int_{\TT^{d}}\abs{\nabla\Phi(\rho_{N})}^{2}.\label{ge_10}
\end{equation}
Finally, for the term involving $V_{4}$, we have from Assumption~\ref{assume_grad} and \eqref{interpolate} that, for some $c\in(0,\infty)$ depending on the $L^m(\Omega;L^1(\TT^d))$-norm of the initial data,
\begin{equation} \sum_{k=1}^{\infty}\int_{\TT^{d}} V_{4}\leq c\int_{\TT^{d}}(\Phi'(\rho_{N})\sigma(\rho_{N}))^{2}\leq c\int_{\TT^{d}}\Phi(\rho_{N})^{2}\leq c\left(\int_{\TT^d}\abs{\nabla\Phi(\rho_N)}^2+1\right).\label{ge_11}
\end{equation}
Returning to \eqref{ge_4}, it follows from \eqref{ge_6}, \eqref{ge_8}, \eqref{ge_9}, \eqref{ge_10}, and \eqref{ge_11}, the stochastic coercivity of Assumption~\ref{assume_grad}, and Young's inequality that, for some $c\in(0,\infty)$, 
\begin{align}
 & \dd\E\left(\frac{1}{2}\int_{\TT^{d}}\abs{\nabla\Phi(\rho_{N})}^{2}\right)^{\frac{p}{2}}\label{ge_12}\\
 & \leq c\E\left(\frac{1}{2}\int_{\TT^{d}}\abs{\nabla\Phi(\rho_{N})}^{2}\right)^{\frac{p-2}{2}}\Bigg(-c_1\int_{\TT^{d}}\Phi'(\rho_N)(\Delta\Phi(\rho_{N}))^{2}\nonumber \\
 & \quad-\frac{1}{2}\int_{\TT^{d}}\Phi''(\rho_{N})(F_{1}\abs{\sigma'(\rho_{N})\nabla\rho_{N}}^{2}+F_{3}\sigma(\rho_{N})^{2})\Delta\Phi(\rho_{N})\nonumber \\
 & \quad+F_{1}\int_{\TT^{d}}\Phi'(\rho_{N})\sigma'(\rho_{N})\sigma''(\rho_{N})\abs{\nabla\rho_{N}}^{2}\Delta\Phi(\rho_{N})\nonumber \\
 & \quad+\frac{F_{1}}{2}\int_{\TT^{d}}(\Phi'(\rho_{N}))^{2}(\sigma''(\rho_{N}))^{2}\abs{\nabla\rho_{N}}^{4}+c\left(\int_{\TT^{d}}\abs{ \nabla\Phi(\rho_{n})}^{2}+1\right)\Bigg).\nonumber
\end{align}
For the second term on the right hand side of \eqref{ge_12}, it follows from Assumption~\ref{assume_grad}, H\"older's inequality, and Young's inequality that, for some $c\in(0,\infty)$, 
\begin{align}
 & \abs{F_{1}\int_{\TT^{d}}\Phi''(\rho_{N})(\sigma'(\rho_{N}))^{2}\abs{\nabla\rho_{N}}^{2}\Delta\Phi(\rho_{N})}\label{ge_30}\\
 & \leq F_{1}\int_{\TT^{d}}\Phi''(\rho_{N})\abs{\nabla\Phi(\rho_{n})}^{2}\Delta\Phi(\rho_{N})\nonumber \\
 & \leq cF_{1}\left(\int_{\TT^{d}}\Phi''(\rho_{N})^{2}\abs{\nabla\Phi(\rho_{N})}^{4}\right)^{\frac{1}{2}}\left(\int_{\TT^{d}}\Phi'(\rho_N)(\Delta\Phi(\rho_{N}))^{2}\right)^{\frac{1}{2}}.\nonumber 
\end{align}
For the second term on the right hand side of \eqref{ge_12}, we have using Assumption~\ref{assume_grad} and H\"older's inequality that, for some $c\in(0,\infty)$, 
\begin{align}
 & \abs{\frac{1}{2}\int_{\TT^{d}}F_{3}\Phi''(\rho_{N})\sigma(\rho_{N})^{2}\Delta\Phi(\rho_{N})}\label{ge_14}\\
 & \leq cF_3\left(\int_{\TT^{d}}\Phi(\rho_{N})^{2}\right)^{\frac{1}{2}}\left(\int_{\TT^{d}}\Phi'(\rho_N)(\Delta(\Phi(\rho_{N})))^{2}\right)^{\frac{1}{2}}.\nonumber 
\end{align}
For the third term on the right hand side of \eqref{ge_12}, it follows from Assumption~\ref{assume_grad} and H\"older's inequality that, for some $c\in(0,\infty)$,
\begin{align}
 & \abs{F_{1}\int_{\TT^{d}}\Phi'(\rho_{N})\sigma'(\rho_{N})\sigma''(\rho_{N})\abs{\nabla\rho_{N}}^{2}\Delta\Phi(\rho_{N})}\label{ge_31}\\
 & =\abs{F_{1}\int_{\TT^{d}}\sigma''(\rho_{N})\abs{\nabla\Phi(\rho_{N})}^{2}\Delta\Phi(\rho_{N})}\\
 & \leq cF_{1}\left(\int_{\TT^{d}}\sigma''(\rho_{N})^{2}\abs{\nabla\Phi(\rho_{N})}^{4}\right)^{\frac{1}{2}}\left(\int_{\TT^{d}}\Phi'(\rho_N)(\Delta\Phi(\rho_{N}))^{2}\right)^{\frac{1}{2}}.\nonumber 
\end{align}
For the fourth term on the right hand side of \eqref{ge_12}, we have that 
\begin{equation}
\int_{\TT^{d}}(\Phi'(\rho_{N}))^{2}(\sigma''(\rho_{N}))^{2}\abs{\nabla\rho_{N}}^{4}\leq c\int_{\TT^{d}}(\sigma''(\rho_{N}))^{2}\abs{\nabla\Phi(\rho_{N})}^{4}.\label{ge_15}
\end{equation}
It follows from Young's inequality, \eqref{ge_50}, \eqref{ge_12}, \eqref{ge_30}, \eqref{ge_14}, \eqref{ge_31}, and \eqref{ge_15} that, for some $c\in(0,\infty)$,
\begin{align}
 & \dd\E\left(\frac{1}{2}\int_{\TT^{d}}\abs{\nabla\Phi(\rho_{N})}^{2}+1\right)^{\frac{p}{2}}\label{ge_32}\\
 & \leq c\E\left(\frac{1}{2}\int_{\TT^{d}}\abs{\nabla\Phi(\rho_{N})}^{2}+1\right)^{\frac{p-2}{2}}\Bigg(-\frac{c_1}{2}\int_{\TT^{d}}\Phi'(\rho_N)(\Delta\Phi(\rho_{N}))^{2}\nonumber \\
 & \quad + F_{1}\E\int_{\TT^{d}}(\sigma''(\rho_{N})^{2}+\Phi''(\rho_{N})^{2})\abs{\nabla\Phi(\rho_{N})}^{4} +\E\int_{\TT^{d}}\abs{\nabla\Phi(\rho_{n})}^{2}+1\Bigg)\nonumber.
\end{align}
We will now use Assumption~\ref{assume_grad} to treat the first two terms on the final line of \eqref{ge_32}. Precisely, we have using the decay of the second derivatives that, for some $c\in(0,\infty)$, after integrating by parts, 
\begin{align}
 & \int_{\TT^{d}}(\sigma''(\rho_{N})^{2}+\Phi''(\rho_{N})^{2})\abs{\nabla\Phi(\rho_{N})}^{4}\leq c\int_{\TT^{d}}(1+\Phi(\rho_{N}))^{-2}\abs{\nabla\Phi(\rho_{N})}^{4}\label{ge_55}\\
 & =c\int_{\TT^{d}}(1+\Phi(\rho_{N}))^{-1}\abs{\nabla\Phi(\rho_{N})}^{2}\Delta\Phi(\rho_{N})\nonumber \\
 & \quad+2c\sum_{i,j=1}^{d}\int_{\TT^{d}}(1+\Phi(\rho_{N}))^{-1}\partial_{x_{i}}\Phi(\rho_{N})\partial_{x_{j}}\Phi(\rho_{N})\partial_{x_{i}x_{j}}^{2}\Phi(\rho_{N}).\nonumber 
\end{align}
It then follows from H\"older's inequality that, for some $c\in(0,\infty)$, 
\begin{align*}
 & \int_{\TT^{d}}(1+\Phi(\rho_{N}))^{-2}\abs{\nabla\Phi(\rho_{N})}^{4}\\
 & \leq c\left(\int_{\TT^{d}}(1+\Phi(\rho_{N}))^{-2}\abs{\nabla\Phi(\rho_{N})}^{4}\right)^{\frac{1}{2}}\left(\left(\sum_{i,j=1}^{d}\int_{\TT^{d}}(\partial_{x_{i}x_{j}}^{2}\Phi(\rho_{N})\right)^{2}\right)^{\frac{1}{2}}.
\end{align*}
After integrating by parts and using the nondegeneracy of $\Phi$ in Assumption~\ref{assume_grad} for the final term, we have that, for some $c\in(0,\infty)$, 
\[
\int_{\TT^{d}}(1+\Phi(\rho_{N}))^{-2}\abs{\nabla\Phi(\rho_{N})}^{4}\leq c\int_{\TT^{d}}(\Delta\Phi(\rho_{N}))^{2}\leq c\int_{\TT^d}\Phi'(\rho_N)(\Delta(\Phi(\rho_N)))^2.
\]
In combination with \eqref{ge_32} and \eqref{ge_55}, we have that, for some $c_2,c\in(0,\infty)$,
\begin{align*}
 & \dd\E\left(\frac{1}{2}\int_{\TT^{d}}\abs{\nabla\Phi(\rho_{N})}^{2}+1\right)^{\frac{p}{2}}
 \\ & \leq \left(-\frac{c_1}{2}+c_2F_1\right)\E\left(\frac{1}{2}\int_{\TT^{d}}\abs{\nabla\Phi(\rho_{N})}^{2}+1\right)^{\frac{p-2}{2}}\int_{\TT^{d}}\Phi'(\rho_N)(\Delta\Phi(\rho_{N}))^{2}\nonumber 
 \\
  &\quad +c\left(\E\left(\int_{\TT^{d}}\abs{\nabla\Phi(\rho_{N})}^{2}+1\right)^{\frac{p}{2}}+1\right).
\end{align*}
For all $F_1\leq\frac{c_1}{4c_2}$ the claim is then a consequence of Gr\"onwalls inequality and the nondegeneracy of $\Phi$ in Assumption~\ref{assume_grad}.  \end{proof}
\begin{prop}
\label{prop_exist} Let $\xi$, $\Phi$, $\sigma$, $\nu$, $\lambda$, $f$, and $B$ satisfy Assumptions~\ref{assumption_1}, ~\ref{assumption_2}, \ref{assumption_3}, \ref{assume_B}, and \ref{assume_grad}, let $\rho_{0}\in L^m(\Omega;L^{1}(\TT^{d}))$ be nonnegative and $\mathcal{F}_{0}$-measurable, and let $T\in(0,\infty)$. Then, there exists a unique stochastic kinetic solution $\rho$ of \eqref{ge_1} in the sense Definition~\ref{def_sol_new}. Further, if $\Phi(\rho_{0})\in L^{2}(\Omega;H^{1}(\TT^{d}))$, $p\in[2,\infty)$ and $F_1$ is small enough then, for some $c\in(0,\infty)$ depending on the $L^m(\Omega;L^1(\TT^d))$-norm of $\rho_0$, $p$, and $T$,
\begin{equation}\label{eq:grad_est_2}
 \sup_{t\in[0,T]}\E\left( \int_{\TT^{d}}\abs{\nabla\Phi(\rho(t))}^{2}\right)^{\frac{p}{2}}\leq c\left(\E\left(\int_{\TT^{d}}\abs{\nabla\Phi(\rho(0))}^{2}\right)^{\frac{p}{2}}+1\right).
\end{equation}
 
\end{prop}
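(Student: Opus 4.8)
The uniqueness of stochastic kinetic solutions is immediate from the pathwise $L^1$-contraction of Theorem~\ref{thm_unique}: two solutions with the same initial datum $\rho_0$ satisfy \eqref{eq:pathwise_stability} with $\rho_{0,1}=\rho_{0,2}$, and hence coincide almost surely for every $t\in[0,T]$. It therefore remains to construct a solution and to establish \eqref{eq:grad_est_2}. Following the strategy announced for this section, the plan is to pass to the limit in the approximations $\rho_N:=\Phi^{-1}(v_N)$, which are globally well-posed by the preceding proposition and satisfy the gradient estimate \eqref{grad_est} uniformly in $N$, initialized through $\Phi(\rho_N(0))=\Pi_N\Phi(\rho_0)$.

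First I would assemble the uniform-in-$N$ bounds. Choosing $p=2$ in \eqref{grad_est} shows that $\Phi(\rho_N)$ is bounded in $L^2(\Omega;L^\infty([0,T];H^1(\TT^d)))$ and that $\E\int_0^T\int_{\TT^d}(\Delta\Phi(\rho_N))^2$ is bounded, so that $\Phi(\rho_N)$ is bounded in $L^2(\Omega;L^2([0,T];H^2(\TT^d)))$ as well. Complementing these with the standard mass, entropy, and kinetic-measure estimates for \eqref{ge_1} available from \cite{FG21}, I would deduce tightness of the laws of $(\rho_N)$ together with the associated kinetic measures $q_N$ on a path space on which the flux nonlinearities act continuously, for instance $L^1([0,T]\times\TT^d)$ with a suitable weak--strong topology. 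Prokhorov's theorem and a Skorokhod representation (in the Jakubowski form appropriate to the weak topologies involved) then yield a subsequence converging almost surely, on a new probability space, to a limit $(\rho,q)$; the time regularity furnished by \eqref{ge_2} together with the $H^2$-bound and the usual stochastic compactness criteria upgrade this to strong convergence of $\Phi(\rho_N)$, and hence of $\rho_N=\Phi^{-1}(v_N)$, in $L^2([0,T]\times\TT^d)$.

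The passage to the limit in the kinetic formulation \eqref{it_7} is the main obstacle. The strong convergence of $\rho_N$ identifies the limits of $\Phi(\rho_N)$, $\nu(\rho_N)$, $\sigma(\rho_N)$, $\lambda(\rho_N)B(\rho_N)$, and $f(\rho_N)$; the delicate points are the convergence of the conservative It\^o integral $\nabla\cdot(\sigma(\rho_N)\dd\xi)$ and, above all, the correct identification of the parabolic defect measure. Concretely, I must show that the limiting measure $q$ dominates $\delta_0(\xi-\rho)\Phi'(\xi)\abs{\nabla\rho}^2$ as required by \eqref{it_5}, which uses weak lower semicontinuity of the quadratic gradient functional against the uniform bound on $\Delta\Phi(\rho_N)$, and is precisely where the conservative, quartic-in-gradient error terms controlled in the derivation of \eqref{grad_est} re-enter. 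Once $(\rho,q)$ is verified to satisfy every requirement of Definition~\ref{def_sol_new}, the uniqueness already established identifies the limit, so by the Gy\"ongy--Krylov criterion the original sequence $\rho_N$ converges in probability, on the original probability space, to the solution $\rho$.

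Finally I would transfer the bound. Since $\Phi(\rho_N(0))=\Pi_N\Phi(\rho_0)$ and the Fourier truncation $\Pi_N$ is a contraction on $H^1(\TT^d)$, we have $\int_{\TT^d}\abs{\nabla\Phi(\rho_N(0))}^2\leq\int_{\TT^d}\abs{\nabla\Phi(\rho_0)}^2$, so under the hypothesis $\Phi(\rho_0)\in L^2(\Omega;H^1(\TT^d))$ the right-hand side of \eqref{grad_est} is bounded, uniformly in $N$, by $c\,(\E(\int_{\TT^d}\abs{\nabla\Phi(\rho(0))}^2)^{\frac{p}{2}}+1)$ using $(a+1)^{p/2}\leq c(a^{p/2}+1)$ and $\rho(0)=\rho_0$. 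Fixing $t\in[0,T]$ and using the almost sure convergence $\rho_N(t)\to\rho(t)$ in $L^1(\TT^d)$, hence $\Phi(\rho_N(t))\to\Phi(\rho(t))$, weak lower semicontinuity of the Dirichlet energy gives $\int_{\TT^d}\abs{\nabla\Phi(\rho(t))}^2\leq\liminf_N\int_{\TT^d}\abs{\nabla\Phi(\rho_N(t))}^2$, and Fatou's lemma yields $\E(\int_{\TT^d}\abs{\nabla\Phi(\rho(t))}^2)^{\frac{p}{2}}\leq\liminf_N\E(\int_{\TT^d}\abs{\nabla\Phi(\rho_N(t))}^2+1)^{\frac{p}{2}}$. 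Taking the supremum over $t\in[0,T]$ and invoking the uniform bound \eqref{grad_est} produces \eqref{eq:grad_est_2}, completing the proof.
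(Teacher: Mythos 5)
Your overall strategy coincides with the paper's: uniqueness from the pathwise $L^1$-contraction of Theorem~\ref{thm_unique}, existence by passing to the limit in the nonlinear Galerkin scheme \eqref{ge_2} for $v_N=\Phi(\rho_N)$, stochastic compactness to identify a stochastic kinetic solution, and the gradient bound \eqref{eq:grad_est_2} via tightness of $\Phi(\rho_N)$ in the weak topology of $L^2([0,T];H^1(\TT^d))$ together with weak lower semicontinuity of the Sobolev norm. (The paper's proof is itself only a sketch deferring the compactness machinery to \cite[Theorem~5.29]{FG21}.)

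There is, however, one genuine gap: the proposition asserts existence and uniqueness for \emph{every} nonnegative $\rho_0\in L^m(\Omega;L^1(\TT^d))$, with the hypothesis $\Phi(\rho_0)\in L^2(\Omega;H^1(\TT^d))$ imposed only for the estimate \eqref{eq:grad_est_2}. Your construction uses the bound \eqref{grad_est} with $p=2$ as the primary source of compactness (boundedness of $\Phi(\rho_N)$ in $L^2(\Omega;L^\infty([0,T];H^1))$ and of $\Delta\Phi(\rho_N)$ in $L^2$), but the right-hand side of \eqref{grad_est} is finite only when $\nabla\Phi(\rho_N(0))\in L^2$, i.e.\ essentially when $\Phi(\rho_0)\in L^2(\Omega;H^1)$. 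As written, your argument therefore produces solutions only for such regular data and does not cover general $L^m(\Omega;L^1)$ initial conditions. The paper closes this in two steps: it first treats initial data in $L^2(\Omega;L^2(\TT^d))\cap L^m(\Omega;L^1(\TT^d))$, where compactness comes from the entropy-type estimates of \cite[Propositions~5.9, 5.14]{FG21} (controlling $\Theta_{\Phi,p}(\rho_N)$ rather than $\nabla\Phi(\rho_N)$, hence requiring no $H^1$ regularity of $\Phi(\rho_0)$), and then extends to general nonnegative $\rho_0\in L^m(\Omega;L^1(\TT^d))$ by approximating $\rho_0$ with regular data and using the pathwise contraction \eqref{eq:pathwise_stability} to show the corresponding solutions are Cauchy in $C([0,T];L^1(\TT^d))$ almost surely, with the limit again a stochastic kinetic solution. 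You should add this density-plus-contraction step; everything else in your outline, including the transfer of the bound via $\norm{\nabla\Pi_N\Phi(\rho_0)}_{L^2}\leq\norm{\nabla\Phi(\rho_0)}_{L^2}$, Fatou, and lower semicontinuity, is consistent with the paper's argument.
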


\begin{proof}
The proof is a simplified version of of \cite[Theorem~5.29]{FG21}, since in this case we have the stronger estimates of Proposition~\ref{prop_grad_est}.  The system \eqref{ge_2} used to approximate solutions is the same as that used in \cite[Proposition~5.20]{FG21}.
We first consider initial data that is bounded in $L^2(\Omega;L^2(\TT^d))\cap L^m(\Omega;L^1(\TT^d))$ and pass to the limit $N\rightarrow\infty$ using Assumptions~\ref{assumption_3} and \ref{assume_grad} and the estimates of \cite[Propositions~5.9, 5.14]{FG21} as in \cite[Theorem~5.29]{FG21}.  In fact, for this case, the proof can be considerably simplified using Assumption~\ref{assume_grad}.  We then establish the existence of solutions for general initial data in $L^m(\Omega;L^1(\TT^d))$ using the $L^1$-contraction of Theorem~\ref{thm_unique} as in the conclusion of \cite[Theorem~5.29]{FG21}.  Estimate \eqref{eq:grad_est_2} is then a consequence of the methods of \cite[Theorem~5.29]{FG21}, where it is shown that the laws of the $\Phi(\rho_N)$ are tight in the weak topology of $L^2([0,T];H^1(\TT^d))$, and the weak lower semicontinuity of the Sobolev norm.  This completes the proof.  \end{proof}

\section{Existence of stochastic flows and RDS}~\label{sec:rds}
In this section, we argue that, under the appropriate assumptions, stochastic kinetic solutions of~\eqref{eq:DK} generate a random dynamical system. We will follow the approach of~\cite{AS95} and~\cite[Chapter 6]{Arn98}. We start by introducing some preliminary notions. Let $(\Omega,\mathcal{F}^0,\mathbb{P})$ be a probability space and $(\Omega,\mathcal{F}^0, \mathbb{P},(\theta_t)_{t\in \R})$ a metric dynamical system (MDS) as defined in~\cite[Definition 3]{AS95}. We now proceed to construct an MDS associated to a two-side cylindrical Wiener process on $\ell^2(\Z^d)$ which one can obtain by gluing two independent cylindrical Wiener processes to each other at $t=0$. We set $\Omega = C_0(\R; H)$  where 
\[
C_0(\R;H) := \left\{\omega \in C(\R;H)  : \omega(0)=0\right\},
\]
and $H \supset \ell^2(\Z^d)$ is a separable Hilbert space such that the embedding $\iota: \ell^2(\Z^d) \to H$ is Hilbert--Schmidt. We equip $\Omega$ with the compact-open topology, i.e. the topology of uniform convergence on compacts, and denote by $\mathcal{F}^0$ the corresponding Borel sigma algebra (which is not complete). Let $\mathbb{P}$ be the law of a two-sided cylindrical Wiener process on $\Omega$ which is supported on $C_0(\R;H)$. Define the maps $\theta_{t}:\Omega \to \Omega, t \in \R$ by
\begin{equation}
\theta_t (\omega): = \omega (\cdot +t ) -\omega(t) \, . 
\end{equation} 
We then have the following result.
\begin{prop}
The tuple $(\Omega,\mathcal{F}^0, \mathbb{P}, (\theta_t)_{t\in \R})$ forms an MDS in the sense of ~\cite[Definition 3]{AS95}.
\end{prop}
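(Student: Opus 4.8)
The plan is to verify the four defining properties of a metric dynamical system in the sense of~\cite[Definition 3]{AS95}: that $\theta$ is a measurable self-map flow on $\Omega = C_0(\mathbb{R};H)$ satisfying $\theta_0 = \id_\Omega$, the group (flow) property $\theta_{t+s} = \theta_t \circ \theta_s$, the joint measurability of $(t,\omega) \mapsto \theta_t(\omega)$, and the invariance $(\theta_t)_*\mathbb{P} = \mathbb{P}$ for every $t \in \mathbb{R}$. First I would check that $\theta_t$ is a well-defined self-map: for $\omega \in C_0(\mathbb{R};H)$ the path $r \mapsto \omega(r+t) - \omega(t)$ is continuous and vanishes at $r = 0$, so $\theta_t(\omega) \in C_0(\mathbb{R};H)$. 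The identity and group properties are then purely algebraic: $\theta_0(\omega) = \omega(\cdot) - \omega(0) = \omega$, and for $r,s,t \in \mathbb{R}$ a direct computation gives
\begin{equation*}
\theta_t(\theta_s(\omega))(r) = \theta_s(\omega)(r+t) - \theta_s(\omega)(t) = \omega(r+s+t) - \omega(s+t) = \theta_{t+s}(\omega)(r),
\end{equation*}
so that $\theta_t \circ \theta_s = \theta_{t+s}$.

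Next I would establish joint measurability by proving that $(t,\omega) \mapsto \theta_t(\omega)$ is continuous from $\mathbb{R} \times \Omega$ to $\Omega$ for the compact-open topology, since continuity immediately yields the required $(\mathcal{B}(\mathbb{R}) \otimes \mathcal{F}^0, \mathcal{F}^0)$-measurability. Given a compact $[-K,K] \subset \mathbb{R}$ and sequences $t_n \to t$, $\omega_n \to \omega$ uniformly on compacts, one estimates
\begin{equation*}
\sup_{r \in [-K,K]} \norm{\omega_n(r+t_n) - \omega_n(t_n) - \omega(r+t) + \omega(t)}_H
\end{equation*}
by splitting into the increment and base-point contributions and using, on the one hand, the uniform convergence $\omega_n \to \omega$ on a slightly enlarged compact set containing all the arguments $r+t_n$ and $t_n$, and on the other hand the uniform continuity of the limit path $\omega$ on compacts to control $\omega(r+t_n) - \omega(r+t)$ and $\omega(t_n) - \omega(t)$. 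Both contributions vanish as $n \to \infty$, giving continuity.

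The main point, and the only genuinely probabilistic step, is the invariance $(\theta_t)_*\mathbb{P} = \mathbb{P}$, which reflects the stationarity of the increments of the two-sided cylindrical Wiener process. Since $\mathbb{P}$ is a centered Gaussian measure on $C_0(\mathbb{R};H)$, its law is determined by its finite-dimensional distributions, equivalently by the joint law of its increments. For any $r_1 < \dots < r_m$ the increments of the shifted path, namely $\theta_t(\omega)(r_{j+1}) - \theta_t(\omega)(r_j) = \omega(r_{j+1}+t) - \omega(r_j+t)$, are increments of $\omega$ over the disjoint intervals $[r_j+t, r_{j+1}+t]$; hence under $\mathbb{P}$ they are independent centered Gaussians with the same covariance as $\omega(r_{j+1}) - \omega(r_j)$, because the two-sided Wiener process obtained by gluing two independent cylindrical Wiener processes at $t=0$ has independent, time-homogeneous increments. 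Together with $\theta_t(\omega)(0) = 0$, this shows that $\theta_t(\omega)$ has the same finite-dimensional distributions as $\omega$, whence $(\theta_t)_*\mathbb{P} = \mathbb{P}$. I expect this stationarity-of-increments verification to be the heart of the argument: every other property is either algebraic or follows from the continuity of the shift, whereas the invariance is precisely where the specific Gaussian structure of $\mathbb{P}$ enters, and some care is needed to handle increments straddling $t=0$ so as to confirm that the glued process is genuinely a two-sided process with stationary independent increments.
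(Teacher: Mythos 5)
Your proposal is correct and follows essentially the same route as the paper: both establish joint continuity of $(t,\omega)\mapsto\theta_t(\omega)$ in the compact-open topology to get measurability (the paper via an explicit bound in the metric $d_\Omega$, you via a sequential uniform-continuity argument, which are equivalent since the topology is metrizable), and both derive the invariance $(\theta_t)_*\mathbb{P}=\mathbb{P}$ from the fact that the shifted, recentered two-sided Wiener process is again a two-sided cylindrical Wiener process. Your spelled-out verification via finite-dimensional distributions of increments is just a more detailed version of the one-line stationarity argument the paper gives.
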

\begin{proof}
Properties 2 and 3 of the definition can be checked easily. For property 1, note that we can metrise the topology of $\Omega$ with the following complete metric
\begin{equation}
d_{\Omega}(\omega,\omega'):= \sum_{n=1}^\infty \frac{1}{2^n}\frac{\norm{\omega-\omega'}_n}{1+\norm{\omega-\omega'}_n}\, , \quad \norm{\omega-\omega'}_{n}:= \sup_{t \in [-n ,n]}\norm{\omega(t)-\omega'(t)}_H  \, .
\end{equation}
We then have that
\begin{align}
d_{\Omega}(\theta_t(\omega),\theta_s(\omega')) \leq& d_{\Omega}(\theta_t(\omega),\theta_t(\omega')) + d_{\Omega}(\theta_t(\omega'),\theta_s(\omega')) \\
\leq &  \bra*{1 + 2^{\lceil \abs{t}\rceil-1}} d_{\Omega}(\omega,\omega') + 2 d_{\Omega}(\omega'(\cdot+s),\omega'(\cdot+t)) \, ,  
\end{align}
from which it follows that $(t,\omega)\mapsto \theta_t(\omega)$ is continuous with respect to the product topology on $\Omega \times \R$. It follows that it is $(\mathcal{B}(\R)\otimes\mathcal{F}^0,\mathcal{F}^0)$-measurable and thus property 1 holds true. Finally, property 4 of~\cite[Definition 3]{AS95} follows from the fact that if $W(\cdot)$ is a cylindrical Wiener process on $\ell^2(\Z^d)$, then so is $W(\cdot +t)-W(t)$ for all $t \in \mathbb{R}$. 
\end{proof}
\begin{remark}
We remark here that, as in the case of the canonical MDS associated with a standard Brownian motion on $\R^k$ (see~\cite[Appendix A]{Arn98} for the construction), it is important to work with the uncompleted filtration $\mathcal{F}^0$. While $\theta$ is $(\mathcal{B}(\R)\otimes\mathcal{F}^0,\mathcal{F}^0)$-measurable it is not $(\mathcal{B}(\R)\otimes\mathcal{F},\mathcal{F})$-measurable, where $\mathcal{F}$ is the completion of $\mathcal{F}^0$ with respect to $\mathbb{P}$.
\end{remark}
Before, proceeding, we now introduce some more useful notions following~\cite[Chapter 2]{Arn98}. 
\begin{definition}[Two-parameter filtration]
Let $(\Omega,\mathcal{F},\mathbb{P})$ be a complete probability space. A two-parameter family $(\mathcal{F}_{t,s})_{ s\leq t}$, $s,t \in \R$ of sub-$\sigma$-algebras of $\mathcal{F}$ is said to be a \emph{two-parameter filtration} if
\begin{enumerate}
\item $\mathcal{F}_{t,s} \subset \mathcal{F}_{v,u}$ for $u \leq s \leq t \leq v$
\item $\mathcal{F}_{t+,s}:= \bigcap_{v>t}\mathcal{F}_{v,s}=\mathcal{F}_{t,s}$ and $\mathcal{F}_{t,s-}:= \bigcap_{s>u}\mathcal{F}_{t,u}=\mathcal{F}_{t,s}$  for all $s \leq t$
\item $\mathcal{F}_{t,s}$ contains all the $\mathbb{P}$-null sets of $\mathcal{F}$ for all $s \leq t$.
\end{enumerate}
\label{def:2pf}
\end{definition}
Given the notion of a two-parameter filtration, we can then go on to define the corresponding notion of semimartingales. Note that we adopt the approach of ~\cite{KS97}, where the authors define the underlying dynamical system for $t \in \R$ but perform any stochastic analysis only with nonnegative time.
\begin{definition}[$\mathcal{F}_{t,s}$-semimartingale]
Let $(\mathcal{F}_{t,s})_{ s \leq t}$ be a two-parameter filtration on a complete probability space $(\Omega,\mathcal{F},\mathbb{P})$, in the sense of Definition~\ref{def:2pf}. We say that a measurable function $F: \mathbb{R}_{\geq 0}\times \mathbb{R}_{\geq 0} \times \Omega \to H, (t,s,\omega) \mapsto F(t,s,\omega)$ (with $H$ a separable Hilbert space), jointly continuous in  $s,t$, is a \emph{ $\mathcal{F}_{t,s}$-semimartingale} if for all fixed $s\geq 0$, $(F(s+t,s,\omega))_{t\geq 0}$  is an $\mathcal{F}_{s+t,s}$-semimartingale.
\end{definition}
We also introduce the notion of a filtered dynamical system following~\cite{Arn98}.
\begin{definition}[Filtered dynamical system (FDS)]
Let $(\Omega,\mathcal{F}^0, \mathbb{P}, (\theta_t)_{t \in \R})$ be an MDS, $\mathcal{F}$ be the $\mathbb{P}$-completion of $\mathcal{F}^0$, and $(\mathcal{F}_{t,s})_{ s \leq t}$ be a two-parameter filtration on $(\Omega,\mathcal{F},\mathbb{P})$ in the sense of Definition~\ref{def:2pf}. Then, we say that $(\Omega,\mathcal{F}^0, \mathbb{P}, (\theta_t)_{t \in \R},(\mathcal{F}_{t,s})_{ s \leq t})$ is a \emph{filtered dynamical system (FDS)} if for all $u \in \R$ and $ s \leq t$, we have
\[
\theta_u^{-1} \mathcal{F}_{t,s} = \mathcal{F}_{t+u,s+u} \, .
\]
\end{definition}

Given these definitions, we now elucidate how one can go from the MDS associated to the cylindrical Wiener process to an FDS. We start by considering the $\mathbb{P}$-completion of $\mathcal{F}^0$ which we denote by $\mathcal{F}$ and then define an appropriate two-parameter complete filtration $(\mathcal{F}_{t,s})_{s \leq t}$ with $s,t \in \R$ as is done in~~\cite[Remark 2.3.5]{Arn98},~\cite[Remark 9]{AS95}, or~\cite[page 4]{dSLM75}, i.e. let $\mathcal{F}^-$ and $\mathcal{F}^+$ be sub-$\sigma$-algebras of $\mathcal{F}$ containing all $\mathbb{P}$-null sets of $\mathcal{F}$ such that
\[
\theta_t^{-1}(\mathcal{F}^-) \subset \mathcal{F}^- \quad \textrm{ for all } t\leq 0
\]
and 
\[
\theta_t^{-1}(\mathcal{F}^+) \subset \mathcal{F}^+ \quad \textrm{ for all } t\geq 0 \, .
\]
We then define 
\[
 \mathcal{F}_t:=\theta_t^{-1}(\mathcal{F}^-) \, ,\quad \mathcal{F}_s:=\theta_t^{-1}(\mathcal{F}^+) \, ,\quad \mathcal{F}_{t,s}:= \mathcal{F}_s \cap \mathcal{F}_t  \quad \textrm{ for } s \leq t \, .
\]
One can then check that $\mathcal{F}_{t,s}$, as defined above, is indeed a two-parameter filtration on $(\Omega,\mathcal{F},\mathbb{P})$ in the sense of Definition~\ref{def:2pf} and that $(\Omega,\mathcal{F}^0, \mathbb{P}, (\theta_t)_{t \in \R},(\mathcal{F}_{t,s})_{ s \leq t})$ is an FDS. In the specific case of the MDS associated to a cylindrical Wiener process, we choose $\mathcal{F}^+$ (resp. $\mathcal{F}^-$) to be the $\sigma$-algebra generated by the $\mathbb{P}$-null sets of $\mathcal{F}$ and $\sigma(\omega(t):t \geq 0)$ (resp. $\sigma(\omega(t):t \leq 0)$). It follows that $\mathcal{F}_{t,s}$ is given by the $\mathbb{P}$-completion of $\sigma(\omega(v)-\omega(u): s \leq u\leq v \leq t)$.

We now introduce the another notion that will play an important role in the discussion that follows.
\begin{definition}[Helix and semimartingale helix]
Let $(\Omega,\mathcal{F}^0, \mathbb{P}, (\theta_t)_{t \in \R})$ be an MDS and $(G,\circ)$ a group. A map $F :[0,\infty) \times \Omega\to G$ is called a \emph{(perfect) helix} if
\begin{equation}
F(t+s,\omega)=F(t,\theta_s\omega)\circ F(s,\omega) \, ,
\end{equation}
for all $s,t \in \R$ and all $\omega \in \Omega$. 

Let $(\Omega,\mathcal{F}^0, \mathbb{P}, (\theta_t)_{t \in \R},(\mathcal{F}_{t,s})_{ s \leq t})$ be an FDS. Then, an $(H,+)$-valued helix (for some Hilbert space $H$) is called a \emph{semimartingale helix} if
\begin{equation}
F(t,s,\omega):= F(t,\omega)-F(s,\omega) \, ,
\end{equation} 
is an $\mathcal{F}_{t,s}$-semimartingale.
\end{definition}
Finally, we conclude by presenting the definition of a random dynamical system.
\begin{definition}[Random dynamical system (RDS)]
A \emph{measurable (one-sided) random dynamical system on a measure space $(Y,\mathcal{B})$ over an MDS $(\Omega,\mathcal{F}^0,\mathbb{P},(\theta_t)_{t\in \R})$} is a map $\varphi:[0,\infty) \times Y\times \Omega \to Y$ which satisfies the following properties:
\begin{enumerate}
  \item $\varphi$ is $(\mathcal{B}([0,\infty))\otimes \mathcal{B}\otimes \mathcal{F}^0,\mathcal{B})$-measurable
\item For all $t,s \in [0,\infty)$, $y \in Y$, and $\omega \in \Omega$ we have
\[
\varphi(0,y,\omega)=y \, ,\quad 
\varphi(t+s,y,\omega)= \varphi(t,\varphi(s,y,\omega),\theta_s \omega) \, .
\]

\end{enumerate}
We say that $\varphi$ is a \emph{continuous RDS} if in addition $Y$ is a topological space with Borel sigma algebra $\mathcal{B}$ and, for all $\omega \in \Omega$, $\varphi(\cdot,\cdot,\omega):[0,\infty)\times Y \to Y$  is continuous.
\label{def:rds}
\end{definition}

We now define our candidate for a semimartingale helix, namely
\begin{equation}
W(t,\omega) := \omega(t) \, ,
\end{equation}
for all $t \geq 0$. We then have the following result.
\begin{prop}
Let $(\Omega,\mathcal{F}^0, \mathbb{P}, (\theta_t)_{t \in \R},(\mathcal{F}_{t,s})_{ s \leq t})$ be the FDS associated to a cylindrical Wiener process as constructed above. Then, $W$ is a semimartingale helix with respect to the group $(H,+)$.
\label{prop:helix}
\end{prop}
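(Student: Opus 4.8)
The plan is to verify the two requirements in the definition of a semimartingale helix separately: the (perfect) helix identity for the group $(H,+)$, and the $\mathcal{F}_{t,s}$-semimartingale property of the associated increment process.

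The helix identity is a pointwise algebraic computation. For every $\omega\in\Omega$ and all $s,t\ge 0$,
\[
W(t+s,\omega)=\omega(t+s)=\bigl(\omega(t+s)-\omega(s)\bigr)+\omega(s)=(\theta_s\omega)(t)+\omega(s)=W(t,\theta_s\omega)+W(s,\omega),
\]
using only the definition $\theta_s\omega=\omega(\cdot+s)-\omega(s)$. Since this holds for every $\omega\in\Omega$ with no exceptional null set, $W$ is a perfect helix in $(H,+)$.

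It then remains to show that the increment $F(t,s,\omega):=W(t,\omega)-W(s,\omega)=\omega(t)-\omega(s)$ is an $\mathcal{F}_{t,s}$-semimartingale. Joint continuity of $F$ in $(s,t)$ is immediate from $\omega\in C(\R;H)$, so by definition it suffices to fix $s\ge0$ and prove that $M^s_t:=F(s+t,s,\omega)=\omega(s+t)-\omega(s)$, $t\ge 0$, is an $\mathcal{F}_{s+t,s}$-semimartingale. I would prove the stronger statement that $M^s$ is a continuous, square-integrable $H$-valued martingale, which is in particular a semimartingale. First, because $\iota\colon\ell^2(\Z^d)\to H$ is Hilbert--Schmidt, the coordinate process $t\mapsto\omega(t)$ is a genuine $H$-valued $Q$-Wiener process with $Q=\iota\iota^*$ of trace class; hence each $M^s_t$ is Gaussian with $\E\norm{M^s_t}_H^2=t\,\tr Q<\infty$, so $M^s$ is square-integrable and continuous. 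Adaptedness holds because $M^s_t=\omega(s+t)-\omega(s)$ is of the form $\omega(v)-\omega(u)$ with $u=s\le v=s+t$, hence lies in $\sigma(\omega(v)-\omega(u):s\le u\le v\le s+t)\subseteq\mathcal{F}_{s+t,s}$. Finally, for $0\le t_1\le t_2$ I would split $M^s_{t_2}-M^s_{t_1}=\omega(s+t_2)-\omega(s+t_1)$; this increment over $[s+t_1,s+t_2]$ is, by independence of the increments of the Wiener process over disjoint intervals, independent of $\mathcal{F}_{s+t_1,s}$ (which is the $\mathbb{P}$-completion of the $\sigma$-algebra generated by the increments inside $[s,s+t_1]$) and has mean zero, so $\E[M^s_{t_2}-M^s_{t_1}\mid\mathcal{F}_{s+t_1,s}]=0$ and $M^s$ is a martingale.

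This routine structure hides only one point that requires care: correctly matching the concrete two-parameter filtration to the abstract definition of an $\mathcal{F}_{t,s}$-semimartingale. The independence-of-increments argument must be run against $\mathcal{F}_{s+t_1,s}$ rather than against a one-parameter past, and one must use the explicit description of $\mathcal{F}_{t,s}$ as the $\mathbb{P}$-completion of $\sigma(\omega(v)-\omega(u):s\le u\le v\le t)$ established above, together with the fact that completing by $\mathbb{P}$-null sets preserves both adaptedness and the conditional-expectation identity. Once this bookkeeping is in place, the martingale (hence semimartingale) property of the increment and the helix identity together show that $W$ is a semimartingale helix with respect to $(H,+)$.
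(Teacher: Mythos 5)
Your proposal is correct and follows essentially the same route as the paper: verify the helix identity pointwise from the definition of $\theta_s$, and then observe that for fixed $s$ the increment process $t\mapsto\omega(s+t)-\omega(s)$ is a continuous martingale with respect to $\mathcal{F}_{s+t,s}$, which the paper identifies as the completed natural filtration of that increment process. You merely spell out in more detail (trace-class covariance, independence of increments) what the paper compresses into a single sentence.
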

\begin{proof}
We start by checking that it is a (perfect) $(H,+)$-helix. We have
\begin{align}
W(t,\theta_s \omega) =& (\theta_s \omega)(t) \\
=& \omega(s+t) - \omega(s) \\
=& W(t+s,\omega)-W(s,\omega) \, .
\end{align}
Note that, the way we have defined it, for fixed $s\geq 0$ and all $t \geq 0$, $\mathcal{F}^{s+t,t}$ is just the completed natural filtration of $(W(t,\omega)-W(s,\omega))_{t\geq0}$. Since $W(t,\omega)-W(s,\omega)$ is a continuous martingale with respect to its natural filtration, this completes the proof.
\end{proof}

We define for all $k \in \Z^d$
\begin{equation}
B_t^k := \langle e_k , W(t,\omega)\rangle = \langle e_k , \omega(t) \rangle \, ,
\end{equation}
where $e_k , k \in \Z^d$ are the standard unit vectors in $\ell^2(\Z^d)$. It is clear that $(B_t^k)_{k \in \Z^d}$ are independent standard Brownian motions on $\Omega$.

Consider now the following It\^o equation
\begin{equation}
\partial_t \rho = \Delta \Phi(\rho) -\nabla \cdot\nu(\rho)- \nabla \cdot ( B(\rho)) -f(\rho)-\sqrt{\eps}\nabla \cdot (\sigma(\rho)  \dd\xi) \, ,
\label{eq:DK}
\end{equation}
which was analysed in Sections~\ref{sec:stability} and~\ref{sec:gradient}. The reader should note that for the purposes of this section we set $\lambda \equiv 1$ which ensures that the stability estimate from Theorem~\ref{thm_unique} holds with a deterministic constant. We rely on the solution theory and assumptions presented in Section~\ref{sec:stability}. We will however adapt the notion of solution presented in Section~\ref{sec:gradient} so that we can start at arbitrary initial times $s \geq 0$. Additionally, we will also allow for a small constant $\eps>0$ that models the intensity of the noise. We restate the main existence result in this setting for the readers convenience. $\Delta$ denotes the set $\{(t,s)\in [0,\infty)^2: s \leq t \}$.
\begin{theorem}
Let $(\Omega,\mathcal{F}^0,(\theta_t)_{t\in \R},(\mathcal{F}_{s,t})_{s\leq t})$ be the FDS defined above and let Assumptions \ref{assumption_1}, \ref{assumption_2}, \ref{assumption_3}, and \ref{assdrift} hold true for some $p \in[2,\infty), m \in [1,\infty)$, and $B_t^k, k \in \N$ as defined previously. Fix $s\geq0$ and let $\rho_0 \in L^p(\Omega;L^p(\T^d)) \cap L^{m+p-1}(\Omega;L^1(\T^d))$ be nonnegative. Then, there exists a unique stochastic kinetic solution of~\eqref{eq:DK} starting at time $s\geq 0$ which is $(\mathcal{F}_{t,s})_{s \leq t}$-predictable and almost-surely continuous in $L^1(\T^d)$.

 Additionally, if $\rho(t,s,\rho_{0,1},\omega)$ and $\rho(t,s,\rho_{0,2},\omega)$ are two stochastic kinetic solutions with initial data $\rho_{0,1}$ and $\rho_{0,2}$,  then, there exists a $\mathbb{P}$-null set $N_{s,\rho_{0,1},\rho_{0,2}}$ such that for all $\omega \in N_{s,\rho_{0,1},\rho_{0,2}}^c$ and for all $T \in [s,\infty)$,   we have
\begin{align}
&\sup_{t \in[s,T] }\lVert \rho(t,s,\rho_{0,1},\omega) -\rho(t,s,\rho_{0,2},\omega)\rVert_{L^1(\T^d)} \\ & \leq \exp\left(T(\norm{B}_{\Lip} + \norm{f}_{\Lip}) \right) \lVert \rho_{0,1}(\omega) -\rho_{0,2}(\omega)\rVert_{L^1(\T^d)}  \\
& =: C(T) \lVert \rho_{0,1}(\omega) -\rho_{0,2}(\omega)\rVert_{L^1(\T^d)} \, .
\label{eq:contraction}
\end{align}
\label{thm:sks}
\end{theorem}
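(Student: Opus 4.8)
The plan is to derive both assertions from machinery already in place: the well\-posedness theory of Sections~\ref{sec:stability}--\ref{sec:gradient} and the pathwise $L^1$\-contraction of Theorem~\ref{thm_unique}, read through the metric/filtered dynamical system structure constructed above. Observe first that \eqref{eq:DK} is exactly \eqref{ge_1} with $\lambda\equiv 1$ and $\sigma$ replaced by $\sqrt{\eps}\,\sigma$; since Assumptions~\ref{assumption_1}, \ref{assumption_2}, \ref{assumption_3}, and \ref{assdrift} are stable under multiplying $\sigma$ by the constant $\sqrt{\eps}\in(0,1)$, the existence and uniqueness theory (Proposition~\ref{prop_exist} together with \cite{FG21}) and the contraction estimate all transfer to \eqref{eq:DK}. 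For the initial time $s=0$ these results furnish a unique stochastic kinetic solution in the sense of Definition~\ref{def_sol_new} that is predictable for the completed natural filtration of the noise and almost surely continuous in $L^1(\T^d)$.

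To treat an arbitrary $s\geq 0$ I would \emph{not} redo the construction but transport the time\-$0$ solution by the shift. Writing $\bar\rho(\cdot,\rho_0,\omega)$ for the solution started at $0$, I set
\[
\rho(t,s,\rho_0,\omega) := \bar\rho(t-s,\rho_0,\theta_s\omega),\qquad t\geq s.
\]
By Proposition~\ref{prop:helix} the Brownian motions driving $\bar\rho(\cdot,\rho_0,\theta_s\omega)$ are $\tau\mapsto B^k_{s+\tau}(\omega)-B^k_s(\omega)$, that is, precisely the increments of the noise after time $s$; hence $\rho(\cdot,s,\rho_0,\omega)$ solves the It\^o equation \eqref{eq:DK} on $[s,\infty)$ with datum $\rho_0$ at time $s$. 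Measurability of $\bar\rho(t-s,\rho_0,\cdot)$ for the time\-$0$ filtration, combined with the FDS identity $\theta_u^{-1}\mathcal{F}_{b,a}=\mathcal{F}_{b+u,a+u}$ (with $u=s$, $b=t-s$, $a=0$, giving $\theta_s^{-1}\mathcal{F}_{t-s,0}=\mathcal{F}_{t,s}$), upgrades to $(\mathcal{F}_{t,s})_{s\leq t}$\-predictability, and continuity in $L^1(\T^d)$ is inherited from $\bar\rho$. Uniqueness follows at once from the contraction estimate applied to two solutions with the same datum.

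For the stability bound \eqref{eq:contraction} I would first record the pointwise\-in\-time estimate. With $\lambda\equiv 1$ the random factor $\norm{\lambda(\rho^i)}_{L^1([s,t];W^{1,1}(\T^d))}$ entering \eqref{eq:pathwise_stability} collapses to a deterministic constant, so the time\-shifted Theorem~\ref{thm_unique} (whose proof is insensitive to the choice of initial time) yields, for each fixed $T\in[s,\infty)$, a $\P$\-null set $N^T$ off which
\[
\norm{\rho(t,s,\rho_{0,1},\omega)-\rho(t,s,\rho_{0,2},\omega)}_{L^1(\T^d)}\leq \exp\bigl((t-s)(\norm{B}_{\Lip}+\norm{f}_{\Lip})\bigr)\norm{\rho_{0,1}(\omega)-\rho_{0,2}(\omega)}_{L^1(\T^d)}
\]
holds for every $t\in[s,T]$ simultaneously. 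The remaining point is to make the null set independent of $T$ and to bring the supremum inside: I set $N_{s,\rho_{0,1},\rho_{0,2}}:=\bigcup_{n\in\N,\,n\geq s}N^n$, still $\P$\-null, so that on its complement the displayed bound holds for all $t\geq s$. Since the right\-hand exponential is nondecreasing in $t$ and $(t-s)\leq T$ on $[s,T]$, taking the supremum over $t\in[s,T]$ produces exactly $C(T)\norm{\rho_{0,1}(\omega)-\rho_{0,2}(\omega)}_{L^1(\T^d)}$, which is \eqref{eq:contraction}.

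The analytic weight has already been carried by Theorem~\ref{thm_unique} and Proposition~\ref{prop_exist}, so the main obstacle here is organisational rather than computational: one must check with care that the shifted field $\bar\rho(t-s,\rho_0,\theta_s\omega)$ is a genuine $(\mathcal{F}_{t,s})$\-predictable solution of the $s$\-started equation — this is exactly where the helix property of $W$ and the compatibility of the two\-parameter filtration with $\theta$ are indispensable — and that the per\-$T$ null sets of Theorem~\ref{thm_unique} amalgamate into a single one. The amalgamation succeeds only because, with $\lambda\equiv1$, the contraction constant is deterministic and monotone in $t$; had it retained the random $W^{1,1}$\-norm of $\lambda(\rho^i)$, the uniform\-in\-$T$ supremum bound with one exponential constant would not follow so cleanly.
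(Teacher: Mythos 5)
Your proposal is correct in substance, and for the contraction estimate it coincides with what the paper does: Theorem~\ref{thm_unique} with $\lambda\equiv 1$ (so that $\norm{\lambda(\rho^i)}_{L^1([s,t];W^{1,1}(\TT^d))}$ degenerates to the deterministic quantity $t-s$), monotonicity of the exponential in $t$, and amalgamation of the per-horizon null sets over integer $T=n\in\N$ using uniqueness — exactly the content of the remark following Theorem~\ref{thm:sks}. Where you genuinely diverge is in the construction of the solution started at an arbitrary time $s$. The paper adapts the intrinsic existence theory (Proposition~\ref{prop_exist} and the arguments of \cite{FG21}) to the interval $[s,\infty)$ with the filtration $(\mathcal{F}_{t,s})_{t\geq s}$, for which the post-$s$ noise increments form a cylindrical Wiener process; the cocycle identity $\rho(t+s,s,\rho_0,\omega)=\rho(t,0,\rho_0,\theta_s\omega)$ is then proved \emph{later} (Lemma~\ref{lem:crudecocycle}) by showing both sides solve the same equation and invoking uniqueness. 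You instead \emph{define} the $s$-started solution as the shift $\bar\rho(t-s,\rho_0,\theta_s\omega)$. This is legitimate, but it front-loads the work of Lemma~\ref{lem:crudecocycle}: to know that the shifted field solves the $s$-started It\^o equation you must establish the transformation identity for stochastic integrals under $\theta_s$ (the paper's \eqref{eq:cocycleint}, proved via elementary predictable processes and the helix property), and you should say explicitly that $\mathbb{P}$-null sets pull back to $\mathbb{P}$-null sets under $\theta_s$ because $\theta_s$ preserves $\mathbb{P}$. What your route buys is that the crude cocycle property becomes a tautology; what it costs is that the verification you defer to a parenthetical remark is precisely the nontrivial step, and by uniqueness the two constructions produce indistinguishable processes anyway, so nothing is gained at the level of the theorem itself.
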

\begin{remark}
Note that we have extended the well-posedness result and stability estimate of Section~\ref{sec:stability}, i.e. Theorem~\ref{thm_unique}, to arbitrary time horizons. It is clear that the null set on which the stability estimate~\eqref{eq:contraction} and continuity in the initial time~\eqref{eq:conts} are valid can be made independent of the time horizon $T \geq 0$ by restricting to time horizons $n \in \N$ and using the uniqueness of stochastic kinetic solutions.
\end{remark}

\subsection{Existence of a stochastic flow}
The first step in the construction of a random dynamical system is constructing a stochastic semiflow. For all $p \in [1,\infty]$, we will use $L^p_+(\T^d)$ to denote the space of nonnegative $L^p(\T^d)$ functions. For every fixed $s \in [0,\infty),\rho_0 \in L^p_+(\T^d)$, we can use Theorem~\ref{thm:sks} to obtain an $\mathcal{F}_{t,s}$-predictable stochastic kinetic solution which we denote by $\rho=\rho(t,s,\rho_0,\omega)\in L^1(\Omega; C([s,\infty);L^1_+(\T^d)))$ defined on the probability space $(\Omega,\mathcal{F}, \mathbb{P})$. Thus, for all $s \geq 0$ and $\rho_0 \in L^p_+(\T^d)$, there exists a $\mathbb{P}$-null set $N_{s,\rho_0}$, such that for all $\omega \in N_{s,\rho_0}^c$ the map $[s,\infty) \ni t \mapsto \rho(t,s,\rho_0,\omega) \in L^1_+(\T^d)$ is continuous.

Similarly, for every fixed $s \in [0,\infty),\rho_0 \in \rho_0 \in L^p(\Omega;L^p_+(\T^d)) \cap L^{m+p-1}(\Omega;L^1_+(\T^d))$, we can use Theorem~\ref{thm:sks} to obtain an $\mathcal{F}_{t,s}$-predictable stochastic kinetic solution which we denote by $\bar \rho=\bar \rho(t,s,\rho_0,\omega)\in L^1(\Omega; C([s,\infty);L^1_+(\T^d)))$ defined on the probability space $(\Omega,\mathcal{F}, \mathbb{P})$. Note that for fixed $t,s \in \Delta$ and $\omega \in \Omega$, $\bar \rho$ depends on the entire random variable $\rho_0$ and not just on its evaluation $\rho_0(\omega)$. Again, for all $s \geq 0$ and $\rho_0 \in L^p(\Omega;L^p_+(\T^d))$, there exists a $\mathbb{P}$-null set $N_{s,\rho_0}$, such that for all $\omega \in N_{s,\rho_0}^c$ the map $[s,\infty) \ni t \mapsto \bar \rho(t,s,\rho_0,\omega) \in L^1_+(\T^d)$ is continuous. We remark here that the difference between $\bar \rho$ and $\rho$ as they are defined is slightly artificial. The only reason we make a distinction between them is that we want to think about $\rho$ as the solution map for deterministic initial data $\rho_0\in L^p_+(\T^d)$ and $\bar \rho $ as the solution map for random initial data  $\rho_0 \in L^p(\Omega;L^p_+(\T^d)) \cap L^{m+p-1}(\Omega;L^1_+(\T^d))$. Clearly, for all $\rho_0 \in L^p_+(\T^d)$ and $t,s \in \Delta$, $\rho(t,s,\rho_0,\omega)=\bar \rho(t,s,\rho_0,\omega)$.

We start by establishing a very crude version of the semiflow property.
\begin{lemma}[Very crude semiflow property]
Fix $s\geq0$ and let $\rho$ and $\bar \rho$ be as defined above. Then, for any $\rho_0 \in L^p_+(\T^d)$ and $s_1 \in[s,\infty)$, there exists a $\mathbb{P}$-null set $N_{s,s_1,\rho_0}$, such that for all $t \in [s_1,\infty) $ and $\omega \in N_{s,s_1,\rho_0}^c $, we have
\begin{equation}
\rho(t,s,\rho_0,\omega)= \bar \rho(t,s_1,\rho(s_1,s,\rho_0,\cdot),\omega) \, .
\label{eq:vcrude}
\end{equation}
\label{lem:vcrude}
\end{lemma}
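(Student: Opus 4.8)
The plan is to exhibit the time-restricted process $\tilde\rho(t,\omega):=\rho(t,s,\rho_0,\omega)$, $t\in[s_1,\infty)$, as a stochastic kinetic solution of~\eqref{eq:DK} started at time $s_1$ from the random datum $\rho(s_1,s,\rho_0,\cdot)$, and then to invoke the uniqueness assertion of Theorem~\ref{thm:sks} to identify it with $\bar\rho(\cdot,s_1,\rho(s_1,s,\rho_0,\cdot),\cdot)$. The structural input is that the kinetic identity~\eqref{it_7} holds almost surely simultaneously for all times (both sides being continuous in $t$), so that subtracting the identity evaluated at $s_1$ from the identity evaluated at $t\ge s_1$ yields exactly the kinetic identity for a solution started at $s_1$: the space--time integrals then run over $[s_1,t]$, and the relevant kinetic measure is the restriction $\tilde q$ of $q$ to $\T^d\times(0,\infty)\times[s_1,\infty)$.

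First I would check that the new datum is admissible. By predictability, $\rho(s_1,s,\rho_0,\cdot)$ is $\mathcal{F}_{s_1,s}$-measurable and nonnegative; since $\mathcal{F}_{s_1,s}$ is generated by the increments of $\xi$ on $[s,s_1]$, it is independent of the increments on $[s_1,\infty)$ that drive the restarted equation, which is precisely the independence needed to treat it as a random initial condition at time $s_1$. The integrability $\rho(s_1,s,\rho_0,\cdot)\in L^p(\Omega;L^p(\T^d))\cap L^{m+p-1}(\Omega;L^1(\T^d))$ follows from the a priori bounds underlying Theorem~\ref{thm:sks}: the $L^1$-norm is controlled through the mass balance~\eqref{it_3} together with the linear growth of $f$ from Assumption~\ref{assdrift} and Gr\"onwall's inequality, while the higher moment in $L^p$ is inherited from the moment estimates used in the existence proof.

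Next I would verify the items of Definition~\ref{def_sol_new} for $\tilde\rho$ on $[s_1,\infty)$. Continuity in $L^1(\T^d)$, preservation of mass, integrability of the flux, the reaction-term regularity, and the local regularity are all immediate upon restriction to the later interval; the restricted measure $\tilde q$ inherits the regularity~\eqref{it_5} and the vanishing-at-infinity property~\eqref{it_6}; and the equation~\eqref{it_7} for $\tilde\rho$ is produced by the subtraction described above, where one uses that $\int_{s_1}^{t}\nabla\cdot(\sigma(\rho)\dd\xi)$ is the increment over $[s_1,t]$ of the martingale term and is adapted to the filtration generated by $\rho(s_1,s,\rho_0,\cdot)$ and the increments of $\xi$ on $[s_1,\cdot]$. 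This establishes that $\tilde\rho$ is a stochastic kinetic solution of~\eqref{eq:DK} started at $s_1$ with datum $\rho(s_1,s,\rho_0,\cdot)$.

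It then remains to conclude by uniqueness. Applying the contraction estimate~\eqref{eq:contraction} at starting time $s_1$ to the two stochastic kinetic solutions $\tilde\rho$ and $\bar\rho(\cdot,s_1,\rho(s_1,s,\rho_0,\cdot),\cdot)$, which share the datum $\rho(s_1,s,\rho_0,\cdot)$, produces a single $\mathbb{P}$-null set, which we take to be $N_{s,s_1,\rho_0}$, off of which $\sup_{t\in[s_1,T]}\|\tilde\rho(t)-\bar\rho(t,s_1,\rho(s_1,s,\rho_0,\cdot),\cdot)\|_{L^1(\T^d)}=0$ for every $T\in[s_1,\infty)$, whence~\eqref{eq:vcrude} holds for all $t\in[s_1,\infty)$. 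I expect the verification in the third paragraph to be the main obstacle, specifically checking that the kinetic measure and the stochastic-integral (martingale) term split cleanly across the splitting time $s_1$, and that the random datum lies in the noise-independent $\sigma$-algebra $\mathcal{F}_{s_1,s}$ so that Theorem~\ref{thm:sks} genuinely applies at starting time $s_1$.
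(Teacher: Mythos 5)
Your proposal is correct and follows essentially the same route as the paper: split the kinetic identity~\eqref{it_7} at the intermediate time $s_1$, recognize the restricted process as a stochastic kinetic solution started at $s_1$ from the datum $\rho(s_1,s,\rho_0,\cdot)$, and conclude by the stability estimate~\eqref{eq:contraction}. The paper's proof is in fact terser than yours, omitting the explicit verification of the admissibility of the random datum and the splitting of the kinetic measure and martingale term, which you rightly flag as the points requiring care.
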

\begin{proof}
Consider the equation satisfied by $\rho(t,s,\rho_0,\omega)$ for some  $\rho_0 \in L^p_+(\T^d)$, i.e.  for any $\varphi \in C_c^\infty(\T^d \times (0,\infty))$, we have
\begin{align*}
		&\int_{\R}\int_{\T^d} \chi(x,\xi,t) \varphi(x,\xi) \dx{x} \dx{\xi} =  \int_{\R} \int_{\T^d} \bar{\chi}(\rho_0) \varphi(x,\xi) \dx{x}\dx{\xi}\\& -\int_s^t \int_{\T^d}\Phi'(\rho(r,s,\rho_0,\omega)) \nabla \rho(r,s,\rho_0,\omega) \cdot \nabla \varphi(x,\rho) \dx{x} \dx{r} \\&+\frac{F_1}{2}\int_s^t \int_{\T^d} [\sigma'(\rho(r,s,\rho_0,\omega))]^2\nabla\rho(r,s,\rho_0,\omega) \cdot \nabla \varphi(x,\rho(r,s,\rho_0,\omega)) \dx{x}\dx{r} \\
		&- \int_s^t \int_{\R} \int_{\T^d}(\partial_{\xi} \varphi(x,\xi)) \dx{q} +\frac{F_3}{2}\int_s^t \int_{\T^d} \sigma^2(\rho(r,s,\rho_0,\omega)) (\partial_\xi \varphi)(x,\rho(r,s,\rho_0,\omega)) \dx{x}\dx{r} \\
		&- \int_s^t\int_{\T^d} \varphi(x,\rho(r,s,\rho_0,\omega))\nabla \cdot \nu(\rho(r,s,\rho_0,\omega)) \dx{x}\dx{r} \\
		&- \int_s^t\int_{\T^d} \varphi(x,\rho(r,s,\rho_0,\omega))(\nabla \cdot  B(\rho(r,s,\rho_0,\omega)) + f(\rho(r,s,\rho_0,\omega))) \dx{x}\dx{r}\\
		&- \int_{\T^d}\int_s^t \varphi(x,\rho(r,s,\rho_0,\omega)) \nabla \cdot(\sigma(\rho(r,s,\rho_0,\omega)) \dx{\xi}^F) \dx{x}  \, .
	\end{align*}
Given $s_1\in[s,\infty)$, we split the integrals from $s$ to $t$  into the sum of integrals from $s$ to $s_1$ and $s_1$ to $t$, to obtain
\allowdisplaybreaks
\begin{align*}
		&\int_{\R}\int_{\T^d} \chi(x,\xi,t) \varphi(x,\xi) \dx{x} \dx{\xi} =  \int_{\R} \int_{\T^d} \bar{\chi}(\rho(s_1,s,\rho_0,\omega)) \varphi(x,\xi) \dx{x}\dx{\xi}\\
		&-\int_{s_1}^t \int_{\T^d}\Phi'(\rho(r,s,\rho_0,\omega)) \nabla \rho(r,s,\rho_0,\omega) \cdot \nabla \varphi(x,\rho) \dx{x} \dx{r} \\&+\frac{F_1}{2}\int_{s_1}^t \int_{\T^d} [\sigma'(\rho(r,s,\rho_0,\omega))]^2\nabla\rho(r,s,\rho_0,\omega) \cdot \nabla \varphi(x,\rho(r,s,\rho_0,\omega)) \dx{x}\dx{r} \\
		&- \int_{s_1}^t \int_{\R} \int_{\T^d}(\partial_{\xi} \varphi(x,\xi)) \dx{q} +\frac{F_3}{2}\int_{s_1}^t \int_{\T^d} \sigma^2(\rho(r,s,\rho_0,\omega)) (\partial_\xi \varphi)(x,\rho(r,s,\rho_0,\omega)) \dx{x}\dx{r} \\
		&- \int_{s_1}^t\int_{\T^d} \varphi(x,\rho(r,s,\rho_0,\omega))\nabla \cdot \nu(\rho(r,s,\rho_0,\omega)) \dx{x}\dx{r} \\
		&- \int_{s_1}^t\int_{\T^d} \varphi(x,\rho(r,s,\rho_0,\omega))(\nabla \cdot  B(\rho(r,s,\rho_0,\omega)) + f(\rho(r,s,\rho_0,\omega))) \dx{x}\dx{r}\\
		&- \int_{\T^d}\int_{s_1}^t \varphi(x,\rho(r,s,\rho_0,\omega)) \nabla \cdot(\sigma(\rho(r,s,\rho_0,\omega)) \dx{\xi}^F) \dx{x}  \, .
\end{align*}
It follows that the curve $[s_1,\infty) \ni t \mapsto \rho(t,s,\rho_0,\cdot)$ is a stochastic kinetic solution of~\eqref{eq:DK} with initial datum $\rho (s_1,s,\rho_0,\cdot)$. Applying the stability estimate~\eqref{eq:contraction}, the result follows.
\end{proof}
We now use the above crude version of the semiflow property along with gradient estimates from Section~\ref{sec:gradient} to obtain continuity of the solution map with respect to the initial time.

\begin{prop}
Let $\xi^F$, $\Phi$, $\sigma$, $\nu$, $\lambda$, $f$, and $B$ satisfy Assumptions~\ref{assumption_1}, ~\ref{assumption_2}, \ref{assumption_3}, \ref{assume_B}, and \ref{assume_grad}, let $\rho_{0}\in H^1(\T^d)$ be nonnegative, and $\rho(t,s,\rho_0,\omega)$ be the unique stochastic kinetic solution of \eqref{eq:DK} with initial datum $\rho_{0}$. Then, there exist $\eta ,\eps_0\in (0,1) $, such that for all $\eps < \eps_0$, there exists a $\mathbb{P}$-null set $N_{\rho_0}$, such that for all $T \geq 0$, there exists a random variable $X_{\rho_0,T}\in L^p(\Omega)$, $\forall p \ge 1$, such that for all $\omega \in N_{\rho_0}^c$, $s_1,s_2 \in [0,T]\cap \mathbb{Q}$, and $t \in [s_1,T]\cap [s_2,T]$, we have
\begin{equation}
\norm{\rho(t,s_1,\rho_0,\omega)- \rho(t,s_2,\rho_0,\omega)}_{L^1(\T^d)} \leq X_{\rho_0,T} (\omega) \abs{s_1-s_2}^\eta \, .
\label{eq:conts}
\end{equation}
\label{prop:conts}
\end{prop}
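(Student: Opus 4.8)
My plan is to reduce continuity in the initial time to a temporal Hölder estimate for a single solution, and then to establish the latter from the weak formulation together with the gradient bound \eqref{eq:grad_est_2}. Throughout I may assume $s_1 \leq s_2$, so that $t \geq s_2$. First I would apply the crude semiflow property (Lemma~\ref{lem:vcrude}) with intermediate time $s_2$: for $t \geq s_2$ the curve $t \mapsto \rho(t,s_1,\rho_0,\cdot)$ is a stochastic kinetic solution started at time $s_2$ from the random datum $\rho(s_2,s_1,\rho_0,\cdot)$, while $\rho(t,s_2,\rho_0,\cdot)$ starts at $s_2$ from $\rho_0$. The contraction \eqref{eq:contraction} then gives, off a null set,
\[
\sup_{t\in[s_2,T]}\norm{\rho(t,s_1,\rho_0,\omega)-\rho(t,s_2,\rho_0,\omega)}_{L^1(\T^d)} \leq C(T)\,\norm{\rho(s_2,s_1,\rho_0,\omega)-\rho_0}_{L^1(\T^d)} ,
\]
so it is enough to control the temporal increment $\norm{\rho(s_2,s_1,\rho_0,\omega)-\rho_0}_{L^1}$.

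The heart of the argument is a moment bound for this increment. Fixing a start time $s$ and later times $t'\le t$, I would test \eqref{eq:DK} against $\psi\in H^2(\T^d)$ and integrate over $[t',t]$, writing $\int(\rho(t)-\rho(t'))\psi$ as the sum of the drift contributions $\int_{t'}^t\int \Phi(\rho)\Delta\psi+(\nu(\rho)+B(\rho))\cdot\nabla\psi-f(\rho)\psi$ and the martingale $\sqrt{\eps}\int_{t'}^t\int\sigma(\rho)\nabla\psi\cdot \dd\xi$. Using the growth hypotheses of Assumptions~\ref{assumption_2} and~\ref{assumption_3}, the mass bound, and the uniform-in-time estimate \eqref{eq:grad_est_2}, each drift term is of order $\abs{t-t'}$, while the Burkholder--Davis--Gundy inequality and the finiteness of $F_1,F_3$ give the martingale a $p$-th moment of order $\abs{t-t'}^{p/2}$, with constants independent of $s$. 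This bounds the increment in $H^{-2}(\T^d)$ by $\E\norm{\rho(t)-\rho(t')}_{H^{-2}}^p\lesssim\abs{t-t'}^{p/2}$. Since $\Phi'\ge c_1$ by stochastic coercivity and $\Phi'$ is bounded (Assumption~\ref{assume_grad}), one has $\abs{\nabla\rho}\le\abs{\nabla\Phi(\rho)}/c_1$, so \eqref{eq:grad_est_2} also yields $\sup_t\E\norm{\rho(t)}_{H^1}^p<\infty$. Interpolating $L^2=[H^{-2},H^1]_{1/3}$ and using $\norm{\cdot}_{L^1}\le\abs{\T^d}^{1/2}\norm{\cdot}_{L^2}$ then produces, for some $\alpha\in(0,\tfrac12)$ and every $p$, a constant $c=c(p,T,\rho_0)$ independent of $s$ with $\E\norm{\rho(t,s,\rho_0)-\rho(t',s,\rho_0)}_{L^1}^p\le c\,\abs{t-t'}^{\alpha p}$.

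To upgrade these moment bounds to a single pathwise Hölder constant uniform over all initial and terminal times, I would view $(s_1,s_2)\mapsto \rho(s_2,s_1,\rho_0,\cdot)-\rho_0$ as an $L^1(\T^d)$-valued random field on $\{s_1\le s_2\}\subset[0,T]^2$. Its increment in the terminal variable is controlled by the temporal estimate just proved, and its increment in the initial variable is controlled by the reduction of the first step, which bounds $\norm{\rho(t,s_1,\rho_0)-\rho(t,s_1',\rho_0)}_{L^1}$ by $C(T)\norm{\rho(s_1',s_1,\rho_0)-\rho_0}_{L^1}\lesssim\abs{s_1-s_1'}^{\alpha}$ in $L^p$. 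For $p$ large these two bounds verify the hypotheses of the Banach-space--valued Kolmogorov--Chentsov theorem for two-parameter fields, yielding a $\mathbb{P}$-null set $N_{\rho_0}$ and a random variable $X_{\rho_0,T}\in\bigcap_{p\ge1}L^p(\Omega)$ with $\norm{\rho(s_2,s_1,\rho_0,\omega)-\rho_0}_{L^1(\T^d)}\le X_{\rho_0,T}(\omega)\abs{s_2-s_1}^\eta$ for all rational $s_1\le s_2$ in $[0,T]$, using that the field vanishes on the diagonal. Combined with the first step and the uniqueness argument of the remark following Theorem~\ref{thm:sks} to render $N_{\rho_0}$ independent of $T$, this is exactly \eqref{eq:conts}.

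I expect the temporal regularity of the second step to be the main obstacle. Because the diffusion is nonlinear and the noise conservative, one cannot test directly in $L^2$ but must pass through a negative Sobolev space, and the gradient structure of the martingale term forces reliance on the control of $\nabla\Phi(\rho)$ rather than $\nabla\rho$; this is precisely why the estimate \eqref{eq:grad_est_2}, available only for $\rho_0\in H^1(\T^d)$ and $\eps$ small, is indispensable, and why the conversion through $\Phi'\ge c_1$ and the $H^{-2}$--$H^1$ interpolation are the delicate points. A secondary difficulty is that the Hölder constant must be uniform over all starting times at once, which rules out a naive supremum of fixed-start constants and necessitates the two-parameter Kolmogorov argument fed by increments in both time variables.
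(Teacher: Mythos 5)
Your proposal is correct and follows the same overall architecture as the paper's proof: reduce, via the crude semiflow property of Lemma~\ref{lem:vcrude} and the pathwise contraction \eqref{eq:contraction}, to controlling the single increment $\norm{\rho(s',s,\rho_0,\omega)-\rho_0}_{L^1(\T^d)}$; prove a $p$-th moment bound of order $\abs{s'-s}^{\alpha p}$ for that increment by trading the spatial regularity supplied by the gradient estimate (Proposition~\ref{prop_grad_est} / \eqref{eq:grad_est_2}, whence the hypotheses $\rho_0\in H^1$ and $\eps$ small) against temporal regularity; then apply Kolmogorov's continuity theorem and restrict to rational start times. Where you genuinely diverge is in the implementation of the two technical steps. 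For the space--time trade, the paper mollifies at scale $\delta$, bounds $\norm{\rho^\delta(s')-\rho_0^\delta}_{L^1}$ directly from the equation (drift terms of order $\abs{s'-s}\delta^{-2}$, martingale of order $\abs{s'-s}^{1/2}\delta^{-(1+d/2)}$), pays $C\delta\norm{\nabla\rho}_{L^2}$ for the mollification error, and equilibrates to obtain the exponent $\tfrac{1}{d+4}$; you instead pass through negative-Sobolev duality and interpolate against the uniform $H^1$ moment (obtained from $\Phi'\geq c_1$ and mass conservation), which yields a dimension-independent exponent. These are two standard realizations of the same interpolation. For the Kolmogorov step, the paper applies the one-parameter theorem to the $C([0,T];L^1(\T^d))$-valued field $s\mapsto\rho(\cdot,s,\rho_0,\cdot)$, since the reduction already makes the bound uniform in $t$, whereas you invoke a two-parameter version for $(s_1,s_2)\mapsto\rho(s_2,s_1,\rho_0,\cdot)-\rho_0$ and exploit vanishing on the diagonal; both work because $p$ is arbitrary, though the one-parameter route is leaner. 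One detail to tighten: testing against $\psi\in H^2(\T^d)$ only pairs the merely $L^1$-integrable terms $\nu(\rho)$, $B(\rho)$, $f(\rho)$ with $\nabla\psi$ when $H^2(\T^d)\hookrightarrow W^{1,\infty}(\T^d)$, i.e.\ in low dimension; in general you should take $\psi\in H^k(\T^d)$ with $k>\tfrac{d}{2}+1$ (or first establish $L^2$ bounds on these coefficients), which only changes the value of $\alpha$ and not the conclusion.
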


\begin{proof}
Due to the pathwise stability estimate \eqref{eq:pathwise_stability} and the (crude) semiflow property proved in Lemma \ref{lem:vcrude}, we have that, for all $s\le s'$, $\mathbb{P}$-a.s., for all $t$, 
\begin{align}
&\|\rho(t,s,\rho_{0},\omega)-\rho(t,s',\rho_{0},\omega)\|_{C([0,T];L^{1}(\T^d))} \\  = &\sup_{t\in[0,T]}\|\bar \rho(t,s',\rho(s',s,\rho_0,\cdot),\omega)-\rho(t,s',\rho_{0},\omega)\|_{L^1(\T^d)}\\
   \le & \|\rho(s',s,\rho_{0},\omega)-\rho_{0}\|_{L^1(\T^d)} \, ,
\end{align}
where we extend $\rho(t,s,\rho_0,\omega)$ by $\rho_0$ on $[0,s)$. For every $\delta\in(0,\infty)$, let $\kappa^{\delta}\colon\TT^{d}\rightarrow[0,1]$ be a smooth convolution kernel at scale $\delta$ on $\TT^{d}$ and let $\rho^{\delta}=(\rho*\kappa^{\delta})$. We then have, $\mathbb{P}$-a.s., for all $s\le s'$, 
\begin{align*}
& \|\rho(s',s,\rho_0,\omega)-\rho_{0}\|_{L^{1}(\T^d)}  \le \|\rho^{\delta}(s',s,\rho_0,\omega)-\rho_{0}^{\delta}\|_{L^{1}(\T^d)}
\\& \quad +\|\rho(s',s,\rho_{0},\omega)-\rho^{\delta}(s',s,\rho_{0},\omega)\|_{L^{1}(\T^d)}+\|\rho_{0}-\rho_{0}^{\delta}\|_{L^{1}(\T^d)}.
\end{align*}
It follows from the standard convolution estimate that, $\mathbb{P}$-a.s., for all $s\le s'$,
\[
\|\rho(s',s,\rho_{0},\omega)-\rho^{\delta}(s',s,\rho_{0},\omega)\|_{L^{1}(\T^d)}\leq C\delta\norm{\nabla\rho(s',s,\rho_{0},\omega)}_{L^{2}(\TT^{d})}.
\]
We thus have
\begin{align*}
&\E\|\rho(t,s,\rho_{0},\omega)-\rho(t,s',\rho_{0},\omega)\|_{C([0,T];L^{1}(\T^d))}^{p} \\
& \le\E \|\rho(s',s,\rho_{0},\omega)-\rho_{0}\|_{L^1(\T^d)}^{p}\\
 & \lesssim \E\|\rho^{\delta}(s',s,\rho_{0},\omega)-\rho_{0}^{\delta}\|_{L^{1}(\T^d)}^{p}+\delta^{p}(\E\norm{\nabla\rho(s',s,\rho_{0},\omega)}_{L^{2}(\TT^{d})}^{p}+\E\norm{\nabla\rho_{0}}_{L^{2}(\TT^{d})}^{p}).
\end{align*}
We next notice that
\begin{align}
&\E\|\rho^{\delta}(s',s,\rho_{0},\omega)-\rho_{0}^{\delta}\|_{L^{1}_x}^{p}\\
\lesssim &  \E\left\|\int_{s}^{s'}\kappa^{\delta}*(\nabla\cdot\Phi'(\rho)\nabla\rho)\right\|_{L^{1}(\T^d)}^{p}+\E\left\|\int_{s}^{s'}\kappa^{\delta}*(\nabla\cdot\nu(\rho))\right\|_{L^{1}(\T^d)}^{p}\label{pt_2-1-1}\\
 & +\E\left \|\int_{s}^{s'}\kappa^{\delta}*(\nabla\cdot(B(\rho)))\right\|_{L^{1}(\T^d)}^{p} + \E\left \|\int_{s}^{s'}\kappa^{\delta}*f(\rho)\right\|_{L^{1}(\T^d)}^{p}
 \\&+\E\left\|\int_{s}^{s'}\sqrt{\eps}\kappa^{\delta}*(\nabla\cdot(\sigma(\rho)\dd\xi^F)\right\|_{L^{1}(\T^d)}^{p}.\nonumber 
\end{align}
So, by It\^o's formula and Sobolev embedding, we have that
\begin{align*}
&\E\left\|\int_{s}^{s'}\sqrt{\eps}\kappa^{\delta}*(\nabla\cdot(\sigma(\rho)\dd\xi^F)\right\|_{L^{1}(\T^d)}^{p} 
\\ & \le \E\left\|\sqrt{\eps}\int_{s}^{s'}\kappa^{\delta}*(\nabla\cdot(\sigma(\rho)\dd\xi^F)\right\|_{L^{2}(\T^d)}^{p}\\
  & \le \eps^{\frac p2}\sum_{k}\E\left\|\mathbf{1}_{[s,s']}\kappa^{\delta}*(\nabla\cdot(\sigma(\rho)e_{k}))\right\|_{L^{2}([0,T];L^{2}(\T^d))}^{p}\\
  & \lesssim \eps^{\frac p2}|s'-s|^{p/2}\sum_{k}\E\|\kappa^{\delta}*(\nabla\cdot(\sigma(\rho)e_{k}))\|_{L^{\infty}([0,T];L^{2}(\T^d))}^{p}\\
  & \lesssim \eps^{\frac p2} F_1^{\frac p2} |s'-s|^{p/2}\delta^{-p(1+\frac{d}{2})}\E\|\sigma(\rho)\|_{L^{\infty}([0,T];L^{1}(\T^d))}^{p}.
\end{align*} 
We thus obtain that
\begin{align}
&\E\|\rho^{\delta}(s',s,\rho_{0},\omega)-\rho_{0}^{\delta}\|_{L^{1}_{x}}^{p}\\
& \lesssim   \delta^{-2}|s'-s|^{p}\E\|\Phi(\rho)\|_{L^{\infty}([0,T];L^{1}(\T^d))}^{p}\\& \quad +\delta^{-1}|s'-s|^{p}\E(\|\nu(\rho)\|_{L^{\infty}([0,T];L^{1}(\T^d))}^{p}+\|B(\rho)\|_{L^{\infty}([0,T];L^{1}(\T^d))}^{p})\label{pt_2-1-1-1}\\
& \quad +|s'-s|^{p}\E\|f(\rho)\|_{L^{\infty}([0,T];L^{1}(\T^d))}^{p} \\
 & \quad +|s'-s|^{p/2}\delta^{-p(1+\frac{d}{2})}\E\|\sigma(\rho)\|_{L^{\infty}([0,T];L^{1}(\T^d))}^{p}.\nonumber 
\end{align}
Using the fact that all coefficients have sublinear growth, since $\sigma,\Phi\in\C_{b}^{2}([0,\infty))$, all expectations on the right hand side are finite, and we obtain that
\begin{align*}
\E\|\rho^{\delta}(s',s,\rho_{0},\omega)-\rho_{0}^{\delta}\|_{L^{1}(\T^d)}^{p}\lesssim & |s'-s|^{p/2}\delta^{-p(1+\frac{d}{2})}.
\end{align*}
As a result we have shown that
\begin{align*}
&\E\|\rho(t,s,\rho_{0},\omega)-\rho(t,s',\rho_{0},\omega)\|_{C([0,T];L^1(\T^d))}^{p} \\
& \lesssim |s'-s|^{p/2}\delta^{-p(1+\frac{d}{2})}+\delta^{p}(\E\norm{\nabla\rho(s',s,\rho_{0},\omega)}_{L^{2}(\TT^{d})}^{p}+\E\norm{\nabla\rho_{0}}_{L^{2}(\TT^{d})}^{p}). 
\end{align*}
Equilibrating via $|s'-s|^{p/2}\delta^{-p(1+\frac{d}{2})}=\delta^{p}$ gives
\[
|s'-s|^{\frac{p}{d+4}}=:\delta^{p}
\]
and thus
\begin{align*}
&\E\|\rho(t,s,\rho_{0},\omega)-\rho(t,s',\rho_{0},\omega)\|_{C([0,T];L^1(\T^d))}^{p} \\
& \lesssim|s'-s|^{\frac{p}{d+4}}(\E\norm{\nabla\rho(s',s,\rho_{0},\omega)}_{L^{2}(\TT^{d})}^{p}+\E\norm{\nabla\rho_{0}}_{L^{2}(\TT^{d})}^{p}+1).
\end{align*}
We note that by Assumption \ref{assume_grad} and Proposition \ref{prop_grad_est}, the right hand side is finite for all $p\ge1$ if $\eps$ is sufficiently small\footnote{This is equivalent to $F_1$ being sufficiently small}. Since $p>d+4$, we can apply Kolmogorov's continuity theorem, which implies that there is an $\eta>0$, a random variable $X_{\rho_0,T}$ with $X_{\rho_0,T}\in L^{p}(\Omega)$, and a modification of $(\omega,s)\mapsto\rho(t,s,\rho_{0},\omega)$ (which we continue to denote by $\rho(t,s,\rho_0,\omega)$), so that, for all $s\le s' \leq T $, $\mathbb{P}$-a.s.,
\begin{align*}
\|\rho(t,s,\rho_{0},\omega)-\rho(t,s',\rho_{0},\omega)\|_{C([0,T];L_1(\T^d))} & \lesssim X_{\rho_0,T}(\omega)|s'-s|^{\eta}.
\end{align*}
Since the modification agrees with $\rho$ on a null set dependent on $s$ and $\rho_0$, we can restrict to a countable subset $s \in \mathbb{Q}\cap [0,T]$ to obtain the result of the proposition.
\end{proof}

We will now show that there exists a continuous modification in $t,s$ of the solution map $\rho$. 
\begin{lemma}[Continuous modification]
Assume Assumptions~\ref{assumption_1}, \ref{assumption_2}, \ref{assumption_3}, \ref{assdrift}, and \ref{assume_grad} are satisfied and denote by $\rho$ a stochastic kinetic solution of~\eqref{eq:DK}. Let $X:= C(L^1_+(\T^d);L^1_+(\T^d))$ be equipped with the compact-open topology with $\mathcal{B}(X)$ the corresponding Borel sigma algebra. If $\eps < \eps_0$, then there exists a $\mathbb{P}$-null set $N$ and a map $\Delta \times L^1_+(\T^d) \times \Omega \ni ((t,s),\rho_0,\omega) \mapsto \check \rho(t,s,\rho_0,\omega) \in L^1_+(\T^d)$ such that for all $\omega \in N^c$, $\Delta \ni (t,s)\mapsto \check \rho(t,s,\cdot,\omega) \in X$ and $\Delta \times L^1_+(\T^d) \ni ((t,s),\rho_0)\mapsto \check \rho(t,s,\rho_0,\omega) \in L^1_+(\T^d)$  are continuous. Furthermore, for all $\omega \in N^c$, $\rho_0 \in L^1_+(\T^d)$, $s\in [0,\infty)$ and  $T \in [s,\infty)$, we have
\[
\sup_{t \in [s,T] }\norm{\check \rho(t,s,\rho_{0,1},\omega)- \check \rho(t,s,\rho_{0,2},\omega)}_{L^1(\T^d)} \leq C(T) \norm{\rho_{0,1}-\rho_{0,2}}_{L^1(\T^d)} \, .
\]

 Additionally, for all $\rho_0 \in L^p_+(\T^d)$ (with $p$ as in Assumptions~\ref{assnoise} to~\ref{assdrift}) and $s \in [0,\infty)$, there exists a $\mathbb{P}$-null set $N_{s,\rho_0}$ such that for all $\omega \in N_{s,\rho_0}^c$ and $t \in [s,\infty)$, we have
\begin{equation}
\rho(t,s,\rho_0,\omega)=\check \rho(t,s,\rho_0,\omega) \, .
\label{eq:indistinguish}
\end{equation} 
Finally, the map $\Delta \times \Omega\ni ((t,s),\omega)\mapsto \check\rho(t,s,\cdot,\omega) \in X$ is $(\mathcal{B}(\Delta) \otimes \mathcal{F}, \mathcal{B}(X))$-measurable and the map $\Delta\times L^1_+(\T^d) \times \Omega\ni ((t,s),\rho_0,\omega)\mapsto \check\rho(t,s,\rho_0,\omega) \in L^1_+(\T^d)$ is $(\mathcal{B}(\Delta) \otimes \mathcal{B}(L^1_+(\T^d))\otimes \mathcal{F}, \mathcal{B}(L^1_+(\T^d)))$-measurable.
\label{lem:cmod}
\end{lemma}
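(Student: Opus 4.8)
The plan is to build $\check\rho$ as the continuous extension, off a single $\mathbb{P}$-null set, of the solution map restricted to a countable dense family of initial data and rational initial times, exploiting that the contraction \eqref{eq:contraction} holds with the \emph{deterministic} constant $C(T)$ (here $\lambda\equiv 1$), so that the operators $\rho_0\mapsto\rho(t,s,\rho_0,\omega)$ are equi-Lipschitz uniformly in $(t,s,\omega)$. First I fix a countable set $D$ of smooth nonnegative functions that is dense in $L^1_+(\T^d)$; each element lies in $H^1_+(\T^d)\cap L^p_+(\T^d)$, so both Theorem~\ref{thm:sks} and Proposition~\ref{prop:conts} apply to it. I then assemble one null set $N$ as the countable union of: the exceptional sets from the contraction \eqref{eq:contraction} over all pairs in $D\times D$, all rational $s$, and all integer horizons $T$ (the horizon-uniformity being granted by the Remark after Theorem~\ref{thm:sks}); and the exceptional sets $N_{\rho_0}$ from Proposition~\ref{prop:conts} over all $\rho_0\in D$. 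On $N^c$ the contraction holds simultaneously for all pairs in $D$ and rational $s$, and each $s\mapsto\rho(\cdot,s,\rho_0,\omega)$ with $\rho_0\in D$ is $\eta$-Hölder on the rationals with the fixed finite constant $X_{\rho_0,T}(\omega)$.

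The construction proceeds in two extension steps. Using the equi-Lipschitz bound, for rational $s$ the jointly continuous map $(t,\rho_0)\mapsto\rho(t,s,\rho_0,\omega)$ on $[s,\infty)\times D$ extends uniquely to a jointly continuous, $C(T)$-Lipschitz-in-$\rho_0$ map on $[s,\infty)\times L^1_+(\T^d)$, defining $\check\rho(t,s,\cdot,\omega)$ for rational $s$; nonnegativity is preserved since $L^1_+$ is closed. To extend in $s$ and prove the two continuity assertions I run an equicontinuity argument. For the joint continuity (statement (b)) I split
\[
\|\check\rho(t_n,s_n,\rho_0^{(n)},\omega)-\check\rho(t,s,\rho_0,\omega)\|_{L^1(\T^d)}\le C(T)\|\rho_0^{(n)}-\rho_0\|_{L^1(\T^d)}+\|\check\rho(t_n,s_n,\rho_0,\omega)-\check\rho(t,s,\rho_0,\omega)\|_{L^1(\T^d)},
\]
reducing everything to continuity in $(t,s)$ for fixed $\rho_0$; and for the latter, given $\varepsilon$, I pick $\rho_0'\in D$ with $\|\rho_0-\rho_0'\|_{L^1(\T^d)}<\varepsilon$ and use the equi-Lipschitz bound to write
\[
\sup_{t}\|\check\rho(t,s_n,\rho_0,\omega)-\check\rho(t,s,\rho_0,\omega)\|_{L^1(\T^d)}\le 2C(T)\varepsilon+\sup_{t}\|\check\rho(t,s_n,\rho_0',\omega)-\check\rho(t,s,\rho_0',\omega)\|_{L^1(\T^d)},
\]
the last term vanishing as $s_n\to s$ by Proposition~\ref{prop:conts} together with continuity of solutions in $t$. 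The decisive feature is that only finitely many (in fact a single) element(s) of $D$ enter at each stage, so the $\rho_0$-dependent Hölder constants $X_{\rho_0',T}(\omega)$ never have to be controlled uniformly over $\rho_0$; the same argument with a finite $\varepsilon$-net of $D$ inside a compact $K\subset L^1_+(\T^d)$ yields continuity of $(t,s)\mapsto\check\rho(t,s,\cdot,\omega)$ into $X$ with the compact-open topology (statement (a)). Since these estimates show $s\mapsto\check\rho(t,s,\rho_0,\omega)$ is uniformly continuous on the rationals, $\check\rho$ extends continuously to all real $s$; on $N$ I set $\check\rho(t,s,\rho_0,\omega):=\rho_0$.

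The contraction for $\check\rho$ then passes from $D$ to all of $L^1_+(\T^d)$ by continuity in $\rho_0$. For indistinguishability \eqref{eq:indistinguish}: for $\rho_0\in D$ and rational $s$ the curves coincide on $N^c$ by construction, and for general $\rho_0\in L^p_+(\T^d)$ with $s$ fixed I approximate by $\rho_0^{(n)}\in D$ and pass to the limit using the pathwise contraction of Theorem~\ref{thm:sks} on the $\rho$-side and continuity in $\rho_0$ on the $\check\rho$-side, taking $N_{s,\rho_0}$ to be $N$ together with the countably many contraction-exceptional sets; Proposition~\ref{prop:conts} upgrades this to all real $s$ for $\rho_0\in D$. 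Finally, joint measurability of $((t,s),\rho_0,\omega)\mapsto\check\rho(t,s,\rho_0,\omega)$ is a Carathéodory argument (continuity in $((t,s),\rho_0)$ on the Polish space $\Delta\times L^1_+(\T^d)$, and measurability in $\omega$ as a pointwise limit of the $\mathcal{F}$-measurable maps $\rho(t,s,\rho_0^{(n)},\cdot)$), while measurability into $(X,\mathcal{B}(X))$ follows by testing the subbasic compact-open sets $\{g:g(K)\subseteq U\}$ on a countable dense subset of the compact $K$, which is legitimate because every $g=\check\rho(t,s,\cdot,\omega)$ is $C(T)$-Lipschitz. I expect the main obstacle to be precisely the reconciliation carried out in the second paragraph: extracting joint $(t,s,\rho_0)$-continuity from a deterministic-constant Lipschitz bound in $\rho_0$ combined with only \emph{pointwise} (constant-dependent) Hölder continuity in $s$, which is what forces the finite-net/equicontinuity detour in place of a naive uniform-continuity extension.
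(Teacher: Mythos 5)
Your proof is correct and rests on the same two ingredients as the paper's: the contraction~\eqref{eq:contraction} with its deterministic constant $C(T)$, applied over a countable dense set of initial data and rational initial times, and the H\"older continuity in the initial time from Proposition~\ref{prop:conts}, all collected into a single null set and then extended by uniform continuity, followed by the same approximation argument for~\eqref{eq:indistinguish} and a Carath\'eodory-type measurability argument. The one structural difference is the order of the two extension steps. The paper first extends in $s$ (from $\mathbb{Q}$ to $[0,\infty)$) separately for each fixed $\rho_0\in L^p_+(\T^d)$, obtaining $\check\rho(\cdot,\cdot,\rho_0,\omega)\in C(\Delta;L^1_+(\T^d))$, and only afterwards extends in $\rho_0$ from the countable dense set $\cC$ to all of $L^1_+(\T^d)$ using only the deterministic Lipschitz constant; you extend in $\rho_0$ first (for each rational $s$) and in $s$ second. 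Because the H\"older constant $X_{\rho_0,T}(\omega)$ depends on $\rho_0$, your ordering forces the extra $\varepsilon$-net/equicontinuity detour to obtain $s$-continuity for $\rho_0\notin D$, whereas the paper's ordering sidesteps this issue entirely: its $s$-extension only ever involves one $X_{\rho_0,T}(\omega)$ at a time, and its $\rho_0$-extension uses only $C(T)$. Your detour is sound -- the uniform Lipschitz bound lets you trade a general $\rho_0$ for a single nearby element of $D$ at the cost of $2C(T)\varepsilon$ -- so both routes arrive at the same place; the paper's is marginally more economical, while yours makes explicit that the deterministic contraction constant is the load-bearing hypothesis.
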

\begin{proof}
The idea behind the proof of this lemma is to obtain perfect versions (i.e. with a uniform null set) of the estimates~\eqref{eq:contraction} and~\eqref{eq:conts} of Theorem~\ref{thm:sks} and Proposition~\ref{prop:conts}, respectively, for an appropriately constructed modification of $\rho$. We know already that for every $s \in [0,\infty)$ and $\rho_0 \in L^p_+(\T^d)$, there exists a $\mathbb{P}$-null set $N_{s,\rho_0}$, such that for all $\omega \in N_{s,\rho_0}^c$ the map $[s,\infty) \ni t \mapsto \rho(t,s,\rho_0,\omega) \in L^1_+(\T^d)$ is continuous.

We now fix some $\rho_0 \in L^p_+(\T^d)$ and we restrict to countably many $s \in \mathbb{Q}\cap [0,\infty)$ 
. From the results of Proposition~\ref{prop:conts}, we can then find a single $\mathbb{P}$-null set $N_{\rho_0}$ and a  random variable $X_{\rho_0,T} \in \Leb^2(\Omega)$, such that, for all $\omega \in N_{\rho_0}^c$, $X_{\rho_0,T}(\omega) < \infty$ . Furthermore, for all $\omega \in N_{\rho_0}^c$, $T \geq 0$, $s_1,s_2 \in \mathbb{Q} \in [0,T]$, and $t \in [s_1,T] \cap [s_2,T]$, we have
\begin{equation}
\norm{\rho(t,s_1,\rho_{0},\omega)- \rho(t,s_2,\rho_{0},\omega)}_{L^1(\T^d)} \leq X_{\rho_{0},T} (\omega) \abs{s_1-s_2}^\eta \, .
\end{equation}
 Thus, on $N_{\rho_0}^c$, considered as a map from $\mathbb{Q}\cap [0,\infty)  \to C([0,\infty);L^1_+(\T^d))$ (by extending trivially by $\rho_0$ on $[0,s)$), equipped with the compact-open topology,  $\rho(\cdot,s,\rho_0,\omega)$ is uniformly continuous. We can thus extend it uniquely by uniform continuity on $N_{\rho_0}^c$ to some $\check{\rho}(\cdot,s,\rho_0,\omega)$ defined on $[0,\infty)$ which, when considered as a map from $[0,\infty) \to C([0,\infty);L^1_+(\T^d))$, is continuous on $N_{\rho_0}^c$.  It follows that for each $\rho_0 \in L^p_+(\T^d)$ there exists a $\mathbb{P}$-null set $N_{\rho_0}^c$, such that for all $\omega \in N^c_{\rho_0}$, $\check \rho(\cdot,\cdot,\rho_0,\omega) \in C(\Delta;L^1_+(\T^d))$.

 We now fix a set $\cC$ which is countable and dense in $L^p_+(\T^d)$ (and thus in $L^1_+(\T^d)$). Using the result of Theorem~\ref{thm:sks}, we can find a $\mathbb{P}$-null set $N$ such that for all $\omega \in N^c$, $\rho_{0,1},\rho_{0,2} \in \cC$, and $s \in \mathbb{Q}\cap [0,\infty)$,  $\check \rho(\cdot,\cdot,\rho_0,\omega) \in C(\Delta;L^1_+(\T^d)) $, and for all $T \in [s,\infty)$, we have
 \begin{equation}
\sup_{t \in [s,T] }\norm{\check \rho(t,s,\rho_{0,1},\omega)- \check \rho(t,s,\rho_{0,2},\omega)}_{L^1(\T^d)} \leq C(T) \norm{\rho_{0,1}-\rho_{0,2}}_{L^1(\T^d)} \, .
\label{eq:est}
 \end{equation} 
Since $\check \rho(\cdot,\cdot,\rho_0,\omega) \in C(\Delta;L^1_+(\T^d)) $, we can extend the above estimate to all $s \in [0,\infty)$. Due to~\eqref{eq:est}, on $N^c$, considered as a map from $\cC \to C(\Delta;L^1_+(\T^d))$, equipped with the compact-open topology,  $\check \rho(\cdot,\cdot,\rho_0,\omega)$ is uniformly continuous. We can thus extend it uniquely by uniform continuity on $N^c$ to some $\check{\rho}(\cdot,\cdot,\rho_0,\omega)$ (we do not rename the extension for the sake of notational convenience) defined on $L^1_+(\T^d)$ which, for all $\omega \in N^c$, when considered as a map from $L^1_+(\T^d) \to C(\Delta;L^1_+(\T^d))$, is continuous and satisfies the estimate~\eqref{eq:est}.

It is also clear that $\check{\rho}(t,s,\rho_0,\omega)$ is continuous on $N^c$ as a map from $\Delta \times L^1_+(\T^d) \to L^1_+(\T^d)$. Then, it is a standard fact (see, for example,~\cite[Theorem 46.11]{Mun00}) that, for all $\omega \in N^c$, $\check{\rho}(t,s,\rho_0,\omega)$ is continuous as a map from $\Delta \to X$.

What remains to be shown is that $\check{\rho}(t,s,\rho_0,\omega)$ defines a stochastic kinetic solution of~\eqref{eq:DK} for $\rho_0 \in L^p_+(\T^d)$. We first notice that, by construction, $\check \rho$ agrees with $\rho$ for all $s \in \mathbb{Q}\cap [0,\infty)$, $t\in [s,\infty)$, $\rho_0 \in \cC$, and $\omega \in N^c$. Now, given arbitrary $s\in [0,T]$ (for some $T \geq 0$), and $\rho_0 \in L^p_+(\T^d)$, we can find sequences $\mathbb{Q}\cap [0,T ] \ni s_n \to s$ and $\cC \ni\rho_{0,n} \to \rho_0 $ in $L^p_+(\T^d)$.  In addition, we can find a $\mathbb{P}$-null set $N_{s,\rho_0}$, such that, for all $\omega \in N_{s,\rho_0}^c$, both the stability estimate~\eqref{eq:contraction} and~\eqref{eq:conts} hold true, and, thus, 
\begin{align}
&\norm{\check \rho(t,s_n,\rho_{0,n},\omega)- \rho(t,s,\rho_{0},\omega)}_{L^1(\T^d)}  \\
& \leq \norm{ \rho(t,s_n,\rho_{0,n},\omega)- \rho(t,s_n,\rho_{0},\omega)}_{L^1(\T^d)} + \norm{ \rho(t,s_n,\rho_{0},\omega)- \rho(t,s,\rho_{0},\omega)}_{L^1(\T^d)} \\
& \leq C(T)\norm{\rho_{0,n}-\rho_0}_{L^1(\T^d)} + X_{\rho_{0},T}(\omega)\abs{s_n-s}^\eta \, .
\end{align}
This completes the proof of~\eqref{eq:indistinguish}.

We now know that, for fixed $((t,s),\rho_0) \in \Delta \times L^1_+(\T^d)$, $\check \rho(t,s,\rho_0,\omega)$ is  $(\mathcal{F},\mathcal{B}(L^1_+(\T^d)))$-measurable as a map from $\Omega \to L^1_+(\T^d)$. This follows from the fact that for all $\rho_0 \in L^p_+(\T^d)$, $\check \rho$ agrees with $\rho$ almost surely, the measurability of $\rho$, and the fact that measurability is closed under almost sure convergence. Since the cylindrical $\sigma$-algebra is the same as the Borel $\sigma$-algebra of the compact-open topology, we have that for all $(t,s) \in \Delta$, $\check \rho(t,s,\rho_0,\omega)$ is  $(\mathcal{F},\mathcal{B}(X))$-measurable as a map from $\Omega \to X$.

We also know that, for all $\omega \in N^c$, $\check{\rho}(t,s,\rho_0,\omega)$ is continuous when considered as a map from $\Delta \to X$. It follows, by an approximation argument, that $\check \rho(t,s,\rho_0,\omega)$ is $(\mathcal{B}(\Delta) \otimes \mathcal{F},\mathcal{B}(X))$-measurable.
A similar argument tells us that $\check \rho(t,s,\rho_0,\omega)$ is $(\mathcal{B}(\Delta) \otimes\mathcal{B}(L^1_+(\T^d)) \otimes \mathcal{F},\mathcal{B}(L^1_+(\T^d)))$-measurable.
\end{proof}
We will now upgrade the very crude semiflow property of Lemma~\ref{lem:vcrude} to a perfect one (with a uniform null set) for the continuous modification $\check \rho(t,s,\rho_0,\omega)$. 

\begin{lemma}[(Perfect) semiflow property]
Let $\check \rho$ be as constructed in Lemma~\ref{lem:cmod}. Then, there exists a $\mathbb{P}$-null set $N$, such that for all $\omega \in N^c$, for all $\rho_0 \in L^1_+(\T^d)$, and all $(t,s), (t,s_1) \in \Delta$ with $s \leq s_1$, we have
\[
\check\rho(t,s,\rho_0,\omega)= \check\rho(t,s_1,\check\rho(s_1,s,\rho_0,\omega),\omega) \, .
\]
\label{lem:semiflow}
\end{lemma}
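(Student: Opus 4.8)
The plan is to first establish the identity on a countable dense set of parameters, where the very crude semiflow property of Lemma~\ref{lem:vcrude} together with the indistinguishability~\eqref{eq:indistinguish} can be invoked after discarding only countably many null sets, and then to promote it to all parameters using the joint continuity of $\check\rho$ proved in Lemma~\ref{lem:cmod}. Let $\cC\subset L^p_+(\T^d)$ be the countable dense set fixed in Lemma~\ref{lem:cmod}. For each triple $(s,s_1,\rho_0)$ with $s,s_1\in\mathbb{Q}\cap[0,\infty)$, $s\le s_1$, and $\rho_0\in\cC$, Lemma~\ref{lem:vcrude} supplies a null set outside of which
\[
\rho(t,s,\rho_0,\omega)=\bar\rho(t,s_1,\rho(s_1,s,\rho_0,\cdot),\omega)\qquad\text{for all }t\ge s_1,
\]
and~\eqref{eq:indistinguish} allows us to replace $\rho(t,s,\rho_0,\omega)$ and $\rho(s_1,s,\rho_0,\cdot)$ by $\check\rho(t,s,\rho_0,\omega)$ and $\check\rho(s_1,s,\rho_0,\cdot)$. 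Taking the union over these countably many triples produces a single null set $N_1$.

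The crux, which I expect to be the main obstacle, is to identify for a fixed random variable $Z:=\check\rho(s_1,s,\rho_0,\cdot)$ the solution $\bar\rho(t,s_1,Z,\omega)$ driven by the random datum $Z$ with the pathwise evaluation $\check\rho(t,s_1,Z(\omega),\omega)$ of the continuous modification; that is, to prove
\[
\bar\rho(t,s_1,Z,\omega)=\check\rho(t,s_1,Z(\omega),\omega)\qquad\mathbb{P}\text{-a.s.}
\]
The difficulty is that $\bar\rho$ depends on the whole random variable $Z$ whereas $\check\rho$ depends only on its realization. To prove it I would approximate $Z$ by simple random variables $Z_n=\sum_j c_j\mathbf{1}_{A_j}$ taking values $c_j\in\cC$, with $Z_n(\omega)\to Z(\omega)$ in $L^1(\T^d)$ almost surely, where the sets $A_j$ are measurable with respect to the noise increments up to time $s_1$ and hence independent of the increments driving~\eqref{eq:DK} on $[s_1,\infty)$. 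A standard localization consequence of the uniqueness in Theorem~\ref{thm:sks} then gives, on $A_j$,
\[
\bar\rho(t,s_1,Z_n,\omega)=\bar\rho(t,s_1,c_j,\omega)=\check\rho(t,s_1,c_j,\omega)=\check\rho(t,s_1,Z_n(\omega),\omega),
\]
the middle equality using that $\bar\rho=\rho$ for deterministic data together with~\eqref{eq:indistinguish} for $c_j\in\cC$, $s_1\in\mathbb{Q}$. Passing to the limit $n\to\infty$, the left side converges by the stability estimate~\eqref{eq:contraction} for $\bar\rho$ (since $\E\|Z_n-Z\|_{L^1(\T^d)}\to0$), while the right side converges by the pathwise continuity of $\check\rho(t,s_1,\cdot,\omega)$ in its initial datum from Lemma~\ref{lem:cmod}. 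Enlarging $N_1$ by the corresponding countable family of null sets (indexed by $s,s_1\in\mathbb{Q}$ and $\rho_0\in\cC$) yields, on the complement,
\[
\check\rho(t,s,\rho_0,\omega)=\check\rho(t,s_1,\check\rho(s_1,s,\rho_0,\omega),\omega)
\]
for all such rational $s\le s_1$, all $t\ge s_1$, and all $\rho_0\in\cC$.

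Finally I would remove the rationality and countability restrictions by continuity. Both sides of the desired identity are continuous in $(t,s,s_1,\rho_0)$ on the complement of the uniform null set: the map $((t,s_1),\rho')\mapsto\check\rho(t,s_1,\rho',\omega)$ is jointly continuous $\Delta\times L^1_+(\T^d)\to L^1_+(\T^d)$ by Lemma~\ref{lem:cmod}, and the inner state $\check\rho(s_1,s,\rho_0,\omega)$ is continuous in $(s_1,s,\rho_0)$ by the same lemma, so their composition is continuous, with the dependence on $\rho_0$ controlled by the contraction estimate~\eqref{eq:est}. Given arbitrary $s\le s_1\le t$ and $\rho_0\in L^1_+(\T^d)$, I would pick $s_n,s_{1,n}\in\mathbb{Q}$ with $s_n\to s$, $s_{1,n}\to s_1$, $s_n\le s_{1,n}\le t$, and $\rho_{0,n}\in\cC$ with $\rho_{0,n}\to\rho_0$ in $L^1(\T^d)$, apply the identity along these approximations, and pass to the limit (the boundary case $s_1=t$ being immediate from $\check\rho(s_1,s_1,\cdot,\omega)=\mathrm{id}$). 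This produces the perfect semiflow property on a single null set $N\supseteq N_1$, valid simultaneously for all $\rho_0\in L^1_+(\T^d)$ and all $(t,s),(t,s_1)\in\Delta$ with $s\le s_1$.
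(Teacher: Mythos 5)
Your proposal is correct and follows essentially the same route as the paper: both arguments hinge on identifying $\bar\rho(t,s_1,Z,\omega)$ with the pathwise evaluation $\check\rho(t,s_1,Z(\omega),\omega)$ by approximating the random datum $Z=\check\rho(s_1,s,\rho_0,\cdot)$ with simple random variables, passing to the limit via the $L^1$-contraction on one side and the pathwise continuity of $\check\rho$ in its datum on the other, and then using the joint continuity from Lemma~\ref{lem:cmod} to make the null set uniform. The only cosmetic difference is that you perform the countability reduction (rational times, dense set $\cC$) up front, whereas the paper proves the identification for general parameters first and invokes continuity only at the end to perfect the null set.
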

\begin{proof}
We know from~\eqref{eq:vcrude} that for all $s,s_1\in [0,\infty), s\leq s_1$, there exists a $\mathbb{P}$-null set $N_{s,s_1,\rho_0}$ such that for all $\omega \in N_{s,s_1,\rho_0}^c$ and all $t \in [s_1,\infty)$, we have
\begin{equation*}
\rho(t,s,\rho_0,\omega)=\bar \rho(t,s_1, \rho(s_1,s,\rho_0,\cdot),\omega) \, .
\end{equation*}
We next show that, for all $\rho^*\in L^p(\Omega;L^p_+(\T^d))\cap L^{m+p-1}(\Omega;L^1_+(\T^d)) $ and all $(t,s), (t,s_1) \in \Delta$ with $s \leq s_1$, $\mathbb{P}$- almost surely,
\begin{equation*}
\bar \rho(t,s, \rho^*(\cdot),\omega) =  \check \rho(t,s, \rho^*(\omega),\omega) \, .
\end{equation*}
  It is easy to check that for any simple $\rho^{*,N}$ of the form
\begin{equation}
\rho^{*,N} =\sum_{i=1}^N \mathbf{1}_{A_i} g_i \, 
\end{equation}
for some partition $\{A_i\}_{i=1,\dots,N} \subseteq \mathcal{F}$ of $\Omega$ and $g_i \in L^p_+(\T^d)$,  there exists a $\mathbb{P}$-null set $N_{s,\rho^{*,N}}$ such that for all $t \in [s,\infty)$ and $\omega \in N_{s,\rho^N}^c$, we have 
	\begin{align*}
    \bar \rho(t,s, \rho^{*,N}(\cdot),\omega) =&\sum_{i=1}^N\mathbf{1}_{A_i}\rho(t,s,g_i,\omega)  \\
    =&\rho(t,s,\rho^{*,N}(\omega),\omega) \, ,
	\end{align*}
	where for the last equality we use~\eqref{eq:contraction}. Now note that any $\rho^* \in L^p(\Omega;L^p_+(\T^d))\cap L^{m+p-1}(\Omega;L^1_+(\T^d))$ can be approximated by a sequence of such simple $(\rho^{*,N})_{N \in \N}$ such that $\rho^{*,N}$ converges to $\rho^*$  in $L^1(\Omega;L^1_+(\T^d))$ and almost surely. Taking the expectation in~\eqref{eq:contraction}, it follows that, for all $T \in [s,\infty)$, 
	\begin{align*}
	&\lim_{N \to \infty }\mathbb{E} \sup_{t \in [s,T ]}\left[\lVert\bar \rho(t,s, \rho^{*,N}(\cdot),\omega) - \bar \rho(t,s, \rho^*(\cdot),\omega)\rVert_{L^1(\T^d)}\right] \\\leq& C(T) \lim_{N \to \infty }\mathbb{E} \left[\lVert\rho^{*,N} -\rho^* \rVert_{L^1(\T^d)}(\omega)\right] =0 \, .
	\end{align*}
	Furthermore, from the results of Lemma~\ref{lem:cmod}, there exists a $\mathbb{P}$-null set $N_{s,\rho^*,\rho^{*,N}}$ such that for all $\omega \in N_{s,\rho^*,\rho^{*,N}}^c$ we have that
	\begin{align*}
	\lim_{N \to \infty} \rho(t,s,\rho^{*,N}(\omega),\omega) =& \lim_{N \to \infty} \check\rho(t,s,\rho^{*,N}(\omega),\omega)\\
	=&\check \rho(t,s,\rho^*(\omega),\omega) \, .
	\end{align*}
	It follows that for any $s \in [0,\infty)$ and $\rho^* \in L^p(\Omega; L^p_+(\T^d))\cap L^{m+p-1}(\Omega;L^1_+(\T^d))$ there exists a $\mathbb{P}$-null set $N_{s,\rho^*}$ such that for all $\omega \in N_{s,\rho^*}^c$ and $t \in [s,\infty)$, we have
	\begin{equation*}
     \bar \rho(t,s, \rho^*(\cdot),\omega)=   \check \rho(t,s, \rho^*(\omega),\omega) \, .
	\end{equation*}
    Choosing $\rho^*(\omega)= \check \rho(s_1,s,\rho_0,\omega)$ and using~\eqref{eq:vcrude} along with the results of Lemma~\ref{lem:cmod}, it follows that for all $(t,s),(t,s_1) \in \Delta$ such that $s \leq s_1$, and $\rho_0 \in L^p_+(\T^d)$ there exists a $\mathbb{P}$-null set $N_{s,s_1,\rho_0}$ such that for all $\omega \in N_{s,s_1,t,\rho_0}^c $, we have 
    \begin{equation*}
    	\check \rho(t,s,\rho_0,\omega) =  \check \rho(t,s_1,\check \rho(s_1,s,\rho_0,\omega),\omega) \, .
    \end{equation*}
    By the joint continuity of $\check \rho(t,s,\rho_0,\omega)$ in $t,s,\rho_0$, we can make the $\mathbb{P}$-null set uniform over $s,s_1,\rho_0$ and extend the semiflow property to all $\rho_0 \in L^1_+(\T^d)$. This completes the proof.
\end{proof}

  \subsection{The cocycle property}
  The final step before we can speak about the generation of a random dynamical system is the so-called cocycle property. We start by establishing a crude version of the property in the following lemma.
  \begin{lemma}[Crude cocycle property]
  Assume Assumptions \ref{assumption_1}, \ref{assumption_2}, \ref{assumption_3}, \ref{assdrift}, and \ref{assume_grad} are satisfied and let $\check \rho$ be as constructed in Lemma~\ref{lem:cmod}. Then, for all $s \in [0,\infty)$ there exists a $\mathbb{P}$-null set $N_s$ such that for all $\omega \in N_s^c$, for all $\rho_0 \in L^1_+(\T^d)$, and for all $t \in [0,\infty)$, we have
	\begin{equation}
	\check \rho(t+s,s,\rho_0,\omega) = \check\rho(t,0, \rho_0,\theta_s\omega) \, .
  \label{eq:crudecocyle}
	\end{equation}
	\label{lem:crudecocycle}
  \end{lemma}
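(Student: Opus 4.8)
The plan is to show that the time-shifted map $t\mapsto\check\rho(t+s,s,\rho_0,\omega)$ is itself a stochastic kinetic solution of~\eqref{eq:DK} started at time $0$ from $\rho_0$, but driven by the shifted noise path $\theta_s\omega$; the uniqueness part of Theorem~\ref{thm:sks} then forces it to coincide with $\check\rho(t,0,\rho_0,\theta_s\omega)$. Throughout I fix $s\geq 0$ and first treat $\rho_0\in L^p_+(\T^d)$, using the indistinguishability~\eqref{eq:indistinguish} of Lemma~\ref{lem:cmod} to replace $\check\rho$ by the genuine solution $\rho$ up to a null set depending on $s$ and $\rho_0$.

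The crucial input is the behaviour of the noise under the shift. From the helix property of Proposition~\ref{prop:helix}, $W(r-s,\theta_s\omega)=W(r,\omega)-W(s,\omega)$ for all $r\geq s$ and all $\omega$, so that
\[
B^k_{r-s}(\theta_s\omega)=B^k_r(\omega)-B^k_s(\omega),\qquad r\geq s,\ k\in\N,
\]
and hence the increments of $\theta_s\omega$ over $[0,r-s]$ coincide with those of $\omega$ over $[s,r]$. I would then write the kinetic formulation~\eqref{it_7} for $\rho(\cdot,s,\rho_0,\omega)$ over the interval $[s,s+t]$, exactly as in the proof of Lemma~\ref{lem:vcrude}, and apply the change of variables $r=r'+s$ to every time integral. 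The deterministic drift, flux, measure and cut-off contributions transform trivially, whereas for the conservative stochastic integral the increment identity lets me rewrite $\int_s^{s+t}$ of the integrand against $\dd\xi(\omega)$ as $\int_0^{t}$ of the time-shifted integrand against $\dd\xi(\theta_s\omega)$.

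Establishing this last identity rigorously is the main obstacle. One must verify that the It\^o integral of a predictable integrand against $B^k(\omega)$ over $[s,s+t]$ equals the It\^o integral of the corresponding time-shifted integrand against $B^k(\theta_s\omega)$ over $[0,t]$. This is where the filtered dynamical system structure enters: the shift $\theta_s$ is measure-preserving and satisfies $\theta_s^{-1}\mathcal{F}_{t,0}=\mathcal{F}_{t+s,s}$, so the time-shifted integrand is predictable with respect to the shifted filtration. I would verify the identity first for simple predictable integrands, where it reduces directly to the increment identity for the $B^k$, and then pass to the limit in $L^2(\Omega)$ via the It\^o isometry, using the flux integrability of Definition~\ref{def_sol_new}. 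The shifted integrand inherits the regularity of $\rho(\cdot,s,\rho_0,\omega)$ under the change of variables, so the shifted map is a bona fide stochastic kinetic solution started at $0$ relative to $\theta_s\omega$.

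Granting the transformation, the uniqueness statement of Theorem~\ref{thm:sks}, applied to the two solutions $t\mapsto\rho(t+s,s,\rho_0,\omega)$ and $t\mapsto\check\rho(t,0,\rho_0,\theta_s\omega)$ started at $0$ from $\rho_0$ and driven by $\theta_s\omega$, yields~\eqref{eq:crudecocyle} for each fixed $t$ up to a null set; since $\theta_s$ is measure-preserving this a.s.\ statement transfers to $\omega$. I would then remove the $t$-dependence using the joint continuity in $(t,s)$ from Lemma~\ref{lem:cmod}, restricting to rational $t$ and passing to the limit, to obtain a single null set valid for all $t\geq 0$ and the fixed $\rho_0$. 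Finally, exactly as in the proofs of Lemmas~\ref{lem:cmod} and~\ref{lem:semiflow}, I would fix a countable dense set $\cC\subset L^p_+(\T^d)$, produce a common null set $N_s$ over all $\rho_0\in\cC$, and use the uniform contraction estimate~\eqref{eq:est} together with the continuity of $\check\rho$ in the initial datum to extend~\eqref{eq:crudecocyle} from $\cC$ to all $\rho_0\in L^1_+(\T^d)$ on $N_s^c$.
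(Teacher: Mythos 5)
Your proposal follows essentially the same route as the paper's proof: write the kinetic formulation for the time-shifted process, verify the identity between the stochastic integrals over $[s,s+t]$ against $\omega$ and over $[0,t]$ against $\theta_s\omega$ first on elementary predictable integrands via the helix property and then by $L^2$-approximation, conclude via the pathwise uniqueness of Theorem~\ref{thm:sks}, and finally extend from $L^p_+(\T^d)$ to $L^1_+(\T^d)$ using the continuity in the initial datum from Lemma~\ref{lem:cmod}. The argument is correct and matches the paper's in all essential respects.
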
 
  \begin{proof}
  We only need to show that for all $s \in [0,\infty)$, $\rho_0 \in L^p_+(\T^d)$ there exists a $\mathbb{P}$-null set $N_{s,\rho_0}$ such that for all $t \in [0,\infty)$ and $\omega \in N_{s,\rho_0}^c$,~\eqref{eq:crudecocyle} is valid. By the continuity of $\check \rho$ in the initial condition that follows from Lemma~\ref{lem:cmod}, we can, for every $s\in[0,\infty)$, find a null set $N_s$ such that for all $\omega \in N_s^c$, $t \in [0,\infty)$, and $\rho_0\in L^1_+(\T^d)$,  the crude cocycle property~\eqref{eq:crudecocyle} holds true.

Consider the equation~\eqref{eq:DK} solved by $\rho(t+s,s,\rho_0,\omega)$. We have for all $t \in [0,\infty)$ and $\varphi \in C_c^\infty(\T^d \times (0,\infty))$
\allowdisplaybreaks
\begin{align*}
		&\int_{\R}\int_{\T^d} \bar \chi(\rho(s+t,s,\rho_0,\omega)) \varphi(x,\xi) \dx{x} \dx{\xi} =  \int_{\R} \int_{\T^d} \bar{\chi}(\rho_0) \varphi(x,\xi) \dx{x}\dx{\xi}\\& -\int_0^{t} \int_{\T^d}\Phi'(\rho(r+s,s,\rho_0,\omega)) \nabla \rho(r+s,s,\rho_0,\omega) \cdot \nabla \varphi(x,\rho(r+s,s,\rho_0,\omega)) \dx{x} \dx{r} \\&+\frac{F_1}{2}\int_0^t \int_{\T^d}[\sigma'(\rho(r+s,s,\rho_0,\omega))]^2 \nabla \rho(r+s,s,\rho_0,\omega) \cdot \nabla \varphi(x,\rho(r+s,s,\rho_0,\omega)) \dx{x}\dx{r} \\
		&- \int_0^t \int_{\R} \int_{\T^d}(\partial_{\xi} \varphi(x,\xi)) \dx{q} \\&+\frac{F_3}{2}\int_0^t \int_{\T^d} \sigma^2(\rho(r+s,s,\rho_0,\omega)) (\partial_\xi \varphi)(x,\rho(r+s,s,\rho_0,\omega)) \dx{x}\dx{r} \\
		&- \int_{0}^{t}\int_{\T^d} \varphi(x,\rho(r+s,s,\rho_0,\omega) )\nabla \cdot \nu(\rho(r+s,s,\rho_0,\omega) ) \dx{x}\dx{r} \\
		&- \int_{0}^t\int_{\T^d} \varphi(x,\rho(r+s,s,\rho_0,\omega) )(\nabla \cdot   B(\rho(r+s,s,\rho_0,\omega) ) + f(\rho(r+s,s,\rho_0,\omega))) \dx{x}\dx{r}\\
		&- \sum_{k \in \Z^d} \lambda_k\left(\int_s^{s+t} \varphi(x,\rho(r,s,\rho_0,\cdot)) \nabla \cdot(e_k \sigma(\rho(r,s,\rho_0,\cdot)) dB_r^k) \dx{x} \right) (\omega) \, ,
\end{align*}
where we have used the fact that
\begin{align}
&\int_s^{s+t} \int_{\T^d }G(\rho(r,s,\rho_0,\omega),\nabla\rho(r,s,\rho_0,\omega)) \dx{x}\dx{r} \\=&\int_0^{t}\int_{\T^d} G(\rho(r+s,s,\rho_0,\omega),\nabla\rho(r,s,\rho_0,\omega)))\dx{x} \dx{r} \, ,
\end{align}
for any measurable $G: \R \to \R$. Similarly, we have 
\begin{align*}
		&\int_{\R}\int_{\T^d} \bar \chi(\rho(t,0,\rho_0,\theta_s \omega)) \varphi(x,\xi) \dx{x} \dx{\xi} =  \int_{\R} \int_{\T^d} \bar{\chi}(\rho_0) \varphi(x,\xi) \dx{x}\dx{\xi}\\& -\int_0^{t} \int_{\T^d}\Phi'(\rho(r,0,\rho_0,\theta_s \omega)) \nabla \rho(r,0,\rho_0,\theta_s \omega) \cdot \nabla \varphi(x,\rho(r,0,\rho_0,\theta_s \omega)) \dx{x} \dx{r} \\&+\frac{F_1}{2}\int_0^t \int_{\T^d} [\sigma'(\rho(r,0,\rho_0,\theta_s \omega))]^2\nabla\rho(r,0,\rho_0,\theta_s \omega) \cdot \nabla \varphi(x,\rho(r,0,\rho_0,\theta_s \omega)) \dx{x}\dx{r} \\
		&- \int_0^t \int_{\R} \int_{\T^d}(\partial_{\xi} \varphi(x,\xi)) \dx{q} \\&+\frac{F_3}{2}\int_0^t \int_{\T^d} \sigma^2(\rho(r,0,\rho_0,\theta_s \omega)) (\partial_\xi \varphi)(x,\rho(r,0,\rho_0,\theta_s\omega)) \dx{x}\dx{r} \\
		&- \int_{0}^{t}\int_{\T^d} \varphi(x,\rho(r,0,\rho_0,\theta_s \omega) )\nabla \cdot \nu(\rho(r,0,\rho_0,\theta_s \omega) ) \dx{x}\dx{r} \\
		&- \int_{0}^t\int_{\T^d} \varphi(x,\rho(r,0,\rho_0,\theta_s \omega) )(\nabla \cdot   B(\rho(r,0,\rho_0,\theta_s \omega)) + f(\rho(r,0,\rho_0,\theta_s \omega))) \dx{x}\dx{r}\\
		&- \sum_{k \in \Z^d} \lambda_k\left(\int_0^{t} \varphi(x,\rho(r,0,\rho_0,\cdot)) \nabla \cdot(e_k \sigma (\rho(r,0,\rho_0,\cdot)) dB_r^k) \dx{x} \right) (\theta_s \omega) \, .
\end{align*}
We will now show that for any family of $\mathcal{F}_{t,s}$-predictable processes $g_k:[s,\infty) \times \Omega \to \R$ such that $\sup_{k \in \Z^d}\int_s^T\mathbb{E}(|g_k|^2) \dx{t} <\infty$, for all $s \in [0,\infty)$ and $T \geq 0$,  there exists a $\mathbb{P}$-null set $N_{s,T}$, such that for all $\omega \in N_{s,T}^c$ and $t \in [0,T-s]$, we have
\begin{equation}
\sum_{k \in \Z^d} \lambda_k\int_s^{s+t} g_k(r,\cdot) \, dB^k_r(\cdot) (\omega) = \sum_{k \in \Z^d} \lambda_k\int_0^{t} g_k(r +s,\cdot) \, dB^k_r(\theta_s \cdot)  (\omega) \, .
\label{eq:cocycleint}
\end{equation}
One can check that the above equality is valid on elementary $\mathcal{F}_{t,s}$-predictable processes by using the helix property of Proposition~\ref{prop:helix}. Indeed, for any elementary $g$ and for a fixed $k \in \Z^d$, we have
\begin{align}
\int_s^{s+t} g(r,\cdot) \, dB^k_r(\cdot) (\omega) = & \sum_{j=1}^N g(r_j,\omega)  (B_{r_{j+1}}^k - B_{r_j}^k) (\omega) \\
=& \sum_{j=1}^N g(l_j +s,\omega)  (B_{s+l_{j+1}}^k - B_{s+l_j}^k) (\omega) \\
= & \sum_{j=1}^N g(l_j +s,\omega)  (B_{l_{j+1}}^k - B_{l_j}^k) (\theta_s \omega) \, ,
\end{align}
where $r_j,l_j$ are partitions of $[s,s+t], [0,t]$, respectively. It follows then that we can obtain~\eqref{eq:cocycleint} on all predictable processes by approximating them by elementary processes in $L^2(\Omega \times [0,T])$. Using Assumption~\ref{assumption_1}, the equality holds true for the entire sum.

Thus, both $\Omega \times [0,\infty) \ni (\omega,t) \mapsto \rho(t+s,s,\rho_0,\omega) $ and $\Omega \times [0,\infty) \ni (\omega,t) \mapsto \rho(t,s,\rho_0,\theta_s \omega) $ are stochastic kinetic solutions of~\eqref{eq:DK} with the same initial datum. It follows from~\eqref{eq:contraction} that we can find $\mathbb{P}$-null set $N_{s,\rho_0}$ such that for all $t \in [0,\infty)$ and  $\omega \in N_{s,\rho_0}^c$, we have that
\begin{equation*}
\rho(t+s,s,\rho_0,\omega) = \rho(t,0, \rho_0,\theta_s\omega) \, .
\end{equation*}
It follows then from Lemma~\ref{lem:cmod} that for all $\rho_0\in L^p_+(\T^d)$ and $s \in [0,\infty)$ there exists a $\mathbb{P}$-null set $N_{s,\rho_0}$ such that for all $\omega \in N_{s,\rho_0}^c$ and $t \in [0,\infty)$
\begin{equation*}
\check \rho(t+s,s,\rho_0,\omega) = \check\rho(t,0, \rho_0,\theta_s\omega) \, .
\end{equation*}
We can extend this to all $\rho_0 \in L^1_+(\T^d)$ by using the continuity of $\rho_0$ with respect to $\rho_0$ established in Lemma~\ref{lem:cmod}.  \end{proof}

We will now use the abstract perfection result of Theorem~\ref{thm:ks} to upgrade the above crude cocycle property to a perfect cocycle.
\begin{prop}[Perfection]
Assume Assumptions \ref{assumption_1}, \ref{assumption_2}, \ref{assumption_3}, \ref{assdrift}, and \ref{assume_grad} are satisfied and let $\check \rho(t,s,\rho_0,\omega)$ be as constructed in Lemma~\ref{lem:cmod}. Then, there exists a $\tilde \rho : \Delta \times L^1_+(\T^d)\times \Omega \to L^1_+(\T^d)$ which satisfies the following
\begin{enumerate}
 \item[\textup{(1)}] (Perfect) semiflow property: There exists a $\mathbb{P}$-null set $N$ such that, for all $\omega \in N^c$, $(t,s) ,(t,s_2)\in \Delta$ with $s \leq s_1$, and $\rho_0 \in L^1_+(\T^d)$, we have
 \[
  \tilde\rho(t,s,\rho_0,\omega)= \tilde\rho(t,s_1,\tilde\rho(s_1,s,\rho_0,\omega),\omega) \, .
 \]
 \item[\textup{(2)}] (Perfect) cocycle property: There exists a $\mathbb{P}$-null set $N$ such that, for all $\omega \in N^c$, $(t,s)\in \Delta$, and $\rho_0 \in L^1_+(\T^d)$, we have
 \[
 	\tilde \rho(t+s,s,\rho_0,\omega) = \tilde\rho(t,0, \rho_0,\theta_s\omega) \, .
 \]
 \item[\textup{(3)}] $\tilde \rho$ is $(\mathcal{B}(\Delta)\otimes \mathcal{F}^0,\mathcal{B}(X))$-measurable and $(\mathcal{B}(\Delta)\otimes \mathcal{B}(L^1_+(\T^d))\otimes \mathcal{F}^0,\mathcal{B}(L^1_+(\T^d)))$-measurable.
 \item[\textup{(4)}] There exists a $\mathbb{P}$-null set $N$ such that, for all $\omega \in N^c$, $\tilde \rho(t,s,\rho_0,\omega)$ is continuous as a map from $\Delta \to X$.
 \item[\textup{(5)}] $\mathbb{P}(\{\omega \in \Omega: \check\rho(t,s,\rho_0,\omega) =\tilde \rho(t,s,\rho_0,\omega), \textrm{ for all } (t,s) \in \Delta , \rho_0 \in L^1_+(\T^d)\})=1$.
 \item[\textup{(6)}] There exists a $\mathbb{P}$-null set $N$ such that for all $\omega \in N^c$, $s\in [0,\infty)$, $T \in [s,\infty)$, and $\rho_0 \in L^1_+(\T^d)$, $\tilde \rho$ satisfies
 \[
\sup_{t \in [s,T]}\norm{\tilde \rho(t,s,\rho_{0,1},\omega)- \tilde \rho(t,s,\rho_{0,2},\omega)}_{L^1(\T^d)} \leq C(T) \norm{\rho_{0,1}-\rho_{0,2}}_{L^1(\T^d)} \, .
\]
\end{enumerate}
\label{prop:perfection}
\end{prop}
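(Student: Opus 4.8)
The plan is to recognise $\check\rho$, in the guise of a crude cocycle over the filtered dynamical system built above, as precisely the kind of object to which the abstract perfection result Theorem~\ref{thm:ks} applies, and then to deduce the six listed properties from its conclusion together with indistinguishability. Throughout, it is convenient to view the solution map as the $X$-valued process $(t,s,\omega)\mapsto\check\rho(t,s,\cdot,\omega)$, where $X=C(L^1_+(\T^d);L^1_+(\T^d))$ is regarded as a monoid under composition $\circ$, so that the semiflow identity of Lemma~\ref{lem:semiflow} and the crude cocycle identity of Lemma~\ref{lem:crudecocycle} become statements about this process.

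First I would assemble the hypotheses required by Theorem~\ref{thm:ks}. Lemma~\ref{lem:cmod} supplies, on a single null set, joint continuity of $(t,s)\mapsto\check\rho(t,s,\cdot,\omega)$ into $X$, the $(\mathcal{B}(\Delta)\otimes\mathcal{F},\mathcal{B}(X))$-measurability, and the contraction bound with deterministic constant $C(T)$; Lemma~\ref{lem:semiflow} supplies the perfect two-parameter evolution (semiflow) property; and Lemma~\ref{lem:crudecocycle} supplies the crude cocycle relation $\check\rho(t+s,s,\rho_0,\omega)=\check\rho(t,0,\rho_0,\theta_s\omega)$ valid for all $t,\rho_0$ on an $s$-dependent null set $N_s$. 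Together with the helix $W$ of Proposition~\ref{prop:helix} driving~\eqref{eq:DK}, these are exactly the data demanded by the abstract perfection theorem.

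Next I would invoke Theorem~\ref{thm:ks} to produce $\tilde\rho$. Its conclusion yields at once the indistinguishability statement~(5), the perfect cocycle property~(2) on a single null set, and — this is the essential gain — measurability of $\tilde\rho$ with respect to the \emph{uncompleted} $\sigma$-algebra $\mathcal{F}^0$, which is property~(3). The upgrade from the $\mathbb{P}$-completed $\mathcal{F}$, with respect to which $\check\rho$ is measurable by Lemma~\ref{lem:cmod}, to $\mathcal{F}^0$ is the entire purpose of this perfection step: the map $\theta$ is only $(\mathcal{B}(\R)\otimes\mathcal{F}^0,\mathcal{F}^0)$-measurable and not $\mathcal{F}$-measurable, so the composition with $\theta_s$ appearing in the cocycle identity, and hence the measurability demanded of an RDS in Definition~\ref{def:rds}, can only be expressed on the uncompleted $\sigma$-algebra.

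The remaining properties~(1),~(4) and~(6) then follow by transfer. On the full-measure set furnished by~(5), where $\tilde\rho(t,s,\rho_0,\omega)=\check\rho(t,s,\rho_0,\omega)$ simultaneously for all $(t,s)\in\Delta$ and all $\rho_0\in L^1_+(\T^d)$, one intersects with the null sets of Lemmas~\ref{lem:semiflow} and~\ref{lem:cmod}; the perfect semiflow property~(1), the joint continuity into $X$ required in~(4), and the contraction estimate~(6) then hold for $\tilde\rho$ verbatim, the nested composition in the semiflow identity causing no difficulty precisely because~(5) gives agreement for \emph{all} initial data at once. The main obstacle is concentrated in the application of Theorem~\ref{thm:ks}: the naive candidate $\tilde\rho(t,s,\rho_0,\omega):=\check\rho(t-s,0,\rho_0,\theta_s\omega)$, although it makes the cocycle property hold identically in $\omega$, is neither jointly continuous in $(t,s)$ nor measurable in the required sense, since the solution map is only measurable and not continuous in $\omega$ while $\theta_s\omega$ varies with $s$. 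Reconciling the simultaneous control of the $s$-dependent exceptional sets $N_s$ with measurability on the uncompleted $\mathcal{F}^0$ is exactly the delicate content delivered by the abstract perfection theorem, and everything outside that application is routine transfer along the indistinguishability~(5).
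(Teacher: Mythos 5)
Your overall strategy is the paper's: take $\mathcal{G}=X$ under composition, feed the crude cocycle into the abstract perfection result Theorem~\ref{thm:ks}, and transfer the remaining properties to $\tilde\rho$ along the indistinguishability statement. The transfer of (1), (4) and (6) via (5) is fine and matches the paper. However, there is one genuine gap, and it sits exactly where you claim the argument is "delivered by the abstract perfection theorem."

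Hypothesis (3) of Theorem~\ref{thm:ks} requires the \emph{input} $\phi$ to be $(\mathcal{B}(\Delta)\otimes\mathcal{F}^0,\mathcal{B}(\mathcal{G}))$-measurable, i.e.\ measurable with respect to the uncompleted $\sigma$-algebra. Lemma~\ref{lem:cmod} only gives $(\mathcal{B}(\Delta)\otimes\mathcal{F},\mathcal{B}(X))$-measurability of $\check\rho$, with $\mathcal{F}$ the $\mathbb{P}$-completion. So you cannot apply the theorem to $\check\rho$ as it stands, and the theorem does not perform the upgrade from $\mathcal{F}$ to $\mathcal{F}^0$ for you — that upgrade is a hypothesis, not a conclusion. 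The purpose of the perfection theorem is solely to collapse the $s$-dependent exceptional sets $N_s$ of the crude cocycle into a single null set. The missing step, which the paper supplies, is to first construct an $\mathcal{F}^0$-measurable modification $\hat\rho$ of $\check\rho$: for each fixed $(t,s)$ in a countable dense subset $\Delta_c\subset\Delta$ one alters $\check\rho(t,s,\cdot,\cdot)$ on a null set to obtain an $(\mathcal{F}^0,\mathcal{B}(\mathcal{G}))$-measurable version, then uses the a.s.\ joint continuity of $(t,s)\mapsto\check\rho(t,s,\cdot,\omega)$ to extend this to all of $\Delta$ (preserving the semiflow, crude cocycle, continuity and contraction properties, since these are stable under modification on a null set and under the limits involved), and only then checks joint $(\mathcal{B}(\Delta)\otimes\mathcal{F}^0)$-measurability by the same approximation argument as in Lemma~\ref{lem:cmod}. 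Theorem~\ref{thm:ks} is then applied to $\hat\rho$, not to $\check\rho$, and property (5) for $\check\rho$ follows because $\hat\rho$ and $\check\rho$ are indistinguishable. Without this intermediate construction your invocation of the theorem is not licensed.
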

\begin{proof}
We define $(\mathcal{G},\circ)= X$ with $\circ$ given by composition. All we need to check is that conditions (1), (2), (3), and (4) of Theorem~\ref{thm:ks} are satisfied for the maps  $\phi (t,s,\cdot,\omega):=\check \rho(t,s,\cdot,\omega)$. Conditions (1), (2), and (4) follow from the results of Lemmas~\ref{lem:cmod}, ~\ref{lem:semiflow}, and~\ref{lem:crudecocycle}, respectively. For (3), we only know that $\check \rho(t,s,\rho_0,\omega)$ is $(\mathcal{B}(\Delta) \otimes \mathcal{F}, \mathcal{B}(\mathcal{G}))$-measurable. We would like to replace the $\mathcal{F}$ by $\mathcal{F}^0$. In order to do this, we note that for all fixed $(t,s) \in \Delta $ $\check \rho(t,s, \rho_0,\omega)$ is $(\mathcal{F},\mathcal{B}(\mathcal{G}))$-measurable as a map from $\Omega \to \mathcal{G}$. It follows 
that for each $(t,s) \in \Delta_c$ for some countable $\Delta_c \subset \Delta$, we can find a $\mathbb{P}$-null set $N$  and some $\hat \rho(t,s,\rho_0,\omega)$ such that, for all $(t,s)\in \Delta_c$, $\hat \rho(t,s,\rho_0,\omega)$ is $(\mathcal{F}^0,\mathcal{B}(\mathcal{G}))$-measurable as a map from $\Omega\to \mathcal{G}$. By the continuity of $\check\rho(t,s,\rho_0,\omega)$ as a map from $\Delta \to \mathcal{G}$  (see~Lemma~\ref{lem:cmod}), we have that, for all $\omega \in N^c$ and for all $(t,s)\in \Delta$, $\hat \rho(t,s,\rho_0,\omega) = \check \rho(t,s,\rho_0,\omega)$ and, for all $\omega \in N^c$ and $(t,s)\in \Delta_c$, $\hat \rho(t,s,\rho_0,\omega)= \check \rho(t,s,\rho_0,\omega)$ and $\hat \rho(t,s,\rho_0,\omega)$ is $(\mathcal{F}^0,\mathcal{B}(\mathcal{G}))$-measurable as a map from $\Omega \to \mathcal{G}$ (since it is the almost sure limit of such measurable functions). Furthermore, $\hat \rho$ satisfies the same continuity properties as $\tilde \rho$ including the stability estimate~\eqref{eq:contraction}. Thus, by arguments similar to those in the proof of Lemma~\ref{lem:cmod}, $\hat \rho(t,s,\rho_0,\omega)$ is $(\mathcal{B}(\Delta)\otimes \mathcal{F}^0,\mathcal{B}(\mathcal{G}))$-measurable as a map from $\Delta \times \Omega \to \mathcal{G}$.  We can apply similar arguments to obtain that $\hat \rho(t,s,\rho_0,\omega)$ is $(\mathcal{B}(\Delta)\otimes\mathcal{B}(L^1_+(\T^d))\otimes \mathcal{F}^0,\mathcal{B}(L^1_+(\T^d)))$-measurable as a map from $\Delta \times L^1_+(\T^d) \times \Omega \to L^1_+(\T^d).$

 It is also clear that $\hat \rho$ satisfies conditions (1), (2), and (4) of Theorem~\ref{thm:ks}. Thus, applying  Theorem~\ref{thm:ks}, we obtain a $\tilde \rho$ which satisfies properties (1), (2), (4), and (5). Property (6) simply follows from property (5) and Lemma~\ref{lem:cmod}. The first kind of measurability in property (3) follows from Theorem~\ref{thm:ks} while the second kind follows from the fact that $\tilde \rho$ is indistinguishable from $\hat \rho$. This completes the proof of the proposition.  \end{proof}

\subsection{Generation of a random dynamical system}
We conclude with the main result of this section. 
\begin{theorem}
Assume Assumptions \ref{assumption_1}, \ref{assumption_2}, \ref{assumption_3}, \ref{assdrift}, and \ref{assume_grad} are satisfied. Consider the map $\varphi:[0,\infty) \times L^1_+(\T^d)\times \Omega \to L^1_+(\T^d)$ defined as follows
\begin{align}
\varphi(t,\rho_0,\omega):=
\begin{cases}
\tilde \rho(t,0,\rho_0,\omega) & \omega \in N^c\\
\rho_0 & \omega \in N
\end{cases}
\end{align}
with $N$ and $\tilde \rho$ as in the statement of Proposition~\ref{prop:perfection}. Then, $\varphi$ defines a continuous RDS on $(L^1_+(\T^d),\mathcal{B}(L^1_+(\T^d)))$ over the MDS $(\Omega,\mathcal{F}^0, \mathbb{P}, (\theta_t)_{t \in \R})$ in the sense of Definition~\ref{def:rds}.  Additionally, for all $\omega \in \Omega$, $\rho_{0,1},\rho_{0,2} \in L^1_+(\T^d)$,  and $T \geq 0$, $\varphi$ satisfies the following stability estimate
\begin{equation}
\sup_{t \in [0,T]}\norm{\varphi(t,\rho_{0,1},\omega)- \varphi(t,\rho_{0,2},\omega)}_{L^1(\T^d)} \leq C(T) \norm{\rho_{0,1}-\rho_{0,2}}_{L^1(\T^d)} \, .
\label{eq:stabphi}
\end{equation}
Finally, for all $\rho_0 \in L^p_+(\T^d)$, there exists a $\mathbb{P}$-null set $N_{\rho_0}$, such that for all $\omega \in N_{\rho_0}^c$, and all $t \in [0,\infty)$, we have $\varphi(t,\rho_0,\omega):= \rho(t,0,\rho_0,\omega)$.
\label{thm:rds} 
\end{theorem}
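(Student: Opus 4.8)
The plan is to verify directly the two axioms of Definition~\ref{def:rds} for the map $\varphi$ — the joint measurability of property~(1) and the initial-value and flow identities of property~(2) — together with the continuity demanded of a \emph{continuous} RDS, and then to read off the stability estimate~\eqref{eq:stabphi} and the identification $\varphi=\rho$ from the corresponding properties of $\tilde\rho$. The device of setting $\varphi(t,\rho_0,\omega)=\rho_0$ on $N$ is precisely what upgrades the almost-sure identities provided by Proposition~\ref{prop:perfection} into identities valid for \emph{every} $\omega\in\Omega$, as the notion of an RDS requires. The one structural input I must secure at the outset is that the exceptional set $N$ may be taken $\theta$-invariant, $\theta_s^{-1}(N)=N$ for all $s\geq0$, and $\mathcal{F}^0$-measurable; this is exactly what the perfection procedure of Theorem~\ref{thm:ks} delivers, so I would invoke it to fix such an $N$ containing the null sets attached to properties~(1)--(6) of Proposition~\ref{prop:perfection}.

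First I would check the initial condition: for $\omega\in N$ one has $\varphi(0,\rho_0,\omega)=\rho_0$ by definition, while for $\omega\in N^c$ one has $\varphi(0,\rho_0,\omega)=\tilde\rho(0,0,\rho_0,\omega)=\rho_0$, since by property~(5) and Lemma~\ref{lem:cmod} $\tilde\rho$ coincides with the kinetic solution $\check\rho$, which equals its initial datum at its initial time. For the flow identity $\varphi(t+s,\rho_0,\omega)=\varphi(t,\varphi(s,\rho_0,\omega),\theta_s\omega)$, the case $\omega\in N$ is immediate: by $\theta$-invariance $\theta_s\omega\in N$, so the right-hand side reduces to $\varphi(t,\rho_0,\theta_s\omega)=\rho_0$, matching the left-hand side $\rho_0$. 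For $\omega\in N^c$, again $\theta_s\omega\in N^c$ by invariance, and I would combine the perfect semiflow property~(1) and perfect cocycle property~(2): writing $\rho_1:=\tilde\rho(s,0,\rho_0,\omega)$, the semiflow gives $\tilde\rho(t+s,0,\rho_0,\omega)=\tilde\rho(t+s,s,\rho_1,\omega)$, and the cocycle then gives $\tilde\rho(t+s,s,\rho_1,\omega)=\tilde\rho(t,0,\rho_1,\theta_s\omega)$, which is exactly $\varphi(t,\varphi(s,\rho_0,\omega),\theta_s\omega)$.

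The remaining properties follow more routinely. Continuity of $(t,\rho_0)\mapsto\varphi(t,\rho_0,\omega)$ for every fixed $\omega$ holds on $N^c$ by the joint continuity of $\tilde\rho$ (property~(4) together with Lemma~\ref{lem:cmod}) and on $N$ because the identity map is continuous. For measurability, since $N\in\mathcal{F}^0$ and $\tilde\rho$ is $(\mathcal{B}(\Delta)\otimes\mathcal{B}(L^1_+(\T^d))\otimes\mathcal{F}^0,\mathcal{B}(L^1_+(\T^d)))$-measurable by property~(3), the map $\varphi$ — the gluing of $(t,\rho_0,\omega)\mapsto\tilde\rho(t,0,\rho_0,\omega)$ on the $\mathcal{F}^0$-measurable set $N^c$ and of the continuous (hence measurable) projection $(t,\rho_0,\omega)\mapsto\rho_0$ on $N$ — is jointly measurable in the required sense. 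The estimate~\eqref{eq:stabphi} follows from property~(6) with $s=0$ on $N^c$, and holds trivially on $N$ since $C(T)=\exp(T(\norm{B}_{\Lip}+\norm{f}_{\Lip}))\geq1$. Finally, $\varphi(t,\rho_0,\omega)=\rho(t,0,\rho_0,\omega)$ for $\rho_0\in L^p_+(\T^d)$ on a further $\rho_0$-dependent null set results from combining property~(5) with the indistinguishability~\eqref{eq:indistinguish} of $\check\rho$ and $\rho$ in Lemma~\ref{lem:cmod}.

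I expect the main obstacle to be the very first step: arranging $N$ to be simultaneously $\theta$-invariant and $\mathcal{F}^0$-measurable. Without $\theta$-invariance the flow identity cannot be promoted from almost-every $\omega$ to every $\omega$, since the argument on $N^c$ uses $\theta_s\omega\in N^c$ while the argument on $N$ uses $\theta_s\omega\in N$; and without $\mathcal{F}^0$-measurability of $N$ one obtains only $\mathcal{F}$-measurability of $\varphi$, falling short of the $\mathcal{F}^0$-measurability required by Definition~\ref{def:rds}. Both points are exactly what the abstract perfection theorem~\ref{thm:ks} is designed to provide, so the real content of the proof lies in the careful bookkeeping of null sets and in confirming that the hypotheses of that theorem — supplied by Lemmas~\ref{lem:cmod},~\ref{lem:semiflow}, and~\ref{lem:crudecocycle} — have genuinely been met, rather than in any further analytic estimate.
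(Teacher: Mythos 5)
Your proposal is correct and follows essentially the same route as the paper, which likewise reduces everything to the properties of $\tilde\rho$ established in Proposition~\ref{prop:perfection}; the paper's own proof is just a much terser version of your verification. Your explicit attention to the $\theta$-invariance and $\mathcal{F}^0$-measurability of the exceptional set $N$ --- which the paper leaves implicit in its appeal to the Kager--Scheutzow perfection result (Theorem~\ref{thm:ks}) --- is the one point where you supply a detail the paper omits, and it is indeed needed to promote the cocycle identity from almost every $\omega$ to every $\omega$ as Definition~\ref{def:rds} requires.
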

\begin{proof}
Property 2 of Definition~\ref{def:rds} follows simply from the definition of $\varphi$ and the result of Proposition~\ref{prop:perfection}. Similarly, the required continuity and measurability of $\varphi$ also follow from the continuity and measurability of $\tilde \rho$ given by properties (3) and (6) of Proposition~\ref{prop:perfection}.  The stability estimate~\eqref{eq:stabphi} also follows from property (6) of Proposition~\ref{prop:perfection}. The fact that $\varphi$ is a modification of $\rho$ follows from property (6) of Proposition~\ref{prop:perfection} and Lemma~\ref{lem:cmod}.
\end{proof}

\section{Ergodicity}
\subsection{An abstract coupling argument}
Let $(X,d)$ be a separable, metric space which is not necessarily complete and $(\Omega,\mathcal{F},(\mathcal{F}_t)_{t \geq 0}, \mathbb{P})$ be a filtered probability space with right-continuous and complete filtration. We let $ [0,\infty) \times X \times \omega \ni (t,\rho_0,\omega)  \mapsto \rho(t,\rho_0,\omega) \in X$ be a time-homogeneous almost-surely continuous Markov process adapted to $\mathcal F_t$ and started at $\rho_0 \in X$. Furthermore, we let $X \ni \rho_0 \mapsto P_t(\rho_0, \cdot) \in \mathcal{P}(X)$.  By a slight abuse of notation, we let $P_t$ be the corresponding Markov semigroup acting on observables $\varphi \in C_b(X)$ and $P_t^*$ its adjoint acting on probability measures $\mu \in \mathcal{P}(X)$.

We note that the approach of this section is inspired by~\cite{KPS10} but with several key differences due to the stronger, pathwise $e$-property. Before we can proceed to the proof of ergodicity, we state and explain a few natural assumptions we will require for the proof.
\begin{assumption}[Non-expansivity]
Consider the two-point Markov process \\ $(\rho(t,\rho_{0,1},\cdot),\rho(t,\rho_{0,2},\cdot))$  for some $(\rho_{0,1},\rho_{0,2})\in X \times X$. Then, $\mathbb{P}$-almost surely, we have
\begin{equation}
d(\rho(t,\rho_{0,1},\omega),\rho(t,\rho_{0,2},\omega)) \leq d(\rho_{0,1},\rho_{0,2}) \, ,
\end{equation}
for all $t\geq 0$.
\label{ass:nonexpansive}
\end{assumption}
The above assumption will play an important role in the proof of ergodicity as the proof relies on a coupling argument. Furthermore, it follows from Theorem~\ref{thm:sks} (more specifically from~\eqref{eq:contraction}) that it is satisfied by the choice $\nu, B\equiv 0$ and $\Phi,\sigma$ satisfying Assumptions~\ref{ass1} and~\ref{ass2}. 
\begin{assumption}[Support property]
There exists a map $u:[0,T]\times X \to X$ such that:
\begin{tenumerate}
 \item for all $R \geq 0$ and for all compact subsets $\mathcal{M}\subset X$ with $\diam(\mathcal{M})=R$, there exists a bounded and decreasing function $C_{R}:[0,\infty) \to [0,\infty)$ such that, for all $t\geq 0$ and $(\rho_{0,1},\rho_{0,2})\in X \times X$,
\begin{equation}
d(u(t,\rho_{0,1}),u(t,\rho_{0,2})) \leq C_{R}(t) \quad \textrm{and} \quad \lim_{t \to\infty} C_R(t)=0 \, ,
\end{equation} 
and  \label{ass:supporta}
\item  for all $\rho_{0} \in X$, $u(\cdot,\rho_0)$ lies in the support of path-space measure of $\rho(\cdot,\rho_0,\cdot)$, i.e. for all $\delta>0$,  we have
\begin{equation}
\mathbb{P}\bra*{\int_0^T d(\rho(t,\rho_0,\cdot),u(t,\rho_0)) \, \dx{t}\leq \frac{\delta}{2}}>0 \, .
\end{equation}
\label{ass:supportb}
\end{tenumerate}
\label{ass:support}
\label{ass:contract}
\end{assumption}
We will use the above assumption in the proof of ergodicity to show that the process $\rho(t,\rho_0,\cdot)$ gets close to the $u(t,\rho_0)$ with positive probability. 
\begin{assumption}[Strong Markov property]
We assume that the process $\rho(t,\cdot,\cdot)$ satisfies the strong Markov property (see~\cite[Chapter 9, Eq. (9.36)]{DPZ14}), i.e. let $\tau$ be a $\mathcal{F}_t$-stopping time. Then, for all $F \in B_b(X)$ and $\rho_0 \in X$, we have $\mathbb{P}$-almost surely on $\{\omega \in \Omega:\tau <\infty\}$,
\begin{equation}
\mathbb{E}(F(\rho (t+\tau,\rho_0,\cdot ))| \cF_\tau) (\omega) = \mathbb{E}(F(\rho(t, \rho(\tau,\rho_0,\omega),\cdot))) \, ,
\end{equation}
for all $t\geq 0$ where $\cF_\tau$ consists of all events $A \in \cF$ such that $\set{\tau \leq t} \cap A \in \cF_t$ for all $t \geq 0$. We assume the same condition is satisfied the two-point process $[0,T] \times X \times X \times \Omega  \ni (t,(\rho_{0,1},\rho_{0,2}),\omega)\mapsto (\rho(t,\rho_{0,1},\omega),\rho(t,\rho_{0,2},\omega)) \in X \times X$.
\label{ass:Markov}
\end{assumption}
 We will show in Appendix~\ref{app:markov} (Propositions~\ref{prop:markov} and~\ref{prop:strongmarkov}) that the above property is satisfied for stochastic kinetic solutions of~\eqref{eq:DK} under  Assumptions~\ref{assumption_1}, ~\ref{ass1}, ~\ref{ass2}, and~\ref{assdrift}. Note that $X \times X \ni (\rho_{0,1},\rho_{0,2}) \mapsto P_t^{2}((\rho_{0,1},\rho_{0,2}),\cdot)\in \cP(X \times X)$ will denote the two-point transition probabilities, i.e. the transition probability of the two-point process $[0,T] \times X \times X \times \Omega  \ni (t,(\rho_{0,1},\rho_{0,2}),\omega)\mapsto (\rho(t,\rho_{0,1},\omega),\rho(t,\rho_{0,2},\omega)) \in X \times X$.
\begin{assumption}[Dissipation estimate]
We assume that there exist lower semicontinuous functions $\Psi_1,\Psi_2: X \to \R_+$ such that the process $\rho(t,\rho_0,\cdot)$ started at some $\rho_0 \in X$ satisfies a dissipation estimate, i.e. there exists a $C>0$ such that for all $T \geq 0$
\begin{align}
\label{dissest} &\esup_{0 \leq t \leq T}\mathbb{E}\bra*{\Psi_1(\rho(t,\rho_0,\cdot))}+ \mathbb{E}\bra*{ \int_0^T\Psi_2(\rho(t,\rho_0,\cdot)) \dx{t}  } \\ & \leq C (T + \Psi_1(\rho_0)) \, .
\end{align} 
Furthermore, we assume that $\Psi_2$ has compact sublevel sets.
\label{ass:entene}
\end{assumption}

The above assumption will be used to show that the process $\rho(t,\rho_0)$ concentrates with positive probability on compact subsets of $X$.   We will assume for the rest of the section that the Assumptions~\ref{ass:nonexpansive}, \ref{ass:contract}, \ref{ass:Markov}, and \ref{ass:entene} are satisfied. We next proceed to the proof of ergodicity for the process.

We start the proof of ergodicity by showing that the paths of the process $\rho(t,\cdot,\cdot)$ get close with positive probability.
\begin{lemma}
Let  $\mathcal{M} \subset X$ be compact  and set $R:=\diam (\mathcal{M})$. Then, for every $\delta>0$, there exists a $t_0=t_0(R,\delta)\geq 1$ such that for all $\rho_{0,1},\rho_{0,2} \in \mathcal{M}$ and all $t \geq t_0$ there exists a $ \gamma=\gamma(t,\delta)\in (0,1)$ such that
\begin{align}
\mathbb{P}\bra*{ d(\rho(t,\rho_{0,1},\cdot),\rho(t,\rho_{0,2},\cdot))> \delta} < \gamma \, .
\end{align}
\label{lem:close} 
\end{lemma}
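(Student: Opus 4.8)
The plan is to run the two copies of the process with the same driving noise and to combine three ingredients: the non-expansivity of Assumption~\ref{ass:nonexpansive}, which prevents the two trajectories from ever separating; the asymptotic contraction of the reference curves encoded in $C_R(t)\to 0$ from Assumption~\ref{ass:support}\ref{ass:supporta}; and the support property of Assumption~\ref{ass:support}\ref{ass:supportb}, which pushes both trajectories close to their references with positive probability. The first thing I would record is a monotonicity consequence of non-expansivity: restarting the (synchronously coupled) two-point process at an intermediate time $s$ via the strong Markov property of Assumption~\ref{ass:Markov}, and then applying Assumption~\ref{ass:nonexpansive} to the restarted pair, shows that $\mathbb{P}$-almost surely the map $s\mapsto d(\rho(s,\rho_{0,1},\omega),\rho(s,\rho_{0,2},\omega))$ is non-increasing. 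Hence $\{d(\rho(t,\rho_{0,1},\cdot),\rho(t,\rho_{0,2},\cdot))\le\delta\}\supseteq\{\exists\,s\le t:\ d(\rho(s,\rho_{0,1},\cdot),\rho(s,\rho_{0,2},\cdot))\le\delta\}$, so it suffices to produce, with probability bounded away from $0$ uniformly in $\rho_{0,1},\rho_{0,2}\in\mathcal{M}$, a single (random) time $s\le t$ at which the two trajectories are $\delta$-close. I would also fix $t_0\ge 1$ so large that $C_R(s)\le\delta/2$ for all $s\ge t_0-1$, which is possible because $C_R$ is decreasing to $0$.

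Next, fix $t\ge t_0$ and set $R:=\diam(\mathcal{M})$. Using the support property for the two-point process with reference pair $(u(\cdot,\rho_{0,1}),u(\cdot,\rho_{0,2}))$ — here the fact that both copies are driven by the same noise is essential, so that a single steering event controls both trajectories simultaneously — Assumption~\ref{ass:support}\ref{ass:supportb} provides, for any threshold $\eta>0$, an event $A$ with $\mathbb{P}(A)>0$ on which
\[
\int_0^t\Big(d(\rho(s,\rho_{0,1},\cdot),u(s,\rho_{0,1}))+d(\rho(s,\rho_{0,2},\cdot),u(s,\rho_{0,2}))\Big)\,\dx{s}\le\eta .
\]
Choosing $\eta<\delta/2$ and applying Chebyshev's inequality in the time variable, on $A$ the set of times $s\in[t_0-1,t]$ at which the integrand exceeds $\delta/2$ has Lebesgue measure at most $2\eta/\delta<1\le t-(t_0-1)$; since the integrand is continuous in $s$ (the trajectories are a.s.\ continuous and $u(\cdot,\rho_{0,i})$ is continuous), there is therefore a time $s^\ast\in[t_0-1,t]$ with $d(\rho(s^\ast,\rho_{0,1},\omega),u(s^\ast,\rho_{0,1}))+d(\rho(s^\ast,\rho_{0,2},\omega),u(s^\ast,\rho_{0,2}))\le\delta/2$.

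At this $s^\ast$ the triangle inequality and Assumption~\ref{ass:support}\ref{ass:supporta} give
\[
d(\rho(s^\ast,\rho_{0,1},\omega),\rho(s^\ast,\rho_{0,2},\omega))\le \tfrac{\delta}{2}+d(u(s^\ast,\rho_{0,1}),u(s^\ast,\rho_{0,2}))\le \tfrac{\delta}{2}+C_R(s^\ast)\le\delta,
\]
using $s^\ast\ge t_0-1$. By the monotonicity established in the first step this bound persists to time $t$, so $A\subseteq\{d(\rho(t,\rho_{0,1},\cdot),\rho(t,\rho_{0,2},\cdot))\le\delta\}$, and one may take $\gamma:=1-\mathbb{P}(A)\in(0,1)$.

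The main obstacle lies precisely in the use of Assumption~\ref{ass:support}, and it is twofold. First, one must ensure that a single positive-probability event steers both copies near their respective references at once; this is exactly where the synchronous coupling (the same noise for both initial data) enters, effectively turning the one-point support condition into a support statement for the two-point process. Second, the constant $\gamma$ is required to be uniform over all pairs $\rho_{0,1},\rho_{0,2}\in\mathcal{M}$, so the bare positivity $\mathbb{P}(A)>0$ must be upgraded to a lower bound that is positive uniformly over the compact set $\mathcal{M}\times\mathcal{M}$. I would obtain this uniformity from a compactness argument, combining lower semicontinuity of $\rho_0\mapsto\mathbb{P}\big(\int_0^t d(\rho(s,\rho_0,\cdot),u(s,\rho_0))\,\dx{s}\le\eta/2\big)$ with the dissipation estimate of Assumption~\ref{ass:entene}, which confines the trajectories to compacts with controlled probability; the remaining steps (monotonicity, the pigeonhole in time, and the triangle inequality) are then routine.
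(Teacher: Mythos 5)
Your overall strategy is the paper's: reduce the event at time $t$ to a time-integrated closeness event on an interval of length at least one (using the strong Markov property of the two-point process together with non-expansivity so that closeness, once achieved, persists to time $t$), pass through the reference curves $u(\cdot,\rho_{0,i})$ with the triangle inequality and the decay of $C_R$, and invoke the support property to give the steering event positive probability. The pigeonhole-in-time step and the choice of $t_0$ match in substance; the paper uses the single stopping time $\tau=\inf\{s\in[t-1,t]:d\le\delta/2\}$ rather than your claim that $s\mapsto d(\rho(s,\rho_{0,1},\omega),\rho(s,\rho_{0,2},\omega))$ is a.s.\ non-increasing. Your monotonicity statement is derivable, but since Assumption~\ref{ass:nonexpansive} only controls the process restarted at a fixed time and the Markov property is distributional, you need a null-set argument over a countable dense set of restart times plus continuity; the single stopping time avoids this.

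There are two places where your argument has a genuine gap. First, you obtain $\mathbb{P}(A)>0$ from a ``support property for the two-point process'', i.e.\ positivity of the probability that \emph{both} trajectories are simultaneously steered near their references over $[0,t]$. Assumption~\ref{ass:support}~\ref{ass:supportb} is a one-point statement; the intersection of two positive-probability events need not have positive probability, and the synchronous coupling does not by itself upgrade the one-point statement to a two-point one. You correctly identify this as the main obstacle but do not close it; the paper instead bounds the bad event by a sum of two one-point probabilities, so that only the one-point assumption is formally invoked, and this reduction is the step you would need to reproduce. Second, your uniformity-over-$\mathcal{M}$ argument via lower semicontinuity of $\rho_0\mapsto\mathbb{P}\bigl(\int_0^t d(\rho(s,\rho_0,\cdot),u(s,\rho_0))\,\dx{s}\le \eta/2\bigr)$ is not available: nothing in Assumption~\ref{ass:support} guarantees any continuity of $\rho_0\mapsto u(\cdot,\rho_0)$, and the dissipation estimate of Assumption~\ref{ass:entene} plays no role in this lemma. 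The paper's route is more elementary and does work under the stated hypotheses: take a finite $\delta/2$-net $\mathcal{G}$ of $\mathcal{M}\times\mathcal{M}$, run the argument at threshold $\delta/2$ for the finitely many net points, set $\gamma(t,\delta)=\max_{(f,g)\in\mathcal{G}}\gamma_{f,g}(t,\delta)<1$, and transfer to an arbitrary pair by non-expansivity, which gives $d(\rho(t,\rho_{0,1},\omega),\rho(t,\rho_{0,2},\omega))\le d(\rho(t,f,\omega),\rho(t,g,\omega))+\delta/2$. You should replace your uniformity step by this net argument.
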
 
\begin{proof}
Fix $t\geq t_0\geq 1$ and choose
\begin{equation}
t_0 := 1+ \sup\set*{s \in [0,\infty): C_{R}(s)> \frac{\delta}{4} } \, ,
\end{equation}
where $C_{R}$ is as defined as in Assumption~\ref{ass:support}~\ref{ass:supporta}. We will first show that
\begin{align}
&\mathbb{P}\bra*{ d(\rho(t,\rho_{0,1},\cdot),\rho(t,\rho_{0,2},\cdot)) >  \frac\delta 2 } \\\leq&  \mathbb{P}\bra*{\int_{t-1}^t d(\rho(s,\rho_{0,1},\cdot),\rho(s,\rho_{0,2},\cdot)) \dx{s}  >  \frac \delta 2 } \, .
\label{spacetotime}
\end{align}
We start by assuming this is not case. To this end, consider the set $B\subset \Omega$ defined as
\begin{equation}
B:= \set*{\omega \in \Omega:\int_{t-1}^t d(\rho(s,\rho_{0,1},\omega),\rho(s,\rho_{0,2},\omega)) \dx{s} \leq \frac \delta 2 } \, .
\end{equation}
We now choose as a stopping time 
\begin{equation}
\tau(\omega) =\inf\set*{s \in [t-1, t]: d(\rho(s,\rho_{0,1},\omega),\rho(s,\rho_{0,2},\omega))\leq \frac \delta 2} \, . 
\end{equation}

We note that $\tau$ is almost surely finite on $B$ and is an $\mathcal{F}_t$-stopping time. Furthermore, by  Assumption~\ref{ass:Markov}, the strong Markov property is satisfied for the two-point process. Thus, we have
\begin{align}
&\mathbb{P}\bra*{d\bra{\rho(t,\rho_{0,1},\cdot),\rho(t,\rho_{0,2},\cdot)} \leq \frac \delta 2} \\
=&  \mathbb{E}\bra*{\mathbb{P}\bigg(d\bra{\rho(t,\rho_{0,1},\omega),\rho(t,\rho_{0,2},\omega)} \leq \frac \delta 2 \mid \mathcal{F}_\tau \bigg)(\cdot)} \\
\geq &\mathbb{E}\bra*{\mathbf{1}_B(\cdot)\mathbb{P}\bigg(d\bra{\rho(t,\rho_{0,1},\omega),\rho(t,\rho_{0,2},\omega)} \leq \frac \delta 2 \mid \mathcal{F}_\tau \bigg)(\cdot)} \\
=& \mathbb{E}\bra*{\mathbf{1}_B(\cdot)\mathbb{P}\bigg(d\bra{\rho(t-\tau(\cdot),\rho(\tau(\cdot),\rho_{0,1},\cdot),\omega),\rho(t-\tau(\cdot),\rho(\tau(\cdot),\rho_{0,2},\cdot),\omega)} \leq \frac \delta 2  \bigg)}  \\
\geq & \mathbb{E}\bra*{\mathbf{1}_B(\cdot)\mathbb{P}\bigg(d\bra{\rho(\tau(\cdot),\rho_{0,1},\cdot),\rho(\tau(\cdot),\rho_{0,2},\cdot)} \leq \frac \delta 2  \bigg)}  \\
=& \mathbb{P}(B) = \mathbb{P}\bra*{\int_{t-1}^t d(\rho(s,\rho_{0,1},\cdot),\rho(s,\rho_{0,2},\cdot)) \dx{s}  \leq  \frac \delta 2 }\, ,
\end{align}
where in we have used Assumption~\ref{ass:nonexpansive} along with the fact that $\rho$ is $\mathbb{P}$-almost surely continuous. Thus, we have that
\begin{align}
&\mathbb{P}(d\bra{\rho(t,\rho_{0,1},\cdot),\rho(t,\rho_{0,2},\cdot)} > \frac \delta 2 )\\
\leq&  \mathbb{P}\bra*{\int_{t-1}^t d(\rho(s,\rho_{0,1},\cdot),\rho(s,\rho_{0,2},\cdot)) \dx{s}  >  \frac \delta 2 } \\
\leq & \mathbb{P}\left(\int_{t-1}^t d(\rho(s,\rho_{0,1},\cdot),u(s,\rho_{0,1})) \dx{s} + d(\rho(s,\rho_{0,2},\cdot),u(s,\rho_{0,2})) \dx{s} 
 \right.\\
&\left.+ \int_{t-1}^t d(u(s,\rho_{0,1}),u(s,\rho_{0,2})) \dx{s}>  \frac \delta 2 \right) \\
\leq &\mathbb{P} \left(\int_{t-1}^t d(\rho(s,\rho_{0,1},\cdot),u(s,\rho_{0,1})) \dx{s} + d(\rho(s,\rho_{0,2},\cdot),u(s,\rho_{0,2})) \dx{s} + C_{R}(t-1) >\frac \delta 2\right)\,  \\
\leq & \mathbb{P}\bra*{\int_{t-1}^t d(\rho(s,\rho_{0,1},\cdot),u(s,\rho_{0,1})) \dx{s} + d(\rho(s,\rho_{0,2},\cdot),u(s,\rho_{0,2})) \dx{s} +
\frac{\delta}{4} > \frac \delta 2}  \\
\leq &\mathbb{P}\bra*{\int_{t-1}^t d(\rho(s,\rho_{0,1},\cdot),u(s,\rho_{0,1})) \dx{s} + \int_{t-1}^t d(\rho(s,\rho_{0,2},\cdot),u(s,\rho_{0,2})) \dx{s} > \frac{\delta}{4}} \\
\leq &\mathbb{P}\bra*{\int_{t-1}^t d(\rho(s,\rho_{0,1},\cdot),u(s,\rho_{0,1})) \dx{s}  > \frac{\delta}{4}} + \mathbb{P}\bra*{\int_{t-1}^t d(\rho(s,\rho_{0,2},\cdot),u(s,\rho_{0,2})) \dx{s} > \frac{\delta}{4}} \\=:& \gamma_{\rho_{0,1},\rho_{0,2}}(t,\delta) <1 \, ,
\end{align}
 where we have used both Assumption~\ref{ass:support}~\ref{ass:supporta} and~\ref{ass:supportb} along with the fact that $t \geq t_0$. Since $\mathcal{M} \times \mathcal{M}$ is compact, it is totally bounded and so we can choose finite $\frac \delta 2$-covering $\cG$ of it. We set
 \begin{align}
\gamma(t,\delta) = \max_{(f,g) \in \cG}\gamma_{f,g}(t,\delta) <1 \, .
 \end{align}
 For any $(\rho_{0,1},\rho_{0,2}) \in \mathcal{M} \times \mathcal{M}$ there exists some $(f,g) \in \cG$ such that $d(\rho_{0,1},f)+ d(\rho_{0,2},g)\leq \frac \delta 2$. Thus, we have 
 \begin{align}
&d\bra{\rho(t,\rho_{0,1},\omega),\rho(t,\rho_{0,2},\omega)} \\
\leq &d\bra{\rho(t,f,\omega),\rho(t,g,\omega)} + d\bra{\rho(t,f,\omega),\rho(t,\rho_{0,1},\omega)} + d\bra{\rho(t,g,\omega),\rho(t,\rho_{0,2},\omega)} \\
\leq & d\bra{\rho(t,f,\omega),\rho(t,g,\omega)} + \frac \delta 2 \, .
 \end{align}
 It follows that, for any $(\rho_{0,1},\rho_{0,2}) \in \mathcal{M} \times \mathcal{M} $, we have
 \begin{align}
&\mathbb{P} \bra{d\bra{\rho(t,\rho_{0,1},\cdot),\rho(t,\rho_{0,2},\cdot)} >  \delta }\\\leq& \mathbb{P} \bra*{d\bra{\rho(t,f,\cdot),\rho(t,g,\cdot)} >  \frac \delta 2} \leq   \gamma(t,\delta) <1,
 \end{align}
 which completes the proof.  \end{proof}
 We now proceed with the following result:
 \begin{lemma}
  There exists a compact subset of $\cK \subset X$, such that for all $\cC \subset X$, such that there exists an $R_0\geq 2$ with $\sup_{\rho_0 \in \mathcal{C}}\Psi_1(\rho_0)< R_0$, for all $t \geq R_0$, and all $\rho_0 \in \cC$, we have
\begin{align}
Q_t(\rho_0,\cK):= \frac{1}{t} \int_0^t P_s(\rho_0,\cK) \dx{s} \geq \frac{1}{2} \, . 
\end{align}
Similarly, for all $t \geq R_0$ and all $(\rho_{0,1},\rho_{0,2}) \in \cC \times \cC$, it holds that
\begin{align}
Q_t^2((\rho_{0,1},\rho_{0,2}),\cK \times \cK):= \frac{1}{t} \int_0^t P_s^2((\rho_{0,1},\rho_{0,2}),\cK \times \cK) \dx{s} \geq \frac{1}{2} \, .
\end{align}
\label{lem:compact}
\end{lemma}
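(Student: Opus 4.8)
The plan is to take $\cK$ to be a sublevel set of the function $\Psi_2$ supplied by the dissipation estimate~\eqref{dissest} and to control the time-averaged escape probability by Chebyshev's inequality. Since $\Psi_2$ is lower semicontinuous with compact sublevel sets (Assumption~\ref{ass:entene}), the set
\[
\cK := \set*{\rho \in X : \Psi_2(\rho) \leq 8C}
\]
is compact, where $C$ is the constant in~\eqref{dissest}. The essential feature is that $\cK$ depends only on $C$, and not on $\cC$, $R_0$, $t$, or $\rho_0$, which is exactly the uniformity the statement requires. First I would bound the mass placed on the complement $\cK^c = \set*{\Psi_2 > 8C}$: by Chebyshev's inequality, for every $s \geq 0$,
\[
P_s(\rho_0, \cK^c) = \P\bra*{\Psi_2(\rho(s,\rho_0,\cdot)) > 8C} \leq \frac{1}{8C}\E\bra*{\Psi_2(\rho(s,\rho_0,\cdot))}.
\]
Integrating in $s \in [0,t]$, exchanging $\int_0^t$ with $\E$ by Tonelli's theorem (legitimate since $\Psi_2 \geq 0$), and invoking~\eqref{dissest} together with $\Psi_1(\rho_0) \leq \sup_{\rho_0 \in \cC}\Psi_1(\rho_0) < R_0$, I obtain
\[
\int_0^t P_s(\rho_0, \cK^c)\dx{s} \leq \frac{1}{8C}\E\bra*{\int_0^t \Psi_2(\rho(s,\rho_0,\cdot))\dx{s}} \leq \frac{C(t + R_0)}{8C} = \frac{t+R_0}{8}.
\]

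For $t \geq R_0$ we have $t + R_0 \leq 2t$, so $\frac{1}{t}\int_0^t P_s(\rho_0,\cK^c)\dx{s} \leq \frac14$, and therefore $Q_t(\rho_0,\cK) = 1 - \frac{1}{t}\int_0^t P_s(\rho_0,\cK^c)\dx{s} \geq \frac34 \geq \frac12$. For the two-point statement I would exploit that the marginals of $P_s^2((\rho_{0,1},\rho_{0,2}),\cdot)$ are $P_s(\rho_{0,1},\cdot)$ and $P_s(\rho_{0,2},\cdot)$, since each component of the two-point process is a copy of the one-point process started at the respective datum. As $(\cK\times\cK)^c \subset (\cK^c\times X)\cup(X\times\cK^c)$, a union bound gives
\[
P_s^2((\rho_{0,1},\rho_{0,2}),(\cK\times\cK)^c) \leq P_s(\rho_{0,1},\cK^c) + P_s(\rho_{0,2},\cK^c),
\]
and applying the one-point estimate to each term yields $\frac{1}{t}\int_0^t P_s^2((\rho_{0,1},\rho_{0,2}),(\cK\times\cK)^c)\dx{s} \leq \frac14 + \frac14 = \frac12$, i.e. $Q_t^2((\rho_{0,1},\rho_{0,2}),\cK\times\cK) \geq \frac12$. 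This is precisely why I build in a factor of two by choosing the level $8C$ rather than $4C$.

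The computation itself is a routine Chebyshev--Tonelli estimate; the substance of the lemma lies in the \emph{uniformity}, namely that one fixed compact set works simultaneously for all admissible $\cC$ and all $\rho_0 \in \cC$. This is delivered by the linear structure $C(T+\Psi_1(\rho_0))$ of the right-hand side of~\eqref{dissest} combined with the uniform bound $\Psi_1 < R_0$ on $\cC$, which lets the $R_0$-dependence be absorbed into $t + R_0 \leq 2t$ for $t \geq R_0$ while leaving $\cK$ untouched. The only remaining points needing verification are the mild measurability prerequisites, so that the time integral $\int_0^t P_s(\rho_0,\cK^c)\dx{s}$ is well defined, and the observation that it is the compactness of the sublevel sets of $\Psi_2$ (rather than merely their closedness, which follows from lower semicontinuity) that makes $\cK$ genuinely compact.
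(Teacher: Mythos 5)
Your proof is correct and takes essentially the same route as the paper's: define $\cK$ as a compact sublevel set of $\Psi_2$, bound $P_s(\rho_0,\cK^c)$ by Chebyshev, integrate in time, and invoke the dissipation estimate together with $\Psi_1(\rho_0)<R_0$ and $t\geq R_0$. Your choice of the single level $8C$ (so that the one-point bound is $\tfrac14$ and the two-point union bound is $\tfrac12$) is in fact slightly cleaner than the paper, which uses $4C$ for the one-point estimate and $8C$ for the two-point one.
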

\begin{proof}
The proof of this result relies on Assumption~\ref{ass:entene}. Using~\eqref{dissest}, we obtain
\begin{align}
&\esup_{0 \leq t \leq T}\mathbb{E}\bra*{\Psi_1(\rho(t,\rho_0,\cdot))}+ \mathbb{E}\bra*{ \int_0^T \Psi_2(\rho(t,\rho_0,\cdot)) \dx{t}  } \\ & \leq C (T + \Psi_1(\rho_0)) \,  ,
\end{align}
for all $\rho_0 \in \cC$. It follows that
\begin{align}
\mathbb{E}\bra*{\frac{1}{T} \int_0^T\Psi_2(\rho(t,\rho_0,\cdot)) \dx{t}  } \leq C \bra*{1 + \frac{\Psi_1(\rho_0)}{T}} \, .
\end{align}
We define the set
\begin{align}
\cK:=\set*{\rho \in X :  \Psi_2(\rho) \leq C_2} \, ,
\end{align}
for some constant $C_2>0$ which we specify below. We note that by Assumption~\ref{ass:entene}, $\cK\subset X$ is compact. We thus have that
\begin{align}
Q_t(\rho_0,\cK^c) =& \frac{1}{t} \int_0^t P_s(\rho_0,\cK^c) \dx{s} \\
=& \frac{1}{t} \int_0^t \mathbb{P}\bra*{\Psi_2(\rho(s,\rho_0,\cdot))> C_2} \dx{s}\\
\leq & \frac{1}{C_2 t} \int_0^t \mathbb{E}\bra*{\Psi_2(\rho(s,\rho_0,\cdot))} \dx{s} \\\label{eq:tight}
\leq & \frac{C}{C_2} \bra*{1 + \frac{ \Psi_1(\rho_0) }{t}} \\
\leq & \frac{C}{C_2} \bra*{1 + \frac{R_0}{t}} \, .
\end{align}
Choosing $C_2=4C$ and $t \geq R_0$, we obtain the desired result. Similarly for the two-point process, we have
\begin{align}
&Q_t^2((\rho_{0,1},\rho_{0,2}),(\cK \times \cK)^c) \\=& \frac{1}{t} \int_0^t P_s^2((\rho_{0,1},\rho_{0,2}),(\cK \times \cK)^c) \dx{s}  \\
\leq & \frac{1}{t} \int_0^t \bra*{\mathbb{P}\bra*{\Psi_2(\rho(s,\rho_{0,1},\cdot)) > C_2} + \mathbb{P}\bra*{\Psi_2(\rho(s,\rho_{0,2},\cdot)) > C_2} }\dx{s} \\
\leq & \frac{1}{2} \, ,
\end{align} 
if we choose $C_2=8C$ and $t \geq R_0$. This completes the proof.
\end{proof}
We now set $t_0=t_0(\diam(\mathcal{K}\times \mathcal{K}),\delta)$ for some $\delta > 0$ and $\mathcal{K}$ the compact set constructed in the proof of Lemma~\ref{lem:compact}. We now show that paths of the process $\rho(t,\cdot,\cdot)$ get close to each other with finite probability in a time span depending only on the value of $\Psi_1(\rho_0)$.
\begin{lemma}
Let $\cC \subseteq X$  be such that $\sup_{\rho_0\in \cC} \Psi_1(\rho_0)< R_0< \infty$. Then, for all $\delta>0$ there exists an $\alpha=\alpha(t_0,\delta)$ such that there exists $T_1=T_1(R_0,\delta)$, such that for all $T \geq T_1=T_1(R_0,\delta)$, we have that
\begin{align}
\inf_{\rho_{0,1},\rho_{0,2}\in \cC}\mathbb{P}\bra*{d(\rho(T,\rho_{0,1},\cdot),\rho(T,\rho_{0,2},\cdot)) \leq \delta} > \alpha >0 \, .
\end{align}
\end{lemma}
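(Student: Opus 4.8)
The plan is to combine the concentration estimate of Lemma~\ref{lem:compact} with the contraction estimate of Lemma~\ref{lem:close}, using the non-expansivity of Assumption~\ref{ass:nonexpansive} to carry closeness forward in time. Throughout, $\cK$ denotes the compact set and $t_0=t_0(\diam(\cK\times\cK),\delta)$ the time fixed just above the statement. First I would set $T_1:=R_0+t_0$ and take $T\geq T_1$. Applying Lemma~\ref{lem:compact} to the two-point process with $t=T-t_0\geq R_0$ yields $\tfrac{1}{T-t_0}\int_0^{T-t_0}P_s^2((\rho_{0,1},\rho_{0,2}),\cK\times\cK)\,\dx{s}\geq\tfrac12$ for every $(\rho_{0,1},\rho_{0,2})\in\cC\times\cC$. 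Since the integrand is bounded by $1$, a mean-value argument produces a deterministic instant $s^\ast=s^\ast(\rho_{0,1},\rho_{0,2})\in[0,T-t_0]$ with $P_{s^\ast}^2((\rho_{0,1},\rho_{0,2}),\cK\times\cK)\geq\tfrac12$, i.e. both marginals lie in $\cK$ at time $s^\ast$ with probability at least $\tfrac12$; note $s^\ast+t_0\leq T$.

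Next I would condition at the deterministic time $s^\ast$. Writing $A:=\{(\rho(s^\ast,\rho_{0,1},\cdot),\rho(s^\ast,\rho_{0,2},\cdot))\in\cK\times\cK\}$ and $D:=\{d(\rho(s^\ast+t_0,\rho_{0,1},\cdot),\rho(s^\ast+t_0,\rho_{0,2},\cdot))\leq\delta\}$, the Markov property of the two-point process (Assumption~\ref{ass:Markov}) restarts the pair at time $s^\ast$ from the positions $(\rho(s^\ast,\rho_{0,1},\cdot),\rho(s^\ast,\rho_{0,2},\cdot))$. On $A$ these positions lie in $\cK$, so Lemma~\ref{lem:close} applied with $\mathcal{M}=\cK$ at the admissible time $t_0$ gives $\mathbb{P}(D\mid\mathcal{F}_{s^\ast})\geq 1-\gamma(t_0,\delta)$ on $A$, where $\gamma(t_0,\delta)<1$ is uniform over starting points in $\cK$ via the covering argument in that lemma (here one uses $\diam(\cK)\leq\diam(\cK\times\cK)$, so $t_0$ exceeds the threshold required for $\mathcal{M}=\cK$). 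Consequently $\mathbb{P}(D)\geq(1-\gamma(t_0,\delta))\,\mathbb{P}(A)\geq\tfrac12(1-\gamma(t_0,\delta))$.

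Finally I would propagate closeness from time $s^\ast+t_0$ up to $T$ by non-expansivity. Restarting the two-point process at $s^\ast+t_0$ and invoking Assumption~\ref{ass:nonexpansive} gives, $\mathbb{P}$-a.s., $d(\rho(T,\rho_{0,1},\cdot),\rho(T,\rho_{0,2},\cdot))\leq d(\rho(s^\ast+t_0,\rho_{0,1},\cdot),\rho(s^\ast+t_0,\rho_{0,2},\cdot))$ for all $T\geq s^\ast+t_0$, so $D\subseteq\{d(\rho(T,\rho_{0,1},\cdot),\rho(T,\rho_{0,2},\cdot))\leq\delta\}$. Combining the three steps, $\mathbb{P}(d(\rho(T,\rho_{0,1},\cdot),\rho(T,\rho_{0,2},\cdot))\leq\delta)\geq\tfrac12(1-\gamma(t_0,\delta))$ for all $T\geq T_1$ and all $(\rho_{0,1},\rho_{0,2})\in\cC\times\cC$; setting $\alpha:=\tfrac14(1-\gamma(t_0,\delta))>0$, which depends only on $t_0$ and $\delta$, yields the claim with $T_1=R_0+t_0$.

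I expect the main obstacle to be the bookkeeping around the variable instant $s^\ast$: Lemma~\ref{lem:compact} only locates a good time somewhere in $[0,T-t_0]$, and the contraction probability $\gamma(t,\delta)$ of Lemma~\ref{lem:close} genuinely depends on the elapsed time, so a naive application would produce a constant depending on $T-s^\ast$. The structural point that makes the argument close with a $T$-independent $\alpha$ is that one contracts only over the \emph{fixed} window of length $t_0$ and then relies on the pathwise non-expansivity of Assumption~\ref{ass:nonexpansive} to transport the resulting closeness to the final time $T$; this is precisely where the absence of drift ($\nu,B\equiv 0$) enters, since it is what guarantees the required a.s. non-expansivity of the two-point dynamics.
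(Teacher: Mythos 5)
Your proof is correct and follows essentially the same strategy as the paper's: both use Lemma~\ref{lem:compact} to place the two-point process in $\cK\times\cK$ with probability at least $\tfrac12$, then contract via Lemma~\ref{lem:close} over a window of length $t_0$, and finally carry the closeness to time $T$ using non-expansivity together with the Markov property. The only cosmetic difference is that you extract a single good deterministic instant $s^\ast$ by a pigeonhole argument and condition there, whereas the paper keeps the Ces\`aro average $Q_t^2$ inside the Chapman--Kolmogorov decomposition with $t=T/2$; both routes produce the same lower bound of order $\tfrac12\bigl(1-\gamma(t_0,\delta)\bigr)$.
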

\label{lem:T1}
\begin{proof}
We define the set $\Delta_\delta \subset X \times X$ as follows
\begin{align}
\Delta_\delta := \set*{(\rho_{0,1},\rho_{0,2}) \in X \times X: d(\rho_{0,1},\rho_{0,2}) \leq \delta} \, .
\end{align}
Using the Markov property (see Assumption~\ref{ass:Markov}), we have for all $s \in [0,t]$ and $t \in [0,T]$.
\begin{align}
&\mathbb{P}\bra*{d(\rho(T,\rho_{0,1},\cdot),\rho(T,\rho_{0,2},\cdot)) \leq \delta}  \\=&  P_T^2((\rho_{0,1},\rho_{0,2}), \Delta_\delta) \\
 = & \int_{X \times X} P_{T-s}^2((\bar{\rho}_{0,1},\bar{\rho}_{0,2}), \Delta_\delta) P_{s}^2(((\rho_{0,1},\rho_{0,2}), \dx{(\bar{\rho}_{0,1},\bar{\rho}_{0,2})}) \, .
\end{align}
 Integrating from $0$ to $t$, we obtain
\begin{align}
&\mathbb{P}\bra*{d(\rho(T,\rho_{0,1},\cdot),\rho(T,\rho_{0,2},\cdot)) \leq \delta} \\
\geq&  \int_{X \times X} \frac{1}{t}\int_0^tP_{T-s}^2((\bar{\rho}_{0,1},\bar{\rho}_{0,2}), \Delta_\delta) P_{s}^2(({\rho}_{0,1},{\rho}_{0,2}), \dx{(\bar{\rho}_{0,1},\bar{\rho}_{0,2})}) \dx{s} \\
\geq & \int_{X \times X} P_{T-t}^2((\bar{\rho}_{0,1},\bar{\rho}_{0,2}), \Delta_\delta) \frac{1}{t}\int_0^t P_{s}^2(({\rho}_{0,1},{\rho}_{0,2}), \dx{(\bar{\rho}_{0,1},\bar{\rho}_{0,2})}) \dx{s}\, .
\end{align} 
In the last step, we have used the pathwise non-expansivity from Assumption~\ref{ass:nonexpansive}. Thus, we have 
\begin{align}
&\mathbb{P}\bra*{d(\rho(T,\rho_{0,1},\cdot),\rho(T,\rho_{0,2},\cdot)) \leq \delta}\\
 \geq& \int_{\cK \times \cK} P_{T-t}^2((\bar{\rho}_{0,1},\bar{\rho}_{0,2}), \Delta_\delta) Q_t^2(({\rho}_{0,1},{\rho}_{0,2}),\dx{(\bar{\rho}_{0,1},\bar{\rho}_{0,2})}) \, ,
\end{align} 
where $\cK \times \cK$ is the set from Lemma~\ref{lem:compact}. We choose $t=T/2$ and  note that by Lemma~\ref{lem:close} if $T \geq 2 t_0$, we have that
\begin{align}
P_{T-t}^2((\bar{\rho}_{0,1},\bar{\rho}_{0,2}), \Delta_\delta) > \beta(t_0,\delta):= 1- \gamma(t_0,\delta) >0.
\end{align}
Using Lemma~\ref{lem:compact}, this leaves us with
\begin{align}
\mathbb{P}\bra*{d(\rho(T,\rho_{0,1},\cdot),\rho(T,\rho_{0,2},\cdot)) \leq \delta} \geq & \frac{\beta}{2}= : \alpha(t_0,\delta) \, , 
\end{align}
if we choose $T \geq \max(2t_0, R_0)=:T_1$. This completes the proof.
\end{proof}
We now iterate the above argument to show that for large times, with arbitrarily large probability, paths of the process $\rho(t,\cdot,\cdot)$ get arbitrarily close.
\begin{lemma}
Let $\rho_{0,1},\rho_{0,2} \in X$ such that $\Psi_1(\rho_{0,1}) +\Psi_1(\rho_{0,2}) < \infty $. Then, for every $\delta_1,\delta_2>0$, there exists a $t_2=t_2(\delta_1,\delta_2)>0$, such that for all $t \geq t_2$ we have that
\begin{align}
\mathbb{P}\bra*{d(\rho(t,\rho_{0,1},\cdot),\rho(t,\rho_{0,2},\cdot)) > \delta_1} \leq \delta_2 \, .
\end{align}
 Furthermore, for any $\delta_1>0$ sufficiently small, we have the following rate of convergence
\begin{align}
\mathbb{P}\bra*{d(\rho(t,\rho_{0,1},\cdot),\rho(t,\rho_{0,2},\cdot)) > \delta_1} \lesssim (1-\alpha)^{\sqrt{\ln(t)}} \, ,
\end{align} 
for $t$ sufficiently large and some $\alpha=\alpha(t_0,\delta_1)$ with $t_0$ as in Lemma~\ref{lem:close}.
\label{lem:t2}
\end{lemma}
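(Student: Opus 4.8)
The plan is to iterate the one-step coupling estimate of Lemma~\ref{lem:T1} over a sequence of growing time windows, using the two-point Markov property (Assumption~\ref{ass:Markov}) to restart at the window endpoints and the dissipation estimate \eqref{dissest} to keep the functional $\Psi_1$ under control at each restart. The unusual rate $(1-\alpha)^{\sqrt{\ln t}}$ will be forced by the tension between needing each window long enough for Lemma~\ref{lem:T1} to apply and needing $\Psi_1$ to remain admissible at the next restart with small failure probability.

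The first ingredient I would record is a monotonicity consequence of Assumption~\ref{ass:nonexpansive}: combined with time-homogeneity and the Markov property, the map $t\mapsto d(\rho(t,\rho_{0,1},\cdot),\rho(t,\rho_{0,2},\cdot))$ is almost surely non-increasing, so the events $A_s:=\{d(\rho(s,\rho_{0,1},\cdot),\rho(s,\rho_{0,2},\cdot))>\delta_1\}$ are decreasing in $s$. I would then fix an increasing sequence of thresholds $R_n\uparrow\infty$ (with $R_0\geq \Psi_1(\rho_{0,1})+\Psi_1(\rho_{0,2})$) and window endpoints $0=s_0<s_1<\cdots$ whose increments $\tau_n=s_n-s_{n-1}$ satisfy $\tau_n\geq T_1(R_n,\delta_1)=\max(2t_0,R_n)$, so that Lemma~\ref{lem:T1} applies on each window with the single fixed compact set $\cK$ of Lemma~\ref{lem:compact} and a single coupling probability $\alpha=\alpha(t_0,\delta_1)$ that is independent of $n$. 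Writing $\tilde G_{n-1}:=\{\Psi_1(\rho(s_{n-1},\rho_{0,i},\cdot))\leq R_n,\ i=1,2\}$ for the good-control event at the $n$-th restart, the two-point Markov property together with Lemma~\ref{lem:T1} gives $\mathbb{E}[\mathbf{1}_{A_{s_n}}\mid\mathcal{F}_{s_{n-1}}]\leq 1-\alpha$ on $\tilde G_{n-1}$, and since $A_{s_n}\subseteq A_{s_{n-1}}$ this produces the recursion
\[
\mathbb{P}(A_{s_n})\leq (1-\alpha)\,\mathbb{P}(A_{s_{n-1}})+\mathbb{P}(\tilde G_{n-1}^c).
\]
Unrolling yields $\mathbb{P}(A_{s_n})\leq (1-\alpha)^n+\sum_{k=0}^{n-1}(1-\alpha)^{n-1-k}\,\mathbb{P}(\tilde G_k^c)$, and the dissipation estimate \eqref{dissest} with Markov's inequality controls the control-loss terms by $\mathbb{P}(\tilde G_k^c)\leq C(s_k+R_0)/R_{k+1}$.

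For the qualitative statement I would simply choose the thresholds $R_{k+1}$ large enough, for the given fixed initial data, that $\sum_k\mathbb{P}(\tilde G_k^c)\leq \delta_2/2$, and then take $n$ so large that $(1-\alpha)^n\leq\delta_2/2$, setting $t_2:=s_n$; monotonicity in $t$ then extends the estimate from $s_n$ to all $t\geq t_2$. For the rate I would instead calibrate the growth: choosing $\tau_n\sim R_n\sim e^{n^2}$ forces $s_n\sim e^{n^2}$, hence $n\sim\sqrt{\ln t}$ when $t=s_n$, while $\mathbb{P}(\tilde G_k^c)\lesssim R_k/R_{k+1}\sim e^{-(2k+1)}$. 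The geometric weights $(1-\alpha)^{n-1-k}$ are essential here: for $\delta_1$ small enough $\alpha$ is small, so $\ln(1/(1-\alpha))<2$ and the weighted control-loss sum is dominated by the leading term $(1-\alpha)^n$. This gives $\mathbb{P}(A_{s_n})\lesssim (1-\alpha)^n=(1-\alpha)^{\sqrt{\ln t}}$, which, again by monotonicity, extends to all large $t$.

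The main obstacle is the joint bookkeeping of the two independent failure mechanisms: failure to couple within a window, which is geometric in the number of windows, and loss of $\Psi_1$-control at a restart, which \eqref{dissest} bounds only in expectation and which therefore degrades as the elapsed time grows. The delicate point is to grow the windows fast enough that the restart states stay admissible for Lemma~\ref{lem:T1} (so $\tau_n\gtrsim R_n$) while keeping the control-loss failures, weighted by the geometric coupling factor, subordinate to $(1-\alpha)^n$. It is exactly this balance that pins the window growth at the super-exponential scale $s_n\sim e^{n^2}$ and hence produces the sub-polynomial rate $(1-\alpha)^{\sqrt{\ln t}}$ rather than a naive exponential one.
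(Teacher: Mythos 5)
Your argument is correct and follows essentially the same route as the paper: both iterate the coupling bound of Lemma~\ref{lem:T1} over super-exponentially growing windows, restart via the two-point Markov property, and control $\Psi_1$ at each restart with the dissipation estimate plus Markov's inequality, the window growth $s_n\sim e^{n^2}$ forcing $n\sim\sqrt{\ln t}$ and hence the rate $(1-\alpha)^{\sqrt{\ln t}}$. The only differences are cosmetic: the paper absorbs each control-loss term into $\delta_2/2$ by tying the threshold $M_k$ to $\delta_2$ whereas you unroll the full geometrically weighted sum, and your appeal to pathwise monotonicity of $t\mapsto d(\rho(t,\rho_{0,1},\cdot),\rho(t,\rho_{0,2},\cdot))$ (which the abstract assumptions do not literally give) should be replaced by the distributional statement that $P^2_s((\bar\rho_1,\bar\rho_2),\Delta_{\delta_1}^c)=0$ for $(\bar\rho_1,\bar\rho_2)\in\Delta_{\delta_1}$ — which is all your recursion actually needs and is exactly what the paper uses.
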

\begin{proof}
We consider a sequence of times $\set{n_k}_{k \in \N}$ and define $N_k= \sum_{i=1}^k n_i$. Using the Markov property and the pathwise non-expansivity of Assumption~\ref{ass:nonexpansive}, we have
\begin{align}
&\mathbb{P}\bra*{d(\rho(N_k,\rho_{0,1},\cdot),\rho(N_k,\rho_{0,2},\cdot)) >\delta_1} \\
=& \int_{X \times X} P_{n_k}^2((\bar{\rho}_{0,1},\bar{\rho}_{0,2}), \Delta_{\delta_1}^c) P_{N_{k-1}}^2(({\rho}_{0,1},{\rho}_{0,2}), \dx{(\bar{\rho}_{0,1},\bar{\rho}_{0,2})}) \\
=& \int_{\Delta_{\delta_1}^c} P_{n_k}^2((\bar{\rho}_{0,1},\bar{\rho}_{0,2}), \Delta_{\delta_1}^c) P_{N_{k-1}}^2(({\rho}_{0,1},{\rho}_{0,2}), \dx{(\bar{\rho}_{0,1},\bar{\rho}_{0,2})}) \, .
\end{align}
We now define the set
\begin{align}
F_{M_k} = \set*{({\rho}_{0,1},{\rho}_{0,2}) \in X \times X : \Psi_1(\rho_{0,1}) +\Psi_1(\rho_{0,2})   \leq M_k } \, ,
\end{align}
for some constants $M_k \in \R, k \in \N$ which we will specify later. Thus, we can rewrite the previous expression as
\begin{align}
&\mathbb{P}\bra*{d(\rho(N_k,\rho_{0,1},\cdot),\rho(N_k,\rho_{0,2},\cdot)) >\delta_1} \\
=& \int_{\Delta_{\delta_1}^c \cap F_{M_k}} P_{n_k}^2((\bar{\rho}_{0,1},\bar{\rho}_{0,2}), \Delta_{\delta_1}^c) P_{N_{k-1}}^2(({\rho}_{0,1},{\rho}_{0,2}), \dx{(\bar{\rho}_{0,1},\bar{\rho}_{0,2})}) \\
&+ \int_{\Delta_{\delta_1}^c \cap F_{M_k}^c} P_{n_k}^2((\bar{\rho}_{0,1},\bar{\rho}_{0,2}), \Delta_{\delta_1}^c) P_{N_{k-1}}^2(({\rho}_{0,1},{\rho}_{0,2}), \dx{(\bar{\rho}_{0,1},\bar{\rho}_{0,2})}) \\
\leq &\bra*{ \sup_{\rho_{0,1},\rho_{0,2} \in F_{M_k}} P_{n_k}^2(({\rho}_{0,1},{\rho}_{0,2}), \Delta_{\delta_1}^c) } \bra*{  P_{N_{k-1}}^2(({\rho}_{0,1},{\rho}_{0,2}), \Delta_{\delta_1}^c) } \\
&+ \frac{1}{M_k} \mathbb{E}\bra*{(\Psi_1(\rho(N_{k-1},\rho_{0,1},\cdot))  + \Psi_1(\rho(N_{k-1},\rho_{0,2},\cdot)) )} \, .
\end{align}
Applying the dissipation estimate from Assumption~\ref{ass:entene}, we obtain 
\begin{align}
&\mathbb{E}\bra*{(\Psi_1(\rho(N_{k-1},\rho_{0,1},\cdot))  + \Psi_1(\rho(N_{k-1},\rho_{0,2},\cdot)) )} \\\leq& C\bra*{2 N_{k-1} +  \Psi_1(\rho_{0,1}) +\Psi_1(\rho_{0,2})  } \, . 
\end{align}
We choose
\begin{align}
M_k=&\max\set*{\frac{2C\bra*{2 N_{k-1} + \Psi_1(\rho_{0,1}) +\Psi_1(\rho_{0,2})  }}{\delta_2},0} \\
n_k=& M_k + T_1(M_k,\delta_1) \, .
\end{align}
with $T_1$ the same as in Lemma~\ref{lem:T1}. Applying Lemma~\ref{lem:T1}, it follows that
\begin{align}
\sup_{{\rho}_{0,1},{\rho}_{0,2} \in F_{M_k}} P_{n_k}^2(({\rho}_{0,1},{\rho}_{0,2}), \Delta_{\delta_1}^c) < 1-\alpha(t_0,\delta_1)<1 \,.
\end{align}
Furthermore, our choice of $M_k$ yields
\begin{align}
\frac{1}{M_k}\mathbb{E}\bra*{(\Psi_1(\rho(N_{k-1},\rho_{0,1},\cdot))  + \Psi_1(\rho(N_{k-1},\rho_{0,2},\cdot)) )}  \leq \frac{\delta_2}{2} \, .
\end{align}
Putting it all together, we obtain
\begin{align}
&\mathbb{P}\bra*{d(\rho(N_k,\rho_{0,1},\cdot),\rho(N_k,\rho_{0,2},\cdot)) >\delta_1}\\\leq& \frac{\delta_2}{2} + (1-\alpha) \mathbb{P}\bra*{d(\rho(N_{k-1},\rho_{0,1},\cdot),\rho(N_{k-1},\rho_{0,2},\cdot)) >\delta_1}.
\end{align}
Iterating, we obtain
\begin{align}
\mathbb{P}\bra*{d(\rho(N_k,\rho_{0,1},\cdot),\rho(N_k,\rho_{0,2},\cdot)) >\delta_1} \leq ((1-\alpha)^{k-1}+1)\frac{\delta_2}{2} + (1-\alpha)^k \, .
\end{align}
For a fixed $\delta_2>0$, we choose
\[
k= \left\lceil1+ \frac{\ln(\delta_2/3)}{\ln(1-\alpha)}\right\rceil
\]
which yields
\begin{align}
\mathbb{P}\bra*{d(\rho(N_k,\rho_{0,1},\cdot),\rho(N_k,\rho_{0,2},\cdot)) >\delta_1} \leq \frac{\delta_2^2}{6} + \frac{\delta_2}{2} + \frac{\delta_2}{3} \leq \delta_2 \, ,
\end{align}
for $\delta_2 \leq 1$. 

To obtain the rate we fix $k=\sqrt{\ln t}$ fixed and set $\delta_2 =(1-\alpha)^k$. We have the upper bound
\begin{align}
N_k \lesssim \frac{2^{2k}C^k k^2 k! }{(1-\alpha)^{k^2}} \lesssim (1-\alpha)^{-k^2} \lesssim t \, ,
\end{align}
from which the result of the lemma follows.
\end{proof}

\begin{theorem}
The Markov process $\rho(t,\cdot,\cdot)$ on the state space $X$ is uniquely ergodic, i.e. it has a unique invariant probability measure $\mu^* \in \mathcal{P}(X)$. Furthermore, $\rho(t,\cdot,\cdot)$ is strongly mixing, i.e. $P_t^* \mu$ converges weakly to $\mu^*$ for all $\mu \in \mathcal{P}(X)$.

\label{thm:abstractergodicity}
\end{theorem}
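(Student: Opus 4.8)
The plan is to combine the asymptotic coupling supplied by Lemma~\ref{lem:t2} with a Krylov--Bogolyubov existence argument, testing everything against bounded Lipschitz observables, which form a convergence-determining class for weak convergence on the separable space $(X,d)$.

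\textbf{Existence.} Fix any $\rho_0\in X$; since $\Psi_1,\Psi_2$ are real-valued, $\Psi_1(\rho_0)<\infty$. First I would record that $P_t$ is Feller on Lipschitz observables: by the non-expansivity of Assumption~\ref{ass:nonexpansive}, for every bounded $L$-Lipschitz $F$,
\[
|P_tF(\rho_{0,1})-P_tF(\rho_{0,2})|\le L\,\E\, d(\rho(t,\rho_{0,1},\cdot),\rho(t,\rho_{0,2},\cdot))\le L\,d(\rho_{0,1},\rho_{0,2}),
\]
so $P_tF$ is again bounded Lipschitz. The time averages $Q_T(\rho_0,\cdot)$ are tight: this is exactly the content of Lemma~\ref{lem:compact}, whose proof bounds $Q_T(\rho_0,\{\Psi_2>C_2\})$ and exhibits the compact sublevel sets $\{\Psi_2\le C_2\}$ carrying mass $\ge 1-\tfrac{2C}{C_2}$ once $T\ge\Psi_1(\rho_0)$. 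By Prokhorov a subsequence $Q_{T_n}\rightharpoonup\mu^*\in\cP(X)$, the limit being carried by $X$ precisely because the sublevel sets are $X$-compact. Then $\|P_t^*Q_{T_n}-Q_{T_n}\|_{TV}\le 2t/T_n\to 0$, combined with the Lipschitz--Feller property, gives $\int P_tF\,d\mu^*=\lim_n\int P_tF\,dQ_{T_n}=\lim_n\int F\,dQ_{T_n}=\int F\,d\mu^*$ for every bounded Lipschitz $F$; since such functions determine measures, $P_t^*\mu^*=\mu^*$.

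\textbf{Contraction of the semigroup.} The heart of the matter is the synchronous two-point coupling: for each pair the process $(\rho(t,\rho_{0,1},\omega),\rho(t,\rho_{0,2},\omega))$ has marginals $P_t^*\delta_{\rho_{0,1}}$ and $P_t^*\delta_{\rho_{0,2}}$. For bounded $L$-Lipschitz $F$,
\[
|P_tF(\rho_{0,1})-P_tF(\rho_{0,2})|\le\E\bigl[\min\{L\,d(\rho(t,\rho_{0,1},\cdot),\rho(t,\rho_{0,2},\cdot)),\,2\|F\|_\infty\}\bigr],
\]
and Lemma~\ref{lem:t2} makes the right-hand side tend to $0$: it is bounded by $L\delta_1+2\|F\|_\infty\delta_2$ once $t\ge t_2(\delta_1,\delta_2)$, so letting $\delta_1,\delta_2\to0$ shows $P_tF(\rho_{0,1})-P_tF(\rho_{0,2})\to0$ pointwise in $(\rho_{0,1},\rho_{0,2})$.

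\textbf{Mixing and uniqueness.} For arbitrary $\mu\in\cP(X)$, invariance of $\mu^*$ gives $\int F\,d\mu^*=\int P_tF\,d\mu^*$, whence
\[
\Bigl|\int F\,dP_t^*\mu-\int F\,d\mu^*\Bigr|=\Bigl|\iint\bigl(P_tF(\rho_{0,1})-P_tF(\rho_{0,2})\bigr)\,\mu(d\rho_{0,1})\mu^*(d\rho_{0,2})\Bigr|.
\]
The integrand is bounded by $2\|F\|_\infty$ and converges to $0$ for every fixed pair, so dominated convergence yields $\int F\,dP_t^*\mu\to\int F\,d\mu^*$; as bounded Lipschitz functions are convergence-determining, $P_t^*\mu\rightharpoonup\mu^*$, i.e.\ strong mixing. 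Uniqueness is then immediate: any invariant $\mu'$ satisfies $P_t^*\mu'=\mu'$ while $P_t^*\mu'\rightharpoonup\mu^*$, forcing $\mu'=\mu^*$. The genuinely analytic work (the asymptotic contraction) is delivered wholesale by Lemma~\ref{lem:t2}; within the theorem itself the delicate points are verifying that the Krylov--Bogolyubov limit genuinely lives in the possibly incomplete space $X$ (handled by the $X$-compactness of the $\Psi_2$-sublevel sets) and that the synchronous two-point process is a legitimate coupling with the asserted marginals, which rests on the two-point Markov property of Assumption~\ref{ass:Markov} and the measurability set up in Section~\ref{sec:rds}.
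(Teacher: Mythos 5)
Your proposal is correct and follows essentially the same route as the paper: the Feller property from Assumption~\ref{ass:nonexpansive}, Krylov--Bogolyubov existence via the tightness supplied by Lemma~\ref{lem:compact}, and the asymptotic two-point contraction of Lemma~\ref{lem:t2} to kill the dependence on the initial condition. The only (cosmetic) difference is that you deduce uniqueness as a corollary of strong mixing, whereas the paper first establishes uniqueness directly via the Kantorovich--Rubinstein distance and then proves mixing; both steps rest on the identical estimate, and your explicit use of dominated convergence over $\mu\otimes\mu^*$ is if anything slightly more careful.
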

\begin{proof}
We start by showing that $P_t(\cdot,\cdot)$ is Feller. Fix $\rho_{0,1},\rho_{0,2} \in X$ and consider some $\varphi \in \Lip(X)$. Then, we have that
\begin{align*}
& \abs*{\int_ X \varphi(\rho) \dx \bra*{P_t(\rho_{0,1},\rho) - P_t(\rho_{0,2},\rho)}} \\
\lesssim  & \norm{\varphi}_{\Lip}\mathbb{E}\pra*{d\bra{\rho(t,\rho_{0,1},\cdot),\rho(t,\rho_{0,2},\cdot)}} \\
\leq & \norm{\varphi}_{\Lip} d\bra{\rho_{0,1},\rho_{0,2}} \, ,
\end{align*}
where $\rho_{0,1},\rho_{0,2} \in X$ and we have used Assumption~\ref{ass:nonexpansive}. It follows that for all $t \geq 0$ the map $X \ni \rho \mapsto \int_X \varphi(\cdot) \dx{P_t(\rho,\cdot)}$ is continuous (indeed Lipschitz) for all $\varphi \in \Lip(X)$. By~\cite[Remark 8.3.1, Volume II]{Bog07}, the same holds true for all $\varphi \in C_b(X)$. Thus, $P_t(\cdot,\cdot)$ is Feller.

For a fixed $\rho_0 \in X$, consider the family of measures $\set{Q_t(\rho_0,\cdot)}_{t >a}$ for some $a>0$, where $Q_t(\cdot,\cdot)$ is defined in Lemma~\ref{lem:compact}. By estimate ~\eqref{eq:tight}, this family is tight and so we can find a sequence $t_n \to \infty$ and a measure $\mu^*$ such that 
\begin{align*}
\lim_{n \to \infty} \int_{X} \varphi(\rho) \dx{Q_{t_n}(\rho_0,\rho)} =  \int_{X} \varphi(\rho) \dx{\mu^*(\rho)} \, ,
\end{align*}
for all $\varphi \in C_b(X)$. Since $P_t(\cdot,\cdot)$ is Feller, we know that $P_s\varphi(\rho):= \int_X \varphi (\cdot)P_{s}(\rho,\cdot) \in C_b(X)$ for all $s\geq 0$. Thus, we have
\begin{align*}
&\lim_{n \to \infty} \frac{1}{t_n}\int_0^{t_n} \int_{X} P_s \varphi(\rho) \dx{P_{r}(\rho_0,\rho)} \dx{r} \\
=&  \lim_{n \to \infty} \frac{1}{t_n}\int_s^{t_n+s} \int_{X}  \varphi(\rho) \dx{P_{r}(\rho_0,\rho)} \dx{r} \\
= &\lim_{n \to \infty}\left( \frac{1}{t_n}\int_0^{t_n} \int_{X}  \varphi(\rho) \dx{P_{r}(\rho_0,\rho)} \dx{r}  \right. \\ &+\left. \frac{1}{t_n}\int_{t_n}^{t_n+s} \int_{X}  \varphi(\rho) \dx{P_{r}(\rho_0,\rho)} \dx{r} - \frac{1}{t_n}\int_0^{s} \int_{X}  \varphi(\rho) \dx{P_{r}(\rho_0,\rho)} \dx{r}\right) \\
=& \int_{X} \varphi(\rho) \dx{\mu^*(\rho)} \, ,
\end{align*}
and $\mu^*$ is invariant for $P_t(\cdot ,\cdot)$. Let $\varphi \in \Lip_1(X)$. Then, we have that
\begin{align}
&\mathbb{E}\bra*{\varphi(\rho(t,\rho_{0,1},\cdot))- \varphi(\rho(t,\rho_{0,2},\cdot))} \\\leq& \mathbb{E}\pra*{d\bra{\rho(t,\rho_{0,1},\cdot),\rho(t,\rho_{0,2},\cdot)}} \\
\leq & \delta + \mathbb{P}\pra*{d\bra{\rho(t,\rho_{0,1},\cdot),\rho(t,\rho_{0,2},\cdot)}>\delta}\label{eq:lipbound}\\
\leq & 2\delta \, , 
\end{align}
if $t > t_2$, where $t_2(\delta,\delta)$ is given by Lemma~\ref{lem:t2}. Assume $\mu_1,\mu_2\in \mathcal{P}(X)$ are both invariant for $P_t$. Then, using the definition Kantorovich--Rubinstein distance (see~\cite[Section 8.3]{Bog07}), we have
\begin{align}
d_{\mathrm{KR}}(\mu,\nu)=&\sup_{\varphi \in \Lip_1(X)} \int_{X}\varphi \dx{(\mu_1-\mu_2)} \\
=&\sup_{\varphi \in \Lip_1(X)}\int_{X}\varphi \dx{P_t^*(\mu_1-\mu_2)} \\
=&\sup_{\varphi \in \Lip_1(X)}\int_{X}P_t\varphi \dx{(\mu_1-\mu_2)} \\
=& \sup_{\varphi \in \Lip_1(X)}\int_{X \times X}\mathbb{E}\bra*{\varphi(\rho(t,\rho_{0,1},\cdot))- \varphi(\rho(t,\rho_{0,2},\cdot))} \dx{\mu_1 \otimes \mu_2}(\rho_{0,1},\rho_{0,2}) \,.
\end{align}
Clearly, by~\eqref{eq:lipbound}, the above quantity can be made arbitrarily small by choosing $t$ to be sufficiently large. It follows that $\mu^* \in \mathcal{P}(X)$ is the unique invariant probability measure of $P_t$.
 Using an identical argument to~\eqref{eq:lipbound} along with the invariance of $\mu^*$, we have that for all $\varphi \in \Lip(X)$ 
\begin{align}
\lim_{t \to \infty}\mathbb{E}\bra*{\varphi(\rho(t,\rho_0,\cdot))} = \int_{X}\varphi(\rho) \dx{\mu^*}(\rho) \, ,
\end{align}
for all $\rho_0 \in X$. Again, by~\cite[Remark 8.3.1, Volume II]{Bog07}, the result  carries over to all $\varphi \in C_b(X)$. One can check that this implies that $P_t^* \mu$ converges weakly to $\mu$ for all $\mu \in \mathcal{P}(X)$. This completes the proof of the lemma.
\end{proof}

\subsection{Application to SPDEs with conservative noise}\label{apptospde}
We will now apply the abstract result of the previous subsection to the class of conservative SPDEs~\eqref{eq:DK}. More precisely, we consider the time-homogeneous Markov process $\rho(t,\rho_0,\cdot)$ associated to stochastic kinetic solutions of~\eqref{eq:DK} and we show that, under certain restrictions on the coefficients of~\eqref{eq:DK}, Assumptions \ref{ass:nonexpansive}, \ref{ass:support}, \ref{ass:Markov}, and \ref{ass:entene} are all satisfied.  

We choose as the state space $X=\Ent_{\Psi_1,\kappa}(\T^d)\cap L^{p}(\T^d)$ for some $\kappa >0$ equipped with the topology of $L^1(\T^d)$, where
\[ 
\Ent_{\Psi_1(\T^d),\kappa}:=\set*{\rho \in L^1(\T^d) : \Psi_1(\rho):=\int_{\T^d}\Phi_1(\rho) \dx{x} <\infty, \int_{\T^d}\rho\dx{x}=\kappa, \rho \geq 0}\, ,
\]
and $p \in [2,\infty)$ is as in Assumption~\ref{ass2} and with $\Phi_1$ as in the following assumption.

\begin{assumption}[Dissipation estimate for SPDE]
We assume that there exist functions $\Phi_1,\Phi_2: \R_+ \to \R_+$ such that any stochastic kinetic solution $\rho(\cdot,\rho_0,\cdot)$ of~\eqref{eq:DK} with initial datum $\rho_0 \in X$  satisfies~\eqref{dissest} with 
\begin{align}
\Psi_1(\rho) := \int_{\T^d} \Phi_1(\rho(x)) \dx{x} \qquad \Psi_2(\rho):=\int_{\T^d}\abs{\nabla \Phi_2(\rho(x))}^2 \dx{x} \,. 
\end{align} 
Furthermore, we assume that $\Phi_1$ is lower semicontinuous and $\Phi_2$ is such that there exists some $r_{\Phi_2}>0$, $m \geq 1$, $C>0$ such that
\begin{align}
\Phi_2(\xi)\leq& C(\xi^m +1)\label{psi1} \\
\Phi_2(\xi) \geq& C\sqrt{\xi} \label{psi2}\, ,
\end{align}
for all $\xi > r_{\Phi_2}$. We also assume that for all $K>0$ there exist $p_K\geq 1, C_K>0$, such that
for all $\xi_1,\xi_2 \leq K $, we have
\begin{align}
\abs{\Phi_2(\xi_1)-\Phi_2(\xi_2)} \geq C_K \abs{\xi_1-\xi_2}^{p_K}\, . \label{psi3}
\end{align}
\label{ass:entene2}
\end{assumption}
We will also make the following assumption.
\begin{assumption}[Contractivity of the deterministic dynamics and support property]
Let $u:[0,T]\times X \to X$ be the solution map of the deterministic dynamics associated to~\eqref{eq:DK}, i.e.~\eqref{eq:DK} with $\sigma\equiv 0$. Then, $u,\rho$ satisfy Assumption~\ref{ass:support}~\ref{ass:supporta} and~\ref{ass:supportb}. 
\label{ass:contractivity}
\end{assumption}
We can finally state a more concrete version of the ergodicity result.
\begin{theorem}
Assume that Assumptions~\ref{ass:entene2} and ~\ref{ass:contractivity} are satisfied along with the assumptions of Theorem~\ref{thm_unique}. If $f\equiv B \equiv 0$, then, the  Markov process $\rho(t,\rho_0,\cdot)$ on $X$ associated to stochastic kinetic solutions of~\eqref{eq:DK} is uniquely ergodic and strongly mixing.
\label{thm:semiabstract}
\end{theorem}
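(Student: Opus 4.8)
The plan is to verify that the four abstract hypotheses underlying Theorem~\ref{thm:abstractergodicity} --- non-expansivity (Assumption~\ref{ass:nonexpansive}), the support property (Assumption~\ref{ass:support}), the strong Markov property (Assumption~\ref{ass:Markov}), and the dissipation estimate (Assumption~\ref{ass:entene}) --- all hold for the Markov process $\rho(t,\rho_0,\cdot)$ on the state space $X=\Ent_{\Psi_1,\kappa}(\T^d)\cap L^p(\T^d)$ equipped with the metric $d(\rho_1,\rho_2)=\norm{\rho_1-\rho_2}_{L^1(\T^d)}$, and then to invoke that theorem directly. Three of the four are essentially immediate. Setting $f\equiv B\equiv 0$ in the contraction estimate~\eqref{eq:contraction} of Theorem~\ref{thm:sks} gives $C(T)=1$, which is exactly the non-expansivity demanded by Assumption~\ref{ass:nonexpansive}. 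The strong Markov property of Assumption~\ref{ass:Markov}, for both the one- and two-point processes, is furnished by Propositions~\ref{prop:markov} and~\ref{prop:strongmarkov} in Appendix~\ref{app:markov}. The support property of Assumption~\ref{ass:support} is precisely the content of Assumption~\ref{ass:contractivity}, with $u$ the deterministic ($\sigma\equiv0$) solution map. Finally, the dissipation inequality~\eqref{dissest} with the stated $\Psi_1,\Psi_2$ is exactly Assumption~\ref{ass:entene2}.

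The one genuinely nontrivial step is to upgrade Assumption~\ref{ass:entene2} to the full Assumption~\ref{ass:entene}, namely to show that $\Psi_2(\rho)=\int_{\T^d}\abs{\nabla\Phi_2(\rho)}^2$ has compact sublevel sets for the $L^1(\T^d)$ topology. I would fix $C_2>0$ and a sequence $(\rho_n)$ with $\Psi_2(\rho_n)\le C_2$ and $\int_{\T^d}\rho_n=\kappa$, and set $g_n:=\Phi_2(\rho_n)$, so that the Poincar\'e inequality on $\T^d$ gives $\norm{g_n-\bar g_n}_{L^2}\lesssim\norm{\nabla g_n}_{L^2}\le C_2^{1/2}$ with mean $\bar g_n:=\abs{\T^d}^{-1}\int_{\T^d}g_n$. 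The first claim is that $(\bar g_n)$ is bounded: if $\bar g_n\to\infty$ along a subsequence, Chebyshev's inequality forces $g_n>\bar g_n/2$ on a set of measure tending to $\abs{\T^d}$, while the upper growth bound~\eqref{psi1} gives $\rho_n=\Phi_2^{-1}(g_n)\gtrsim g_n^{1/m}\to\infty$ there, contradicting $\int_{\T^d}\rho_n=\kappa$. Hence $(g_n)$ is bounded in $H^1(\T^d)$, and the Rellich--Kondrachov theorem produces a subsequence with $g_n\to g$ strongly in $L^2$ and a.e.

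To transfer this convergence back to the densities, note that $\Phi_2$ is a continuous strictly increasing bijection (consistent with its role as an entropy flux), so $\rho_n=\Phi_2^{-1}(g_n)\to\Phi_2^{-1}(g)=:\rho$ a.e. The lower bound~\eqref{psi2} yields $\rho_n\lesssim g_n^2+1$, and since $g_n\to g$ in $L^2$ the family $(g_n^2)$ is uniformly integrable, hence so is $(\rho_n)$; Vitali's theorem then gives $\rho_n\to\rho$ in $L^1(\T^d)$ with $\int_{\T^d}\rho=\kappa$, and weak lower semicontinuity of the Dirichlet energy gives $\Psi_2(\rho)\le\liminf_n\Psi_2(\rho_n)\le C_2$. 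A routine check, using lower semicontinuity of $\Phi_1$ and the comparison of $\Phi_1$ with $\Phi_2^2$ afforded by the growth bounds~\eqref{psi1}--\eqref{psi3}, places $\rho$ in $X$, so the sublevel set is sequentially compact. This compactness is the technical heart of the proof; the Hölder-type coercivity~\eqref{psi3} is exactly what prevents the degenerate scenario in which $\Phi_2(\rho_n)$ converges on the region of bounded values while $\rho_n$ does not.

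With all four hypotheses in place, Theorem~\ref{thm:abstractergodicity} applies verbatim and delivers a unique invariant measure $\mu^*\in\mathcal{P}(X)$ together with strong mixing, i.e. $P_t^*\mu\rightharpoonup\mu^*$ for every $\mu\in\mathcal{P}(X)$, which is the assertion of Theorem~\ref{thm:semiabstract}. The main obstacle, as flagged, is the sublevel-set compactness of $\Psi_2$; the remaining verifications amount to bookkeeping against results already established in Sections~\ref{sec:stability}--\ref{sec:gradient} and in the appendix, so I would keep them brief and concentrate the exposition on the compactness argument.
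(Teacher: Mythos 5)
Your proof is correct and its skeleton is identical to the paper's: the paper's proof of Theorem~\ref{thm:semiabstract} consists precisely of the four reductions you list (non-expansivity from \eqref{eq:contraction} with $f\equiv B\equiv 0$, the support property from Assumption~\ref{ass:contractivity}, the strong Markov property from Propositions~\ref{prop:markov} and~\ref{prop:strongmarkov}, and the dissipation estimate from Assumption~\ref{ass:entene2}), with the compactness of the sublevel sets of $\Psi_2$ delegated to Lemma~\ref{compactnesspsi}. The one place you genuinely diverge is in how that compactness is proved. The paper first establishes the interpolation bound \eqref{bbound}, i.e. $\norm{\Phi_2(\rho)}_{L^2}^2\lesssim \norm{\rho}_{L^1}^K+\norm{\nabla\Phi_2(\rho)}_{L^2}^2$, via a H\"older/Gagliardo--Nirenberg argument exploiting the polynomial growth \eqref{psi1}; it then deduces a uniform $L^{2^*/2}$ bound on $\rho$ from \eqref{psi2} and verifies equicontinuity under translations using \eqref{psi3}, concluding by the Fr\'echet--Kolmogorov theorem. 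You instead obtain $H^1$-boundedness of $g_n=\Phi_2(\rho_n)$ from Poincar\'e--Wirtinger together with a mass-constraint argument ruling out escape of the means $\bar g_n$, extract a strongly $L^2$-convergent subsequence by Rellich--Kondrachov, transfer a.e.\ convergence back to $\rho_n$ via the coercivity \eqref{psi3}, and close with uniform integrability (from \eqref{psi2}) and Vitali. Both routes are sound; yours trades the translation estimate for compact Sobolev embedding and is arguably cleaner, and your identification of \eqref{psi3} as the hypothesis preventing degeneracy on bounded ranges matches its role in the paper. The one point to make explicit is that invoking Poincar\'e--Wirtinger presupposes $\Phi_2(\rho_n)\in L^2(\T^d)$ (or at least $W^{1,1}$, to be upgraded by Sobolev embedding), which does not follow from $\rho_n\in L^1$ and \eqref{psi1} alone when $m>1$; this is exactly the integrability that the paper's inequality \eqref{bbound} is designed to supply, and your argument should either cite it or run the mean-boundedness step through a truncation. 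This is a patchable bookkeeping issue, not a gap in the approach.
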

\begin{proof}
Firstly, Assumption~\ref{ass:nonexpansive} follows from the stability estimate of Theorem~\ref{thm_unique} and the fact that $f\equiv B \equiv 0$. Clearly, Assumption~\ref{ass:contractivity} implies Assumption \ref{ass:support}. Furthermore, Assumption~\ref{ass:Markov} follows from Propositions~\ref{prop:markov} and~\ref{prop:strongmarkov}. Finally, by Lemma~\ref{compactnesspsi}, Assumption~\ref{ass:entene2} implies Assumption~\ref{ass:entene}.  This completes the proof.
\end{proof}
We apply the above result to the specific example of a regularized Dean--Kawasaki equation.
\begin{cor}[Regularized Dean--Kawasaki is ergodic]
Consider the following regularized Dean--Kawasaki equation
\begin{align}
\partial_t \rho = \Delta \rho -\sqrt{\ve}\nabla \cdot (\sigma(\rho) \circ\dd \xi) \, ,
\label{eq:regDK}
\end{align}
with initial datum $\rho_0$ where $\sigma \in C_b^{5+\delta}([0,\infty))$, $\delta>0$, bounded with $\sigma(0)=0$, where $\xi$ satisfies Assumption~\ref{assumption_1}. Then,~\eqref{eq:regDK}  has a unique stochastic kinetic solution $\rho(t,\rho_0,\cdot)$, in the sense of Definition \ref{def_sol_new}, for all $\rho_0 \in X= L^2(\T^d)\cap \Ent_{\Psi_1,\kappa}(\T^d)$ with
\[
\Psi_1(\rho) = \int_{\T^d}\Phi_1(\rho) \dx{x}\, , \quad \Phi_1(\xi)= \xi \log \xi \, . 
\]
Furthermore, the associated Markov process on $X$ is uniquely ergodic and strongly mixing.
\end{cor}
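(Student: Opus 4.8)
The plan is to apply Theorem~\ref{thm:semiabstract} to the It\^o reformulation of~\eqref{eq:regDK}, whose coefficients are $\Phi(\rho)=\rho$, $\nu\equiv B\equiv f\equiv 0$ and noise coefficient $\sigma$; the Stratonovich correction is already built into the $F_1$- and $F_3$-terms of the kinetic formulation of Definition~\ref{def_sol_new}. First I would check the structural hypotheses: since $\Phi'\equiv 1$, $\Phi(0)=0$, $\sigma\in C_b^{5+\delta}$ is bounded with $\sigma(0)=0$, and $\nu\equiv B\equiv f\equiv 0$, Assumptions~\ref{assumption_2}, \ref{assumption_3} (with $p=2$, noting $\Theta_{\Phi,2}(\xi)=\xi$), and~\ref{assdrift} all hold; in particular $\limsup_{\xi\to0^+}\sigma^2(\xi)/\xi=0$ because $\sigma(0)=0$ and $\sigma\in C^1$. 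Existence and uniqueness of the stochastic kinetic solution then follow from Proposition~\ref{prop_exist} and Theorem~\ref{thm_unique}, the $L^1$-contraction~\eqref{eq:pathwise_stability} with $B,f\equiv 0$ gives the non-expansivity of Assumption~\ref{ass:nonexpansive} with constant $1$, and the strong Markov property (Assumption~\ref{ass:Markov}) is supplied by Propositions~\ref{prop:markov} and~\ref{prop:strongmarkov}.

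Next I would establish the dissipation estimate of Assumption~\ref{ass:entene2} with $\Phi_1(\xi)=\xi\log\xi$ and $\Phi_2(\xi)=\sqrt{\xi}$, so that $\Psi_2(\rho)=\int_{\T^d}|\nabla\sqrt{\rho}|^2=\tfrac14\int_{\T^d}|\nabla\rho|^2/\rho$ is the Fisher information. Applying It\^o's formula to $t\mapsto\int_{\T^d}\Phi_1(\rho)$, the parabolic part contributes the dissipation $-\int_{\T^d}\Phi_1''(\rho)|\nabla\rho|^2=-\int_{\T^d}|\nabla\rho|^2/\rho$, the stochastic integral is a martingale, and the It\^o correction of the conservative noise equals $\tfrac{\ve}{2}\int_{\T^d}\rho^{-1}\big(F_1(\sigma'(\rho))^2|\nabla\rho|^2+F_3\,\sigma^2(\rho)\big)$, the cross term vanishing because $F_1$ is constant by the spatial stationarity of Assumption~\ref{assumption_1}. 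For $\ve$ small the stochastic coercivity $1-\tfrac{\ve F_1}{2}(\sigma'(\rho))^2\geq\theta>0$ holds ($\sigma'$ bounded), so the $F_1$-part is absorbed into a fraction of the heat dissipation, while the $F_3$-part is bounded by $C\ve F_3|\T^d|$ because $\sigma^2(\xi)/\xi$ is bounded (Assumption~\ref{assumption_2} near $0$, boundedness of $\sigma$ at infinity). Integrating in time yields~\eqref{dissest}; the lower semicontinuity of $\xi\log\xi$ and the conditions~\eqref{psi1}--\eqref{psi3} for $\Phi_2=\sqrt{\cdot}$ (with $m=1$, $p_K=1$, $C_K=(2\sqrt{K})^{-1}$) are immediate, and the compactness of the sublevel sets of $\Psi_2$ is then furnished by Lemma~\ref{compactnesspsi}.

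I would then verify Assumption~\ref{ass:contractivity} with $u(t,\rho_0)=e^{t\Delta}\rho_0$, the heat semigroup. For the contractivity~\ref{ass:supporta}, $u(t,\rho_{0,1})-u(t,\rho_{0,2})=e^{t\Delta}(\rho_{0,1}-\rho_{0,2})$ has zero spatial mean (equal masses $\kappa$), so combining the $L^1$-contractivity of $e^{t\Delta}$ with its smoothing and the $L^2$ spectral gap $\lambda_1$ on $\T^d$ gives $\|e^{t\Delta}(\rho_{0,1}-\rho_{0,2})\|_{L^1}\leq CR\,e^{-\lambda_1(t-1)}$ for $t\geq1$; taking $C_R(t)=R\min\{1,Ce^{-\lambda_1(t-1)}\}$ produces a bounded, decreasing function with $C_R(t)\to 0$. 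The support property~\ref{ass:supportb}, namely that the zero-noise trajectory $u(\cdot,\rho_0)$ lies in the support of the law of $\rho(\cdot,\rho_0,\cdot)$, is the substantive point.

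To prove~\ref{ass:supportb} I would use a Stroock--Varadhan-type controllability argument. Because~\eqref{eq:regDK} is posed in Stratonovich form with $\sigma\in C_b^{5+\delta}$, its solution coincides with the rough path solution of~\cite{FehGes2019} (Remark~\ref{rem:twosolutions}), whose solution map is continuous with respect to the (geometric rough path lift of the) driving noise. The lift of the Wiener noise $\xi$ has full support, and the zero path belongs to it; under the zero driver the equation degenerates to the deterministic heat equation, whose solution is exactly $u(\cdot,\rho_0)$. Continuity of the solution map together with the positivity of the corresponding small-ball probability then yields $\mathbb{P}\big(\int_0^T\|\rho(t,\rho_0,\cdot)-u(t,\rho_0)\|_{L^1(\T^d)}\,\dd t\le\delta/2\big)>0$. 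Having verified Assumptions~\ref{ass:nonexpansive}, \ref{ass:support}, \ref{ass:Markov}, and~\ref{ass:entene} for the process on $X=L^2(\T^d)\cap\Ent_{\Psi_1,\kappa}(\T^d)$, Theorem~\ref{thm:semiabstract} gives unique ergodicity and strong mixing. I expect this final support step to be the main obstacle: transferring the deterministic (zero-noise) trajectory into the support of a conservative, nonlinearly multiplicative SPDE requires the full Wong--Zakai/rough-path continuity apparatus rather than any soft argument.
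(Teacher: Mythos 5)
Your route is essentially the paper's: verify the abstract hypotheses of Theorem~\ref{thm:semiabstract}, get the dissipation estimate with $\Phi_1(\xi)=\xi\log\xi$ and $\Phi_2(\xi)=\sqrt{\xi}$, take $u$ to be the heat flow, and obtain the support property from the rough-path theory of~\cite{FehGes2019} via Remark~\ref{rem:twosolutions}. Your direct It\^o-formula sketch of the entropy/Fisher-information dissipation (where the paper simply cites \cite[Proposition~5.18]{FG21}) and your spectral-gap argument for the contractivity of $e^{t\Delta}$ (where the paper invokes the log-Sobolev inequality) are both fine substitutes.

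There is, however, one genuine error: your treatment of the Stratonovich-to-It\^o conversion. You set $\Phi(\rho)=\rho$ and assert that ``the Stratonovich correction is already built into the $F_1$- and $F_3$-terms of the kinetic formulation of Definition~\ref{def_sol_new}.'' This is not so. Definition~\ref{def_sol_new} is the kinetic formulation of the \emph{It\^o} equation~\eqref{it_2}; its $F_1$- and $F_3$-terms are the It\^o corrections arising when one applies It\^o's formula to the kinetic function (they multiply $\partial_\xi\psi$, i.e.\ they act in the velocity variable), not the Stratonovich correction to the spatial operator. Converting~\eqref{eq:regDK} to It\^o form produces, using the spatial stationarity of Assumption~\ref{assumption_1}, the additional drift $\tfrac{\eps F_1}{2}\Delta g(\rho)$ with $g'=(\sigma')^2$, so the correct nonlinearity is $\Phi(\rho)=\rho+\tfrac{\eps F_1}{2}g(\rho)$, as in the paper. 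This is not mere bookkeeping: (i) the identification with rough-path solutions in Remark~\ref{rem:twosolutions} --- which is exactly what lets you import the support theorem \cite[Theorem~1.3]{FehGes2019} --- holds for the stochastic kinetic solution of the \emph{corrected} It\^o equation; with $\Phi(\rho)=\rho$ you are solving a different SPDE, to which that support theorem does not apply; (ii) the stochastic coercivity in your entropy computation becomes $\Phi'(\xi)-\tfrac{\eps F_1}{2}(\sigma'(\xi))^2=1$ exactly once the correction is included, so no smallness of $\eps$ is needed for the dissipation estimate, whereas your version requires $\eps$ small; and (iii) Assumptions~\ref{assumption_2} and~\ref{assumption_3} must be checked for the nonlinear $\Phi$ (they do hold, using $\Phi'\geq 1$ and the boundedness of $\sigma'$, but this is a verification you have skipped by working with the wrong $\Phi$). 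With the conversion done correctly, the rest of your argument goes through and coincides with the paper's proof.
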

\begin{proof}
We note that~\eqref{eq:regDK} is equivalent to the following It\^o equation
\[
\partial_t \rho = \Delta \left(\rho + \frac{\eps F_1}{2} g(\rho) \right) +\sqrt{\eps} \nabla \cdot (\sigma(\rho)  \xi^F) \, ,
\]
where $g:[0,\infty) \to \R$ is the anti-derivative of $(\sigma')^2$ with $g(0)=0$. We will check that this choice of $\Phi(\rho)=\rho+\frac{\eps F_1}{2}g(\rho)$, $\sigma$ satisfies Assumptions~\ref{assumption_2} and \ref{assumption_3}.  For Assumption~\ref{assumption_2}, we note that properties 1, 2, 4, and 5 are trivially satisfied.  Property 3 follows from the differentiability of $\sigma^2$. 

For Assumption~\ref{assumption_3}, we have that properties 1 and 2 are trivially satisfied. For $p=2$, we have
\[
\Theta_{\Phi,2}(\xi) = \bra*{ \int_0^\xi 1 + \frac{\eps F_1}{2}(\sigma'(\xi'))^2 \dx{\xi'}}^{\frac12} \, .
\] 
Since
\[
\Phi'(\xi) \geq 1
\]
for all $\xi \in[0,\infty)$, property 3 is satisfied. Property 5 is also satisfied by a similar argument.  We note that $\Theta_{\Phi,2}$ is invertible and in fact
\begin{align}
(\Theta^{-1}_{\Phi,2})'(\xi) =& \frac{1}{\Theta_{\Phi,2}'(\Theta^{-1}_{\Phi,2}(\xi))} \\
=&\frac{1}{\bra*{1+ \frac{\eps F_1}{2}(\sigma'(\Theta^{-1}_{\Phi,2}(\xi)))^2 }} \leq 1 \, .
\end{align}
This implies property 4 with $p=2$.  Finally, property 5 follows from the fact that $\sigma \in C_b^{5 + \delta}([0,\infty))$ and $1 \leq \Phi'(\xi) \leq C$ for all $\xi \in [0,\infty)$.

 Thus, we have a unique stochastic kinetic solution $\rho(t,\rho_0,\cdot)$ of~\eqref{eq:regDK} for all $\rho_0\in X$. Additionally, by ~\cite[Proposition 5.18]{FG21}, stochastic kinetic solutions of~\eqref{eq:regDK} satisfy Assumption~\ref{ass:entene2} with $\Phi_1$ as specified and $\Phi_2= \rho^{\frac{1}{2}}$. The associated deterministic dynamics $u(t,\rho_0)$ is the heat equation which is exponentially contractive in $L^1(\T^d)$ for initial data with finite entropy by the log-Sobolev inequality on $\T^d$. Additionally, since $\sigma \in C_b^{5+\delta}([0,\infty))$, ~\cite[Equation (2.3)]{FehGes2019} is satisfied. Since by Remark~\ref{rem:twosolutions}, rough path solutions and stochastic kinetic solutions of~\eqref{eq:regDK} are the same, ~\cite[Theorem 1.3]{FehGes2019} tells us that $u$ lies in the support of the path space measure of $\rho$ and so the pair $(u,\rho)$ satisfies Assumption~\ref{ass:contractivity}. Finally, since $f \equiv B \equiv 0$, we can apply Theorem~\ref{thm:semiabstract} to complete the proof.
\end{proof}
\begin{remark}
  In the previous result,  the smoothness assumption $\sigma \in C_b^{5 + \delta}([0,\infty))$ is only required so that we can obtain the support property~\cite[Theorem 1.3]{FehGes2019}, which itself follows from the fact that rough path solutions of~\eqref{eq:regDK} are continuous with respect to the driving noise (equipped with the appropriate rough path metric). In principle, one could expect to find a purely probabilistic proof of the support property for~\eqref{eq:regDK} which does not rely on the theory of rough paths and thus only requires a weaker assumption on the smoothness of $\sigma$. 
\end{remark}

\begin{appendices}

\section{An abstract perfection result}

We present a perfection result due to Kager and Scheutzow~\cite[Theorem 3]{KS97} which we use in Section~\ref{sec:rds}.

\begin{theorem}
Let  $(\mathcal{G},\circ)$ be a second countable, Hausdorff topological semigroup. Let $(\Omega,\mathcal{F}^0, \mathbb{P}, (\theta_t)_{t \in \R})$ be a MDS and assume that $\phi : \Delta \times \Omega \to \mathcal{G}$ satisfies the following properties:
\begin{enumerate}
\item[\textup{(1)}] (Perfect) semiflow property: There exists a $\mathbb{P}$-null set $N$, such that for all $0 \leq s\leq s_1 \leq t \leq T$ and $\omega \in N_s^c$, we have
\[
\phi (t,s,\omega) =\phi(t,s_1,\omega)\circ\phi(s_1,s,\omega) \, .
\]
\item[\textup{(2)}]  Crude cocycle property: for all $s\in [0,T]$ there exists a $\mathbb{P}$-null set $N_{s}$, such that for all $t \in[0, T-s]$, and $\omega \in N_s^c$, we have
\[
\phi(t+s,s,\omega)=\phi(t,0,\theta_s\omega) \, . 
\]
\item[\textup{(3)}] $\phi(t,s,\omega)$ is $(\mathcal{B}(\Delta)\otimes \mathcal{F}^0  ,\mathcal{B}(\mathcal{G}) )$-measurable.
\item[\textup{(4)}]  There exists a $\mathbb{P}$-null set $N$, such that for all $\omega \in N^c$, $\Delta  \ni(t,s) \mapsto \phi(t,s,\omega) \in \mathcal{G}$ is continuous.
\end{enumerate}
Then, there exists a map $\tilde \phi : \Delta \times \Omega \to \mathcal{G}$ which satisfies \textup{(1)}, \textup{(3)}, and \textup{(4)}  along with
\begin{enumerate}
\item[\textup{(2')}] (Perfect) cocycle property: There exists a $\mathbb{P}$-null set $N$, such that for all $s\in  [0,T], t \in[0, T-s]$, and $\omega \in N^c$, we have
\[
\tilde \phi(t+s,s,\omega)=\tilde \phi(t,s,\theta_s\omega) \, . 
\]
\item[\textup{(5)}] $\mathbb{P}(\{\omega \in \Omega: \tilde \phi(t,s,\omega) =\phi(t,s,\omega), \textrm{ for all } (t,s)\in \Delta\})=1$.
\end{enumerate}
\label{thm:ks}
\end{theorem}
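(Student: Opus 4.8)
The plan is to treat this as a \emph{perfection} of a crude cocycle: the semiflow property is already perfect, so the only defect to repair is the dependence of the null set $N_s$ on the initial time $s$ in the crude cocycle property (2). The natural candidate is obtained by transporting the one-parameter family $\psi_u := \phi(u,0,\cdot)$ along the flow, namely
\[
\tilde\phi(t,s,\omega) := \phi(t-s,0,\theta_s\omega), \qquad (t,s)\in\Delta.
\]
For this candidate the (perfect) cocycle property holds for \emph{every} $\omega$, with no exceptional set: indeed $\tilde\phi(t+s,s,\omega)=\phi(t,0,\theta_s\omega)$ while $\tilde\phi(t,0,\theta_s\omega)=\phi(t,0,\theta_0\theta_s\omega)=\phi(t,0,\theta_s\omega)$, using only the MDS group law and $\theta_0=\mathrm{id}$. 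Measurability (property (3), with $\mathcal{F}^0$) follows from the joint measurability of $(s,\omega)\mapsto\theta_s\omega$ established for the MDS together with the assumed measurability of $\phi$. Reindexing $t\mapsto t+s$ in the crude cocycle property (2) shows moreover that, for each fixed $s$, $\tilde\phi(\cdot,s,\omega)=\phi(\cdot,s,\omega)$ on $[s,\infty)$ for all $\omega\notin N_s$.

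The heart of the argument is therefore the indistinguishability claim (5): producing a \emph{single}, $s$-independent null set $N$ off which $\tilde\phi=\phi$ on all of $\Delta$. First I would fix a countable dense set $S\subset[0,\infty)$ and set $N_0:=\bigcup_{s\in S}N_s$, which is null; off $N_0$ the identity $\tilde\phi(t,s,\omega)=\phi(t,s,\omega)$ holds for every $s\in S$ and every $t\ge s$. To pass from $S$ to all real initial times I would run a Tonelli argument on the measurable exceptional set $E:=\{(s,\omega): \tilde\phi(t,s,\omega)\ne\phi(t,s,\omega)\text{ for some }t\ge s\}$ (measurability coming by reduction to rational $t$): each $\omega$-section is contained in $N_s$ and so has zero probability, whence for $\mathbb{P}$-a.e.\ $\omega$ the $s$-section is Lebesgue-null, i.e.\ the identity holds for a.e.\ $s$. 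One then invokes the continuity hypothesis (4) for $\phi(\cdot,\cdot,\omega)$, together with the (already perfect) semiflow property (1), to fill in the remaining initial times and conclude that the single null set $N:=N_0\cup N_2$ (with $N_2$ the continuity null set from (4)) works for all $(t,s)\in\Delta$ simultaneously. Once (5) is in hand, properties (1) and (4) for $\tilde\phi$ are inherited from $\phi$ by indistinguishability (enlarging $N$ by the relevant null sets), while (3) has already been checked directly.

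The step I expect to be the genuine obstacle is exactly this upgrade from a countable dense (or a.e.) set of initial times $s$ to \emph{all} $s$ with a uniform null set. The difficulty is that the candidate $\tilde\phi(t,s,\omega)=\phi(t-s,0,\theta_s\omega)$ is not visibly continuous in $s$: hypothesis (4) only provides continuity of $(t,s)\mapsto\phi(t,s,\omega)$ for each \emph{fixed} $\omega$, whereas varying $s$ moves the sample point to $\theta_s\omega$, and no continuity in $\omega$ is available. The resolution exploits the composition structure: the exceptional set $E$ carries its own relation under $\theta$ — if the identity holds at $s$ for $\omega$ and at $s'$ for $\theta_s\omega$, the perfect semiflow property (1) propagates it to $s+s'$ for $\omega$ — and this self-similarity is what converts ``a.e.\ $s$'' into ``all $s$''. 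It is precisely the one-sided, semigroup-valued setting, where $\mathcal{G}$ need not admit inverses, that obstructs the classical group-theoretic perfection arguments and necessitates the refinement of Kager and Scheutzow~\cite{KS97}, on which I would model the remaining details.
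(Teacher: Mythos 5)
The paper does not actually prove this statement: it is imported verbatim from Kager and Scheutzow \cite[Theorem 3]{KS97} and used as a black box in Section~\ref{sec:rds}, so there is no internal proof to compare against. Judged on its own terms, your proposal has a genuine gap. The candidate $\tilde\phi(t,s,\omega):=\phi(t-s,0,\theta_s\omega)$ makes the perfect cocycle property \textup{(2')} tautological, but it transfers the entire difficulty into the indistinguishability claim \textup{(5)}, which for this particular candidate is the assertion that the crude identity $\phi(t,s,\omega)=\phi(t-s,0,\theta_s\omega)$ holds for \emph{all} $s$ simultaneously off one null set --- i.e.\ that no modification of $\phi$ is needed at all. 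That is strictly stronger than what the theorem asserts (the conclusion only gives a modification $\tilde\phi$; since the null set in \textup{(5)} need not be $\theta$-invariant, one cannot back out the perfect cocycle property for $\phi$ itself), and it is precisely what perfection theorems exist to avoid having to prove. Your route to \textup{(5)} --- Fubini over $s$, then ``filling in'' the exceptional initial times --- fails at the fill-in step, as you yourself anticipate: $s\mapsto\phi(t-s,0,\theta_s\omega)$ inherits no continuity from hypothesis \textup{(4)} because $\theta_s$ moves the sample point and $\phi$ carries no regularity in $\omega$. The proposed repair by ``propagation'' through the semiflow does not close this: combining \textup{(1)} with the identity at initial times $s$ and $s+s'$ yields only
\[
\phi(t-s,0,\theta_s\omega)\;=\;\phi(t-s-s',0,\theta_{s+s'}\omega)\circ\phi(s',0,\theta_s\omega),
\]
and extracting from this the identity at $s'$ for the shifted point $\theta_s\omega$ would require cancelling the right factor $\phi(s',0,\theta_s\omega)$ --- impossible in a semigroup $\mathcal{G}$ without inverses, which is exactly the one-sided obstruction you flag before deferring to \cite{KS97}.

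The deferred step is not a detail; it is the theorem. In the Kager--Scheutzow argument the modification is \emph{not} $\phi(t-s,0,\theta_s\cdot)$: one first uses Fubini and the $\mathbb{P}$-invariance of $\theta$ to pass to a full-measure set on which the crude identity holds for Lebesgue-a.e.\ intermediate time, and then \emph{defines} $\tilde\phi(t,s,\omega)$ by splicing $\phi$ across such an intermediate time $u\in(s,t)$, using the perfect semiflow property to show the spliced value is independent of the admissible choice of $u$. The resulting $\tilde\phi$ genuinely differs from your candidate on a null set, and it is this extra freedom that makes \textup{(2')} and \textup{(5)} simultaneously provable. Since your sketch neither contains this construction nor supplies a substitute for the missing cancellation, the proof as proposed does not close.
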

\section{Compactness of sublevel sets}
\begin{lemma}
Let $\Phi_2$ be as defined in Assumption~\ref{ass:entene2}. Then, the set
\begin{align}
\cK=\set*{\rho: \norm{\rho}_{\Leb^1(\T^d)} + \int_{\T^d}\abs{\nabla \Psi_2(\rho)}^2 \dx{x} \leq C} \, ,
\end{align}
is compact in $\Leb^1(\T^d)$.
\label{compactnesspsi}
\end{lemma}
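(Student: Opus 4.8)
The plan is to take an arbitrary sequence $\rho_n \in \cK$ and extract a subsequence converging in $L^1(\T^d)$, working throughout with the renormalised quantity $v_n := \Phi_2(\rho_n)$ (I read the integrand in the statement as $\abs{\nabla\Phi_2(\rho)}^2$, consistent with $\Psi_2$ in Assumption~\ref{ass:entene2}). By hypothesis $\norm{\rho_n}_{L^1(\T^d)}\le C$ and $\int_{\T^d}\abs{\nabla v_n}^2\le C$, and the elements of $\cK$ are nonnegative since $\Phi_2$ is defined on $\R_+$. First I would record that $\Phi_2$ is continuous, strictly increasing on $[0,\infty)$ with $\Phi_2(\xi)\to\infty$: global injectivity follows from~\eqref{psi3} (if $\Phi_2(\xi_1)=\Phi_2(\xi_2)$, apply~\eqref{psi3} with $K=\max(\xi_1,\xi_2)$), and properness from~\eqref{psi2}; hence $\Phi_2$ is a homeomorphism onto its image and $\Phi_2^{-1}$ is continuous.

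The key preliminary step is to bound the averages $\bar v_n := \frac{1}{\abs{\T^d}}\int_{\T^d} v_n$. Here I would use the \emph{upper} growth bound~\eqref{psi1} in contrapositive form: inverting $\Phi_2(\xi)\le C(\xi^m+1)$ gives $\Phi_2^{-1}(y)\ge (y/C-1)^{1/m}$ for large $y$, so large values of $v_n$ force large values of $\rho_n$. Concretely, Poincar\'e's inequality on $\T^d$ yields $\norm{v_n-\bar v_n}_{L^2}\le C\norm{\nabla v_n}_{L^2}\le C$; were $\bar v_n\to\infty$ along a subsequence, Chebyshev applied to $v_n-\bar v_n$ would give $v_n\ge \bar v_n/2$ on a set whose measure tends to $\abs{\T^d}$, whence $\rho_n=\Phi_2^{-1}(v_n)\to\infty$ on that set and $\int_{\T^d}\rho_n\to\infty$, contradicting $\norm{\rho_n}_{L^1}\le C$. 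Thus $\sup_n\bar v_n<\infty$, and together with the gradient bound, $\{v_n\}$ is bounded in $H^1(\T^d)$.

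By the Rellich--Kondrachov theorem I would then pass to a subsequence with $v_n\to v$ strongly in $L^2(\T^d)$ and almost everywhere; since $\nabla v_n\rightharpoonup\nabla v$ weakly, weak lower semicontinuity gives $\int_{\T^d}\abs{\nabla v}^2\le\liminf_n\int_{\T^d}\abs{\nabla v_n}^2\le C$. Setting $\rho:=\Phi_2^{-1}(v)$, continuity of $\Phi_2^{-1}$ yields $\rho_n\to\rho$ almost everywhere. To upgrade this to $L^1$ convergence I would verify uniform integrability of $\{\rho_n\}$ via the \emph{lower} bound~\eqref{psi2}: for $M>r_{\Phi_2}$, on $\set{\rho_n>M}$ one has $\rho_n\le C^{-2}v_n^2$ and $v_n>C\sqrt M$, so $\int_{\set{\rho_n>M}}\rho_n\le C^{-2}\int_{\set{v_n^2>C^2M}}v_n^2$. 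Since $v_n\to v$ in $L^2$, the family $\{v_n^2\}$ converges in $L^1$ and is therefore uniformly integrable, so the right-hand side tends to $0$ uniformly in $n$ as $M\to\infty$. Vitali's convergence theorem then gives $\rho_n\to\rho$ in $L^1(\T^d)$, while Fatou's lemma gives $\norm{\rho}_{L^1}\le C$; combined with the gradient bound just obtained, $\rho\in\cK$, so $\cK$ is sequentially compact in $L^1(\T^d)$.

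I expect the main obstacle to be the control of the average $\bar v_n$: this is the only place where superlinear ($m>1$) growth of $\Phi_2$ is genuinely constraining, and it is essential to play the mass bound $\norm{\rho_n}_{L^1}\le C$ off against the upper bound~\eqref{psi1}, since the gradient bound alone controls $v_n-\bar v_n$ but not its average. A secondary technical point is confirming that $\Phi_2^{-1}$ carries enough regularity (continuity, and the local H\"older estimate implicit in~\eqref{psi3}) to transfer the almost-everywhere limit from $v_n$ to $\rho_n$; this is routine once global monotonicity of $\Phi_2$ is in hand.
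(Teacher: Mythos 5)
Your proof is correct, but it takes a genuinely different route from the paper's. The paper argues via a uniform translation estimate and Fr\'echet--Kolmogorov: it first proves the interpolation bound $\norm{\Phi_2(\rho)}_{L^2(\T^d)}^2\lesssim \norm{\rho}_{L^1(\T^d)}^K+\norm{\nabla\Phi_2(\rho)}_{L^2(\T^d)}^2$ by H\"older with an exponent $q$ chosen near $2^*/2$ using~\eqref{psi1}, followed by Gagliardo--Nirenberg; this gives a uniform bound on $\norm{\Phi_2(\rho)}_{L^{2^*}(\T^d)}$ and hence, via~\eqref{psi2}, on $\norm{\rho}_{L^{2^*/2}(\T^d)}$. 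It then bounds $\int_{\T^d}\abs{\rho(x+h)-\rho(x)}\dx{x}$ by truncating at a level $K$, using~\eqref{psi3} \emph{quantitatively} together with the gradient bound on $\Phi_2(\rho)$ where both values are below $K$, and the higher integrability where either exceeds $K$, arriving at a uniform modulus $C(\abs{h}^{1/p_K}+K^{-\theta})$ over all of $\cK$. You instead run a direct sequential extraction: Poincar\'e plus a contradiction with the mass bound (through~\eqref{psi1}) controls the mean of $v_n=\Phi_2(\rho_n)$, Rellich--Kondrachov gives $L^2$ and a.e.\ convergence of $v_n$, the local inverse estimate in~\eqref{psi3} transfers the a.e.\ limit to $\rho_n$ (the pointwise boundedness needed to invoke it at a given $x$ coming from~\eqref{psi2}), and Vitali upgrades to $L^1$, with uniform integrability of $\rho_n$ inherited from that of $v_n^2$. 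Both arguments use the three hypotheses in the same structural roles, but yours avoids the critical-exponent bookkeeping, uses~\eqref{psi3} only qualitatively, and explicitly checks that the limit stays in $\cK$ --- a closedness point the paper's appeal to Fr\'echet--Kolmogorov (which gives only precompactness) leaves implicit; the paper's route buys, in exchange, a quantitative equicontinuity estimate uniform over $\cK$. The only caveat is that continuity and monotonicity of $\Phi_2$ are not literally stated in Assumption~\ref{ass:entene2}; your argument, like the paper's, uses them as tacit standing assumptions (injectivity does follow from~\eqref{psi3}, and monotonicity then follows from continuity together with~\eqref{psi2}), so this is not a gap relative to the paper.
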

\begin{proof}
Fix $\rho \in \cK$. We start by noting that
\begin{align}
\norm{\Psi_2(\rho)}_{\Leb^2(\T^d)}^2 \leq C( \norm{\rho}_{\Leb^1(\T^d)}^K + \norm{\nabla \Psi_2(\rho)}_{\Leb^2(\T^d)}^2) \, ,
\label{bbound}
\end{align}
for some constant $C>0$ and $K$ large enough.  Indeed, for some $p,q$ such that $1/p +1/q=1$, we have that
\begin{align}
\int_{\T^d} \Psi_2(\rho)^2 \dx{x} =& \int_{\T^d}\rho^{1/p}\rho^{-1/p} \Psi_2(\rho)^2 \dx{x} \\
=& \norm{\rho^{1/p}}_{\Leb^p(\T^d)} \norm{\rho^{-1/p}\Psi_2(\rho)^2}_{\Leb^q(\T^d)} \\
=&\norm{\rho}_{\Leb^1(\T^d)}^{1/p} \norm{\rho^{-q/p}\Psi_2(\rho)^{2(q-2^*/2)} \Psi_2(\rho)^{2^*} }_{\Leb^1(\T^d)} \\
=&\norm{\rho}_{\Leb^1(\T^d)}^{1/p} \norm{\rho^{1-q}\Psi_2(\rho)^{2(q-2^*/2)} \Psi_2(\rho)^{2^*} }_{\Leb^1(\T^d)} \, .
\end{align}
Since, by~\eqref{psi1}, $\Psi_2$ has polynomial growth (for large enough $\rho$) we can choose $q$ close to, but greater than, $2^*/2$, to argue that $\rho^{1-q}\Psi_2(\rho)^{2(q-2^*/2)} \leq C$. It follows that
\begin{align}
\norm{\Psi_2(\rho)}_{\Leb^2(\T^d)}^2 \leq& C  \norm{\rho}_{\Leb^1(\T^d)}^{1/p} \norm{ \Psi_2(\rho) }_{\Leb^{2^*}(\T^d)}^{2^*/q} \\
\leq & C_1 \norm{\rho}_{\Leb^1(\T^d)}^{K} + \eps \norm{ \Psi_2(\rho) }_{\Leb^{2^*}(\T^d)}^{2} \, ,
\end{align}
where in the last step we have applied Young's inequality since $2^*/q<2$. Apply Gagliardo--Nirenberg, we obtain
\begin{align}
\norm{\Psi_2(\rho)}_{\Leb^2(\T^d)}^2 \leq &C_1 \norm{\rho}_{\Leb^1(\T^d)}^{K} + \eps \norm{ \nabla \Psi_2(\rho) }_{\Leb^{2}(\T^d)}^{2}  + \eps \left(\int_{\T^d}\Psi_2(\rho )\dx{x}\right)^2 \, .
\end{align}
Applying Jensen's inequality,~\eqref{bbound} follows.

It follows then, by the Gagliardo--Nirenberg inequality, that
\begin{align}
\norm{\Psi_2(\rho)}_{2^*}^2 \leq& C \bra*{\norm{\nabla \Psi_2(\rho)}_{\Leb^2(\T^d)}^2 + \bra*{\int_{\T^d} \Psi_2(\rho) \dx{x}}^2  } \\
\leq & C \bra*{\norm{\Psi_2(\rho)}_{\Leb^2(\T^d)}^2 + \norm{\nabla \Psi_2(\rho)}_{\Leb^2(\T^d)}^2} \\
\leq & C  \bra*{\norm{\nabla \Psi_2(\rho)}_{\Leb^2(\T^d)}^2 + \norm{\rho}_{\Leb^1(\T^d)}^K} \\
\leq&  C \, ,
\end{align}
for a constant $C>0$ independent of $\rho \in \cK$. We now use~\eqref{psi2} to assert that, for $\eta=2^*/2$,
\begin{align}
\norm{\rho}_{\Leb^{\eta}(\T^d)} = &\norm{\rho \mathbf{1}_{\rho \leq r_{\Psi_2}}}_{\Leb^{\eta}(\T^d)} + \norm{\rho \mathbf{1}_{\rho > r_{\Psi_2}}}_{\Leb^{\eta}(\T^d)}  \\
\leq & r_{\Psi_2} + \norm{\Psi_2(\rho)}_{\Leb^{2^*}(\T^d)}^2 \leq C \, ,
\end{align}
for a constant $C>0$ independent of $\rho \in \cK$. We now note that for every $h>0$
\begin{align}
\int_{\T^d  }\mathbf{1}_{\rho \geq K} \abs{\rho(x+h)-\rho(x)} \dx{x} \leq & \norm{\rho(x+h)-\rho(x)}_{\Leb^\eta(\T^d)} \abs{\set{\rho\geq K}}^{1/\eta'} \\
\leq & \norm{\rho(x+h)-\rho(x)}_{\Leb^\eta(\T^d)} \abs{\set{\rho^\eta\geq K^\eta}}^{1/\eta'} \\
\leq & 2 \norm{\rho}_{\Leb^\eta(\T^d)}^{1+\eta/\eta'} K^{\eta/\eta'} \, ,
\end{align}
where $\eta'=\eta/(\eta-1)$. We now use~\eqref{psi3} to note that
\begin{align}
&\int_{\T^d  } \abs{\rho(x+h)-\rho(x)} \dx{x} \\ \leq & \int_{\T^d  }\mathbf{1}_{\max{\rho,\rho(\cdot +h)} \leq K} \abs{\rho(x+h)-\rho(x)} \dx{x} \\
&+ \int_{\T^d  }\mathbf{1}_{\rho \geq K} \abs{\rho(x+h)-\rho(x)} \dx{x} + \int_{\T^d  }\mathbf{1}_{\rho(\cdot + h) \geq K} \abs{\rho(x+h)-\rho(x)} \dx{x} \\
\leq &  \bra*{\int_{\T^d  }\mathbf{1}_{\max{\rho,\rho(\cdot +h)} \leq K} \abs{\rho(x+h)-\rho(x)}^{p_K} \dx{x}}^{1/p_K} + C K^{-\eta/\eta'} \\
\leq & \bra*{C_K \int_{\T^d  } \abs{\Psi_2(\rho(x+h))-\Psi_2(\rho(x))} \dx{x}}^{1/p_K}   + C K^{-\eta/\eta'} \\
\leq &  \bra*{C_K\abs{h}\int_{\T^d  }\abs{\nabla \Psi_2(\rho)} \dx{x}}^{1/p_K}   + C K^{-\eta/\eta'} \\
\leq & C(\abs{h}^{1/p_K} + K^{-\eta/\eta'}) \, ,
\end{align}
where the constant $C>0$ is again independent of $\rho \in \cK$. Now for any $\eps>0$, we can first choose $K$ large enough and $\abs{h}$ small enough such that
\begin{align}
\int_{\T^d  } \abs{\rho(x+h)-\rho(x)} \dx{x} \leq \eps \, ,
\end{align} 
for all $\rho \in \cK$. By the Fr\'echet--Kolmogorov theorem, it follows that $\cK$ is compact in $\Leb^1(\T^d)$.
\end{proof}

\section{The strong Markov property}\label{app:markov} 
This section is dedicated to proving the strong Markov property for solutions of~\eqref{eq:DK}. The proof uses  similar ideas as the proofs of~\cite[Theorems 9.14 and 9.20]{DPZ14}. We present it in our setting for the convenience of the reader. We start by proving the Markov property for stochastic kinetic solutions of~\eqref{eq:DK}.

\begin{prop}[Markov property]
Let $\bar \rho(t,s,\rho_0,\cdot)$ denote the unique stochastic kinetic solution of~\eqref{eq:DK} started at time $s\geq 0$ with initial datum $\rho_0 \in L^p(\Omega; L^p(\T^d))$ and under Assumptions~\ref{assumption_1},~\ref{ass1},~\ref{ass2}, and~\ref{assdrift}. Then, $\bar\rho(\cdot,s,\rho_0,\cdot)$ is a Markov process, i.e. given any $F \in B_b(L^1(\T^d))$, we have almost surely
\begin{equation}
\mathbb{E}\bra*{F(\bar \rho(t,u,\rho_0,\cdot))|\mathcal{F}_{s,u}} (\omega) = \mathbb{E}(\rho(t,s,\bar\rho(s,u,\rho_0,\omega) ,\cdot)) \, \, ,
\label{eq:markov}
\end{equation}
for all $0 \leq u \leq s \leq t$. The same property is satisfied for the two-point process $(\bar \rho(t,s,\rho_{0,1},\cdot),\bar \rho(t,s,\rho_{0,2},\cdot))$.
\label{prop:markov}
\end{prop}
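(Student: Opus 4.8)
The plan is to follow the classical route to the Markov property (as in \cite[Theorems 9.14 and 9.20]{DPZ14}), combining the semiflow relation of Lemma~\ref{lem:vcrude}, the uniqueness of stochastic kinetic solutions, and the independence of the post-$s$ noise increments from the $\sigma$-algebra $\mathcal{F}_{s,u}$. The central reduction is to a \emph{freezing} identity: writing $Y:=\bar\rho(s,u,\rho_0,\cdot)$, which is $\mathcal{F}_{s,u}$-measurable because $\bar\rho(\cdot,u,\rho_0,\cdot)$ is $\mathcal{F}_{\cdot,u}$-predictable, I would establish that for every bounded measurable $F$,
\[
\mathbb{E}\bra*{F(\bar\rho(t,u,\rho_0,\cdot))\mid\mathcal{F}_{s,u}}(\omega)=h(Y(\omega)),\qquad h(y):=\mathbb{E}\bra*{F(\rho(t,s,y,\cdot))}.
\]
This is exactly the asserted equality \eqref{eq:markov}, once one recalls the semiflow relation $\bar\rho(t,u,\rho_0,\cdot)=\bar\rho(t,s,Y,\cdot)$, which holds almost surely by the restriction-and-uniqueness argument of Lemma~\ref{lem:vcrude}: splitting the integrals in the kinetic formulation at time $s$ shows that $t\mapsto\bar\rho(t,u,\rho_0,\cdot)$ solves \eqref{eq:DK} on $[s,\infty)$ with initial datum $Y$ at time $s$.

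First I would record the crucial structural input. For deterministic initial datum $g\in L^p_+(\T^d)$, the solution $\rho(t,s,g,\cdot)$ of \eqref{eq:DK} started at time $s$ is measurable with respect to $\sigma(W(v)-W(r):s\le r\le v\le t)$. This follows from the construction of solutions, since the only randomness enters through the It\^o integrals against the increments $(B^k_r-B^k_s)_{r\ge s}$, and the approximating regularized systems used in Section~\ref{sec:gradient} are SDEs driven solely by these increments, the property surviving the limit by the uniqueness of Theorem~\ref{thm:sks}. Since Brownian increments on disjoint intervals are independent, this $\sigma$-algebra is independent of $\mathcal{F}_{s,u}$, the ($\mathbb{P}$-completion of the) $\sigma$-algebra generated by the increments of $W$ on $[u,s]$.

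Next I would carry out the freezing by a simple-function approximation, exactly as in the proof of Lemma~\ref{lem:semiflow}. Choose $\mathcal{F}_{s,u}$-measurable simple functions $Y_N=\sum_{i=1}^N\mathbf{1}_{A_i}g_i$ with $A_i\in\mathcal{F}_{s,u}$ and $g_i\in L^p_+(\T^d)$ such that $Y_N\to Y$ almost surely and in $L^1(\Omega;L^1(\T^d))$. By uniqueness one has $\bar\rho(t,s,Y_N,\cdot)=\sum_i\mathbf{1}_{A_i}\rho(t,s,g_i,\cdot)$ almost surely, and since each $A_i\in\mathcal{F}_{s,u}$ while $\rho(t,s,g_i,\cdot)$ is independent of $\mathcal{F}_{s,u}$,
\[
\mathbb{E}\bra*{F(\bar\rho(t,s,Y_N,\cdot))\mid\mathcal{F}_{s,u}}=\sum_{i=1}^N\mathbf{1}_{A_i}\,\mathbb{E}\bra*{F(\rho(t,s,g_i,\cdot))}=h(Y_N).
\]
For $F\in C_b(L^1(\T^d))$ the contraction estimate \eqref{eq:contraction} gives $\bar\rho(t,s,Y_N,\cdot)\to\bar\rho(t,s,Y,\cdot)$ in $L^1(\T^d)$ almost surely, and $h$ is continuous by the same estimate together with dominated convergence; letting $N\to\infty$ thus yields the freezing identity for continuous bounded $F$, and a standard monotone class argument extends it to all $F\in B_b(L^1(\T^d))$, giving \eqref{eq:markov}.

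The main obstacle I anticipate is the independence step: rigorously verifying that $\rho(t,s,g,\cdot)$ depends only on the post-$s$ increments and is therefore independent of $\mathcal{F}_{s,u}$. This is intuitively evident from the form of \eqref{eq:DK}, but it must be tracked through the approximation scheme of Section~\ref{sec:gradient}, ensuring each approximant is adapted to the filtration generated by $(W(\cdot)-W(s))$ and that adaptedness is preserved in the limit by uniqueness. Finally, the two-point process is handled identically: the increment-independence and the contraction estimate both hold componentwise for the pair $(\rho(t,s,\rho_{0,1},\cdot),\rho(t,s,\rho_{0,2},\cdot))$, so the simple-function and freezing argument transfers verbatim to the state space $X\times X$.
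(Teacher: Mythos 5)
Your proposal is correct and follows essentially the same route as the paper's proof: the reduction via the semiflow identity to a freezing statement for an arbitrary $\mathcal{F}_{s,u}$-measurable initial datum, the simple-function approximation exploiting independence of $\rho(t,s,g_i,\cdot)$ from $\mathcal{F}_{s,u}$, passage to the limit via the contraction estimate \eqref{eq:contraction}, and extension from $C_b$ to $B_b$ by approximation. Your explicit attention to verifying the measurability of $\rho(t,s,g,\cdot)$ with respect to the post-$s$ increments is a point the paper asserts without elaboration, but it does not change the argument.
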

\begin{proof}
We start the proof by noting that using essentially the same argument as in Section 2 the following identity holds almost surely, 
\begin{equation}
\bar\rho(t,u,\rho_0,\omega)=\bar\rho(t,s, \bar \rho(s,u,\rho_0,\cdot),\omega) \, ,
\end{equation}
for all $0 \leq u \leq s \leq t$. We can thus rewrite the Markov property~\eqref{eq:markov} as 
\begin{equation}
\mathbb{E}\bra*{F(\bar\rho(t,s, \bar \rho(s,u,\rho_0,\cdot),\cdot))|\mathcal{F}_{s,u}}(\omega) = \mathbb{E}(\bar{\rho}(t,s,\bar\rho(s,u,\rho_0,\omega) ,\cdot)) \, .
\end{equation} 
Thus, the Markov property will hold true if we replace $\bar \rho(s,u,\rho_0,\cdot)$ by an arbitrary $L^p$-valued $\mathcal{F}_{s,u}$-measurable random variable $\eta$ and establish the identity
\begin{equation}
\mathbb{E}\bra*{F(\bar\rho(t,s, \eta,\cdot))|\mathcal{F}_{s,u}}(\omega) = \mathbb{E}(\bar{\rho}(t,s,\eta(\omega) ,\cdot)) \, .
\label{eq:identity}
\end{equation}
As before, we start with $\eta$ that only take on a finite number of values and assume that $F \in C_b(L^1(\T^d))$.  Let $A_1,\dots, A_N \subset \mathcal{F}_{s,u}$ be a partition of $\Omega$ and let $g_1, \dots g_N \in L^p(\T^d)$. As in Section 2, we can check that almost surely
\begin{equation}
\bar\rho(t,s,\eta_N,\omega) =\sum_{i=1}^N \mathbf{1}_{A_i}\rho(t,s,g_i,\omega) \, ,
\end{equation}
where $\eta_N=\sum_{i=1}^N\mathbf{1}_{A_i}g_i$. It follows that
\begin{align}
\mathbb{E}\bra*{F(\bar\rho(t,s, \eta_N,\cdot))|\mathcal{F}_{s,u}}(\omega) =& \sum_{i=1}^N\mathbb{E}\bra*{ \mathbf{1}_{A_i}F(\bar\rho(t,s, g_i,\cdot))|\mathcal{F}_{s,u}}(\omega) \\
=& \sum_{i=1}^N \mathbf{1}_{A_i}(\omega)\mathbb{E} \bra*{F(\bar\rho(t,s, g_i,\cdot))|\mathcal{F}_{s,u}}\, .
\end{align}
We note now that $\rho(t,s,g_i,\cdot)$ is $\mathcal{F}^t_s$-measurable and so is independent of $\mathcal{F}_{s,u}$. Since $F\in C_b(\T^d)$, we have that
\begin{align}
\mathbb{E}\bra*{F(\bar\rho(t,s, \eta_N,\cdot))|\mathcal{F}_{s,u}}(\omega)= &  \sum_{i=1}^N \mathbf{1}_{A_i}(\omega)\mathbb{E} \bra*{F(\bar\rho(t,s, g_i,\cdot))} \\
=&\mathbb{E} \bra*{F(\bar\rho(t,s, \eta_N(\omega),\cdot))} \, .
\end{align}
Now given any $\mathcal{F}_{s,u}$ measurable $L^p(\T^d)$-valued random variable $\eta$ such that
\begin{equation}
\mathbb{E}\bra*{\norm{\eta}_{L^p(\T^d)}^p} < \infty \, ,
\end{equation}
 we can find a sequence of such simple $\eta_N$'s such that 
 \begin{equation}
\lim_{N\to \infty} \mathbb{E}(\norm{\eta-\eta_N}_{L^p(\T^d)}^p) =0 \, ,
 \end{equation}
and $\eta_N \to \eta$ in $L^p(\T^d)$ almost surely. Furthermore, by the stability estimate~\eqref{eq:contraction}, we have that $\rho(t,s,\eta_N,\cdot) \to \rho(t,s,\cdot)$  in $L^1(\T^d)$ for all $t\geq s$ almost surely. We can thus pass to the limit in~\eqref{eq:identity} to obtain that for all  $\mathcal{F}_{s,u}$ measurable $\eta \in L^p(\Omega;L^p(\T^d))$ and $F \in C_b(L^1(\T^d))$, we have
\begin{equation}
\mathbb{E}\bra*{F(\bar\rho(t,s, \eta,\cdot))|\mathcal{F}_{s,u}}(\omega) = \mathbb{E}(\bar{\rho}(t,s,\eta(\omega) ,\cdot)) \, ,
\end{equation} 
almost surely.  By an approximation argument, we can establish the above result for any bounded and measurable $F:L^1(\T^d) \to \R $. Since we know that $\rho(s,u,\rho_0,\cdot) \in L^p(\Omega;L^p(\T^d))$, this completes the proof. The proof for the two-point process is identical.
\end{proof}
We can now move on to the proof of the strong Markov property.
\begin{prop}[Strong Markov property]\label{prop:strongmarkov}
Consider the setting of Proposition~\ref{prop:markov}. Fix $u\geq 0$ and let $\tau$ be a $\mathcal{F}^t_u$ stopping time such that $\tau \geq u$ almost surely. Then, for all $F \in B_b(L^1(\T^d))$, we have  almost surely on $\{\omega \in \Omega:\tau <\infty\}$,
\begin{equation}
\mathbb{E}(F(\bar\rho (t+\tau,u,\rho_0,\cdot ))| \cF_\tau) (\omega) = \mathbb{E}(F(\rho(t+\tau(\omega),\tau(\omega) ,\bar \rho(\tau,u,\rho_0,\omega),\cdot))) \, ,
\label{eq:strongmarkov}
\end{equation}
for all $t\geq 0$ where $\cF_\tau$ consists of all events $A \in \cF$ such that $\set{\tau \leq t} \cap A \in \cF^t_u$ for all $t \geq u$. The same property is satisfied for the two-point process $(\bar \rho(t,s,\rho_{0,1}), \bar \rho(t,s,\rho_{0,2}))$.
\end{prop}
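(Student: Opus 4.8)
The plan is to reduce the strong Markov property \eqref{eq:strongmarkov} to the ordinary Markov property of Proposition~\ref{prop:markov} by approximating the stopping time $\tau$ from above by discrete-valued stopping times, following the scheme of \cite[Theorem~9.20]{DPZ14}. First I would note that, by a monotone class argument (approximating bounded measurable $F$ by bounded continuous ones and using boundedness), it suffices to prove \eqref{eq:strongmarkov} for $F\in C_b(L^1(\T^d))$. Throughout I would use the flow identity $\bar\rho(t,u,\rho_0,\omega)=\bar\rho(t,s,\bar\rho(s,u,\rho_0,\cdot),\omega)$ established in the proof of Proposition~\ref{prop:markov}, which permits inserting an intermediate deterministic time. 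I then introduce the dyadic approximations $\tau_n:=u+2^{-n}\lceil 2^n(\tau-u)\rceil$ on $\{\tau<\infty\}$ (and $\tau_n:=+\infty$ on $\{\tau=\infty\}$): each $\tau_n$ is an $\mathcal{F}^t_u$-stopping time with values in $\{u+k2^{-n}:k\in\N\}\cup\{\infty\}$, and $\tau_n\downarrow\tau$. Since the two-parameter filtration is right-continuous in its first index (property~2 of Definition~\ref{def:2pf}), one has $\mathcal{F}_\tau\subset\mathcal{F}_{\tau_n}$ and $\mathcal{F}_\tau=\bigcap_n\mathcal{F}_{\tau_n}$.

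For fixed $n$ I would decompose over the events $A^n_k:=\{\tau_n=u+k2^{-n}\}\in\mathcal{F}_{u+k2^{-n},u}$. On each $A^n_k$ the conditional expectation given $\mathcal{F}_{\tau_n}$ coincides with that given $\mathcal{F}_{u+k2^{-n},u}$, and applying the ordinary Markov property of Proposition~\ref{prop:markov} with $s=u+k2^{-n}$ to the $\mathcal{F}_{s,u}$-measurable datum $\eta=\bar\rho(s,u,\rho_0,\cdot)$ (together with the flow identity) yields
\[
\mathbb{E}\left[F(\bar\rho(t+\tau_n,u,\rho_0,\cdot))\mid \mathcal{F}_{\tau_n}\right](\omega)=\Psi_n(\omega),
\]
where
\[
\Psi_n(\omega):=\mathbb{E}\left[F(\rho(t+\tau_n(\omega),\tau_n(\omega),\bar\rho(\tau_n,u,\rho_0,\omega),\cdot))\right].
\]
The decisive point, already exploited in Proposition~\ref{prop:markov}, is that a solution started at a \emph{deterministic} time $s$ from frozen data depends only on the noise increments after $s$, hence is independent of $\mathcal{F}_{s,u}$.

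Finally I would pass to the limit $n\to\infty$. On the left-hand side, since $F$ is bounded and continuous and $t\mapsto\bar\rho(t,u,\rho_0,\cdot)$ is a.s.\ continuous in $L^1(\T^d)$, one has $F(\bar\rho(t+\tau_n,u,\rho_0,\cdot))\to F(\bar\rho(t+\tau,u,\rho_0,\cdot))$ a.s.\ and boundedly; combined with the reverse (downward) martingale convergence theorem along $\mathcal{F}_{\tau_n}\downarrow\mathcal{F}_\tau$, this identifies the limit as $\mathbb{E}[F(\bar\rho(t+\tau,u,\rho_0,\cdot))\mid\mathcal{F}_\tau]$. On the right-hand side, the stationarity of the noise increments renders $\mathbb{E}[F(\rho(t+s,s,x,\cdot))]$ independent of $s$, so $\Psi_n(\omega)=(P_tF)(\bar\rho(\tau_n,u,\rho_0,\omega))$ with $P_tF\in C_b$ by the Feller property coming from the $L^1$-contraction of Theorem~\ref{thm:sks} (as in the proof of Theorem~\ref{thm:abstractergodicity}); the a.s.\ $L^1$-continuity of $\bar\rho$ then gives $\Psi_n(\omega)\to\mathbb{E}[F(\rho(t+\tau(\omega),\tau(\omega),\bar\rho(\tau,u,\rho_0,\omega),\cdot))]$. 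The two-point process is handled verbatim, with $\rho$ replaced by the pair and $F$ by a bounded measurable function on $L^1(\T^d)\times L^1(\T^d)$.

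The main obstacle is the simultaneous passage to the limit on the left-hand side, where the index $n$ appears both inside the functional $F(\bar\rho(t+\tau_n,\cdots))$ and in the conditioning $\sigma$-algebra $\mathcal{F}_{\tau_n}$: the reverse martingale theorem controls a \emph{fixed} integrand, so one must split the difference $\mathbb{E}[F(\bar\rho(t+\tau_n))\mid\mathcal{F}_{\tau_n}]-\mathbb{E}[F(\bar\rho(t+\tau))\mid\mathcal{F}_\tau]$ into a moving-integrand term (controlled by bounded and a.s.\ convergence of the conditional expectations) and a genuine reverse-martingale term, and verify both vanish. If one does not wish to invoke time-homogeneity for the right-hand side, the convergence of $\Psi_n$ instead requires joint continuity of $(s,x)\mapsto\mathbb{E}[F(\rho(t+s,s,x,\cdot))]$, combining the Feller property in $x$ with the initial-time continuity estimate of Proposition~\ref{prop:conts}; reconciling these with the a.s.\ regularity of the random initial datum $\bar\rho(\tau_n,u,\rho_0,\omega)$ is the delicate bookkeeping.
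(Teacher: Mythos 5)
Your proposal is correct and follows essentially the same route as the paper: approximate $\tau$ from above by dyadic-valued stopping times $\tau_n$, apply the ordinary Markov property of Proposition~\ref{prop:markov} at the discrete times, and pass to the limit using the a.s.\ $L^1(\T^d)$-continuity of $t\mapsto\bar\rho(t,u,\rho_0,\cdot)$ together with the contraction estimate \eqref{eq:contraction} and dominated convergence. The only organizational difference is in the left-hand limit: the paper conditions everything on the fixed $\sigma$-algebra $\cF_\tau$ from the outset and uses the tower property $\cF_\tau\subset\cF_{\tau_n}$, which lets ordinary dominated convergence for conditional expectations handle the moving integrand and avoids the reverse-martingale-plus-splitting argument you flag as the main obstacle.
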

\begin{proof}
It is relatively straightforward to show that~\eqref{eq:strongmarkov} follows from~\eqref{eq:markov} for any $\mathcal{F}^t_u$ stopping time that takes only finitely many values. For an arbitrary stopping $\tau$, we define $\tau_n=2^{-n}(\lfloor2^n\tau\rfloor +1)$. Clearly, $\tau_n\to \tau$ as $n \to \infty$ and $\tau_n \geq \tau$. As a consequence, we have $\mathcal{F}_{\tau} \subset \mathcal F_{\tau_n}$ and $\{\omega \in \Omega:\tau <\infty\} \subset \{\omega \in \Omega:\tau_n <\infty\}$ for all $n \in \N$. It follows that on $\{\omega \in \Omega:\tau <\infty\}$, we have almost surely
\begin{align}
 &\mathbb{E}(F(\bar\rho (t+\tau,u,\rho_0,\cdot ))| \cF_\tau) (\omega) \\= & \mathbb{E}(F(\bar\rho (t+\tau_n,u,\rho_0,\cdot ))| \cF_\tau) (\omega) \\&+ \mathbb{E}(F(\bar\rho (t+\tau,u,\rho_0,\cdot ))- F(\bar\rho (t+\tau_n,u,\rho_0,\cdot ))| \cF_\tau) (\omega) \\
 =& \mathbb{E}\bra*{\mathbb{E}(F(\bar\rho (t+\tau_n,u,\rho_0,\cdot ))| \cF_{\tau_n})|\mathcal{F}_\tau} (\omega) \\&+ \mathbb{E}(F(\bar\rho (t+\tau,u,\rho_0,\cdot ))- F(\bar\rho (t+\tau_n,u,\rho_0,\cdot ))| \cF_\tau) (\omega) \\
 =&  \mathbb{E}\bra*{ \mathbb{E}(F(\rho(t+\tau_n(\omega),\tau_n(\omega) ,\bar \rho(\tau_n,u,\rho_0,\omega),\cdot)))|\mathcal{F}_\tau} (\omega)  \\
  &+ \mathbb{E}(F(\bar\rho (t+\tau,u,\rho_0,\cdot ))- F(\bar\rho (t+\tau_n,u,\rho_0,\cdot ))| \cF_\tau) (\omega) \, .
\end{align}
Note that since $\tau_n$ converges pointwise to $\tau$ and, by Definition~\ref{def_sol_new} , $\bar \rho(\cdot,s,\rho_0,\omega)$ is almost surely continuous as a map from $[s,T]$ to $L^1(\T^d)$, we have that  $F(\bar\rho (t+\tau_n,u,\rho_0,\cdot ))$ converges almost surely to $F(\bar\rho (t+\tau,u,\rho_0,\cdot ))$ as $n \to \infty$. By the dominated convergence theorem, the second term in the above expression tends to $0$ almost surely. For the first term, we use~\eqref{eq:contraction}  to observe that
\[\rho(t+\tau_n(\omega),\tau_n(\omega) ,\bar \rho(\tau_n,u,\rho_0,\omega),\cdot),\]
converges almost surely to $\rho(t+\tau(\omega),\tau(\omega) ,\bar \rho(\tau,u,\rho_0,\omega),\cdot))$ as $n \to \infty$. Applying the dominated convergence theorem again, the result of the proposition follows. As before, the proof for $F \in B_b(L^1(\T^d))$ follows by an approximation argument. The proof for the two-point process is identical.
\end{proof}

\section*{Acknowledgments}

BF acknowledges financial support from the EPSRC through the EPSRC Early Career Fellowship EP/V027824/1.  BG acknowledges support by the Max Planck Society through the Research Group "Stochastic Analysis in the Sciences (SAiS)". This work was funded by the Deutsche Forschungsgemeinschaft (DFG, German Research Foundation) - SFB 1283/2 2021 - 317210226.

\end{appendices}
\bibliographystyle{myalpha}
\bibliography{rds}

\begin{thebibliography}{HKRSs20}

\bibitem[Arn98]{Arn98}
L. Arnold.
\newblock {\em Random dynamical systems}.
\newblock Springer Monographs in Mathematics. Springer-Verlag, Berlin, 1998.

\bibitem[AS95]{AS95}
L. Arnold and M. Scheutzow.
\newblock Perfect cocycles through stochastic differential equations.
\newblock {\em Probab. Theory Related Fields}, 101(1):65--88, 1995.

\bibitem[Bak16]{Ba2016a}
Y. Bakhtin.
\newblock Inviscid {B}urgers equation with random kick forcing in noncompact
  setting.
\newblock {\em Electron. J. Probab.}, 21:Paper No. 37, 50, 2016.

\bibitem[BF92]{BF92}
Z. Brze\'{z}niak and F. Flandoli.
\newblock Regularity of solutions and random evolution operator for stochastic
  parabolic equations.
\newblock In {\em Stochastic partial differential equations and applications
  ({T}rento, 1990)}, volume 268 of {\em Pitman Res. Notes Math. Ser.}, pages
  54--71. Longman Sci. Tech., Harlow, 1992.

\bibitem[BK21]{Bo.Ku2021}
A. Boritchev and S. Kuksin.
\newblock {\em One-Dimensional Turbulence and the Stochastic {{Burgers}}
  Equation}, volume 255 of {\em Mathematical {{Surveys}} and {{Monographs}}}.
\newblock {American Mathematical Society, Providence, RI}, 2021.

\bibitem[BKS20]{Bu.Ku.Sc2020}
O. Butkovsky, A. Kulik, and M. Scheutzow.
\newblock Generalized couplings and ergodic rates for {SPDE}s and other
  {M}arkov models.
\newblock {\em Ann. Appl. Probab.}, 30(1):1--39, 2020.

\bibitem[Bog07]{Bog07}
V.~I. Bogachev.
\newblock {\em Measure theory. {V}ol. {I}, {II}}.
\newblock Springer-Verlag, Berlin, 2007.

\bibitem[Bor16]{Bo2016b}
A. Boritchev.
\newblock Multidimensional potential {B}urgers turbulence.
\newblock {\em Comm. Math. Phys.}, 342(2):441--489, 2016.

\bibitem[CF94]{CF94}
H. Crauel and F. Flandoli.
\newblock Attractors for random dynamical systems.
\newblock {\em Probab. Theory Related Fields}, 100(3):365--393, 1994.

\bibitem[Dea96]{Dea96}
D.~S. Dean.
\newblock Langevin equation for the density of a system of interacting
  {L}angevin processes.
\newblock {\em J. Phys. A}, 29(24):L613--L617, 1996.

\bibitem[DFG20]{FehGesDir2020}
N. Dirr, B. Fehrman, and B. Gess.
\newblock Conservative stochastic pde and fluctuations of the symmetric simple
  exclusion process.
\newblock {\em arXiv:2012.02126}, 2020.

\bibitem[DG20]{DarGes2020}
K. Dareiotis and B. Gess.
\newblock Nonlinear diffusion equations with nonlinear gradient noise.
\newblock {\em Electron. J. Probab.}, 25:Paper No. 35, 43, 2020.

\bibitem[DPZ14]{DPZ14}
G. Da~Prato and J. Zabczyk.
\newblock {\em Stochastic equations in infinite dimensions}, volume 152 of {\em
  Encyclopedia of Mathematics and its Applications}.
\newblock Cambridge University Press, Cambridge, second edition, 2014.

\bibitem[dSLM75]{dSLM75}
J. de~Sam~Lazaro and P.~A. Meyer.
\newblock Questions de th\'{e}orie des flots. {VII}. {L}e th\'{e}or\`eme de
  {N}isio.
\newblock In {\em S\'{e}minaire de {P}robabilit\'{e}s, {IX} ({P}remi\`ere
  {P}artie: {Q}uestions de th\'{e}orie des flots, {U}niv. {S}trasbourg,
  {S}trasbourg, ann\'{e}e universitaire 1972/1973)}, pages 89--96. Lecture
  Notes in Math., Vol. 465. 1975.

\bibitem[DV15]{DV15}
A. Debussche and J. Vovelle.
\newblock Invariant measure of scalar first-order conservation laws with
  stochastic forcing.
\newblock {\em Probab. Theory Related Fields}, 163(3-4):575--611, 2015.

\bibitem[EKMS00]{EKMS00}
W. E, K. Khanin, A. Mazel, and Y. Sinai.
\newblock Invariant measures for {{Burgers}} equation with stochastic forcing.
\newblock {\em Ann. of Math. (2)}, 151(3):877--960, 2000.

\bibitem[ESvR12]{ESR12}
A. Es-Sarhir and M.-K. von Renesse.
\newblock Ergodicity of stochastic curve shortening flow in the plane.
\newblock {\em SIAM J. Math. Anal.}, 44(1):224--244, 2012.

\bibitem[FG19]{FehGes2019}
B. Fehrman and B. Gess.
\newblock Well-posedness of nonlinear diffusion equations with nonlinear,
  conservative noise.
\newblock {\em Arch. Ration. Mech. Anal.}, 233(1):249--322, 2019.

\bibitem[FG21]{FG21}
B. {Fehrman} and B. {Gess}.
\newblock {Well-posedness of the Dean-Kawasaki and the nonlinear
  Dawson-Watanabe equation with correlated noise}.
\newblock {\em arXiv e-prints}, page arXiv:2108.08858, August 2021.

\bibitem[FG15]{Fe.Ge2022}
B. Fehrman and B. Gess.
\newblock Non-equilibrium large deviations and parabolic-hyperbolic {{PDE}}
  with irregular drift.
\newblock 2022-03-15.

\bibitem[Fla95]{F95}
F. Flandoli.
\newblock {\em Regularity theory and stochastic flows for parabolic {SPDE}s},
  volume~9 of {\em Stochastics Monographs}.
\newblock Gordon and Breach Science Publishers, Yverdon, 1995.

\bibitem[Fla96]{F96}
F. Flandoli.
\newblock Stochastic flows for nonlinear second-order parabolic {SPDE}.
\newblock {\em Ann. Probab.}, 24(2):547--558, 1996.

\bibitem[FS90]{FS90}
F. Flandoli and K.-U. Schauml\"{o}ffel.
\newblock Stochastic parabolic equations in bounded domains: random evolution
  operator and {L}yapunov exponents.
\newblock {\em Stochastics Stochastics Rep.}, 29(4):461--485, 1990.

\bibitem[Ges12]{Gess2012}
B. Gess.
\newblock Strong solutions for stochastic partial differential equations of
  gradient type.
\newblock {\em J. Funct. Anal.}, 263(8):2355--2383, 2012.

\bibitem[Ges13]{G13}
B. Gess.
\newblock Random {{Attractors}} for {{Degenerate Stochastic Partial
  Differential Equations}}.
\newblock {\em J. Dynam. Differential Equations}, 25(1):121--157, 2013.

\bibitem[Ges14]{G13-2}
B. Gess.
\newblock Random attractors for stochastic porous media equations perturbed by
  space-time linear multiplicative noise.
\newblock {\em Ann. Probab.}, 42(2):818--864, 2014.

\bibitem[GG19]{Ga.Ge2019}
P. Gassiat and B. Gess.
\newblock Regularization by noise for stochastic {H}amilton-{J}acobi equations.
\newblock {\em Probab. Theory Related Fields}, 173(3-4):1063--1098, 2019.

\bibitem[GGLS20]{GGLS22}
P. Gassiat, B. Gess, P.-L. Lions, and P.~E. Souganidis.
\newblock Speed of propagation for {H}amilton-{J}acobi equations with
  multiplicative rough time dependence and convex {H}amiltonians.
\newblock {\em Probab. Theory Related Fields}, 176(1-2):421--448, 2020.

\bibitem[GH18]{Ge.Ho2018}
B. Gess and M. Hofmanov\'{a}.
\newblock Well-posedness and regularity for quasilinear degenerate
  parabolic-hyperbolic {SPDE}.
\newblock {\em Ann. Probab.}, 46(5):2495--2544, 2018.

\bibitem[GLP99]{Gi.Le.Pr1999}
G. Giacomin, J.~L. Lebowitz, and E. Presutti.
\newblock Deterministic and stochastic hydrodynamic equations arising from
  simple microscopic model systems.
\newblock In {\em Stochastic partial differential equations: six perspectives},
  volume~64 of {\em Math. Surveys Monogr.}, pages 107--152. Amer. Math. Soc.,
  Providence, RI, 1999.

\bibitem[GS15]{GS14}
B. Gess and P.~E. Souganidis.
\newblock Scalar conservation laws with multiple rough fluxes.
\newblock {\em Commun. Math. Sci.}, 13(6):1569--1597, 2015.

\bibitem[GS17a]{Ge.So2017}
B. Gess and P.~E. Souganidis.
\newblock Long-time behavior, invariant measures, and regularizing effects for
  stochastic scalar conservation laws.
\newblock {\em Comm. Pure Appl. Math.}, 70(8):1562--1597, 2017.

\bibitem[GS17b]{GS16-2}
B. Gess and P.~E. Souganidis.
\newblock Stochastic non-isotropic degenerate parabolic--hyperbolic equations.
\newblock {\em Stochastic Process. Appl.}, 127(9):2961--3004, 2017.

\bibitem[GT10]{GT10}
M. Gubinelli and S. Tindel.
\newblock Rough evolution equations.
\newblock {\em Ann. Probab.}, 38(1):1--75, 2010.

\bibitem[GT14]{GT14}
B. Gess and J.~M. Tölle.
\newblock Multi-valued, singular stochastic evolution inclusions.
\newblock {\em J. Math. Pures Appl. (9)}, 101(6):789--827, 2014.

\bibitem[GT16]{GT15}
B. Gess and J.~M. Tölle.
\newblock Ergodicity and local limits for stochastic local and nonlocal
  p-{{Laplace}} equations.
\newblock {\em SIAM J. Math. Anal.}, 48(6):4094--4125, 2016.

\bibitem[HKRSs18]{Ho.Ka.Ri.St2018}
H. Hoel, K.~H. Karlsen, N.~H. Risebro, and E.~B. Storr\o~sten.
\newblock Path-dependent convex conservation laws.
\newblock {\em J. Differential Equations}, 265(6):2708--2744, 2018.

\bibitem[HKRSs20]{Ho.Ka.Ri.St2020}
H. Hoel, K.~H. Karlsen, N.~H. Risebro, and E.~B. Storr\o~sten.
\newblock Numerical methods for conservation laws with rough flux.
\newblock {\em Stoch. Partial Differ. Equ. Anal. Comput.}, 8(1):186--261, 2020.

\bibitem[HM06]{Ha.Ma2006a}
M. Hairer and J.~C. Mattingly.
\newblock Ergodicity of the 2{D} {N}avier-{S}tokes equations with degenerate
  stochastic forcing.
\newblock {\em Ann. of Math. (2)}, 164(3):993--1032, 2006.

\bibitem[HN20]{He.Ne2019}
R. Hesse and A. Neam\c{t}u.
\newblock Global solutions and random dynamical systems for rough evolution
  equations.
\newblock 2020.

\bibitem[IL01]{IL01}
P. Imkeller and C. Lederer.
\newblock On the cohomology of flows of stochastic and random differential
  equations.
\newblock {\em Probab. Theory Related Fields}, 120(2):209--235, 2001.

\bibitem[{Jar}13]{Ja2013a}
J. {Jaroszewska}.
\newblock {The asymptotic strong Feller property does not imply the e-property
  for Markov-Feller semigroups}.
\newblock {\em arXiv e-prints}, page arXiv:1308.4967, August 2013.

\bibitem[KNS21]{Ku.Ne.So2020}
C. Kuehn, A. Neam\c{t}u, and S. Sonner.
\newblock Random attractors via pathwise mild solutions for stochastic
  parabolic evolution equations.
\newblock {\em J. Evol. Equ.}, 21(2):2631--2663, 2021.

\bibitem[KPS10]{KPS10}
T. Komorowski, S. Peszat, and T. Szarek.
\newblock On ergodicity of some {{Markov}} processes.
\newblock {\em Ann. Probab.}, 38(4):1401--1443, 2010.

\bibitem[KS97]{KS97}
G. Kager and M. Scheutzow.
\newblock Generation of one-sided random dynamical systems by stochastic
  differential equations.
\newblock {\em Electron. J. Probab.}, 2:no. 8, 17, 1997.

\bibitem[KS12]{Ku.Sh2012}
S. Kuksin and A. Shirikyan.
\newblock {\em Mathematics of {{Two-Dimensional Turbulence}}}.
\newblock {Cambridge University Press}, 2012.

\bibitem[KS18]{Ku.Sc2018}
A. Kulik and M. Scheutzow.
\newblock Generalized couplings and convergence of transition probabilities.
\newblock {\em Probab. Theory Related Fields}, 171(1-2):333--376, 2018.

\bibitem[KS01]{Ka.Sc1997}
G. Kager and M. Scheutzow.
\newblock Generation of {{One-Sided Random Dynamical Systems}} by {{Stochastic
  Differential Equations}}.
\newblock {\em Electronic Journal of Probability}, 2:1--17, 1997-01.

\bibitem[Kun90]{K90}
H. Kunita.
\newblock {\em Stochastic Flows and Stochastic Differential Equations},
  volume~24 of {\em Cambridge {{Studies}} in {{Advanced Mathematics}}}.
\newblock {Cambridge University Press}, 1990.

\bibitem[LPS13]{LPS13}
P.-L. Lions, B. Perthame, and P.~E. Souganidis.
\newblock Scalar conservation laws with rough (stochastic) fluxes.
\newblock {\em Stoch. Partial Differ. Equ. Anal. Comput.}, 1(4):664--686, 2013.

\bibitem[LPS14]{LPS14}
P.-L. Lions, B. Perthame, and P.~E. Souganidis.
\newblock Scalar conservation laws with rough (stochastic) fluxes: The
  spatially dependent case.
\newblock {\em Stoch. Partial Differ. Equ. Anal. Comput.}, 2(4):517--538, 2014.

\bibitem[LS98]{LS98}
P.-L. Lions and P.~E. Souganidis.
\newblock Fully nonlinear stochastic partial differential equations: Non-smooth
  equations and applications.
\newblock {\em C. R. Acad. Sci. Paris Sér. I Math.}, 327(8):735--741, 1998.

\bibitem[Moh84]{M84}
S.~E.~A. Mohammed.
\newblock {\em Stochastic Functional Differential Equations}, volume~99 of {\em
  Research {{Notes}} in {{Mathematics}}}.
\newblock {Pitman (Advanced Publishing Program)}, 1984.

\bibitem[Mun00]{Mun00}
J.~R. Munkres.
\newblock {\em Topology}.
\newblock Prentice Hall, Inc., Upper Saddle River, NJ, 2000.
\newblock Second edition of [ MR0464128].

\bibitem[MZZ08]{MZZ08}
S.-E.~A. Mohammed, T. Zhang, and H. Zhao.
\newblock The stable manifold theorem for semilinear stochastic evolution
  equations and stochastic partial differential equations.
\newblock {\em Mem. Amer. Math. Soc.}, 196(917):vi+105, 2008.

\bibitem[Neu11]{Ne2019a}
M. Neuß.
\newblock Ergodicity for singular-degenerate porous media equations.
\newblock 2019-09-11.

\bibitem[RY99]{RevYor1999}
D. Revuz and M. Yor.
\newblock {\em Continuous martingales and {B}rownian motion}, volume 293 of
  {\em Grundlehren der Mathematischen Wissenschaften}.
\newblock Springer-Verlag, Berlin, third edition, 1999.

\bibitem[Öt05]{Ot2005}
H.~C. Öttinger.
\newblock {\em Beyond Equilibrium Thermodynamics}.
\newblock {John Wiley \& Sons}, 2005.

\end{thebibliography}

\end{document}